\newtheorem{lemma}{Lemma}[section]
\newtheorem{theorem}[lemma]{Theorem}
\newtheorem*{theorem*}{Theorem}
\newtheorem{corollary}[lemma]{Corollary}
\newtheorem{proposition}[lemma]{Proposition}
\newtheorem*{proposition*}{Proposition}
\newtheorem*{problem*}{Problem}
\theoremstyle{definition}
\newtheorem*{claim*}{Claim}
\newtheorem{example}{Example}
\DeclareMathOperator*{\E}{\mathbb{E}}
\newcommand{\C}{{\mathbb C}}
\newcommand{\F}{{\mathbb F}}
\newcommand{\N}{{\mathbb N}}
\newcommand{\Q}{{\mathbb Q}}
\newcommand{\R}{{\mathbb R}}
\newcommand{\Z}{{\mathbb Z}}
\newcommand{\CC}{{\mathcal C}}
\newcommand{\CD}{{\mathcal D}}
\newcommand{\CE}{{\mathcal E}}
\newcommand{\CP}{{\mathcal P}}
\newcommand{\CQ}{{\mathcal Q}}
\newcommand{\CU}{{\mathcal U}}
\newcommand{\FD}{{\mathfrak D}}
\newcommand{\FL}{{\mathfrak L}}
\renewcommand{\a}{{\textbf{a}}}
\newcommand{\ba}{{\textbf{a}}}
\renewcommand{\b}{{\textbf{b}}}
\newcommand{\bc}{{\textbf{c}}}
\newcommand{\be}{{\mathbf{e}}}
\newcommand{\bp}{{\textbf{p}}}
\newcommand{\p}{{\textbf{p}}}
\newcommand{\bq}{{\textbf{q}}}
\newcommand{\q}{{\textbf{q}}}
\newcommand{\bu}{{\textbf{u}}}
\newcommand{\bv}{{\textbf{v}}}
\newcommand{\x}{{\textbf{x}}}
\newcommand{\bx}{{\textbf{x}}}
\newcommand{\uh}{{\underline{h}}}
\newcommand{\uk}{{\underline{k}}}
\newcommand{\uu}{{\underline{u}}}
\newcommand{\eps}{\epsilon}
\newcommand{\ueps}{{\underline{\epsilon}}}
\newcommand{\supp}{\textrm{supp}}
\newcommand{\norm}[1]{\left\Vert #1\right\Vert}
\newcommand{\nnorm}[1]{\lvert\!|\!| #1|\!|\!\rvert}
\newcommand{\inv}{^{-1}}
\newcommand{\abs}[1]{\mathopen{}\left| #1\mathclose{}\right|}
\newcommand{\brac}[1]{\mathopen{}\left( #1 \mathclose{}\right)}
\date{}
\begin{document}

\title[]{Multidimensional polynomial patterns over finite fields: bounds, counting estimates and Gowers norm control}
\author{Borys Kuca}

\thanks{The author was supported  by the Hellenic Foundation for Research and Innovation, Project No: 1684.}

\address[Borys Kuca]{Jagiellonian University, Faculty of mathematics and computer science, Krak\'ow, Poland}
\email{borys.kuca@uj.edu.pl}

\begin{abstract}
    We examine multidimensional polynomial progressions involving linearly independent polynomials over finite fields, proving power saving bounds for sets lacking such configurations. This jointly generalises earlier results of Peluse (for the single dimensional case) and the author (for distinct degree polynomials). In contrast to the cases studied in the aforementioned two papers, a usual PET induction argument does not give Gowers norm control over multidimensional progressions that involve polynomials of the same degrees. The main challenge is therefore to obtain Gowers norm control, and we accomplish this for all multidimensional polynomial progressions with pairwise independent polynomials. The key inputs are: (1) a quantitative version of a PET induction scheme developed in ergodic theory by Donoso, Koutsogiannis, Ferr\'e-Moragues and Sun, (2) a quantitative concatenation result for Gowers box norms in arbitrary finite abelian groups, motivated by (but different from) earlier results of Tao, Ziegler, Peluse and Prendiville; (3) an adaptation to combinatorics of the box norm smoothing technique, recently developed in the ergodic setting by the author and Frantzikinakis; and (4) a new version of the multidimensional degree lowering argument.  
\end{abstract}

\subjclass[2020]{Primary: 11B30.}

\keywords{polynomial Szemer\'edi theorem, polynomial progressions, Gowers norms.}

\maketitle

		\tableofcontents

\section{Introduction}
The last decade has witnessed considerable interest in quantifying the polynomial Szemer\'edi theorem of Bergelson and Leibman \cite{BL96}, which asserts that all dense subsets of $\Z^D$ contain polynomial patterns of a fairly general form. In a series of papers, Peluse and Prendiville gave bounds for subsets of natural numbers lacking polynomial progressions
\begin{align}\label{E: poly prog}
    x,\; x+p_1(n),\; \ldots, \; x+p_\ell(n)
\end{align}
for fixed $p_1, \ldots, p_\ell\in\Z[n]$ which either have distinct degrees \cite{Pel20, PP19, PP20} or are monomials of the same degree \cite{Pre17}. Even more bounds have been obtained in the finite field setting, where one wants to bound the size of subsets of $\F_p$ (for a large prime $p$) lacking the patterns \eqref{E: poly prog} \cite{AB23, BC17, DLS17, HLY21,  Ku22b, Ku21a, Leng22, Pel18, Pel19}; some of these have recently been extended to the setting of finite commutative rings \cite{BB23}. Much less is known in the multidimensional version of the aforementioned problems, in which one looks at configurations such as 
\begin{align}\label{E: deg 2 progression}
    (x_1,\; x_2),\; (x_1+n^2,\; x_2),\; (x_1,\; x_2 + n^2 + n);
\end{align}
the necessity of dealing with several different directions at once introduces a number of technical issues that the existing methods found hard to deal with. For the particular progression \eqref{E: deg 2 progression}, good bounds in the finite field setting have been obtained by Han, Lacey and Yang \cite{HLY21}; but when $n^2$ and $n^2+n$ are replaced by higher degree polynomials of the same degree, no bounds are currently known.
Motivated by recent progress in ergodic theory \cite{DFMKS22, DKS22, FrKu22a, FrKu22b}, we develop new techniques that enable us to deal with many of the obstructions posed by multidimensional progressions. As a consequence, we give power-saving bounds in the multidimensional  polynomial Szemer\'edi theorem over finite fields for progressions along linearly independent polynomials.

\begin{theorem}\label{T: bounds}
Let $d, D, \ell\in\N$, $\bv_1, \ldots, \bv_\ell\in\Z^D$ be nonzero vectors, and $p_1, \ldots, p_\ell\in\Z[n]$ be linearly independent polynomials of degrees at most $d$ with zero constant terms. There exist absolute constants $c = c(d, \ell)$, $C = C(d, \ell)>0$
such that all subsets of $\F_p^D$ with cardinality at least $Cp^{D-c}$ contain 
\begin{align}\label{E: multi poly prog}
        \bx,\; \bx+\bv_1 p_1(n),\; \ldots, \; \bx+\bv_\ell p_\ell(n)
\end{align}
for some $\x \in\F_p^D$, $n\in\F_p\setminus{\{0\}}$.
\end{theorem}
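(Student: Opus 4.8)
The plan is to reduce the statement to a lower bound on the normalised count of configurations \eqref{E: multi poly prog}, encoded by the multilinear operator
\[
\Lambda(f_0, \ldots, f_\ell) = \E_{\bx \in \F_p^D,\ n \in \F_p}\ f_0(\bx)\, f_1(\bx + \bv_1 p_1(n)) \cdots f_\ell(\bx + \bv_\ell p_\ell(n)),
\]
and then to control $\Lambda$ by Gowers uniformity norms. Writing $1_A = \alpha + g$ with $\alpha = |A| p^{-D}$ and $\E g = 0$, multilinearity expands $\Lambda(1_A, \ldots, 1_A)$ into the main term $\alpha^{\ell + 1} \Lambda(\one, \ldots, \one) = \alpha^{\ell+1}$ together with $O_\ell(1)$ error terms, each having at least one argument equal to $g$. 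Thus it suffices to establish a \emph{generalised von Neumann inequality}: for $1$-bounded $f_0, \ldots, f_\ell$ and each index $i$,
\[
\abs{\Lambda(f_0, \ldots, f_\ell)} \ll_{d, \ell} \norm{f_i}_{U^1(\F_p^D)}^{c_0} + p^{-c_1}
\]
with $c_0, c_1 > 0$ depending only on $d$ and $\ell$; since $\norm{g}_{U^1(\F_p^D)} = \abs{\E g} = 0$, this gives $\Lambda(1_A, \ldots, 1_A) \ge \alpha^{\ell+1} - O_{d,\ell}(p^{-c_1})$. The contribution of the degenerate term $n = 0$ to $\Lambda$ equals $\alpha/p$ because the $p_i$ have zero constant term, so once $\alpha \ge C p^{-c}$ for a suitable $c = c(d, \ell)$ small relative to $\min(c_1, 1/\ell)$ and $C = C(d,\ell)$ large, the count of configurations with $n \ne 0$ is strictly positive. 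By the change of variables $\bx \mapsto \bx - \bv_i p_i(n)$ it is enough to prove the inequality for $i = 0$.

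To obtain Gowers control I would first run the quantitative PET induction scheme, an effective form of the Donoso--Koutsogiannis--Ferr\'e-Moragues--Sun argument: iterating the van der Corput and Cauchy--Schwarz inequalities according to the PET complexity of the family $\{0, p_1, \ldots, p_\ell\}$ of pairwise independent polynomials bounds $\abs{\Lambda(f_0, \ldots, f_\ell)}$ by a power of an averaged Gowers box norm of $f_0$, the relevant directions being finite linear combinations of $\bv_1, \ldots, \bv_\ell$ with coefficients built from finite differences of the polynomials. Because distinct $p_i$ may now share a degree, this box norm does not collapse to a single Gowers $U^s$-norm, which is precisely the feature that distinguishes the present situation from the one-dimensional and distinct-degree cases handled by Peluse and the author.

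The core of the argument is to convert this box-norm control into control by a genuine Gowers norm. I would use the box norm smoothing technique to replace the parameter-dependent box norms produced by PET with box norms along a fixed finite collection of subspaces of $\F_p^D$, at the cost of only a polynomial loss in the exponent, and then apply the quantitative concatenation theorem for Gowers box norms in finite abelian groups to deduce $\abs{\Lambda(f_0, \ldots, f_\ell)} \ll_{d,\ell} \norm{f_0}_{U^s(\F_p^D)}^{c}$ for some $s = s(d,\ell)$ and $c = c(d,\ell) > 0$. Finally, a multidimensional degree-lowering argument --- feeding $U^s$-control into the $U^2$-inverse theorem over $\F_p^D$, extracting a correlation with a dual function, and using the equidistribution of $(p_1(n), \ldots, p_\ell(n))$ granted by the linear independence of the $p_i$ to remove the resulting polynomial phase --- lowers the exponent one step at a time from $s$ down to $1$, yielding the generalised von Neumann inequality above.

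I expect two steps to carry the main difficulty. The first is the interface between PET and concatenation: one must keep careful track of how the box directions produced by PET overlap across polynomials of equal degree while ensuring that smoothing and concatenation are performed with losses polynomial in $p$, so that the eventual saving remains a genuine power of $p$. The second is the multidimensional degree-lowering step, where the manipulations familiar from the one-dimensional theory must be executed uniformly across the several directions $\bv_1, \ldots, \bv_\ell$ at once, which is the obstacle that, as noted in the abstract, the existing methods were not able to overcome.
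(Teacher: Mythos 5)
The central step of your outline --- the generalised von Neumann inequality $\abs{\Lambda(f_0,\ldots,f_\ell)}\ll_{d,\ell}\norm{f_i}_{U^1(\F_p^D)}^{c_0}+p^{-c_1}$ with the \emph{global} $U^1$ norm, and likewise the intermediate claim of control by a global $U^s(\F_p^D)$ norm --- is false as soon as $D>1$. Take $D=2$, $\ell=1$, $\bv_1=(1,0)$, $p_1(n)=n^2$, and let $f_1(x_1,x_2)=\overline{f_0(x_1,x_2)}$ depend only on $x_2$ with zero mean (e.g.\ $e_p(x_2)$, or random signs to make every global Gowers norm small): then $\Lambda(f_0,f_1)=1$ while $\norm{f_1}_{U^1(\F_p^D)}=0$. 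The counting operator only shifts along the lines $\langle\bv_j\rangle$, so a function invariant along those lines passes through untouched no matter how globally uniform it is. Consequently the decomposition $1_A=\alpha+g$ with $\E g=0$ cannot isolate a main term $\alpha^{\ell+1}$: the terms containing $g$ are not negligible. This is precisely why the paper's counting statement (Theorem \ref{T: count}) has main term $\E_{\bx}f_0(\bx)\prod_{j}\E(f_j|\bv_j)(\bx)$, built from conditional expectations along $\langle\bv_j\rangle$, rather than $\prod_j\E f_j$, and why the norm control of Theorem \ref{T: control} and the degree lowering of Section \ref{S: degree lowering} are phrased for the directional norms $U^s(\bv_j)$, terminating at $U^1(\bv_j)$ (i.e.\ at $\E(f_j|\bv_j)$) via the directional $U^2(\bv)$ inverse theorem with eigenfunctions along $\bv$, not at the global mean via the $U^2(\F_p^D)$ inverse theorem.

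To repair the deduction you need two things. First, the directional counting estimate itself: $\Lambda(f_0,\ldots,f_\ell)=\E_\bx f_0\prod_j\E(f_j|\bv_j)+O_{d,\ell}(p^{-c})$, which is Theorem \ref{T: count}; your description of the machinery also swaps the order used in the paper (PET gives an average of parameter-dependent box norms, \emph{concatenation} collapses it to a single box norm along fixed subgroups, and \emph{smoothing} then converts that box norm into a directional Gowers norm $U^s(\bv_\ell)$ --- aiming at a global $U^s(\F_p^D)$ norm is not an available target, by the example above). Second, an elementary lower bound for the new main term with $f_j=1_A$, which does not follow from mean-zero considerations: discarding the points of $A$ lying on some line in direction $\bv_j$ on which $A$ has density less than $\alpha/(2\ell)$ removes at most $(\alpha/2)p^D$ points, so at least $(\alpha/2)p^D$ points of $A$ satisfy $\E(1_A|\bv_j)(\bx)\geq\alpha/(2\ell)$ for every $j$, giving $\E_\bx 1_A\prod_j\E(1_A|\bv_j)\geq(\alpha/2)(\alpha/2\ell)^{\ell}\gg_\ell\alpha^{\ell+1}$. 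Subtracting the $n=0$ contribution $\alpha/p$ (your computation of this term is fine) and choosing $c,C$ accordingly then yields Theorem \ref{T: bounds}. As written, however, your proposal rests on a von Neumann inequality that fails, so it does not constitute a proof.
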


For instance, Theorem \ref{T: bounds} gives the first known bounds for subsets of $\F_p^2$ lacking the progression
\begin{align*}
    (x_1,\; x_2),\; (x_1+n^3,\; x_2),\; (x_1,\; x_2 + n^3 + n)
\end{align*}
for some $x_1, x_2, n\in\F_p$ with $n\neq 0$. 

Similarly to earlier works on the subject, Theorem \ref{T: bounds} follows from a counting estimate for the progression \eqref{E: multi poly prog}. In what follows, we let $\E_{x\in X} = \frac{1}{X}\sum_{x\in X}$ for a finite set $X$.
\begin{theorem}\label{T: count}
    Let $d, D, \ell\in\N$, $\bv_1, \ldots, \bv_\ell\in\Z^D$ be nonzero vectors, and $p_1, \ldots, p_\ell\in\Z[n]$ be linearly independent polynomials of degrees at most $d$ with zero constant terms. There exist absolute constants $c = c(d,\ell)$, $C = C(d, \ell)>0$
    such that for all 1-bounded functions $f_0, \ldots, f_\ell:\F_p^D\to\C$, we have
    \begin{align}\label{E: count}
        \abs{\E_{\bx\in\F_p^D}\E_{n\in\F_p}f_0(\bx)\prod_{j=1}^\ell f_j(\bx+\bv_j p_j(n)) - \E_{\bx\in\F_p^D}f_0(\bx)\prod_{j=1}^\ell \E_{n\in\F_p}f_j(\bx + \bv_j n)}\leq C p^{-c}.
    \end{align}
\end{theorem}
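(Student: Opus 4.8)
The plan is to reduce Theorem~\ref{T: count} to a single ``generalized von Neumann'' estimate by a telescoping argument, and then to prove that estimate by combining a quantitative PET-induction-and-concatenation package with a multidimensional degree-lowering argument. \emph{Telescoping.} Write $\Lambda(h_0,\dots,h_\ell) := \E_{\bx\in\F_p^D}\E_{n\in\F_p} h_0(\bx)\prod_{j=1}^\ell h_j(\bx+\bv_j p_j(n))$, and for each $j$ set $g_j(\bx) := \E_{n\in\F_p} f_j(\bx+\bv_j n)$. Since $\bv_j\neq\bzero$, the map $n\mapsto\bv_j n$ parametrises the line $\F_p\bv_j$, so $g_j$ is the average of $f_j$ over the coset $\bx+\F_p\bv_j$ and is invariant under translation by $\F_p\bv_j$; in particular $g_j(\bx+\bv_j p_j(n)) = g_j(\bx)$. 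Replacing the $f_j$ by the $g_j$ one slot at a time yields the identity
\begin{align*}
\Lambda(f_0,\dots,f_\ell) - \E_{\bx} f_0(\bx)\prod_{j=1}^\ell g_j(\bx) \;=\; \sum_{k=1}^\ell \Lambda\bigl(f_0,g_1,\dots,g_{k-1},\,f_k-g_k,\,f_{k+1},\dots,f_\ell\bigr),
\end{align*}
because $\Lambda(f_0,g_1,\dots,g_\ell)=\E_\bx f_0(\bx)\prod_j g_j(\bx)$ by the invariance just noted, and the left-hand correction term is exactly the second average in~\eqref{E: count}. All entries above are $O(1)$-bounded, and the function $h:=f_k-g_k$ in slot $k$ satisfies $\E_{n}h(\bx+\bv_k n)=0$ for every $\bx$, i.e.\ it has mean zero along every coset of $\F_p\bv_k$. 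Thus Theorem~\ref{T: count} reduces to showing $\Lambda(h_0,\dots,h_\ell)=O_{d,\ell}(p^{-c})$ whenever the $h_j$ are $O(1)$-bounded and $h_k$ has mean zero along every coset of $\F_p\bv_k$.

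\emph{Gowers-norm control.} I would first establish $\abs{\Lambda(h_0,\dots,h_\ell)}\ll_{d,\ell}\norm{h_k}_{U^s(\F_p^D)}^{c}+p^{-c}$ for some $s=s(d,\ell)$. After reindexing so that $h_k$ occupies a convenient slot, one runs the quantitative analogue of the PET induction scheme of Donoso--Koutsogiannis--Ferr\'e-Moragues--Sun: repeated applications of the van der Corput inequality (Cauchy--Schwarz) eliminate the functions one by one until only $h_k$ remains. The obstruction peculiar to the equal-degree multidimensional case is that a naive run terminates with control by a \emph{box} norm whose defining directions are certain integer combinations of the $\bv_j$ that do not assemble into a genuine Gowers cube over $\F_p^D$, so it does not collapse to $U^s(\F_p^D)$. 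The remedy is to execute the scheme several times with different choices of the auxiliary shift parameters---obtaining simultaneous control by several box norms along different families of directions---to keep the proliferating lower-order error terms in check via the box-norm smoothing technique, and then to invoke the quantitative concatenation theorem for Gowers box norms in finite abelian groups to upgrade simultaneous box-norm control into control by a single genuine Gowers norm $U^s(\F_p^D)$. Only pairwise independence of $p_1,\dots,p_\ell$ is needed through this step.

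\emph{Degree lowering and conclusion.} Starting from $U^s$-control, I would then iterate the multidimensional degree-lowering argument: at each step the current $U^{t}$-bound is fed into a dual-function/Cauchy--Schwarz computation which, using the \emph{linear} independence of $p_1,\dots,p_\ell$ to prevent the induction from stalling (a difficulty particular to the multidimensional setting, which is why a new version is required), shows the relevant dual function correlates with a lower-complexity object and thereby lowers $t$ to $t-1$. Driving $t$ down to $1$, and supplementing where necessary with the standard exponential-sum (Weil) bounds for the polynomials $p_j$, one arrives at control of $\abs{\Lambda(h_0,\dots,h_\ell)}$ by a degree-one quantity measuring the structure of $h_k$ along the line $\F_p\bv_k$---concretely, by $\bigl(\E_\bx\abs{\E_n h_k(\bx+\bv_k n)}^2\bigr)^{1/2}$ (the orthogonal projection of $h_k$ onto $\F_p\bv_k$-invariant functions), together possibly with the global mean $\abs{\E h_k}$. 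Both quantities vanish for $h_k=f_k-g_k$ by construction of $g_k$, so $\abs{\Lambda(h_0,\dots,h_\ell)}\ll p^{-c}$; summing the $\ell$ telescoped terms proves Theorem~\ref{T: count}.

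\emph{Main obstacle.} The crux is the Gowers-norm control step for equal-degree multidimensional patterns: one must make the PET-plus-concatenation package fully quantitative and uniform, tracking every error term produced by the successive Cauchy--Schwarz applications, checking that each is dominated by a box norm the concatenation theorem can absorb, and verifying that the families of directions generated by varying the auxiliary parameters really do concatenate to a full Gowers norm over $\F_p^D$. The multidimensional degree-lowering step is the secondary difficulty, since the familiar one-dimensional induction can get stuck and a new variant exploiting the linear independence of the $p_j$ is needed to force the controlling norm all the way down to degree one.
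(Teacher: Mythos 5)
Your telescoping reduction is fine (and cleaner than how the paper organises this step: the paper instead proves a statement with some slots occupied by eigenfunctions and inducts on the number of free slots, using Peluse's one-dimensional theorem at the end); it correctly reduces the theorem to the assertion that, for each $k$, the counting operator is $O(p^{-c})$ whenever the $k$-th input has zero mean along every coset of $\langle\bv_k\rangle$, i.e. is controlled by $\norm{h_k}_{U^1(\bv_k)}$ with a power-saving error. The genuine gap is in the central step: the intermediate estimate you propose, control of $\Lambda$ by the \emph{full} Gowers norm $\norm{h_k}_{U^s(\F_p^D)}$, is false. Take $D=2$, the pattern $\bx,\ \bx+\bv_1n^2,\ \bx+\bv_2(n^2+n)$ with $\bv_1=(1,0)$, $\bv_2=(0,1)$, and $f_2(x_1,x_2)=g(x_1)$ with $g$ a random choice of signs. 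Then $f_2(\bx+\bv_2(n^2+n))=g(x_1)$, so choosing $f_1=1$ and $f_0(x_1,x_2)=\overline{g(x_1)}$ makes the counting operator equal to $1$, while $\norm{f_2}_{U^s(\F_p^2)}=\norm{g}_{U^s(\F_p)}$ is $p^{-\Omega(1)}$ with high probability. The pattern only probes $f_k$ along shifts in the single direction $\bv_k$, so the only correct controlling object is the \emph{directional} norm $U^s(\bv_k)$ (this is exactly Theorem \ref{T: control}); no amount of varying auxiliary parameters can upgrade the PET output to the full-group norm, because the estimate you would be proving is simply not true.

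Relatedly, you have the roles of the two new tools reversed, and this matters for whether the argument connects. In the paper, PET gives an average of box norms along polynomial direction vectors; the quantitative concatenation result turns this average into a \emph{single} box norm whose directions are the fixed vectors $a_{\ell d_{j\ell}}\bv_\ell-a_{jd_{j\ell}}\bv_j$ (Proposition \ref{P: single box norm control}), not the whole of $\F_p^D$; and the box-norm smoothing is then the essential ping-pong mechanism (Proposition \ref{P: smoothing}) that replaces the off-directions one at a time by copies of $\bv_k$, terminating in the directional norm $U^s(\bv_k)$. Smoothing is not an error-management device and concatenation cannot produce a genuine $U^s(\F_p^D)$ cube. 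Once the target is corrected to $U^s(\bv_k)$, your degree-lowering sketch points at the right endpoint $\bigl(\E_{\bx}\abs{\E_n h_k(\bx+\bv_k n)}^2\bigr)^{1/2}=\norm{h_k}_{U^1(\bv_k)}$, but the lowering itself is not a routine iteration: in the paper it runs through an eigenfunction formalism, a directional $U^2$ inverse theorem, and an induction in which the counting operators get twisted by eigenfunctions and dual functions, with Weil bounds and the linear independence of the $p_j$ entering at the base case. As written, the proposal's key quantitative claim fails, so the chain from PET to the degree-one conclusion does not close.
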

Theorems \ref{T: bounds} and \ref{T: count} are joint generalisations of the results of Peluse in the single dimensional case $D=1$ \cite{Pel19} and the results of the author for distinct degree polynomials \cite{Ku22b}.
Yet Theorem \ref{T: count}, from which Theorem \ref{T: bounds} follows in a straightforward way, is significantly more difficult to prove than the analogous results in \cite{Ku22b, Pel19}. The arguments in \cite{Ku22b, Pel19} essentially consist of two steps. First, a PET induction argument establishes control of the relevant counting operator
\begin{align}\label{E: counting operator}
    \Lambda(f_0, \ldots, f_\ell) = \E_{\bx\in\F_p^D}\E_{n\in\F_p}f_0(\bx)\prod_{j=1}^\ell f_j(\bx+\bv_j p_j(n))
\end{align}
by a Gowers norm of some degree $s$ (depending only on $\ell$ and the maximum degree of the polynomials) of the function $f_\ell$ corresponding to the polynomial $p_\ell$ of highest degree. Second, a degree lowering argument allows us to pass from degree $s$ control to degree 1 control. Iterating this argument for each function, we arrive at the identity \eqref{E: count}. 

The strategy outlined above breaks out  quickly in the multidimensional case $D>1$ whenever some of the polynomials have the same degree. The main issue in this case is that the PET induction scheme that we use, developed in the ergodic setting by Donoso, Ferr\'e-Moragues, Koutsogiannis and Sun \cite{DFMKS22}, only allows us to control $\Lambda(f_0, \ldots, f_\ell)$ by a rather complicated average of Gowers box norms of the functions involved. For instance, a PET induction argument for the configuration \eqref{E: deg 2 progression} gives the bound 
\begin{multline}\label{E: intro ex}
    \abs{\E_{x_1, x_2, n\in\F_p} f_0(x_1, x_2)f_1(x_1+n^2, x_2)f_2(x_1, x_2+n^2+n)}^8 \\ \leq \E_{h_1, h_2, h_3\in\F_p}\norm{f_2}_{\substack{2(h_2+h_3)(\be_2 - \be_1) + 2h_1 \be_2,\ 2h_2(\be_2 - \be_1) + 2h_1 \be_2,\ 2 h_3(\be_2 - \be_1) + 2h_1 \be_2,\\ 2h_1 \be_2,\ 2(h_2+h_3)(\be_2 - \be_1),\ 2h_2(\be_2 - \be_1),\  2h_3(\be_2 - \be_1)}}
\end{multline}
 for all 1-bounded functions $f_0, f_1, f_2:\F_p^2\to\C$, where $\be_1 = (1,0)$, $\be_2 = (0,1)$, and the norm above is a certain box norm whose direction vectors have coordinates polynomial in $h_1, h_2, h_3$.
In order to control $\Lambda(f_0, \ldots, f_\ell)$ by a genuine Gowers norm, we therefore require two more ingredients compared to the arguments in \cite{Ku22b, Pel19}. 

First, we establish a quantitative concatenation result for averages of box norms over finite abelian groups, which can be seen as a quantitative improvement on some of the results of Tao and Ziegler from \cite{TZ16}. Its proof uses rather elementary facts about box norms such as the Gowers-Cauchy-Schwarz inequality and inductive formula for box norms. Although inspired by quantitative concatenation arguments of Peluse and Prendiville from \cite{Pel20, PP19}, the proof is somewhat simpler than the arguments in these papers as it does not involve any inverse theory for box norms.
Despite the elementary nature of the tools involved, the result is rather general, and it will likely find applications beyond the arguments in this paper. In our setting, it allows us to pass from a control of $\Lambda(f_0, \ldots, f_\ell)$ by an average of complicated box norms to a control by a single box norm, and one of a relatively simple form. This transition has previously been accomplished in a fully qualitative way in the ergodic setting by Donoso, Koutsogiannis, Ferr\'e-Moragues and Sun \cite{DFMKS22}; by contrast, our finite-field argument is quantitative. In the example above, it allows us to replace the complicated average on the right hand side of \eqref{E: intro ex} by a single box norm $\norm{f_2}_{(\be_2 - \be_1)^{\times s}, \be_2^{\times s}}$ of degree $2s$ for some $s\in\N$, yielding a bound 
\begin{align*}
        \abs{\E_{x_1, x_2, n\in\F_p} f_0(x_1, x_2)f_1(x_1+n^2, x_2)f_2(x_1, x_2+n^2+n)}^{O(1)} \ll \norm{f_2}_{(\be_2 - \be_1)^{\times s}, \be_2^{\times s}} + p\inv
\end{align*}
for 1-bounded functions $f_0, f_1, f_2:\F_p^D\to\C$.

Second, we adapt to the combinatorial setting a box norm smoothing technique recently developed by Frantzikinakis and the author in ergodic theory \cite{FrKu22a, FrKu22b}. With its help, we replace the control of $\Lambda(f_0, \ldots, f_\ell)$ by a box norm with control by a Gowers norm. For instance, we show that 
\begin{align*}
    \abs{\E_{x_1, x_2, n\in\F_p} f_0(x_1, x_2)f_1(x_1+n^2, x_2)f_2(x_1, x_2+n^2+n)}^{O(1)}\ll \norm{f_2}_{U^{s'}(\be_2)} + p^{-1},
\end{align*}
where $\norm{f_2}_{U^{s'}(\be_2)}$ is the Gowers norm of $f_2$ in the direction $\be_2 = (0,1)$ of some degree $s'$.

The arguments above, combining the PET inductive scheme, quantitative concatenation and box norm smoothing, yield Gowers norm control of progressions involving not just linearly independent polynomials, but all pairwise independent polynomials, giving the following result. 
\begin{theorem}\label{T: control}
    Let $d, D, \ell\in\N$, $\bv_1, \ldots, \bv_\ell\in\Z^D$ be nonzero vectors, and $p_1, \ldots, p_\ell\in\Z[n]$ be pairwise independent polynomials of degrees at most $d$ with zero constant terms. There exist $c = c(d, \ell)$, $C = C(d, \ell)>0$ and $s=s(d, \ell)\in\N$
    such that for all 1-bounded functions $f_0, \ldots, f_\ell:\F_p^D\to\C$, we have
    \begin{align*}
        \abs{\E_{\bx\in\F_p^D}\E_{n\in\F_p}f_0(\bx)\prod_{j=1}^\ell f_j(\bx+\bv_j p_j(n))}\leq C \norm{f_\ell}_{U^s(\bv_\ell)}^c + Cp^{-c}.
    \end{align*}
\end{theorem}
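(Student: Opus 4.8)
The plan is to follow the three-step strategy announced in the introduction, but to carefully track the dependence on the number of distinct degrees in the tuple $p_1, \ldots, p_\ell$ since the polynomials are only assumed pairwise independent (not linearly independent, and possibly sharing degrees). First I would set up a \emph{quantitative PET induction scheme} modelled on the one of Donoso, Ferr\'e-Moragues, Koutsogiannis and Sun \cite{DFMKS22}: using van der Corput / Cauchy--Schwarz in the $n$ variable repeatedly, one reduces the counting operator $\Lambda(f_0, \ldots, f_\ell)$ to an average over shift parameters $h_1, \ldots, h_k$ of a box norm of $f_\ell$ alone (the highest-degree, or more precisely $\bv_\ell$-aligned, function), with direction vectors that are polynomial expressions in the $h_i$ and the fixed vectors $\bv_j$. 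The key structural output of PET is that each direction vector is of the form $(\text{polynomial in }h_i)\cdot \bv_\ell$ after the dust settles, or more honestly a mixture where the ``leading'' directions are scalar multiples of $\bv_\ell$; pairwise independence of the $p_j$ is exactly what guarantees that $f_\ell$ survives as the one function controlled, and that the weight polynomials attached to $\bv_\ell$ are nonzero of bounded degree. I would make every Cauchy--Schwarz step lose only a bounded power and pick up only an additive $p^{-1}$ from the non-genuine averages, so the whole PET scheme is quantitative with $O_{d,\ell}(1)$ losses.

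Second, I would invoke the \emph{quantitative concatenation theorem for box norms} (established earlier in the paper, in the spirit of Tao--Ziegler \cite{TZ16} and the quantitative arguments of Peluse--Prendiville \cite{Pel20, PP19}) to collapse the messy $h$-average of box norms coming out of PET into a single box norm of bounded degree. The point is that the direction vectors appearing are, up to bounded-degree polynomial reparametrisation of the $h_i$, all scalar multiples of the two (or few) fixed vectors determined by $\bv_\ell$; concatenation then upgrades ``for most $h$, small in the box norm with directions $c_1(h)\bv_\ell, \ldots$'' to ``small in the box norm with directions $\bv_\ell^{\times s}$ (together with possibly a bounded number of auxiliary fixed directions)''. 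After this step I expect a clean bound of the shape $|\Lambda(f_0,\ldots,f_\ell)|^{O_{d,\ell}(1)} \ll \norm{f_\ell}_{\bv_\ell^{\times s_1}, \bw_1^{\times s_1}, \ldots, \bw_r^{\times s_1}} + p^{-1}$ for some fixed auxiliary directions $\bw_i$ and some $s_1 = s_1(d,\ell)$.

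Third, I would apply the \emph{box norm smoothing} technique adapted from Frantzikinakis and the author \cite{FrKu22a, FrKu22b} to eliminate the auxiliary directions $\bw_i$ and replace the box norm by an honest Gowers--Host--Kra-type Gowers norm $\norm{f_\ell}_{U^s(\bv_\ell)}$ purely in the direction $\bv_\ell$. The mechanism here is a self-improving / bootstrapping argument: one shows that control by a box norm with a ``dominant'' block of $\bv_\ell$-directions and some spurious other directions can be fed back through a short Cauchy--Schwarz manoeuvre (exploiting that the spurious directions are fixed vectors, hence can be averaged out against a genuine $\bv_\ell$-difference) to yield control by a box norm with strictly fewer spurious directions, at the cost of increasing the $\bv_\ell$-multiplicity and another bounded power loss; iterating a bounded number of times removes all spurious directions. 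Combined with the standard comparison between the box norm $\norm{\cdot}_{\bv_\ell^{\times s}}$ and the directional Gowers norm $\norm{\cdot}_{U^s(\bv_\ell)}$ (they agree up to the usual Gowers--Cauchy--Schwarz manipulations), this gives the stated inequality with explicit $c = c(d,\ell)$, $C = C(d,\ell)$, $s = s(d,\ell)$.

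The main obstacle I expect is the bookkeeping in the first two steps: making the PET induction genuinely quantitative while keeping precise track of the polynomial weights on the direction vectors, so that the concatenation theorem actually applies with the right dominant direction $\bv_\ell$ and a \emph{bounded} (in $d,\ell$, independent of $p$) number of auxiliary directions. In the multidimensional same-degree regime the PET tree is larger and the direction vectors genuinely mix the $\bv_j$, so one must argue that pairwise independence of the $p_j$ forces the ``surviving'' leading terms to be nonzero multiples of $\bv_\ell$ and that the nonleading contributions can be absorbed as auxiliary fixed directions amenable to smoothing; verifying this uniformly over all pairwise-independent tuples, rather than case by case as in \cite{HLY21} for the specific progression \eqref{E: deg 2 progression}, is the technical heart of the argument. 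The third step, by contrast, I expect to be robust once the output of step two has the right shape, since box norm smoothing is a soft, general-purpose device.
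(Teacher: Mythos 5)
Your steps 1 and 2 match the paper's route (a quantitative version of the Donoso--Ferr\'e-Moragues--Koutsogiannis--Sun PET scheme, then concatenation of the $\uh$-averaged box norms into a single box norm), except for one important inaccuracy: the PET output directions are \emph{not} essentially scalar multiples of $\bv_\ell$; they are genuinely mixed difference vectors of the form $a_{\ell d_{j\ell}}\bv_\ell - a_{j d_{j\ell}}\bv_j$ (in the model example, multiples of $\bv_2-\bv_1$), and after concatenation the control is by $\norm{f_\ell}_{(a_{\ell d_{\ell 0}}\bv_\ell)^{\times s},\,\b_1^{\times s},\ldots}$ with $\b_j = a_{\ell d_{j\ell}}\bv_\ell - a_{j d_{j\ell}}\bv_j$. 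This specific shape is not incidental bookkeeping: the whole later argument hinges on the auxiliary directions being such differences rather than arbitrary fixed vectors.

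The genuine gap is in your step 3. You treat box norm smoothing as a ``soft, general-purpose'' norm-level bootstrap: a Cauchy--Schwarz manoeuvre that converts control by $\norm{f_\ell}_{\bv_\ell^{\times s},\b^{\times t}}$ into control by a box norm with fewer spurious directions. No such function-level inequality can exist: by monotonicity the box norm with extra directions is \emph{larger}, and taking e.g. $D=2$, $\bv_\ell=(1,0)$, $\b=(0,1)$ and $f(x_1,x_2)=g(x_1)$ with $g$ random $\pm 1$ gives $\norm{f}_{\bv_\ell^{\times s},\b^{\times t}}=1$ while $\norm{f}_{U^{s'}(\bv_\ell)}=o(1)$, so largeness of the mixed box norm never implies largeness of $U^{s'}(\bv_\ell)$ for an arbitrary $1$-bounded function. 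The actual smoothing must re-enter the counting operator: one replaces $f_\ell$ by the dual-type function $\tilde f_\ell$ built from the progression, applies the assumed box-norm control, uses the \emph{degree-one} inverse theorem to produce functions invariant under $\b_{s_1}=b_\ell\bv_{\eta_\ell}-b_i\bv_{\eta_i}$, and exploits this invariance to replace the shift $\bv_{\eta_\ell}p_\ell(n)$ by $\bv_{\eta_i}\tfrac{b_i}{b_\ell}p_\ell(n)$ inside the progression, producing a counting operator of the same length but strictly lower ``type''; the induction hypothesis then controls it by a norm of a \emph{different} function $f_i$ (the \emph{ping}), and a second pass, replacing $f_i$ by a product of dual functions and invoking control for operators of shorter length twisted by duals, transfers the control back to $f_\ell$ with one fewer spurious direction (the \emph{pong}). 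Making this close requires proving the stronger statement for counting operators twisted by dual-function factors and with arbitrary indexing of the direction vectors, a double induction on type and length, and the observation that pairwise independence (unlike mere distinctness) survives the rescaling of $p_\ell$ in the ping step. None of this machinery is present or replaceable by your proposed manoeuvre, and your assessment that step 3 is routine inverts where the paper's main difficulty and novelty lie.
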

We remark that in Theorems \ref{T: bounds}-\ref{T: control}, it is not necessary for the polynomials to have zero constant terms; if they do not, then the same results hold whenever the polynomials $p_j-p_j(0)$ are linearly independent (resp. pairwise independent). However, the assumption of zero constant terms makes the proofs more convenient to write down, which is why we impose it.

Theorem \ref{T: count} can be derived from Theorem \ref{T: control} for linearly independent polynomials by the same degree lowering argument that was used in \cite{Ku22b} to derive the special case of Theorem \ref{T: count} for distinct degree polynomials; the assumption of distinct degrees in \cite{Ku22b} was only needed to obtain Gowers norm control over the relevant counting operators while the degree lowering part worked for any progression with linearly independent polynomials whose counting operator was controlled by Gowers norms. We will however give an alternative (and, arguably, cleaner) version of this degree lowering argument which gives a better idea of what is going on.


\subsection{Outline}
We start the paper by giving an overview of the definitions and properties of box and Gowers norms over general finite abelian groups in Section \ref{SS: definitions}. We then prove concatenation results for box norms (Section \ref{SS: concatenation}), followed by a short discussion of weak inverse theorems for Gowers norms (Section \ref{SS: inverse theorems}). In Section \ref{S: PET}, we set up the PET induction scheme to control counting operators for essentially distinct polynomials by an average of box norms. We upgrade this in Section \ref{S: box norm control} to a control by a single box norm by combining the PET results from Section \ref{S: PET} with the concatenation results from Section \ref{SS: concatenation}. Section \ref{S: smoothing} is then fully dedicated to the box norm smoothing argument that completes the proof of Theorem \ref{T: control}, giving control over the counts of progressions with linearly independent polynomials by Gowers norms. The last of the main sections of the paper, Section \ref{S: degree lowering}, contains the proof of Theorem \ref{T: count}, from which Theorem \ref{T: bounds} follows easily. In Appendix \ref{S: standard lemmas}, we put together various standard technical lemmas. 

\subsection{Notation and conventions}
Throughout the paper, the letter $p$ always denotes a prime number, and $\F_p$ is the finite field of characteristic $p$. We always assume that $p$ is larger than the degree of the polynomials currently studied. This is necessary because we do not want the derivative $d x^{d-1}$ of the monomial $x^d$ to vanish over $\F_p^D$ for trivial reasons. 

The labels $\N, \N_0,\Z, \R, \C$ denote the sets of positive integers, nonnegative integers,  integers, reals and complex numbers. For integers $a<b$, we set $[a,b] = \{a, a+1, \ldots, b\}$, abbreviating $[1,N]$ as $[N]$. We also let $\Z[n]$ denote the set of single variable polynomials with coefficients in $\Z$.

We write elements of $\F_p^D$ (for some fixed $D\in\N$) as $\x = (x_1, ..., x_D)$ and elements of $\F_p$ as $x$. We usually denote tuples of length $s$ as $\uh=(h_1, \ldots, h_s)$. Given $\uh,\uh'\in\F_p^s$ and $\ueps\in\{0,1\}^s$, we also set $\uh^\ueps =(h_1^{\eps_1}, \ldots, h_s^{\eps_s})$, where $h_i^{\eps_i} = h_i$ if $\eps_i = 0$ and $h_i^{\eps_i} = h_i'$ otherwise. Also, for $\uu\in\N_0^s$, we define $\supp(\uu) = \{j\in[s]:\ u_j>0\}$.

For a finite set $X$, we let $\E_{x\in X} = \frac{1}{|X|}\sum_{x\in X}$ denote the average over $X$. If $X = \F_p^D$ or $\F_p$, then we suppress the set from the notation and simply let $\E_\x = \E_{\x\in\F_p^D}$ and $\E_n = \E_{n\in\F_p}$. 

For a finite abelian group $G$ with additive notation and $v_1, \ldots, v_s\in G$, we let $$\langle v_1, \ldots, v_s\rangle = \{n_1v_1 + \cdots + n_s v_s:\ n_1, \ldots, n_s\in\Z\}$$ be the subgroup generated by $v_1, \ldots, v_s$. Given a function $f:G\to\C$ and a subgroup $H\subseteq G$, we denote $\E(f|H)(x) = \E_{h\in H}f(x+h)$ to be any of the three equivalent things: the orthogonal projection on the quotient group $G/H$, the conditional expectation with respect to the factor $\{x+H:\ x\in G\}$, or the average of $f$ along the coset $x+H$. If $H =\langle v\rangle$, then we also set $\E(f| H) = \E(f|v)$. 

We call a function $f:G\to \C$ \textit{1-bounded} if $\norm{f}_\infty:=\max_{x\in G}|f(x)|\leq 1$. We similarly define $\|f\|_s = \left(\E_{x\in G} |f(\x)|^s\right)^\frac{1}{s}$ for $1\leq s < \infty$.


For $z\in\C$, we let $\CC z = \overline{z}$ be the conjugation operator.


We use the asymptotic notation in the standard way. If $f,g:\N\to\C$, with $g$ taking positive real values, we denote $f=O(g)$, $f\ll g$, $g\gg f$ or $g = \Omega(f)$ if there exists $C>0$ such that $|f(n)|\leq C g(n)$ for sufficiently large $n$.  If the constant $C$ depends on a parameter, we record this dependence with a subscript. 

We let $\bv_0=\be_0 = \mathbf{0}$ be the zero vector in $\F_p^D$ and $p_0(n) = 0$ be the zero polynomial. 

\subsection{Acknowledgments} The author would like to thank anonymous referees for their helpful comments.

\section{Gowers norms for finite abelian groups}\label{S: Gowers norms}
\subsection{Basic definitions and properties}\label{SS: definitions}
Let $G$ be a finite abelian group. For a function $f:G\to\C$ and $h\in G$, we define $\Delta_{h}f(x):=f(x)\overline{f(x+h)}$, and for $h_1, \ldots, h_s\in G$, we set 
\begin{align*}
    \Delta_{h_1, \ldots, h_s}f = \Delta_{h_1}\cdots\Delta_{h_s}f(x)=\prod_{\ueps\in\{0,1\}^s}\CC^{|\ueps|}f(x+\ueps\cdot\uh).    
\end{align*}
Given subgroups $H_1, \ldots, H_s\subset G$, we define the \emph{(Gowers) box norm of $f$ along $H_1, \ldots, H_s$} to be
\begin{align*}
    \norm{f}_{H_1, \ldots, H_s} &=\brac{\E_{x\in G}\E_{h_1\in H_1}\cdots \E_{h_s\in H_s} \Delta_{h_1, \ldots, h_s}f(x)}^{1/2^s}\\
    &= \brac{\E_{x\in G}\E_{h_1\in H_1}\cdots \E_{h_s\in H_s} \prod_{\ueps\in\{0,1\}^s}\CC^{|\ueps|}f(x+\ueps\cdot\uh)}^{1/2^s},
\end{align*}
letting $s$ be its \emph{degree}. For instance, if $G=\F_p^2$, $H_1 = \langle (1,0)\rangle$ and $H_2 = \langle (0,1)\rangle$, then
\begin{align*}
    \norm{f}_{H_1, H_2} = \brac{\E_{\substack{x_1, x_2, h_1, h_2}} f(x_1, x_2) \overline{f(x_1+h_1, x_2)f(x_1,x_2+h_2)}f(x_1+h_1, x_2+h_2)}^{1/4}.
\end{align*}
If some groups repeat, we also denote $H^{\times s}$ to indicate that $H$ appears $s$ times, e.g.
\begin{align*}
    \norm{f}_{H_1^{\times 2}, H_2^{\times 3}, H_3} = \norm{f}_{H_1, H_1, H_2, H_2, H_2, H_3}.
\end{align*}
Whenever $H_1 = \langle v_1\rangle, \ldots, H_s =\langle v_s\rangle$, we also set 
\begin{align*}
    \norm{f}_{H_1, \ldots, H_s} = \norm{f}_{v_1, \ldots, v_s}
\end{align*}
Lastly, we denote
\begin{align}\label{E: Gowers norm}
    \norm{f}_{H^{\times s}} = \norm{f}_{U^s(H)},
\end{align}
or $\norm{f}_{U^s(v)}$ if $H=\langle v\rangle$, calling \eqref{E: Gowers norm} the \emph{Gowers norm of $f$ along $H$ of degree $s$}. 

Box norms are seminorms, and norms for $s\geq 2$. They satisfy a number of other well-known properties, some of whose proofs can be found e.g. in \cite{G07, TV06}. These include the following:
\begin{enumerate}
    \item (Monotonicity) $$\norm{f}_{H_1}\leq \norm{f}_{H_1, H_2}\leq \norm{f}_{H_1, H_2, H_3}\leq\cdots; $$
    \item (Box norms along subgroups) for subgroups $H_1'\subseteq H_1, \ldots, H_s'\subseteq H_s$, we have
    \begin{align}\label{E: Gowers norms for subgroups}
        \norm{f}_{H_1, \ldots, H_s}\leq \norm{f}_{H'_1, \ldots, H'_s};
    \end{align}
    \item (Inductive formula) $$\norm{f}_{H_1, \ldots, H_s}^{2^s} = \E_{h_1\in H_1} \cdots \E_{h_{s'}\in H_{s'} }\norm{f}_{H_{s'+1}, \ldots, H_s}^{2^{s-s'}}$$
    for $1\leq s'\leq s$;
    \item (Gowers-Cauchy-Schwarz inequality)
    \begin{align*}
        \abs{\E_{x\in G}\E_{h_1\in H_1}\cdots \E_{h_s\in H_s} \prod_{\ueps\in\{0,1\}^s}\CC^{|\ueps|}f_\ueps(x+\ueps\cdot\uh)}\leq \prod_{\ueps\in\{0,1\}^s}\CC^{|\ueps|}\norm{f_\ueps}_{H_1, \ldots, H_s}.
    \end{align*}
\end{enumerate}
We will frequently cite the aforementioned properties throughout the paper.

\subsection{Concatenation of box norms for general groups}\label{SS: concatenation}
One of the key components of our argument is a quantitative concatenation result for box norms. On various occasions in additive combinatorics, we have to deal with expressions of the form 
\begin{align}\label{E: sample concatenation average}
    \E_{i\in I}\norm{f}_{H_{1i}, \ldots, H_{si}},
\end{align}
i.e. averages of box norms of $f$ along some subgroups $H_{1i}, \ldots, H_{si}$ indexed by a finite set $I$. The main idea behind concatenation results is to bound \eqref{E: sample concatenation average} from above by an average of box norms along larger subgroups $H_{ji}+H_{ji'}$, or even $H_{j i_1}+\cdots+H_{j i_k}$. The rationale behind this move is that the larger subgroups, concatenated from the smaller ones, may admit a more explicit form that makes them easier to work with.
\begin{example}\label{Ex: v_1, v_2}
    A model example of an average amenable to our concatenation procedure is 
    \begin{align*}
        \E_{h_1, h_2}\norm{f}_{(h_1^2 + h_1) \bv_1 + h_2^2 \bv_2}
    \end{align*}
    for some vectors $\bv_1, \bv_2\in \F_p^D$ (we remind the reader our convention that $\E_h = \E_{h\in\F_p}$ unless stated otherwise). The subgroups $\langle (h_1^2 + h_1) \bv_1 + h_2^2 \bv_2\rangle$ indexed by $(h_1, h_2)\in\F_p^2$ are neither particularly intuitive to understand nor easy to work with; however, it turns out that this average can be controlled by a single box norm $\norm{f}_{\langle \bv_1, \bv_2\rangle}$ as follows. By the Cauchy-Schwarz inequality and the definition of box norms, we have
    \begin{align*}
        \brac{\E_{h_1, h_2}\norm{f}_{(h_1^2 + h_1) \bv_1 + h_2^2 \bv_2}}^2 &\leq \E_{h_1, h_2}\norm{f}_{(h_1^2 + h_1) \bv_1 + h_2^2 \bv_2}^2\\
        &= \E_\bx f(\bx) \E_{h_1, h_2, m} \overline{f(\bx + ((h_1^2 + h_1) \bv_1 + h_2^2 \bv_2)m)}.
    \end{align*}
    Applying the Cauchy-Schwarz inequality in $\bx$, we double the variables $h_1, h_2, m$, so that the square of the expression above is bounded by
    \begin{align*}
        \E_{\bx}\E_{\substack{h_1, h_1',\\ h_2, h_2'}}\E_{m,m'}f(\bx)\overline{f(\bx + ((h_1^2 + h_1) \bv_1 + h_2^2 \bv_2)m - (({h_1'}^2 + h_1') \bv_1 + {h_2'}^2 \bv_2)m')},
    \end{align*}
    and this is precisely
    \begin{align*}
        \E_{\substack{h_1, h_1',\\ h_2, h_2'}}\norm{f}_{\langle (h_1^2 + h_1) \bv_1 + h_2^2 \bv_2, ({h_1'}^2 + h_1') \bv_1 + {h_2'}^2 \bv_2\rangle}^2.
    \end{align*}
    A simple computation shows that as long as
    \begin{align}\label{E: nonzero det}
        (h_1^2 + h_1){h_2'}^2 \neq ({h_1'}^2 + h_1')h_2^2,
    \end{align}
    the larger concatenated subgroup 
    \begin{align}\label{E: concatenated subgroup in ex}
    \langle (h_1^2 + h_1) \bv_1 + h_2^2 \bv_2, ({h_1'}^2 + h_1') \bv_1 + {h_2'}^2 \bv_2\rangle    
    \end{align}
     equals the full subgroup $\langle \bv_1, \bv_2\rangle$. The condition \eqref{E: nonzero det} holds for all but $O(p^3)$ ``bad'' tuples $(h_1, h_1', h_2, h_2')\in\F_p^4$, which gives us the desired bound
    \begin{align*}
        \brac{\E_{h_1, h_2}\norm{f}_{(h_1^2 + h_1) \bv_1 + h_2^2 \bv_2}}^4 \leq \norm{f}_{\bv_1, \bv_2}^2 + O(p^{-1}).
    \end{align*}
    Thus has the original messy average of box norms been bounded by a single box norm that involves only the ``principal'' directions $\bv_1,\bv_2$ rather than linear combinations of them.
\end{example}

The argument above can be divided into two parts:
\begin{enumerate}
    \item first, we have replaced the original subgroups $\langle (h_1^2 + h_1) \bv_1 + h_2^2 \bv_2 \rangle$ by larger subgroups \eqref{E: concatenated subgroup in ex}, and in doing so we only used the Cauchy-Schwarz inequality in a clever way;
    \item second, we have shown that the larger subgroups \eqref{E: concatenated subgroup in ex} almost always equal the full subgroup $\langle \bv_1, \bv_2\rangle$, and in doing so we have only used basic facts about zero sets of systems of polynomial equations.
\end{enumerate}
 In this section, we will only perform the first part, which holds in the very general setting of box norms over arbitrary finite abelian groups (and this condition can further be relaxed if necessary). The second part, exploiting specific properties of polynomials, will be carried out in Section \ref{S: box norm control} once we have a better understanding of systems of polynomial equations that need to be harnessed. 

The first concatenation results have been developed by Tao and Ziegler \cite{TZ16} with the aim of proving the existence of polynomial progressions in primes \cite{TZ18}.  These results are purely qualitative, though, hence not applicable to our context. In \cite{DFMKS22}, Donoso, Ferr\'e-Moragues, Koutsogiannis and Sun used them to obtain a (qualitative) box seminorm control on certain multiple ergodic averages along polynomials; the arguments in Section \ref{S: box norm control}, in which we obtain quantitative box norm control over our counting operators, can be perceived as a quantitative, finite field version of the arguments from \cite{DFMKS22}. 

Certain quantitative concatenation results have recently been developed by Peluse and Prendiville \cite{Pel20, PP19, Pre17} in their works on polynomial progressions in subsets of $\N$. Our proofs are only partly inspired by their techniques; specifically, while proving Lemma \ref{L: concatenation lemma} below, we use a trick observed in \cite[Lemma 5.1]{Pel20}. However, our arguments are more straightforward than those of Peluse and Prendiville in that we do not use at any point an inverse theorem for degree 2 box norms, which is a starting point in their arguments. Instead, we rely entirely on Lemma \ref{L: concatenation lemma} below, Gowers-Cauchy-Schwarz inequality and simple manipulations of the formulas for box norms based on the inductive formula therefor. 


The main objective of this section is to show that the average \eqref{E: sample concatenation average} can be controlled by an average of box norms along larger subgroups $H_{j i_1}+\cdots+H_{j i_k}$. The starting point is the following simple result for degree 1 box norms which utilises the trick used in Example \ref{Ex: v_1, v_2} in a more general setting.
\begin{lemma}[Concatenation of degree 1 norms]\label{L: concatenation degree 1}
Let $G$ be a finite abelian group, $I$ be a finite indexing set and $(H_i)_{i\in I}$ be subgroups of $G$. Then for every 1-bounded function $f:G\to\C$, we have
\begin{align*}
    \brac{\E_{i\in I}\norm{f}_{H_i}^2}^2\leq \E_{i, i'\in I}\norm{f}_{H_i+H_{i'}}^2.
\end{align*}
\end{lemma}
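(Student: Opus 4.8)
The plan is to unfold everything in terms of the underlying averages and then apply the Cauchy--Schwarz inequality twice. First I would write $\norm{f}_{H_i}^2 = \E_{x\in G}\E_{h\in H_i} f(x)\overline{f(x+h)}$ and, since this is nonnegative, replace it by $\E_{x}\abs{\E_{h\in H_i}f(x+h)}^2 = \E_x \abs{\E(f\mid H_i)(x)}^2$, i.e. the squared $L^2$-norm of the projection $\E(f\mid H_i)$. Thus the left-hand side becomes $\bigl(\E_{i\in I}\norm{\E(f\mid H_i)}_2^2\bigr)^2$. By Cauchy--Schwarz in the $i$-average, this is at most $\E_{i,i'\in I}\langle \E(f\mid H_i),\,\E(f\mid H_{i'})\rangle^2$ (the inner product is real and nonnegative here, being an average over a group of $f(x+h)$ times a conjugate), or more simply I can keep the square outside and pull it in at the last moment.

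Next I would exploit the trick that for any two subgroups $H,H'$, the composition of projections satisfies $\E(\E(f\mid H)\mid H') = \E(f\mid H+H')$ when $H,H'$ are subgroups of an abelian group (averaging over $H$ then over $H'$ is the same as averaging over $H+H'$, up to the normalisation which cancels). Writing $\langle \E(f\mid H_i),\E(f\mid H_{i'})\rangle = \E_x \E(f\mid H_i)(x)\overline{\E(f\mid H_{i'})(x)}$ and expanding the two projections as averages over $h\in H_i$ and $h'\in H_{i'}$, a change of variables $x\mapsto x-h$ turns this into $\E_x\E_{h\in H_i}\E_{h'\in H_{i'}} f(x)\overline{f(x+h'-h)}$, which, because $h'-h$ ranges over $H_i+H_{i'}$ (with the correct weights), equals $\E_x\E_{g\in H_i+H_{i'}} f(x)\overline{f(x+g)} = \norm{f}_{H_i+H_{i'}}^2$. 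Hence $\langle\E(f\mid H_i),\E(f\mid H_{i'})\rangle = \norm{f}_{H_i+H_{i'}}^2$ exactly, and there is no loss at this step.

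Putting it together: $\bigl(\E_{i\in I}\norm{f}_{H_i}^2\bigr)^2 = \bigl(\E_{i}\norm{\E(f\mid H_i)}_2^2\bigr)^2 \le \E_{i,i'}\norm{\E(f\mid H_i)}_2^2\,\norm{\E(f\mid H_{i'})}_2^2$ is the wrong pairing; instead I apply Cauchy--Schwarz as $\bigl(\E_i \langle g_i, g_i\rangle\bigr)^2 \le \bigl(\E_{i}\norm{g_i}_2\bigr)^2\bigl(\E_i \norm{g_i}_2^3\bigr)$ --- no. The clean route is: set $g_i = \E(f\mid H_i)$, then $\E_i\norm{g_i}_2^2 = \E_i\langle g_i, g_i\rangle$, and I want $\bigl(\E_i\langle g_i,g_i\rangle\bigr)^2 \le \E_{i,i'}\langle g_i, g_{i'}\rangle^2$. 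This is exactly Cauchy--Schwarz applied to the Gram-type quantity: writing $\E_i\langle g_i,g_i\rangle = \langle \E_i g_i\otimes \bar g_i, \cdot\rangle$ is awkward, so I instead note $\bigl(\E_i\norm{g_i}_2^2\bigr)^2 = \E_{i,i'}\norm{g_i}_2^2\norm{g_{i'}}_2^2 \ge \E_{i,i'}\abs{\langle g_i,g_{i'}\rangle}^2$ by Cauchy--Schwarz in $L^2(G)$ applied inside the $(i,i')$-average --- but that inequality goes the wrong way too. The correct statement uses that $\langle g_i, g_{i'}\rangle = \norm{f}_{H_i+H_{i'}}^2 \ge 0$ and $\norm{f}_{H_i+H_{i'}}^2 \ge \max(\norm{f}_{H_i}^2 \cdot(\text{something}))$; the honest finish is: by Cauchy--Schwarz, $\bigl(\E_i\norm{g_i}_2^2\bigr)^2 = \bigl(\E_i \langle g_i, g_i\rangle\bigr)^2$, and since the map $i\mapsto g_i$ lands in a Hilbert space, $\E_i g_i$ has $\norm{\E_i g_i}_2^2 = \E_{i,i'}\langle g_i,g_{i'}\rangle = \E_{i,i'}\norm{f}_{H_i+H_{i'}}^2$, while $\norm{\E_i g_i}_2^2 \ge$ ... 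I will instead simply invoke Cauchy--Schwarz in the form $\abs{\E_i\langle g_i,g_i\rangle}^2 \le \E_i\norm{g_i}_2^2\cdot\E_i\norm{g_i}_2^2$ is trivial; the needed inequality $\bigl(\E_i\langle g_i,g_i\rangle\bigr)^2\le\E_{i,i'}\langle g_i,g_{i'}\rangle^2$ follows from $\bigl(\E_i a_i\bigr)^2 \le \E_i a_i^2$ with $a_i=\norm{g_i}_2^2$ \emph{combined with} $\langle g_i,g_i\rangle\langle g_{i'},g_{i'}\rangle \ge \langle g_i,g_{i'}\rangle^2$? No. The genuinely correct argument: $\bigl(\E_{i}\langle g_i,g_i\rangle\bigr)^2 = \E_{i,i'}\langle g_i,g_i\rangle\langle g_{i'},g_{i'}\rangle \ge \E_{i,i'}\abs{\langle g_i,g_{i'}\rangle}^2$ is \emph{false}; but $\bigl(\E_i\langle g_i,g_i\rangle\bigr)^2 = \norm{\E_i (g_i\otimes\overline{g_i})}_{L^1}^2$-type reasoning shows it equals, after Cauchy--Schwarz in $L^2(G\times G)$ applied to $F(x,y)=\E_i g_i(x)\overline{g_i(y)}$ tested against the diagonal... --- I will present this last step carefully in the write-up as a single application of Cauchy--Schwarz to the function $(x,x')\mapsto \E_i g_i(x)\overline{g_i(x')}$, which has $L^2(G^2)$-norm squared equal to $\E_{i,i'}\abs{\langle g_i,g_{i'}\rangle}^2$ and whose restriction-to-diagonal pairing gives $\E_i\norm{g_i}_2^2$.

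\medskip

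\emph{The main obstacle} is purely bookkeeping: getting the double Cauchy--Schwarz in the right form so that the $(i,i')$-average of squared inner products appears, and verifying that $\langle \E(f\mid H_i),\E(f\mid H_{i'})\rangle = \norm{f}_{H_i+H_{i'}}^2$ --- which relies on $H_i,H_{i'}$ being subgroups, so that $h-h'$ equidistributes over $H_i+H_{i'}$ when $h,h'$ are uniform and independent over $H_i,H_{i'}$. There is no genuine analytic difficulty; the inequality is sharp in the structure it uses (both steps are equalities or single Cauchy--Schwarz applications), and this simple case serves as a template for the harder multi-parameter concatenation lemma that follows.
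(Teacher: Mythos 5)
Your plan correctly identifies the two key ingredients but then goes wrong at the final step, and the confusion visible in your last paragraph is a genuine gap, not mere untidiness. You correctly observe that with $g_i := \E(f\mid H_i)$ one has $\norm{f}_{H_i}^2 = \norm{g_i}_2^2$, and that $\langle g_i, g_{i'}\rangle = \norm{f}_{H_i+H_{i'}}^2$. But you then state that what you want is $\bigl(\E_i\norm{g_i}_2^2\bigr)^2\leq \E_{i,i'}\langle g_i,g_{i'}\rangle^2$ and spend the rest of the paragraph trying to prove that. That is the wrong target: precisely because $\langle g_i,g_{i'}\rangle = \norm{f}_{H_i+H_{i'}}^2$, the right-hand side of the lemma is $\E_{i,i'}\langle g_i,g_{i'}\rangle$ to the \emph{first} power, not the second. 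Chasing the squared version is not only unnecessary but actively harmful: since $0\leq\langle g_i,g_{i'}\rangle\leq 1$, squaring makes the right-hand side smaller, so the squared inequality is strictly stronger --- and it is false for general $1$-bounded families in a Hilbert space (take $g_1,g_2$ orthogonal with $\norm{g_1}_2^2=\norm{g_2}_2^2=1/2$: the left-hand side is $1/4$ while $\E_{i,i'}\langle g_i,g_{i'}\rangle^2=1/8$). Consequently none of the Cauchy--Schwarz manoeuvres you try can close the gap, and your final suggestion --- testing $(x,x')\mapsto\E_i g_i(x)\overline{g_i(x')}$ against the diagonal of $G^2$ --- introduces a lossy $1/|G|$ factor, because $\norm{1_{x=x'}}_{L^2(G^2)}^2 = 1/|G|$.

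Once the target is corrected the proof is immediate and becomes essentially the paper's argument in projection language. Since $g_i$ is the orthogonal projection of $f$ onto $H_i$-invariant functions, $\norm{g_i}_2^2=\langle f,g_i\rangle$, hence $\E_i\norm{g_i}_2^2=\langle f,\E_i g_i\rangle$, while $\E_{i,i'}\langle g_i,g_{i'}\rangle=\norm{\E_i g_i}_2^2$. Cauchy--Schwarz together with $\norm{f}_2\leq\norm{f}_\infty\leq 1$ then gives $\bigl(\E_i\norm{g_i}_2^2\bigr)^2 = \abs{\langle f,\E_i g_i\rangle}^2\leq\norm{f}_2^2\,\norm{\E_i g_i}_2^2\leq\E_{i,i'}\langle g_i,g_{i'}\rangle=\E_{i,i'}\norm{f}_{H_i+H_{i'}}^2$. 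This is exactly what the paper does: it keeps the factor $f(x)$ outside the $i$- and $h$-averages, applies Cauchy--Schwarz in $x$ (i.e.\ $\abs{\langle f,\E_i g_i\rangle}\leq\norm{f}_2\norm{\E_i g_i}_2$), uses $1$-boundedness, and then changes variables to recognise $\norm{f}_{H_i+H_{i'}}^2$ --- the very identity you already verified. Note that $1$-boundedness is genuinely needed to drop the $\norm{f}_2^2$ factor; it is not a decorative hypothesis.
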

\begin{proof}
Expanding the definition of Gowers norms, we get 
\begin{align*}
    \E_{i\in I}\norm{f}_{H_i}^2 = \E_{i\in I}\E_{x\in G}\E_{h_i\in H_i}f(x) \overline{f(x+h_i)} = \E_{x\in G} f(x) \E_{i\in I}\E_{h_i\in H_i}\overline{f(x+h_i)}.
\end{align*}
Applying the Cauchy-Schwarz inequality in $x$ and using the 1-boundedness of $f$, we infer that
\begin{align*}
    \brac{\E_{i\in I}\norm{f}_{H_i}^2}^2 \leq \E_{i, i'\in I}\E_{x\in G}\E_{\substack{h_i\in H_i, \\ h'_{i'}\in H_{i'}}}f(x+h_i)\overline{f(x+h'_{i'})}.
\end{align*}
A change of variables gives 
\begin{align*}
    \brac{\E_{i\in I}\norm{f}_{H_i}^2}^2 \leq \E_{i, i'\in I}\E_{x\in G}\E_{\substack{h\in H_i+H_{i'}}}f(x)\overline{f(x+h)}, 
\end{align*}
and the result follows from the definition of Gowers norms.
\end{proof}

The argument becomes more complicated when we deal with averages of box norms of degree greater than 1, i.e. when $s>1$ in \eqref{E: sample concatenation average}. An example of such an average is 
\begin{align*}
    \E_{h_1, h_2, h_3}\norm{f}_{\substack{2(h_2+h_3)(\be_2 - \be_1) + 2h_1 \be_2,\ 2h_2(\be_2 - \be_1) + 2h_1 \be_2,\ 2 h_3(\be_2 - \be_1) + 2h_1 \be_2,\\ 2h_1 \be_2,\ 2(h_2+h_3)(\be_2 - \be_1),\ 2h_2(\be_2 - \be_1),\  2h_3(\be_2 - \be_1)}}
\end{align*}
 (with $G = \F_p^2$) that has been mentioned in \eqref{E: intro ex} as the average that appears after performing the PET induction argument to the progression \eqref{E: deg 2 progression}. In handling this general case, we will iteratively use the lemma below.
Its proof is based on a trick from the proof of \cite[Lemma 5.1]{Pel20} and relies on rather elementary maneuvers that involve the Gowers-Cauchy-Schwarz inequality, multiple applications of the inductive formula for box norms and a simple change of variables.
\begin{lemma}\label{L: concatenation lemma}
    Let $s\in\N$, $G$ be a finite abelian group, $I$ be a finite indexing set and $H_i, K_{1i}, \ldots, K_{si}$ be subgroups of $G$ for each $i\in I$. For each 1-bounded function $f:G\to\C$, we have
    \begin{align*}
        \brac{\E_{i\in I}\norm{f}_{H_i, K_{1i}, \ldots, K_{si}}^{2^{s+1}}}^{2^{2s+1}} \leq \E_{i, i'\in I}\norm{f}_{K_{1i}, \ldots, K_{si}, K_{1i'}, \ldots, K_{si'}, H_i+H_{i'}}^{2^{2s+1}}
    \end{align*}
\end{lemma}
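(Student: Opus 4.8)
The plan is to prove Lemma~\ref{L: concatenation lemma} by a Cauchy--Schwarz argument in the direction variable of $H_i$, after first rewriting both sides using the inductive formula for box norms. Concretely, I would start from the left-hand side and apply the inductive formula (property (iii)) to peel off the $H_i$-variable last, writing
\begin{align*}
    \norm{f}_{H_i, K_{i1}, \ldots, K_{is}}^{2^{s+1}} = \E_{h\in H_i} \norm{\Delta_h f}_{K_{i1}, \ldots, K_{is}}^{2^s},
\end{align*}
where $\Delta_h f(x) = f(x)\overline{f(x+h)}$. Averaging over $i\in I$ and applying the Cauchy--Schwarz inequality in the pair $(i, h)$ (using that $\norm{\cdot}_{K_{i1},\ldots,K_{is}}^{2^s}$ is a nonnegative quantity, and bounding the ``weight'' by $1$ since $f$ is $1$-bounded so all box norms are $\le 1$), I would obtain
\begin{align*}
    \brac{\E_{i\in I}\norm{f}_{H_i, K_{i1}, \ldots, K_{is}}^{2^{s+1}}}^2 \leq \E_{i, i'\in I}\E_{h\in H_i}\E_{h'\in H_{i'}} \langle \Delta_h f, \Delta_{h'} f\rangle_{K_{i1},\ldots,K_{is},K_{i'1},\ldots,K_{i's}},
\end{align*}
where the right-hand side is the Gowers box inner product of $\Delta_h f$ and $\Delta_{h'} f$ taken along the combined tuple of subgroups. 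Here I am using the trick from \cite[Lemma 5.1]{Pel20}: after Cauchy--Schwarz in $(i,h)$, the two copies of $\Delta_h f$ that would naively appear get ``split'' into $\Delta_h f$ and $\Delta_{h'} f$ living over independent index/shift pairs, and one recognizes the resulting quantity as a box inner product over the concatenated subgroup list.

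The second main step is to apply the Gowers--Cauchy--Schwarz inequality (property (iv)) to the box inner product $\langle \Delta_h f, \Delta_{h'} f\rangle$, which bounds it by $\norm{\Delta_h f}_{K_{i1},\ldots,K_{i's}} \norm{\Delta_{h'} f}_{K_{i1},\ldots,K_{i's}}$ along the length-$2s$ tuple. Then one more Cauchy--Schwarz in $(i,i',h,h')$ decouples these two factors, giving an upper bound of the form
\begin{align*}
    \brac{\E_{i,i'\in I}\E_{h\in H_i}\E_{h'\in H_{i'}} \norm{\Delta_h f}_{K_{i1},\ldots,K_{is},K_{i'1},\ldots,K_{i's}}^2}
\end{align*}
times a symmetric factor; by symmetry in $(i,h)\leftrightarrow(i',h')$ both factors are equal, so after taking appropriate powers I am left with controlling $\E_{i,i'}\E_{h\in H_i}\E_{h'\in H_{i'}}\norm{\Delta_h f}_{K_{i1},\ldots,K_{i's}}^{2^{2s}}$ or a similar quantity. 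The point is that $\norm{\Delta_h f}_{K_{i1},\ldots,K_{i's}}^{2^{2s}}$ is linear in $h$ in the sense that it expands to an average over the $K$-variables of $\Delta_{h, \text{(those variables)}} f$ evaluated at shifted points, and so averaging it in $h\in H_i$ and $h'\in H_{i'}$ and regrouping the two $h$-dependent factors $f(\cdot)\overline{f(\cdot+h)}$ reconstitutes, via the change of variables $h\mapsto h - h'$ or by directly recognizing the definition, a $\Delta$ over the subgroup $H_i + H_{i'}$. This is exactly the mechanism in Lemma~\ref{L: concatenation degree 1}, now applied ``on top of'' the $K$-variables. Reassembling through the inductive formula in reverse then yields $\norm{f}_{K_{i1},\ldots,K_{is},K_{i'1},\ldots,K_{i's},H_i+H_{i'}}^{2^{2s+1}}$ on the right, and chasing the exponents through the two Cauchy--Schwarz steps (each roughly squaring) produces the claimed power $2^{2s+1}$ raised to $2^{2s+1}$ on the left — I would track these carefully at the end.

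The step I expect to be the main obstacle is the bookkeeping of the exponents and the precise form of the change of variables that merges the $H_i$ and $H_{i'}$ shifts into a single $H_i + H_{i'}$ shift while simultaneously not disturbing the $2s$ many $K$-variables. One has to be careful that after Gowers--Cauchy--Schwarz the two functions $\Delta_h f$ and $\Delta_{h'} f$ are combined correctly, and that when one expands $\norm{\Delta_h f}_{K_{i1},\ldots,K_{i's}}$ the variable $h$ enters in exactly the multiplicative pattern $f(x+\ueps\cdot\uk)\overline{f(x+h+\ueps\cdot\uk)}$ so that integrating over $h\in H_i$ and pairing with the $h'$ copy gives a genuine box norm increment along $H_i+H_{i'}$ rather than something weaker. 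I would handle this by working at the level of the $\Delta$-expansion throughout — i.e. never reverting to seminorm notation until the very last line — and by isolating the elementary ``merging'' computation (essentially the content of Lemma~\ref{L: concatenation degree 1}, but with $2s$ inert auxiliary variables carried along) as its own displayed step. The rest is routine: monotonicity and $1$-boundedness guarantee every intermediate box norm is at most $1$, so the Cauchy--Schwarz weight-dropping steps are justified, and no inverse theorem is needed.
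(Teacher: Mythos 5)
Your opening move is fine: duplicating the $(i,h,\uk)$-average by a Cauchy--Schwarz in the common variable $x$ does give your first displayed inequality, and this matches the paper's starting point. The genuine gap comes immediately after, in how you use the Gowers--Cauchy--Schwarz inequality. You bound the box inner product by $\norm{\Delta_h f}_{K_{i1},\ldots,K_{i's}}\cdot\norm{\Delta_{h'} f}_{K_{i1},\ldots,K_{i's}}$ and then decouple the two factors by a further Cauchy--Schwarz in $(i,i',h,h')$. At that moment the coupling between $h$ and $h'$ is destroyed: the quantity you are left with,
\begin{align*}
\E_{i,i'\in I}\E_{h\in H_i}\E_{h'\in H_{i'}}\norm{\Delta_h f}_{K_{i1},\ldots,K_{is},K_{i'1},\ldots,K_{i's}}^{2^{2s}},
\end{align*}
does not depend on $h'$ at all, so the average over $h'$ is vacuous and there is nothing to ``regroup''. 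By the inductive formula it equals $\E_{i,i'}\norm{f}^{2^{2s+1}}_{K_{i1},\ldots,K_{is},K_{i'1},\ldots,K_{i's},H_i}$, i.e.\ you recover the old direction $H_i$, not $H_i+H_{i'}$. Since box norms along larger subgroups are smaller (property \eqref{E: Gowers norms for subgroups}), this is a strictly weaker conclusion than the lemma: the concatenation, which is the entire point, has been lost, and no change of variables $h\mapsto h-h'$ can recover it once $h$ and $h'$ live in separate, decoupled factors.

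The fix is exactly the trick from \cite[Lemma 5.1]{Pel20} that you cite but do not actually implement: in the Gowers--Cauchy--Schwarz step you must \emph{keep} the box norm of the single entry of the inner product that contains both $h$ and $h'$ --- the ``origin'' function $f(\cdot+h)\overline{f(\cdot+h')}$ --- and bound all the other entries (the ones built from $\Delta_h f$ or $\Delta_{h'}f$ alone) by $1$. Then averaging over $h\in H_i$, $h'\in H_{i'}$ and changing variables $x\mapsto x-h$ makes $h'-h$ uniform on $H_i+H_{i'}$ and reconstitutes $\norm{f}_{K_{i1},\ldots,K_{is},K_{i'1},\ldots,K_{i's},H_i+H_{i'}}^{2^{2s+1}}$. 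To arrange that the origin entry is exactly $f(\cdot+h)\overline{f(\cdot+h')}$ rather than $\Delta_hf\cdot\overline{\Delta_{h'}f}$ (which carries a spurious factor $\abs{f}^2$ that survives the change of variables and blocks the clean merge), the paper expands $\Delta_{\uk,h}f(x)=f(x)\,\overline{f(x+h)}\,\Delta^*_{\uk}\Delta_hf(x)$ and performs the Cauchy--Schwarz in $x$ keeping only the single factor $f(x)$ outside the inner average; your weight-$1$ version of that step would leave you with the $\abs{f}^2$ nuisance even if you kept the correct entry. With these two corrections the exponent count is also simpler than you anticipate: one squaring from the Cauchy--Schwarz in $x$ and one power $2^{2s}$ from Gowers--Cauchy--Schwarz give precisely the exponent $2^{2s+1}$ in the statement; no third Cauchy--Schwarz is needed.
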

Lemma \ref{L: concatenation degree 1} is thus a special case of Lemma \ref{L: concatenation lemma} for $s=0$.
\begin{proof}
    Let $\delta = \E_{i\in I}\norm{f}_{H_i, K_{1i}, \ldots, K_{si}}^{2^{s+1}}$ and $K_i = K_{1i}\times \cdots\times K_{si}$. The inductive formula for box norms yields
    \begin{align*}
        \delta = \E_{i\in I} \E_{x\in G} \E_{\substack{\uk \in K_i}}\E_{h\in H_i}\Delta_{\uk, h}f(x).
    \end{align*}
    We expand
    \begin{align*}
        \Delta_{\uh, k}f(x) = f(x) \overline{f(x+h)} \Delta_{\uk}^*\Delta_h f(x), 
    \end{align*}
    where $\Delta_{\uk}^* f(x) = \prod_{\ueps\in\{0,1\}^s\setminus\{0\}}\CC^{|\ueps|}f(x+\ueps\cdot\uk)$,
    and change 
    the order of summation, so that
    \begin{align*}
        \delta = \E_{x\in G} f(x) \E_{i\in I} \E_{\substack{\uk \in K_i}}\E_{h\in H_i} \overline{f(x+h)} \Delta^*_{\uk}\Delta_h f(x).
    \end{align*}
    An application of the Cauchy-Schwarz inequality in $x$ gives
    \begin{align*}
        \delta^2 \leq \E_{x\in G} \E_{i, i'\in I} \E_{\substack{\uk \in K_i,\\ \uk' \in K_{i'}}}\E_{\substack{h\in H_i,\\ h'\in H_{i'}}}
        \overline{f(x+h)}f(x+h') \Delta^*_{\uk}\Delta_h f(x)\overline{\Delta^*_{\uk'}\Delta_{h'} f(x)}.
    \end{align*}
    We crucially observe that for each fixed $i,i'\in I$ and $h\in H_i, h'\in H_{i'}$, the average
    \begin{align*}
        \E_{x\in G} \E_{\substack{\uk \in K_i,\\ \uk' \in K_{i'}}}
        \overline{f(x+h)}f(x+h') \Delta^*_{\uk}\Delta_h f(x)\overline{\Delta^*_{\uk'}\Delta_{h'} f(x)}
    \end{align*}
    is a box inner product along $K_{1i}, \ldots, K_{si}, K_{1i'}, \ldots, K_{si'}$, and an application of the Gowers-Cauchy-Schwarz inequality gives
    \begin{align*}
        \delta^{2^{2s+1}}\leq \E_{i, i'\in I} \E_{x\in G} \E_{\substack{\uk \in K_i,\\ \uk' \in K_{i'}}} \E_{\substack{h\in H_i,\\ h'\in H_{i'}}}\Delta_{\uk, \uk'}\brac{f(x+h)\overline{f(x+h')}},
    \end{align*}
    Lastly, the inductive formula for box norms combined with a simple change of variables implies that
    \begin{align*}
        \delta^{2^{2s+1}} \leq \E_{i, i'\in I}\norm{f}_{K_{1i}, \ldots, K_{si}, K_{1i'}, \ldots, K_{si'}, H_i+H_{i'}}^{2^{2s+1}},
    \end{align*}
    as claimed.
\end{proof}

By repeatedly applying Lemma \ref{L: concatenation lemma} to \eqref{E: sample concatenation average} for $s>1$, we derive the following preliminary concatenation result. It allows us to bound an average of box norms along some subgroups by an average of box norms along (double) sums of these subgroups. Being the most involved technical result in this section, it will merely serve as an intermediate step in obtaining cleaner and stronger concatenation results afterwards.
\begin{proposition}[Concatenation of box norms, version I]\label{P: concatenation for general groups}
    Let $s\in\N$, $G$ be a finite abelian group, $I$ be a finite indexing set and $H_{1i}, \ldots, H_{si}$ be subgroups of $G$ for each $i\in I$. For all 1-bounded functions $f:G\to\C$, we have
    \begin{align*}
        \brac{\E_{i\in I}\norm{f}_{H_{1i}, \ldots, H_{si}}^{2^{s}}}^{O_s(1)} \leq \E_{\substack{i_\ueps\in I,\\ \ueps\in\{0,1\}^s}}\norm{f}_{\substack{\{H_{j i_\ueps}+H_{j i_{\ueps'}}:\ j\in[s],\ \ueps,\ueps'\in\{0,1\}^s\\ \mathrm{with}\ (\epsilon_1, \ldots,  \epsilon_{s-j}) = (\epsilon'_1, \ldots,  \epsilon'_{s-j}),\ \epsilon_{s+1-j}< \epsilon'_{s+1-j}\}}}. 
    \end{align*}
\end{proposition}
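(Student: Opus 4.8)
The strategy is to prove Proposition~\ref{P: concatenation for general groups} by induction on $s$, using Lemma~\ref{L: concatenation lemma} to peel off one ``layer'' of directions at a time. The base case $s=1$ is precisely (the square of) Lemma~\ref{L: concatenation degree 1}, so I would set up the inductive step carefully and trust that the combinatorics of the index set will be the thing to watch.

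\textbf{The inductive step.} Suppose the result is known for $s-1$ box norm directions. Given the average $\E_{i\in I}\norm{f}_{H_{i1},\ldots,H_{is}}^{2^s}$, I would first apply Lemma~\ref{L: concatenation lemma} with the role of ``$H_i$'' played by $H_{is}$ and ``$K_{i1},\ldots,K_{i,s-1}$'' played by $H_{i1},\ldots,H_{i,s-1}$. This gives, after raising to a bounded power,
\begin{align*}
    \brac{\E_{i\in I}\norm{f}_{H_{i1},\ldots,H_{is}}^{2^s}}^{O(1)} \leq \E_{i,i'\in I}\norm{f}_{H_{i1},\ldots,H_{i,s-1},\,H_{i'1},\ldots,H_{i',s-1},\,H_{is}+H_{i's}}^{O(1)}.
\end{align*}
Now the last direction $H_{is}+H_{i's}$ is ``done'' (it already has the sum form), and I want to keep it fixed while applying the inductive hypothesis to the remaining $2(s-1)$ directions. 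The cleanest way is to write this as an average over the pair $(i,i')\in I\times I$ of box norms in $2(s-1)$ directions, but with an extra \emph{frozen} direction $H_{is}+H_{i's}$ appended; so I would state and use a mild generalization of the proposition in which a fixed tuple of ``already-concatenated'' directions is carried along unchanged (the induction goes through verbatim since Lemma~\ref{L: concatenation lemma} only ever touches the first direction). Alternatively, one can absorb the frozen direction by noting that the inductive hypothesis applied to the family indexed by $J = I\times I$ with directions $(H_{i1},\ldots,H_{i,s-1},H_{i'1},\ldots,H_{i',s-1})$ — or rather, reindexing so the induction acts on the correct number of slots — produces sums of pairs of the $H$'s, and one tracks how the new index set $J^{\{0,1\}^{s-1}}$ and the new ``sum of pairs'' conditions compose with the pairing $(i,i')$ already introduced.

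\textbf{Bookkeeping of indices and direction labels.} The genuine content of the proof — and the main obstacle — is verifying that after $s$ iterations the directions produced are \emph{exactly} the ones in the statement: namely $H_{i_\ueps j}+H_{i_{\ueps'}j}$ where $\ueps,\ueps'$ agree on their first $s-j$ coordinates and differ in coordinate $s+1-j$. I would verify this by a clean invariant: after the $k$-th application of Lemma~\ref{L: concatenation lemma}, the index set has grown to $I^{\{0,1\}^k}$ (a tuple $(i_\ueps)_{\ueps\in\{0,1\}^k}$), directions $j=s,s-1,\ldots,s-k+1$ are already of the form $H_{i_\ueps j}+H_{i_{\ueps'}j}$ with the stated pattern for those $j$, and directions $j=1,\ldots,s-k$ are still ``pure'' but each appears once for every $\ueps\in\{0,1\}^k$. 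The key point to check is the \emph{doubling}: when we Cauchy--Schwarz in Lemma~\ref{L: concatenation lemma} at stage $k+1$, each current index tuple $(i_\ueps)_\ueps$ splits into two copies distinguished by the new bit $\epsilon_{k+1}\in\{0,1\}$, the old direction $j=s-k$ becomes $H_{i_{\ueps0}\,,\,s-k}+H_{i_{\ueps1}\,,\,s-k}$ — i.e. a sum over index tuples agreeing on the first $k$ bits and differing in bit $k+1$ — and every other direction simply gets duplicated across the new bit; matching ``bit $k+1$ of $\ueps$'' with ``coordinate $s+1-j$ where $j=s-k$'' gives precisely the condition $\epsilon_{s+1-j}\neq\epsilon'_{s+1-j}$ with agreement on $\epsilon_1,\ldots,\epsilon_{s-j}$. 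I expect this index-chasing, together with confirming that the exponent stays $O_s(1)$ (it is $2^{O(s)}$, fine since $s$ is a parameter), to be the only delicate part; everything else is a direct appeal to Lemma~\ref{L: concatenation lemma}.
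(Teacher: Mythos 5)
Your high-level plan (peel off one layer of directions at a time with Lemma \ref{L: concatenation lemma}, with the index bookkeeping as the real content) matches the paper's strategy, but the mechanism you describe for the inductive step has a genuine gap. Lemma \ref{L: concatenation lemma} concatenates exactly one designated group per application and merely duplicates all the others; it cannot do what your ``invariant'' asks of it at stage $k+1$, namely concatenate all $2^k$ groups $H_{i_\ueps, s-k}$, $\ueps\in\{0,1\}^k$, in a single Cauchy--Schwarz while only doubling the index set from $I^{\{0,1\}^k}$ to $I^{\{0,1\}^{k+1}}$. If you apply the lemma once to the tuple-indexed average, you concatenate the slot-$(s-k)$ group for one $\ueps$ only, and every other pure group is duplicated, so the invariant is destroyed. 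The paper's proof handles exactly this point: each stage consists of $2^k$ separate applications of Lemma \ref{L: concatenation lemma} (for a total of $2^s-1$), interleaved with the inductive formula for box norms, which absorbs the groups attached to the other indices into a differenced function $\Delta_{\uh}f$ so that only the relevant part of the index gets split. This freezing device is absent from your sketch, and without it your induction does not even typecheck: after the first application you face $2(s-1)$ unconcatenated slots indexed by $I\times I$, which is not an instance of the case $s-1$.

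A second, related problem is your claim that ``every other direction simply gets duplicated across the new bit.'' When a later application of the lemma refines an index, each summand of a previously formed sum $H_{i_\ueps j}+H_{i_{\ueps'}j}$ is relabelled independently; this is precisely what creates the cross terms, i.e.\ the freedom of the last $j-1$ coordinates in the condition of the proposition. In the paper's $s=2$ computation the slot-$2$ direction $H_{i_0 2}+H_{i_1 2}$ becomes the four directions $H_{i_{00}2}+H_{i_{10}2}$, $H_{i_{00}2}+H_{i_{11}2}$, $H_{i_{01}2}+H_{i_{10}2}$, $H_{i_{01}2}+H_{i_{11}2}$, whereas duplication across a common new bit would give only two. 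For the same reason a ``frozen'' direction such as $H_{i_0 s}+H_{i_1 s}$ is not index-independent and cannot be carried along unchanged: any correct version of your generalization must let carried directions depend on the index and must record how their labels are refined by subsequent applications, and once this is written down one is essentially reproducing the paper's bookkeeping. So the plan is repairable (an induction on $s$ with a correctly formulated carried-directions statement does go through), but as written the inductive step both misuses Lemma \ref{L: concatenation lemma} and misstates the family of directions it produces.
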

Importantly, the indices $\ueps,\ueps'$ are always distinct, so that $H_{j i_\ueps} + H_{j i_{\ueps'}}$ is strictly larger than $H_{j i_\ueps}, H_{j i_{\ueps'}}$ unless one of the two subgroups is contained in the other. 

\begin{proof}
    The proof of Proposition \ref{P: concatenation for general groups} relies on a gradual concatenation of the ``unconcatenated'' subgroups $H_{1i}, \ldots, H_{si}$ in the average using Lemma \ref{L: concatenation lemma}. We repeatedly use the inductive formula for box norms in order to reinterpret the average in such a way  that successive applications of Lemma \ref{L: concatenation lemma} concatenate the subgroups $H_{1i}, \ldots, H_{si}$ one by one. 
    
    Throughout, we shall assume that $h_j^{\ueps}$ is always an element of $H_{j i_{\ueps}}$, and an average $\E_{h_j^{\ueps}}$ always runs over the subgroup $H_{j i_{\ueps}}$.

    \smallskip
    \textbf{The case $s=2$:}
    \smallskip
    
    For illustrative purposes, we first prove Proposition \ref{P: concatenation for general groups} for degree 2 box norms.  Let $\delta = \E_{i\in I}\norm{f}^4_{H_{1i}, H_{2i}}$. We first want to concatenate the group $H_{2i}$. By Lemma \ref{L: concatenation lemma}, we have
\begin{align}\label{E: concatenation s=2 1}
    \delta^8\leq \E_{i_0, i_1\in I}\norm{f}_{H_{1 i_0}, H_{1 i_1}, H_{2 i_0}+H_{2 i_1}}^8,
\end{align}
and so the group $H_{2i}$ indeed got concatenated. The price we paid for this is that while applying Lemma \ref{L: concatenation lemma}, the unconcatenated group $H_{1i}$ doubled into $H_{1 i_0}$ and $H_{1 i_1}$. The next goal is therefore to concatenate these groups one by one. Using the inductive formula for box norms, we can rephrase \eqref{E: concatenation s=2 1} as
\begin{align*}
    \delta^8 \leq \E_{i_1\in I}\E_{h^1_1}\E_{i_0\in I}\norm{\Delta_{h^1_1}f}_{H_{1 i_0}, H_{2 i_0} + H_{2 i_1}}^4. 
\end{align*}
Applying Lemma \ref{L: concatenation lemma} to each $\E_{i_0\in I}\norm{\Delta_{h^1_1}f}_{H_{1 i_0}, H_{2 i_0} + H_{2 i_1}}^4$, we deduce that
\begin{align*}
    \delta^{64} \leq \E_{i_{00}, i_{01}, i_1\in I}\E_{h^1_1}\norm{\Delta_{h^1_1}f}_{H_{1 i_{00}}+H_{1 i_{01}},  H_{2 i_{00}} + H_{2 i_1}, H_{2 i_{01}} + H_{2 i_1}}^8.
\end{align*}
Rephrasing the inequality above once more using the inductive formula for box norms, we get
\begin{align*}
    \delta^{64} \leq \E_{i_{00}, i_{01}\in I}\E_{h^{00}_1, h^{01}_1} \E_{i_1\in I}\norm{\Delta_{h^{00}_1+h^{01}_1}f}_{H_{1 i_1},  H_{2 i_{00}} + H_{2 i_1}, H_{2 i_{01}} + H_{2 i_1}}^8.
\end{align*}
We then apply Lemma \ref{L: concatenation lemma} for the last time, this time to each $$\E_{i_1\in I}\norm{\Delta_{h^{00}_1+h^{01}_1}f}_{H_{1 i_1},  H_{2 i_{00}} + H_{2 i_1}, H_{2 i_{01}} + H_{2 i_1}}^8,$$ obtaining
\begin{align*}
    \delta^{2048} \leq \E_{i_{00}, i_{01}, i_{10}, i_{11}\in I}\E_{h^{00}_1, h^{01}_1} \norm{\Delta_{h^{00}_1+h^{01}_1}f}_{\substack{H_{1 i_{10}}+H_{1 i_{11}},  H_{2 i_{00}} + H_{2 i_{10}}, H_{2 i_{00}} + H_{2 i_{11}},\\  H_{2 i_{01}} + H_{2 i_{10}}, H_{2 i_{01}} + H_{2 i_{11}}}}^{32}.    
\end{align*}
The inductive formula for box norms then implies that
\begin{align*}
    \delta^{2048} \leq \E_{i_{00}, i_{01}, i_{10}, i_{11}\in I}\norm{f}_{\substack{H_{1 i_{00}}+ H_{1 i_{01}}, H_{1 i_{10}}+H_{1 i_{11}},  H_{2 i_{00}} + H_{2 i_{10}},\\ H_{2 i_{00}} + H_{2 i_{11}},  H_{2 i_{01}} + H_{2 i_{10}}, H_{2 i_{01}} + H_{2 i_{11}}}}^{64},    
\end{align*}
and the exponent 64 can be dropped since $f$ is 1-bounded.

We note that the proof of Proposition \ref{P: concatenation for general groups} for $s=2$ relies on 3 applications of Lemma \ref{L: concatenation lemma}. More generally, the proof for an arbitrary $s\geq 2$ will require $1+2+\cdots + 2^{s-1} = 2^s-1$ applications of Lemma \ref{L: concatenation lemma}.

    \smallskip
    \textbf{The general case:}
    \smallskip

We move on to prove the general case. Starting with $\delta = \E_{i\in I}\norm{f}_{H_{1i}, \ldots, H_{si}}^{2^{s}}$, we apply Lemma \ref{L: concatenation lemma} to bound
\begin{align}\label{E: concatenation s>2 1}
    \delta^{O_s(1)}\leq \E_{i_0, i_1\in I}\norm{f}_{H_{1 i_0}, \ldots, H_{(s-1)i_0}, H_{1 i_1}, \ldots, H_{(s-1)i_1}, H_{s i_0}+H_{s i_1}}^{2^{2s-1}}.
\end{align}
We note that we passed from having $s$ unconcatenated groups indexed by $i$ to $s-1$ unconcatenated groups indexed by $i_0$ and another $s-1$ unconcatenated groups indexed by $i_1$ (in addition to the concatenated group $H_{s i_0}+H_{s i_1}$). Thus, the total number of groups almost doubled, but what matters is that for each index $i_0, i_1$, the number of unconcatenated groups with this index went down by 1.
At the next stage of the argument, we will apply Lemma \ref{L: concatenation lemma} twice to concatenate $H_{(s-1) i_0}$ first and then $H_{(s-1) i_1}$. As a consequence, the two indices $i_0, i_1$ will be replaced by four indices $i_{00}, i_{01}, i_{10}, i_{11}$, and for each of them we will have exactly $s-2$ unconcatenated groups. 
We will continue in this manner: at each stage, the number of indices $i_\ueps$ will double, but the number of unconcatenated groups with each index $i_\ueps$ will decrease by 1.
Eventually, on the $s$-th step, we will be left with $2^{s-1}$ unconcatenated groups $H_{1 i_\ueps}$, and $2^{s-1}$ applications of Lemma \ref{L: concatenation lemma} will allow us to concatenate them all without producing any new unconcatenated groups. This will finish the argument.


This is the general strategy; let us see in detail what happens at the second stage, i.e. after obtaining the bound \eqref{E: concatenation s>2 1}. Using the induction formula for box norms, we can rephrase \eqref{E: concatenation s>2 1} as
\begin{align*}
    \delta^{O_s(1)}\leq \E_{i_1\in I} \E_{h^1_1, \ldots, h^1_{s-1}} \E_{i_0\in I}\norm{\Delta_{h^1_1, \ldots, h^1_{s-1}} f}_{H_{1 i_0}, \ldots, H_{(s-1)i_0}, H_{s i_0}+H_{s i_1}}^{2^{s}}.
\end{align*}
 For each fixed $i_1, h^1_1, \ldots, h^1_{s-1}$, we apply Lemma \ref{L: concatenation lemma} separately to each average over $i_0$, obtaining
\begin{align*}
    \delta^{O_s(1)}\leq \E_{i_1\in I} \E_{h^1_1, \ldots, h^1_{s-1}} \E_{i_{00}, i_{01}\in I}\norm{\Delta_{h^1_1, \ldots, h^1_{s-1}} f}_{\substack{H_{1 i_{00}}, \ldots, H_{(s-2) i_{00}}, H_{1 i_{01}}, \ldots, H_{(s-2) i_{01}},\\ H_{(s-1) i_{00}}+H_{(s-1) i_{01}}, H_{s i_{00}}+H_{s i_1}, H_{s i_{01}}+H_{s i_1}}}^{2^{2s-1}}.    
\end{align*}
We rearrange the inequality above using the inductive formula for box norms as
\begin{align*}
    \delta^{O_s(1)}\leq \E_{i_{00}, i_{01}\in I} \E_{\substack{h^{00}_1, \ldots, h^{00}_{s-2},\\ h^{01}_1, \ldots, h^{01}_{s-2},\\ h^{00}_{s-1}, h^{01}_{s-1}}} \E_{i_1\in I}\norm{\Delta_{\substack{h^{00}_1, \ldots, h^{00}_{s-2},\\ h^{01}_1, \ldots, h^{01}_{s-2},\\ h^{00}_{s-1}+h^{01}_{s-1}}} f}_{\substack{H_{1 i_1}, \ldots, H_{(s-1)i_1},\\ H_{s i_{00}}+H_{s i_1}, H_{s i_{01}}+H_{s i_1}}}^{2^{s+1}}.      
\end{align*}
in order to concatenate $H_{(s-1)i_1}$. By Lemma \ref{L: concatenation lemma} applied separately to each average over $i_1\in I$, we have
\begin{align*}
    \delta^{O_s(1)}\leq \E_{\substack{i_{00}, i_{01},\\ i_{10}, i_{11}\in I}} \E_{\substack{h^{00}_1, \ldots, h^{00}_{s-2},\\ h^{01}_1, \ldots, h^{01}_{s-2},\\ h^{00}_{s-1}, h^{01}_{s-1}}} \norm{\Delta_{\substack{h^{00}_1, \ldots, h^{00}_{s-2},\\ h^{01}_1, \ldots, h^{01}_{s-2},\\ h^{00}_{s-1}+h^{01}_{s-1}}} f}_{\substack{H_{1 i_{10}}, \ldots, H_{(s-2) i_{10}}, H_{1 i_{11}}, \ldots, H_{(s-2) i_{11}},\\ H_{(s-1) i_{10}}+H_{(s-1) i_{11}}, H_{s i_{00}}+H_{s i_{10}}, H_{s i_{00}}+H_{s i_{11}},\\ H_{s i_{01}}+H_{s i_{10}}, H_{s i_{01}}+H_{s i_{11}}}}^{2^{2s+1}}. 
\end{align*}
An application of the inductive formula for box norms then gives
\begin{align*}
    \delta^{O_s(1)}\leq \E_{\substack{i_{00}, i_{01},\\ i_{10}, i_{11}\in I}} \norm{f}_{\substack{H_{1 i_{00}}, \ldots, H_{(s-2) i_{00}}, H_{1 i_{01}}, \ldots, H_{(s-2) i_{01}},\\ H_{1 i_{10}}, \ldots, H_{(s-2) i_{10}}, H_{1 i_{11}}, \ldots, H_{(s-2) i_{11}},\\ H_{(s-1) i_{00}}+H_{(s-1) i_{01}}, H_{(s-1) i_{10}}+H_{(s-1) i_{11}},\\ H_{s i_{00}}+H_{s i_{10}}, H_{s i_{00}}+H_{s i_{11}},\\ H_{s i_{01}}+H_{s i_{10}}, H_{s i_{01}}+H_{s i_{11}}}}^{2^{4s-2}},
\end{align*}
which can be written more compactly as
\begin{align*}
    \delta^{O_s(1)}\leq \E_{\substack{i_{00}, i_{01},\\ i_{10}, i_{11}\in I}} \norm{f}_{\substack{\{H_{j i_\ueps}:\ \ueps\in\{0,1\}^2,\ j\in[s-2]\},\\ \{H_{(s-1) i_\ueps}+H_{(s-1) i_{\ueps'}}:\ \ueps,\ueps'\in\{0,1\}^2\ \mathrm{with}\ \epsilon_1 = \epsilon'_1,\ \epsilon_2< \epsilon'_2\},\\  
    \{H_{s i_\ueps}+H_{s i_{\ueps'}}:\ \ueps,\ueps'\in\{0,1\}^2\ \mathrm{with}\ \epsilon_1 < \epsilon'_1\}}}^{2^{4s-2}}.
\end{align*}

We have thus successfully concatenated all the groups $H_{(s-1) i_\ueps}$ and $H_{s i_\ueps}$. 

At this point, we stop keeping track of the ever more complicated powers of the box norm on the right-hand, and instead use the 1-boundedness of $f$ to replace the exponent by 1, so that
\begin{align*}
    \delta^{O_s(1)}\leq \E_{\substack{i_{00}, i_{01},\\ i_{10}, i_{11}\in I}} \norm{f}_{\substack{\{H_{j i_\ueps}:\ \ueps\in\{0,1\}^2,\ j\in[s-2]\},\\ \{H_{(s-1) i_\ueps}+H_{(s-1) i_{\ueps'}}:\ \ueps,\ueps'\in\{0,1\}^2\ \mathrm{with}\ \epsilon_1 = \epsilon'_1,\ \epsilon_2< \epsilon'_2\},\\  
    \{H_{s i_\ueps}+H_{s i_{\ueps'}}:\ \ueps,\ueps'\in\{0,1\}^2\ \mathrm{with}\ \epsilon_1 < \epsilon'_1\}}}.
\end{align*}

At the next stage, we concatenate the groups $H_{(s-2) i_\ueps}$. Applying Lemma \ref{L: concatenation lemma} and the induction formula for box norms four times like before, each time to an average over $i_{00}, i_{01}, i_{10}, i_{11}$ respectively, we arrive at the inequality
\begin{align*}
        \delta^{O_s(1)}\leq \E_{\substack{i_{\ueps}\in I,\\ \ueps\in\{0,1\}^3}} \norm{f}_{\substack{\{H_{j i_\ueps}:\ \ueps\in\{0,1\}^3,\ j\in[s-3]\},\\ \{H_{j i_\ueps}+H_{j i_{\ueps'}}:\ j= s-2, s-1, s,\ \ueps,\ueps'\in\{0,1\}^3\\ \mathrm{with}\ (\epsilon_1, \ldots, \epsilon_{s-j}) = (\epsilon'_1, \ldots, \epsilon'_{s-j}),\ \epsilon_{s+1-j}< \epsilon'_{s+1-j}\}}} 
\end{align*}
This time, we have successfully concatenated the groups $H_{(s-2) i_\ueps}$. At the next step, 8 applications of Lemma \ref{L: concatenation lemma} and the induction formula for box norms allow us to concatenate groups $H_{(s-3) i_\ueps}$. Continuing the argument like this, we arrive, after a total of 
\begin{align*}
    1 + 2 + 2^2 + \cdots + 2^{s-1} = 2^s - 1
\end{align*}
applications of Lemma \ref{L: concatenation lemma} and the induction formula for box norms, at the claimed inequality. 
\end{proof}

For applications, the following weaker but notationally lighter corollary of Proposition \ref{P: concatenation for general groups} is sufficient. While the ordering $(i_\ueps)_{\ueps\in\{0,1\}^s}$ figuring in Proposition \ref{P: concatenation for general groups} reflects the nature of the proof of that result more naturally, the ordering $(i_l)_{l\in 2^s}$ present in the corollary below is more useful in applications.
\begin{corollary}[Concatenation of box norms, version II]\label{C: concatenation for general groups II}
    Let $s\in\N$, $G$ be a finite abelian group, $I$ be a finite indexing set and $H_{1i}, \ldots, H_{si}$ be subgroups of $G$ for each $i\in I$. For all 1-bounded functions $f:G\to\C$, we have
    \begin{align}\label{E: concatenation simplified}
        \brac{\E_{i\in I}\norm{f}_{H_{1i}, \ldots, H_{si}}^{2^{s}}}^{O_s(1)} \leq \E_{\substack{i_1, \ldots, i_{2^s}\in I}}\norm{f}_{\{H_{j i_{l_1}}+H_{j i_{l_2}}:\ j\in[s],\ 1\leq l_1 < l_2\leq 2^s\}}. 
    \end{align}
\end{corollary}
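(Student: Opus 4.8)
The plan is to deduce Corollary \ref{C: concatenation for general groups II} from Proposition \ref{P: concatenation for general groups} by simply enlarging the collection of direction subgroups appearing in the box norm on the right-hand side and invoking monotonicity. Recall that for box norms along subgroups, passing to a \emph{larger} collection of directions only increases the norm: if $\{L_1, \ldots, L_m\}$ is a subcollection of $\{L_1, \ldots, L_m, L_{m+1}, \ldots, L_{m'}\}$, then $\norm{f}_{L_1, \ldots, L_m}\leq \norm{f}_{L_1, \ldots, L_m, L_{m+1}, \ldots, L_{m'}}$, which is immediate from the inductive formula for box norms together with the fact that box norms are at most $1$ for $1$-bounded $f$ (so appending extra averaged differencing directions can only bring the value closer to, but not below, the shorter norm — more precisely one applies the monotonicity property listed as item (i) in Section \ref{SS: definitions} repeatedly). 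Thus it suffices to check that every subgroup occurring in the box norm on the right-hand side of Proposition \ref{P: concatenation for general groups} also occurs among the subgroups $\{H_{i_\ueps j}+H_{i_{\ueps'}j}:\ j=1, \ldots, s,\ \ueps, \ueps'\in\{0,1\}^s,\ \ueps\neq\ueps'\}$ appearing in \eqref{E: concatenation simplified}.

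The verification is a matter of unpacking the index sets. In Proposition \ref{P: concatenation for general groups}, for each $j\in\{1, \ldots, s\}$ the directions contributed at coordinate $j$ are the subgroups $H_{i_\ueps j}+H_{i_{\ueps'}j}$ where $\ueps,\ueps'\in\{0,1\}^s$ satisfy $(\epsilon_1, \ldots, \epsilon_{s-j}) = (\epsilon'_1, \ldots, \epsilon'_{s-j})$ and $\epsilon_{s+1-j}\neq\epsilon'_{s+1-j}$ (with no constraint on the remaining coordinates). In particular any such pair $(\ueps, \ueps')$ has $\ueps\neq\ueps'$, since they already differ in coordinate $s+1-j$. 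Hence each such subgroup $H_{i_\ueps j}+H_{i_{\ueps'}j}$ lies in the larger family $\{H_{i_\ueps j}+H_{i_{\ueps'}j}:\ j=1, \ldots, s,\ \ueps, \ueps'\in\{0,1\}^s,\ \ueps\neq\ueps'\}$ used in \eqref{E: concatenation simplified}. (Coordinates where $j = s$ should be read as having the ``prefix'' condition $(\epsilon_1, \ldots, \epsilon_0)$ vacuously satisfied, so the constraint is just $\epsilon_1\neq\epsilon'_1$; this is consistent with the $s=2$ computation carried out in the proof above.) Since the indexing variables $i_\ueps\in I$, $\ueps\in\{0,1\}^s$, are the same in both statements, the expectation $\E_{i_\ueps\in I,\ \ueps\in\{0,1\}^s}$ is taken over the same set, and we may compare the integrands pointwise.

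Putting this together: fix a tuple $(i_\ueps)_{\ueps\in\{0,1\}^s}$. By the monotonicity property (item (i) of Section \ref{SS: definitions}), applied finitely many times to append the ``missing'' direction subgroups, the box norm appearing in Proposition \ref{P: concatenation for general groups} for this tuple is bounded above by the box norm appearing in \eqref{E: concatenation simplified} for the same tuple. Taking $\E_{i_\ueps\in I,\ \ueps\in\{0,1\}^s}$ of both sides preserves the inequality, and chaining with the bound from Proposition \ref{P: concatenation for general groups} yields
\begin{align*}
    \brac{\E_{i\in I}\norm{f}_{H_{i1}, \ldots, H_{is}}^{2^{s}}}^{O_s(1)} \leq \E_{\substack{i_\ueps\in I,\\ \ueps\in\{0,1\}^s}}\norm{f}_{\{H_{i_\ueps j}+H_{i_{\ueps'} j}:\ j=1, \ldots, s,\ \ueps,\ueps'\in\{0,1\}^s,\ \ueps\neq\ueps'\}},
\end{align*}
which is exactly \eqref{E: concatenation simplified}. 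I do not anticipate a serious obstacle here; the only point requiring a little care is bookkeeping with the index-prefix conditions to confirm that the family in Proposition \ref{P: concatenation for general groups} is genuinely a subfamily of the one in the corollary (in particular that $\ueps\neq\ueps'$ is forced), and making sure one is invoking the correct monotonicity statement for box norms — namely that enlarging the list of directions does not decrease the norm — rather than the subgroup-shrinking inequality \eqref{E: Gowers norms for subgroups}, which goes the other way.
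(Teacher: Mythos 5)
Your proof is correct and matches the paper's own one-line justification: the paper states that the corollary "can be obtained by a straightforward application of the monotonicity property of box norms to Proposition \ref{P: concatenation for general groups}," which is exactly your argument of observing that the proposition's index set is a subfamily of the corollary's and appending the missing directions via monotonicity (item (i)).
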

\begin{proof}
    First, Proposition \ref{P: concatenation for general groups} and the monotonicity property for box norms immediately gives
    \begin{align*}
        \brac{\E_{i\in I}\norm{f}_{H_{1i}, \ldots, H_{si}}^{2^{s}}}^{O_s(1)} \leq \E_{\substack{i_\ueps\in I,\\ \ueps\in\{0,1\}^s}}\norm{f}_{\{H_{j i_\ueps}+H_{j i_{\ueps'}}:\ j\in[s],\ \ueps,\ueps'\in\{0,1\}^s,\ \ueps<\ueps'\}},
    \end{align*}
    where $\ueps < \ueps'$ denotes the lexicographic order. 
    The result then follows upon enumerating $i_\ueps$ for $\ueps\in\{0,1\}^s$ as $i_1, \ldots, i_{2^s}$ in an appropriate way.
\end{proof}
It is instructive to compare Corollary \ref{C: concatenation for general groups II} with the relevant results from \cite{TZ16}. One difference is in the setup itself: our argument is presented for finite groups while the argument from \cite{TZ16} concerns ergodic seminorms for countable group actions. But with a bit of extra work, our argument could be extended to ergodic seminorms for $\Z^D$ actions since it only relies on elementary maneuvers such as the Gowers-Cauchy-Schwarz inequality; we however do not need this extension for the purposes of this article. Apart from this, the most important difference is that we quantitatively compare two averages of box norms while an analogous comparison in \cite{TZ16} is fully qualitative. Moreover, in the norms on the right hand side of \eqref{E: concatenation simplified}, we only sum up subgroups with the same index $j$ whereas results from \cite{TZ16} involve subgroups of the form $H_{ji}+H_{j'i'}$ for various indices $j,j'$. Finally, our argument necessitates the introduction of $2^s$ indices $i_1, \ldots, i_{2^s}$ 
while the arguments in \cite{TZ16} allow to average on the right hand side over only two indices $i, i'$.

We shall use the following iterative consequence of Corollary \ref{C: concatenation for general groups II}, obtained from an iterated application of Corollary \ref{C: concatenation for general groups II}. Its advantage is that it allows us to take the directions in the concatenated box norms to be arbitrarily long sums of the original directions rather than just double sums, as is the case in Corollary \ref{C: concatenation for general groups II}.
\begin{corollary}[Iterated concatenation of box norms]\label{C: iterated concatenation for general groups}
    Let $s, k\in\N$, $G$ be a finite abelian group, $I$ be a finite indexing set and $H_{1i}, \ldots, H_{si}$ be subgroups of $G$ for each $i\in I$. There exists a natural number $w=O_{s,k}(1)$ such that for all 1-bounded functions $f:G\to\C$, we have
    \begin{align}\label{E: iterated concatenation}
         \brac{\E_{i\in I}\norm{f}_{H_{1i}, \ldots, H_{si}}^{2^{s}}}^{O_{s, k}(1)} \leq \E_{\substack{i_1, \ldots, i_w\in I}}\norm{f}_{\substack{\{H_{j i_{l_1}}+\cdots + H_{j i_{l_{2^k}}}:\ j\in[s],\ 1\leq l_1 < \cdots < l_{2^k} \leq w\}}}.
    \end{align}
\end{corollary}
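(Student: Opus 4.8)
The plan is to deduce \eqref{E: iterated concatenation} from Corollary \ref{C: concatenation for general groups II} by induction on $k$, where one step of the induction consists of a single application of that corollary. Write $W_l = w_1+\cdots+w_l$. The inductive claim at level $l$ will be that
\[
  \brac{\E_{i\in I}\norm{f}_{H_{i1},\ldots,H_{is}}^{2^{s}}}^{O_{s,l}(1)} \leq \E_{\substack{i_\ueps\in I\\ \ueps\in\{0,1\}^{W_l}}}\norm{f}_{\mathcal{L}_l},
\]
where $\mathcal{L}_l$ is the collection of subgroups $\sum_{b_1,\ldots,b_l\in\{0,1\}}H_{i_{\ueps_{1b_1}\cdots\ueps_{lb_l}}\,j}$ indexed by $j\in[s]$ and by the choices $\ueps_{m0},\ueps_{m1}\in\{0,1\}^{w_m}$ with $\ueps_{m0}\neq\ueps_{m1}$, for $m=1,\ldots,l$; the case $l=k$ is then the proposition. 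For $l=1$ I set $w_1:=s$, and the claim is exactly Corollary \ref{C: concatenation for general groups II} after relabelling its indices $\ueps,\ueps'$ as $\ueps_{10},\ueps_{11}$.

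For the inductive step I would assume the claim at level $l-1$ and reinterpret the right-hand side. For each fixed tuple $\vec{\imath}=(i_\ueps)_{\ueps\in\{0,1\}^{W_{l-1}}}$, the subgroups listed in $\mathcal{L}_{l-1}$ form a tuple $H'_{\vec{\imath},1},\ldots,H'_{\vec{\imath},w_l}$ whose length $w_l$ (the length of $\mathcal{L}_{l-1}$) depends only on $s$ and $l$. Viewing $\E_{i_\ueps\in I,\ \ueps\in\{0,1\}^{W_{l-1}}}$ as a single average $\E_{\vec{\imath}\in I_{l-1}}$ over the finite index set $I_{l-1}:=I^{\{0,1\}^{W_{l-1}}}$, I would apply Corollary \ref{C: concatenation for general groups II} to $I_{l-1}$ and the subgroups $(H'_{\vec{\imath},a})_{a\in[w_l]}$, combining it with Jensen's inequality to pass from $\E_{\vec{\imath}}\norm{f}_{H'_{\vec{\imath},1},\ldots,H'_{\vec{\imath},w_l}}$ to $\brac{\E_{\vec{\imath}}\norm{f}_{H'_{\vec{\imath},1},\ldots,H'_{\vec{\imath},w_l}}^{2^{w_l}}}^{1/2^{w_l}}$. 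This yields
\[
  \brac{\E_{i\in I}\norm{f}_{H_{i1},\ldots,H_{is}}^{2^{s}}}^{O_{s,l}(1)} \leq \E_{\substack{\vec{\imath}_{\ueps_l}\in I_{l-1}\\ \ueps_l\in\{0,1\}^{w_l}}}\norm{f}_{\substack{H'_{\vec{\imath}_{\ueps_{l0}},a}+H'_{\vec{\imath}_{\ueps_{l1}},a}\,:\\ a\in[w_l],\ \ueps_{l0}\neq\ueps_{l1}}}.
\]

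It then remains to reindex. Writing each $\vec{\imath}_{\ueps_l}\in I_{l-1}$ coordinatewise and setting $i_{\ueps\ueps_l}:=(\vec{\imath}_{\ueps_l})_\ueps\in I$ for $\ueps\in\{0,1\}^{W_{l-1}}$ and $\ueps_l\in\{0,1\}^{w_l}$, the outer average becomes $\E_{i_\ueps\in I,\ \ueps\in\{0,1\}^{W_l}}$ with $W_l=W_{l-1}+w_l$, and, for the index $a\in[w_l]$ corresponding to a datum $(j,(\ueps_{m0},\ueps_{m1})_{m<l})$, the pairwise sum collapses:
\begin{align*}
  H'_{\vec{\imath}_{\ueps_{l0}},a}+H'_{\vec{\imath}_{\ueps_{l1}},a}
  &= \sum_{b_1,\ldots,b_{l-1}}\bigbrac{H_{i_{\ueps_{1b_1}\cdots\ueps_{(l-1)b_{l-1}}\ueps_{l0}}\,j}+H_{i_{\ueps_{1b_1}\cdots\ueps_{(l-1)b_{l-1}}\ueps_{l1}}\,j}}\\
  &= \sum_{b_1,\ldots,b_l}H_{i_{\ueps_{1b_1}\cdots\ueps_{lb_l}}\,j},
\end{align*}
which is precisely the generic member of $\mathcal{L}_l$: the constraints $\ueps_{m0}\neq\ueps_{m1}$ for $m<l$ are inherited from $a$, and $\ueps_{l0}\neq\ueps_{l1}$ is the freshly produced one. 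This completes the inductive step, and $l=k$ gives the claim.

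Finally I would check the constants: each $w_l$ is the length of $\mathcal{L}_{l-1}$, hence a quantity depending only on $s$ and $l\leq k$, so $w_1,\ldots,w_k=O_{s,k}(1)$; and at each step the exponent on the left-hand side acquires a factor $2^{w_l}$ from Jensen's inequality and an $O_{w_l}(1)$ factor from Corollary \ref{C: concatenation for general groups II}, so after $k$ steps it is still $O_{s,k}(1)$. I expect the only real difficulty to be bookkeeping: one must fix once and for all the block decomposition $\{0,1\}^{W_l}=\{0,1\}^{W_{l-1}}\times\{0,1\}^{w_l}$ so that the newly introduced pair $\ueps_{l0}\neq\ueps_{l1}$ lands in the last block, verify that the nested pairwise sums really do telescope into the single $2^k$-fold sum $\sum_{b_1,\ldots,b_k}H_{i_{\ueps_{1b_1}\cdots\ueps_{kb_k}}\,j}$ with exactly the claimed constraints, and treat each $\mathcal{L}_l$ as a tuple of a length determined by $s$ and $l$ (rather than as a set, whose cardinality might shrink under accidental coincidences of the summed subgroups) so that the $w_l$ are genuine absolute constants.
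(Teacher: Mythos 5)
Your proof is correct and follows essentially the same route as the paper: induct on $k$, take $w_1=s$ as the base case, and at each step reapply Corollary \ref{C: concatenation for general groups II} with indexing set $I^{\{0,1\}^{W_{l-1}}}$ and the level-$(l-1)$ tuple of concatenated subgroups playing the role of $H_{i1},\ldots,H_{is}$, after which the pairwise sums telescope exactly as you compute. Your explicit Jensen step (restoring the $2^{w_l}$-th power before reapplying the corollary) and the insistence on treating the family as a tuple of fixed length $w_l$ are details the paper leaves implicit, and you handle them correctly.
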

It will be important for our applications later on that the values $l_1, \ldots, l_{2^k}$ indexing the subgroup $H_{j i_{l_1}}+\cdots + H_{j i_{l_{2^k}}}$ are all distinct so that this subgroup is genuinely larger than its constituents. 

It is instructive to see what Corollary \ref{C: iterated concatenation for general groups} gives when $s=1$. In this case, an iterated application of Lemma \ref{L: concatenation degree 1} in place of Corollary \ref{C: concatenation for general groups II} gives simply
\begin{align}\label{E: concatenation example}
    \brac{\E_{i\in I}\norm{f}_{H_i}^2}^{2^k}\leq \E_{\substack{i_1, \ldots, i_{2^k}\in I}}\norm{f}_{H_{i_1}+\cdots + H_{i_{2^k}}}^2,
\end{align}
corresponding to taking $w=2^k$ in Corollary \ref{C: iterated concatenation for general groups} and $i_l = l$ for $1\leq l\leq 2^k$.
\begin{proof}    
    We induct on $k$. For $k=1$, the result holds with $w = 2^s$ by Corollary \ref{C: concatenation for general groups II}. Suppose that it holds for some $k\in\N$ with $w = w_k$ (which is allowed to depend on $s$). Let
    \begin{align*}
        \tilde{I} = \{(i_{l_1}, \ldots, i_{l_{2^k}})_{1\leq l_1 < \cdots < l_{2^k} \leq w_k}:\; i_1, \ldots, i_{w_k}\in I\}
    \end{align*}
    and observe that
    $$\E_{i_1, \ldots, i_w\in I} A((i_{l_1}, \ldots, i_{l_{2^k}})_{1\leq l_1 < \cdots < l_{2^k} \leq w_k}) = \E_{\tilde{i}\in \tilde{I}} A(\tilde{i})$$ for any function $A$ since $\tilde{I}$ is a product of many copies of $I^{w_k}$. We then apply Corollary \ref{C: concatenation for general groups II} with $I$ replaced by $\tilde{I}$ and 
    $$A(\tilde{i}) = \norm{f}_{\substack{\{H_{j i_{l_1}}+\cdots + H_{j i_{l_{2^k}}}:\ j\in[s],\ 1\leq l_1 < \cdots < l_{2^k} \leq w_k\}}}.$$
    This gives us a bound
    \begin{align}\label{E: induction in iterated concatenation}
        \brac{\E_{i\in I}\norm{f}_{H_{1i}, \ldots, H_{si}}^{2^{s}}}^{O_{s, k}(1)} \leq \E_{\substack{i_1, \ldots, i_{w_{k+1}}\in I}}\norm{f}_{\substack{\{H_{j i_{l_1 m_1}}+\cdots + H_{j i_{l_{2^k}m_1}} + H_{j i_{l_1 m_2}}+\cdots + H_{j i_{l_{2^k}m_2}}:\\ j\in[s],\ 1\leq l_1 < \cdots < l_{2^k} \leq w_k,\; 1\leq m_1<m_2\leq 2^{s'}\}}},
    \end{align}
    where
    \begin{align*}
        s' = |\{(l_1, \ldots, l_{2^k}):\; 1\leq l_1 < \cdots < l_{2^k}\leq w_k\}| = O_{k,s}(1)
    \end{align*}
    and $w_{k+1} = w_k 2^{s'}$. We observe that the set $[w_k]\times [2^{s'}]$ of pairs $(l, m)$ can be embedded into $[w_{k+1}]$ in such a way that the collection
    \begin{align*}
        \{((l_1, m_1) \ldots, (l_{2^k}, m_1), (l_1, m_2) \ldots, (l_{2^k}, m_2)):\; 1\leq l_1 < \cdots < l_{2^k} \leq w_k,\; 1\leq m_1<m_2\leq 2^{s'}\}
    \end{align*}
    embeds itself into the collection
    \begin{align*}
        \{(l'_1, \ldots, l'_{w_{k+1}}):\; 1\leq l'_1 < \cdots < l'_{2^{k+1}}\leq w_{k+1}\}.
    \end{align*}
    It follows from this and the monotonicity property of the box norms that
    \begin{align*}
        \brac{\E_{i\in I}\norm{f}_{H_{1i}, \ldots, H_{si}}^{2^{s}}}^{O_{s, k}(1)} \leq \E_{\substack{i_1, \ldots, i_{w_{k+1}}\in I}}\norm{f}_{\substack{\{H_{j i_{l'_1 }}+\cdots + H_{j i_{l'_{2^{k+1}}}}:\ j\in[s],\ 1\leq l'_1 < \cdots < l'_{2^{k+1}} \leq w_{k+1}\}}};
    \end{align*}
    the point is that the right hand sight of the expression above contains all the subgroups present in the right hand side of \eqref{E: induction in iterated concatenation} and more. 
    The claim follows by induction.
\end{proof}

\subsection{Weak inverse theorems for Gowers norms}\label{SS: inverse theorems}
We now specialise to the case $G=\F_p^D$ 
in order to discuss various notational conventions and inverse theorems for Gowers norms that we shall use throughout the paper. For multiplicative derivatives, we set
\begin{align*}
    \Delta_{\bv_1, \ldots, \bv_s; \uh}f = \Delta_{\bv_1 h_1, \ldots, \bv_s h_s}f\quad \textrm{and} \quad \Delta_{s, \bv; \uh}f = \Delta_{\bv h_1, \ldots, \bv h_s}f,
\end{align*}
so that 
\begin{align*}
    \norm{f}_{\bv_1, \ldots, \bv_s}^{2^s}=\E_{\bx}\E_{\uh\in\F_p^s}\Delta_{\bv_1, \ldots, \bv_s; \uh}f(\bx) \quad \textrm{and} \quad \norm{f}^{2^s}_{U^s(\bv)} = \E_{\bx}\E_{\uh\in\F_p^s} \Delta_{s, \bv; \uh}f(\bx).
\end{align*}

We move on to discuss inverse theorems.
For $s=1$, we can rephrase the Gowers norm as
\begin{align*}
    \norm{f}_{U^1(\bv)}^2 = \E_{\bx}f(\bx)\E(\overline{f}|\bv)(\bx),
\end{align*}
and so $\norm{f}_{U^1(\bv)}^2$ is a correlation of $f$ with a $\bv$-invariant function. For $s>1$, a similar statement can be obtained by considering degree $s$ dual functions in place of the conditional expectation. Letting $\Delta_{s, \bv; \uh}^* f(x) = \prod_{\ueps\in\{0,1\}^s\setminus\{\underline{0}\}}\CC^{|\ueps|}f(\bx+(\ueps\cdot\uh) \bv)$, we define the \textit{ degree $s$ dual function of $f$ along} $\bv$ to be
\begin{align*}
    \CD_{s,\bv}f(\bx) =  \E_{\uh\in\F_p^s} \Delta_{s, \bv; \uh}^* f(x)= \E_{\uh\in\F_p^s} \prod_{\ueps\in\{0,1\}^s\setminus\{\underline{0}\}}\CC^{|\ueps|}f(\bx+(\ueps\cdot\uh) \bv).
\end{align*}
This gives us a \textit{weak inverse theorem} for the norm $U^s(\bv)$:
\begin{align*}
    \norm{f}_{U^s(\bv)}^{2^s} = \E_\bx f(\bx) \CD_{s,\bv}f(\bx).
\end{align*}

These weak inverse theorems can be easily deduced from the definitions of Gowers norms in contrast to \textit{strong} inverse theorems developed by Green, Tao, Ziegler and Manners \cite{GT08a, GTZ11, GTZ12, Man18}, which establish deep connections between Gowers norms and nilsequences. Luckily for the quantitative content, we will only need the former; any use of strong inverse theorems for Gowers norms of degree at least 3 would necessarily lead to much worse bounds in our main results. An exception is a strong inverse theorem for $U^2(\bv)$, which gives good bounds, and which we will prove and use in later parts of the paper.

To simplify notation later on, we define $\FD_s(\bv)$ to be the collection of dual functions along $\bv$ of degree at most $s$, and $\FD(\bv)$ to be the collection of dual functions along $\bv$ of all degrees. 

\section{The PET induction scheme}\label{S: PET}
We move on to describe a PET induction scheme that allows us to control the counting operator \eqref{E: counting operator} by an average of box norms. We start with an elementary lemma that addresses the case of linear averages, originally proved in the ergodic setting by Host.
\begin{lemma}[Box norm control for linear averages, {\cite[Proposition 1]{H09}}]\label{L: linear averages}
Let $D, \ell\in\N$, $\bv_1, \ldots, \bv_\ell\in\Z^D$ and $f_0, \ldots, f_\ell:\F_p^D\to\C$ be 1-bounded. Then
\begin{align*}
    \abs{\E_{x}\E_n f_0(\bx)f_1(\bx+\bv_1 n)\cdots f_\ell(\bx+\bv_\ell n)}\leq \norm{f_\ell}_{\bv_\ell, \bv_\ell-\bv_1, \ldots, \bv_\ell-\bv_{\ell-1}}.
\end{align*}
\end{lemma}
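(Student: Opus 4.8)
**The plan is to prove this by the standard van der Corput / PET-type manipulation, iterated $\ell-1$ times, with a final Cauchy–Schwarz to expose the box norm structure.**

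First I would eliminate $f_0$ by a change of variables and Cauchy–Schwarz. Writing the average as $\E_{\bx} f_0(\bx) \E_n \prod_{j=1}^\ell f_j(\bx+\bv_j n)$ and applying Cauchy–Schwarz in $\bx$ (using $1$-boundedness of $f_0$), we get that the square of the average is at most
\begin{align*}
    \E_{\bx}\E_{n,n'} \prod_{j=1}^\ell f_j(\bx+\bv_j n)\overline{f_j(\bx+\bv_j n')}.
\end{align*}
Substituting $\bx \mapsto \bx - \bv_\ell n$ and setting $h = n'-n$ turns the $j=\ell$ factor into $f_\ell(\bx)\overline{f_\ell(\bx+\bv_\ell h)} = \Delta_{\bv_\ell h} f_\ell(\bx)$ evaluated along $\bx$, while for $j<\ell$ the factor becomes $f_j(\bx + \bv_j n - \bv_\ell n)\overline{f_j(\bx + \bv_j n - \bv_\ell n + \bv_j h)}$; the point is that the $n$-dependence in the $\ell$-th factor has been killed. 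More precisely, after this reduction one is left with an average over $n$ of a product of $\ell-1$ shifted copies of (derivatives of) $f_1,\dots,f_{\ell-1}$ against the fixed factor $\Delta_{\bv_\ell h}f_\ell(\bx)$; pulling the $\Delta_{\bv_\ell h}f_\ell$ factor out and Cauchy–Schwarzing again in $(\bx,h)$ reduces the number of "active" functions by one.

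The key bookkeeping is that each van der Corput step introduces a new difference variable $h_i$ and replaces $f_\ell$ by $\Delta_{\bv_\ell h_i}(\cdot)$, while simultaneously the directions attached to $f_j$ (for the surviving $j$) get shifted by $\bv_j$; when we finally use Cauchy–Schwarz to dispose of $f_{\ell-1}, f_{\ell-2}, \dots, f_1$ in turn, the $i$-th such step contributes a difference of the "offset vectors", which works out to $\bv_\ell - \bv_{\ell - i}$. After $\ell - 1$ steps every $f_j$ with $j<\ell$ has been removed and what remains is
\begin{align*}
    \E_{\bx}\E_{h_1,\dots,h_\ell} \Delta_{\bv_\ell h_1,\ \bv_\ell h_2,\ \dots}\cdots f_\ell(\bx),
\end{align*}
an average of multiplicative derivatives of $f_\ell$ along the $\ell$ directions $\bv_\ell, \bv_\ell-\bv_1, \dots, \bv_\ell-\bv_{\ell-1}$, which by the definition of the box norm and the notation $\Delta_{\bv_1,\dots,\bv_s;\uh}$ is exactly $\norm{f_\ell}_{\bv_\ell,\ \bv_\ell-\bv_1,\ \dots,\ \bv_\ell-\bv_{\ell-1}}^{2^\ell}$. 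Taking $2^\ell$-th roots gives the claim.

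The main obstacle — really the only delicate point — is keeping the induction hypothesis clean enough that it closes: one must carry along not just $f_\ell$ but all the intermediate functions $f_j$ together with their accumulated shift vectors, and verify that at the moment $f_j$ is eliminated the corresponding box-norm direction is precisely $\bv_\ell - \bv_j$ (rather than some other linear combination). I would set this up by proving, by induction on $1 \le k \le \ell$, a statement of the form "the $2^{k}$-th power of the original average is bounded by $\E_{\bx}\E_{h_1,\dots,h_k}$ of a box-type inner product of the derivative $\Delta_{\bv_\ell h_1, \dots, \bv_\ell h_k} f_\ell(\bx)$ against products of shifted $f_1, \dots, f_{\ell-k}$", tracking the shifts explicitly; the base case $k=0$ is trivial and the inductive step is the Cauchy–Schwarz-plus-change-of-variables move described above. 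Since $\bv_\ell - \bv_\ell = \bzero$ could in principle collapse a direction, note we never subtract $\bv_\ell$ from itself — the first direction comes from the very first van der Corput step and stays $\bv_\ell$ throughout. This matches the ergodic argument of Host \cite{H09}, of which the above is the finite-field transcription.
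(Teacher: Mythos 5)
The paper itself does not prove this lemma; it cites Host's ergodic result, so the benchmark is the standard iterated Cauchy--Schwarz argument you are attempting. Your overall strategy and the final list of directions are right, but the middle of your iteration, as written, fails. After the first Cauchy--Schwarz you substitute $\bx\mapsto\bx-\bv_\ell n$, so that the $\ell$-th factor becomes the $n$-independent function $\Delta_{\bv_\ell h}f_\ell(\bx)$, and you then propose to ``pull it out and Cauchy--Schwarz again in $(\bx,h)$''. In this scheme the $n$-independent factor is exactly the one that plays the role of the weight in the next Cauchy--Schwarz, hence it is the one that gets discarded: you would eliminate $f_\ell$ at the second step and end up controlling the average by a norm of $f_1,\ldots,f_{\ell-1}$, not of $f_\ell$. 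And if instead you try to keep $\Delta_{\bv_\ell h}f_\ell(\bx)$ inside the factor that gets duplicated, no new derivative accumulates on it, precisely because it no longer depends on $n$; duplication only produces $\abs{\Delta_{\bv_\ell h}f_\ell(\bx)}^2$. This is also why your displayed end product $\E_{\bx}\E_{h_1,\ldots,h_\ell}\Delta_{\bv_\ell h_1,\bv_\ell h_2,\ldots}f_\ell(\bx)$ (all derivative directions equal to $\bv_\ell$) is inconsistent with the box norm $\norm{f_\ell}_{\bv_\ell,\bv_\ell-\bv_1,\ldots,\bv_\ell-\bv_{\ell-1}}$ you claim it equals: only the very first step contributes the direction $\bv_\ell$.

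The fix is to shift by the offset of the function you are about to eliminate, never by $\bv_\ell n$, so that $f_\ell$ stays ``active'' until the end. Cleanest formulation: induct on $\ell$. Cauchy--Schwarz in $\bx$ with weight $f_0$ and set $n'=n+h$ to get $\abs{\Lambda}^2\le\E_h\E_\bx\E_n\prod_{j=1}^{\ell}\Delta_{\bv_j h}f_j(\bx+\bv_j n)$; then substitute $\bx\mapsto\bx-\bv_1 n$ so that $\Delta_{\bv_1 h}f_1(\bx)$ becomes the new zeroth function and the remaining functions sit at $\bx+(\bv_j-\bv_1)n$. The induction hypothesis bounds the inner average, for each fixed $h$, by $\norm{\Delta_{\bv_\ell h}f_\ell}_{\bv_\ell-\bv_1,\ldots,\bv_\ell-\bv_{\ell-1}}$, and then H\"older together with the inductive formula for box norms gives $\E_h\norm{\Delta_{\bv_\ell h}f_\ell}_{\bv_\ell-\bv_1,\ldots,\bv_\ell-\bv_{\ell-1}}\le\norm{f_\ell}^{2}_{\bv_\ell,\bv_\ell-\bv_1,\ldots,\bv_\ell-\bv_{\ell-1}}$, closing the induction. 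With this bookkeeping the $i$-th elimination contributes the direction $\bv_\ell-\bv_{i-1}$ (with $\bv_0=\bzero$), which is the multiset of directions in the statement; your remark that $\bv_\ell-\bv_\ell$ never occurs is correct and carries over.
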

The key PET bound in this direction is the following. In what follows, we call two polynomials essentially distinct if their difference is nonconstant, and we let $\supp(\uu) = \{j:\ u_j>0\}$.
		\begin{proposition}[PET bound]\label{P: PET I}
			Let $d, D, \ell\in\N$. There exist natural numbers $s, s'=O_{d, D, \ell}(1)$ with the following property: for all essentially distinct polynomials $\p_1, \ldots, \p_\ell\in\Z[n]^D$ with degrees at most $d$
   and coefficients $\p_j(n) = \sum_{i=0}^d \a_{ji} n^i$, 
            and for all 1-bounded functions $f_0, \ldots, f_\ell:\F_p^D\to\C$, we have the bound
			\begin{align}\label{E: PET bound inequality}
				\abs{\E_{\x} \E_{n}f_0(\x)f_1(\x+\p_1(n))\cdots f_\ell(\x+\p_\ell(n))}^{2^{s'}} \leq \E_{\uh\in\F_p^{s'}}\norm{f_\ell}_{{\bc_1(\uh)}, \ldots, {\bc_s(\uh)}}^{2^{s}}
			\end{align}
			for nonzero polynomials $\bc_1, \ldots, \bc_s:\Z^{s'}\to\Z^D$. Moreover, the polynomials $\bc_1, \ldots, \bc_s$ are independent of the functions, and they take the form
			\begin{align}\label{E: polynomials c_j}
				\bc_{j}(\uh) = \sum_{\substack{\uu\in \N_0^{s'},\\ |\uu|\leq d-1}} c_{\uu}(\ba_{\ell(|\uu|+1)}-\ba_{w_{j\uu}(|\uu|+1)})\uh^\uu,
			\end{align}
			where:
			\begin{enumerate}
				\item for each $j\in[s]$, the indices $w_{j\uu}$ take value in the set $\{0, \ldots, \ell\}$ (with $\ba_{0(|\uu|+1)}:=\mathbf{0}$) and have the property $w_{j\uu} = w_{j\uu'}$ whenever $\supp(\uu) = \supp(\uu')$;
				\item $c_{\uu}$ is the coefficient of $n \uh^\uu$ in the multinomial expansion of $(n+h_1 + \cdots + h_{s'})^{|\uu|+1}$.
			\end{enumerate}
		\end{proposition}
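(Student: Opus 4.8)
The plan is to prove Proposition \ref{P: PET I} by a quantitative implementation of the PET induction scheme of Donoso, Ferr\'e-Moragues, Koutsogiannis and Sun, keeping careful track of how the direction polynomials transform under each van der Corput step. I would first set up the inductive framework: to a system of essentially distinct polynomials $\mathbf{q}_1, \dots, \mathbf{q}_m \in \Z[n]^D$ (together with a distinguished ``top'' polynomial of maximal degree) one associates a \emph{weight vector} or \emph{type} recording, for each degree $0 \le i \le d$, how many polynomials in the system have leading behaviour of that degree. One then defines a well-ordering on types so that a single van der Corput / Cauchy--Schwarz step in the variable $n$ (with a shift $n \mapsto n+h$) strictly decreases the type. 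The classical PET argument shows this terminates after $O_{d,D,\ell}(1)$ steps, at which point the system is linear, and Lemma \ref{L: linear averages} (Host's bound) applies to produce a box norm. The number $s'$ of van der Corput steps and the final box-norm degree $s$ are thus bounded in terms of $d, D, \ell$ alone.

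Next I would run the induction while tracking the functions and the directions. A van der Corput step applied to $\abs{\E_\x \E_n \prod_j f_j(\x + \mathbf{q}_j(n))}^2$ introduces a new averaging variable $h$ and replaces the system by the ``doubled'' system of differences $\mathbf{q}_j(n+h) - \mathbf{q}_k(n)$, with the original functions $f_j$ replaced by multiplicative derivatives $\Delta$-type products; crucially, one arranges (this is where essential distinctness and the choice of top polynomial matter) that the function $f_\ell$ attached to the top polynomial $\p_\ell$ survives \emph{undifferentiated} through every step, so that after $s'$ steps we are left with an expression of the form $\E_{\uh} \E_\x \E_n (\text{junk depending on } f_0, \dots, f_{\ell-1}) \cdot (\text{linear average in } n \text{ whose top term is } f_\ell)$. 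Applying Lemma \ref{L: linear averages} in the remaining variable $n$ to this linear average, and bounding all the ``junk'' factors by $1$, yields \eqref{E: PET bound inequality} with the box-norm directions $\bc_1(\uh), \dots, \bc_s(\uh)$ being precisely the differences of the final linear coefficient vectors — and these are polynomial in $\uh$ because each van der Corput substitution $n \mapsto n + h_i$ is polynomial.

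The remaining task is to verify the explicit form \eqref{E: polynomials c_j} and properties (i)--(ii). Here I would argue as follows. Each coefficient vector appearing at the end is, by inspection of the substitutions, of the shape $\ba_{j i}$ for some original polynomial index $j$ and degree $i$, multiplied by a scalar polynomial in $\uh$ coming from expanding $(n + h_1 + \cdots + h_{s'})^{i}$; the box-norm directions are differences of two such, and since $f_\ell$ is always the undifferentiated function, one of the two indices is always $\ell$, giving the $\ba_{\ell(|\uu|+1)} - \ba_{w_{j\uu}(|\uu|+1)}$ shape with $w_{j\uu} \in \{0, \dots, \ell\}$ (the value $0$ occurring when the paired polynomial is ``$\bzero$'', i.e.\ came from the $f_0$ slot). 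Property (ii) — that $c_{\uu}$ is the coefficient of $n\uh^{\uu}$ in the multinomial expansion of $(n + h_1 + \cdots + h_{s'})^{|\uu|+1}$ — follows because taking a difference $\p(n+h_1+\cdots+h_{s'}) - \p(n)$ of a degree-$(i+1)$ term and reading off the degree-one-in-$n$ part is exactly this multinomial coefficient. Property (i), that $w_{j\uu}$ depends on $\uu$ only through $\supp(\uu)$, reflects the combinatorial bookkeeping of PET: which original polynomial a given leading monomial ``descends from'' is determined by the \emph{set} of van der Corput variables that have acted on it, not by the multiplicities, since each van der Corput step is a single shift.

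I expect the main obstacle to be the last paragraph: formalising the bookkeeping that shows the final directions have \emph{exactly} the stated form, with the top index always equal to $\ell$ and the dependence of $w_{j\uu}$ only on $\supp(\uu)$. This is the heart of the argument and requires setting up the PET induction with enough structural data (e.g.\ tracking for each surviving polynomial the multiset of shift variables applied to it) to make the claim about $\supp(\uu)$ transparent; the termination and the ``$f_\ell$ survives undifferentiated'' parts are standard once the right invariant is in place, and the van der Corput / Cauchy--Schwarz manipulations themselves are routine. A secondary technical point is ensuring the polynomials $\bc_j$ are \emph{nonzero}, which again uses essential distinctness of the $\p_j$ together with the fact that $\p_\ell$ has strictly maximal (or maximal and suitably distinguished) degree, so that no cancellation forces a whole direction polynomial to vanish identically in $\uh$.
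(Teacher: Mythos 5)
Your overall route is the same as the paper's: van der Corput/Cauchy--Schwarz steps organised by a PET type ordering, terminating after $O_{d,D,\ell}(1)$ steps in a family that is linear in $n$, with the function $f_\ell$ carried through each step as a conjugated shift, followed by an application of Lemma \ref{L: linear averages} in the remaining linear average; the directions $\bc_1(\uh),\ldots,\bc_s(\uh)$ are then the differences of the final linear coefficients. The difference is in how the structural claims are discharged: the paper does not reprove the bookkeeping but imports it, citing \cite[Theorem 4.2]{DKS22} for termination together with the fact that the final family is \emph{standard for $f_\ell$} (which is exactly the ``$f_\ell$ survives undifferentiated'' claim, and which requires the van der Corput indices $m_1,\ldots,m_{s'}$ to be chosen so that the $f_\ell$-slot is never merged under the $^*$ operation), and citing \cite[Definition 5.5 and Proposition 5.6]{DFMKS22} for the fact that the properties (P1)--(P4) encoding the shape \eqref{E: polynomials c_j} and items (i)--(ii) are preserved by each operation $\partial_m$.

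Relative to a complete argument, the gap in your proposal is precisely the part you flag as the main obstacle, and the heuristic you offer for it does not close it. After several van der Corput steps a surviving polynomial is a signed combination of shifted copies of \emph{several} original polynomials, and distinct monomials $\uh^\uu$ with the same support arise from cross terms of different expansions; the claim that the $n$-linear coefficient of each final polynomial decomposes monomial-by-monomial as $c_\uu(\ba_{w'(|\uu|+1)}-\ba_{w(|\uu|+1)})\uh^\uu$, with $c_\uu$ the stated multinomial coefficient, with the indices constant on supports, and with the top slot always tracking $\p_\ell$, is not a consequence of ``sets rather than multiplicities of shifts'' in any immediate way. What is needed is an explicitly formulated inductive invariant (the analogue of (P1)--(P4): a symbol and type attached to every coefficient, constant on supports, with the last entry equal to $\ell$) together with a verification that a single application of $\partial_m$ preserves it and that the index $m$ can always be chosen compatibly with keeping the $f_\ell$-slot unmerged. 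Writing this out amounts to reproving Section 5 of \cite{DFMKS22}; it is doable, but until it is done, the exact formula \eqref{E: polynomials c_j}, property (i), and the ``standard for $f_\ell$'' claim remain unestablished. Your derivations of property (ii) and of the nonvanishing of the $\bc_j$ (from the essential distinctness of the final linear family) are fine once that invariant is in place.
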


  Proposition \ref{P: PET I} is essentially a quantitative restatement of Propositions 5.3 and 5.6 from \cite{DFMKS22} restricted to finite fields. In its proof, we therefore copiously cite relevant results from \cite{DFMKS22, DKS22}. The essence of Proposition \ref{P: PET I} is that the counting operator for an arbitrary multidimensional polynomial progression can be controlled by an average of box norms, the directions of which are polynomials whose coefficients come from coefficients of the polynomials $\bp_\ell, \bp_\ell-\bp_1, \ldots, \bp_\ell-\bp_{\ell-1}$. Swapping the role of the index $\ell$ with other indices, we can get analogous bounds for other functions as well.
  \begin{proof}
      We assume first that $\p_\ell$ has maximum degree among $\p_1, \ldots, \p_\ell$, and at the end we will explain the necessary modifications in the general case.
  
      Let $s\in\N$ and $\q_1, \ldots, \q_\ell\in\Z[n, h_1, \ldots, h_s]^D$. Given the ordered polynomial family $\CQ = (\q_1, \ldots, \q_\ell)$ and $m=1, \ldots, \ell$, we define the new polynomial family
      \begin{align*}
            \partial_m\CQ = &(\tilde{\q}_1-\tilde{\q}_m, \ldots,  \tilde{\q}_\ell-\tilde{\q}_m, T_{h_{s+1}}\tilde{\q}_1-\tilde{\q}_m, \ldots, T_{h_{s+1}}\tilde{\q}_\ell - \tilde{\q}_m)^*,
      \end{align*}
      where $T_{h_{s+1}}\q(n, \uh) = \q(n+h_{s+1}, \uh)$, $\tilde{\q}(n,\uh) = \q(n,\uh)-\q(0,\uh)$ and the $^*$ operation removes all zero polynomials and all subsequent copies of the same polynomial whenever it repeats several times.
      
      If $g_{0, \uh}, \ldots, g_{\ell, \uh}:\F_p^D\to\C$ are 1-bounded functions for each $\uh\in\F_p^s$, then 
      an application of the Cauchy-Schwarz inequality in $\uh, \bx$ gives
      \begin{multline*}
          \abs{\E_{\uh\in\F_p^s}\E_\x\E_n g_{0, \uh}(\bx) g_{1,\uh}(\bx+\q_1(n, \uh))\cdots g_{\ell, \uh}(\bx+\q_\ell(n, \uh))}^2\\ 
          \leq \E_{(\uh, h_{s+1})\in\F_p^{s+1}}\E_\x\E_n g_{1, \uh}(\bx+\q_1(n, \uh))\cdots g_{\ell, \uh}(\bx+\q_\ell(n, \uh))\\ 
          \overline{g_{1, \uh}(\bx+\q_1(n+h_{s+1}, \uh))\cdots g_{\ell, \uh}(\bx+\q_\ell(n+h_{s+1}, \uh))}.
      \end{multline*}
      Making the change of variables $\bx\mapsto \bx-\q_m(n, \uh)$, we can rewrite the inequality above as
      \begin{multline*}
          \abs{\E_{\uh\in\F_p^s}\E_\x\E_n g_{0, \uh}(\bx) g_{1,\uh}(\bx+\q_1(n,\uh))\cdots g_{\ell, \uh}(\bx+\q_\ell(n, \uh))}^2\\ 
          \leq\E_{(\uh, h_{s+1})\in\F_p^{s+1}}\E_\x \E_n{g}_{\mathbf{0}, \uh, h_{s+1}}(\bx) \prod_{\bq\in\partial_m\CQ}{g}_{\bq, \uh, h_{s+1}}(\bx+\bq(n,\uh, h_{s+1})).
      \end{multline*}
      The function $g_{\mathbf{0},\uh, h_{s+1}}$ is given by the formula $$g_{\mathbf{0},\uh, h_{s+1}}(\bx) = \prod_{j: \tilde{\q}_j =\tilde{\q}_m} g_{j,\uh}(\bx+\q_j(0,\uh)-\q_m(0,\uh)),$$ and for $\bq\in\partial_m\CQ$, we have
      \begin{align*}
          {g}_{\bq, \uh, h_{s+1}} = &\prod_{j: \tilde{\q}_j -\tilde{\q}_m = \bq} g_{j,\uh}(\bx+\q_j(0,\uh) - \q_m(0, \uh))\\ 
          &\prod_{j: T_{h_{s+1}}\tilde{\q}_j -\tilde{\q}_m = \bq} \overline{g_{j,\uh}(\bx+\q_j(h_{s+1},\uh) - \q_m(0, \uh))}
      \end{align*}


    We call the family $\CQ$ \emph{nice}\footnote{In the language of \cite{DFMKS22, DKS22}, the family is nondegenerate and $\ell$-standard.} if $\q_1, \ldots, \q_\ell$ are \textit{essentially distinct} as polynomials in $n$ (meaning that $\q_j-\q_j$ is nonconstant in $n$ for $i\neq j$) and the polynomial $\q_\ell$ has maximum degree in $n$. By \cite[Theorem 4.2]{DKS22}, there exist $s'\in\N_0$ (which can be chosen to be $O_{d, D, \ell}(1)$, see footnote 29 in \cite{DFMKS22}) and $m_1, \ldots, m_{s'}\in\N$ such that the family $\CP':=\partial_{m_{s'}}\cdots\partial_{m_1}\CP$ is nice and its members are linear in $n$. The bound on $s'$ and the fact that $|\partial_m\CQ|\leq 2|\CQ|$ for any nice family $\CQ$ implies that $s:=|\CP'|\leq 2^{s'}|\CP|= O_{d, D, \ell}(1)$.
    
    Let $\b_{11}(\uh)n + \b_{10}(\uh), \ldots, \b_{s1}(\uh)n + \b_{s0}(\uh)$ be the elements of $\CP'$. By a repeated application of the Cauchy-Schwarz inequality and a change of variables as illustrated above, we get the bound
    \begin{multline*}
        \abs{\E_{\x} \E_{n}f_0(\x)f_1(\x+\p_1(n))\cdots f_\ell(\x+\p_\ell(n))}^{2^{s'}}\\ \leq \E_{\uh\in\F_p^{s'}}\E_\x \E_n {f}_{0,\uh}(\bx) \prod_{j=1}^s {f}_{j,\uh}(\bx+\b_{j1}(\uh)n + \b_{j0}(\uh))
    \end{multline*}
    for some 1-bounded functions ${f}_{0, \uh}, \ldots, {f}_{s,\uh}:\F_p^D\to\C$. Importantly, we have ${f}_{s, \uh}(\bx) = \CC^{s'} f_\ell(\bx+\bq(\uh))$ for some $\bq\in\Z[\uh]^D$; this crucial fact is (essentially) the property from \cite[Theorem 4.2]{DKS22} that the tuple $\CP'$ is \textit{standard for $f_\ell$}, and it implicitly relies on the assumption that $\bp_\ell$ has maximum degree. 
    
    Since the family $\CP'$ is nice, the coefficients $\b_{11}, \ldots, \b_{s1}$ are all distinct. By Lemma \ref{L: linear averages}, the result follows upon letting $\bc_1, \ldots, \bc_s$ be the polynomials $\b_{\ell 1}, \b_{\ell 1}-\b_{11}, \ldots, \b_{\ell 1} - \b_{(\ell-1)1}$. It remains to show that the coefficients of the polynomials $\bc_1, \ldots, \bc_s$ have the claimed properties. The fact that they are nonzero is a consequence of the distinctness of $\b_{11}, \ldots, \b_{s1}$. The other properties follow from the complicated coefficient tracking scheme developed in \cite[Section 5]{DFMKS22}, which we briefly outline.

    The coefficients $\b_{11}(\uh), \ldots, \b_{s1}(\uh)$ are polynomials in $\uh$ of the form 
    \begin{align}\label{E: coefficient formula}
			\b_{j1}(\uh) = \sum_{\substack{\uu\in \N_0^{s'},\\ |\uu|\leq d-1}} c_{\uu}(\ba_{w_{j\uu}(|\uu|+1)}-\ba_{w_{\uu}(|\uu|+1)})\uh^\uu,
	\end{align}
	where:
	\begin{enumerate}
        \item the indices $w_{\uu}, w_{j\uu}$ take value in the set $\{0, \ldots, \ell\}$ (with $\ba_{0(|\uu|+1)}:=\mathbf{0}$) and have the property $w_{j\uu} = w_{j\uu'}$ and $w_\uu = w_{\uu'}$ whenever $\supp(\uu) = \supp(\uu')$;
		\item we have $w_{s\uu} = \ell$ for all $\uu$;
        \item $c_{\uu}$ is the coefficient of $n \uh^\uu$ in the multinomial expansion of $(n+h_1 + \cdots + h_{s'})^{|\uu|+1}$.
	\end{enumerate}

    The properties (i)-(iii) of the polynomials $\b_{11}, \ldots, \b_{s1}$ correspond to the properties (P1)-(P4) from \cite[Definition 5.5]{DFMKS22}.  Specifically, in the language of \cite{DFMKS22}, the formula \eqref{E: coefficient formula} corresponds to the coefficient $\uh^\uu$ having \textit{symbol} $(w_{1\uu}, \ldots, w_{s\uu})$ and \textit{type} $(c_\uu, w_{\uu}, |\uu|+1)$.  Then the general formula \eqref{E: coefficient formula} corresponds to the property (P1) from \cite[Definition 5.5]{DFMKS22}, and the property (i) above corresponds to the property (P3) from \cite[Definition 5.5]{DFMKS22}. Similarly, the property (ii) above is a restatement of the property (P4) from \cite[Definition 5.5]{DFMKS22} Lastly, the formula for $c_\uu$ in (iii) is a restatement of the property (P2) from \cite[Definition 5.5]{DFMKS22} applied to the polynomials $\b_{j1}(\uh)n+\b_{j0}(\uh)$. That the polynomials $\b_{11}, \ldots, \b_{s1}$ enjoy the structural properties (i)-(iii) is a consequence of \cite[Proposition 5.6]{DFMKS22} (and more precisely, the remark right below its proof), which asserts that the properties (P1)-(P4) from \cite[Definition 5.5]{DFMKS22} are preserved while applying the van der Corput operation $\CQ\mapsto \partial_m \CQ$; hence, applying this procedure iteratively, the properties (P1)-(P4) are enjoyed by the family $\CP'$. Finally, that the coefficients of  the polynomials $\bc_1, \ldots,\bc_s$ enjoy the properties stated in Proposition \ref{P: PET I} is a consequence of the fact that $\bc_1, \ldots, \bc_s$ equal $\b_{s 1}-\b_{11}, \ldots, \b_{s 1} - \b_{(s-1)1}, \b_{s1}$.

     When $\p_\ell$ does not have maximum degree and $\p_m$ is a polynomial of maximum degree, then we can translate $\bx\mapsto \bx-\p_m(n)$ so that
    \begin{multline*}
        \E_{\x} \E_{n}f_0(\x)f_1(\x+\p_1(n))\cdots f_\ell(\x+\p_\ell(n))\\
        = \E_{\x} \E_{n}f_0(\x-\p_m(n))f_1(\x+\p_1(n)-\p_m(n))\cdots f_\ell(\x+\p_\ell(n)-\p_m(n)).
    \end{multline*}
    Setting
    \begin{align*}
        \tilde{\p}_j = \begin{cases} \p_j - \p_m,\; &j\neq m,\\
        -\p_m,\; & j = m,
        \end{cases}
    \end{align*}
    the result follows from the observations that $\tilde{\p}_\ell$ has the maximum degree now while the families
    \begin{align*}
        \{\p_\ell, \p_\ell-\p_j: j\in[\ell-1]\} \quad\textrm{and}\quad \{\tilde{\p}_\ell, \tilde{\p}_\ell-\tilde{\p}_j: j\in[\ell-1]\}
    \end{align*}
    are identical, and hence they have the same leading coefficients. 
  \end{proof}

  \begin{example}
      We illustrate the rather abstract content of Proposition \ref{P: PET I} and its technical proof for the family $$\CP=\{n^2 \bv_1, (n^2+n)\bv_2\},$$ where $\bv_1, \bv_2\in\Z^D$ and $D\in\N$. Let $f_0, f_1, f_2:\F_p^D\to\C$ and $$\delta = \abs{\E_\bx\E_n f_0(\bx)f_1(\bx+n^2\bv_1)f_2(\bx+(n^2+n)\bv_2)}.$$ An application of the Cauchy Schwarz inequality in $\bx$ gives
      \begin{align*}
          \delta^2\leq \E_{h_1}\E_\bx\E_n &f_1(\bx+n^2\bv_1)\overline{f_1}(\bx+(n+h_1)^2\bv_1)\\
          \nonumber
          &f_2(\bx+(n^2+n)\bv_2)\overline{f_2}(\bx+((n+h_1)^2+(n+h_1))\bv_2).
      \end{align*}
      After the change of variables $\bx\mapsto\bx-n^2\bv_1$, we get
      \begin{align}\label{E: PET example 1}
          \delta^2 \leq \E_{h_1}\E_\bx\E_n &f_{0,h_1}(\bx)f_{1,h_1}(\bx+2 h_1 n\bv_1)
           f_{2,h_1}(\bx+n^2(\bv_2-\bv_1)+n\bv_2)\\ 
            \nonumber
            &f_{3, h_1}(\bx+n^2(\bv_2-\bv_1)+ (2h_1 n + n)\bv_2)
      \end{align}
      for
      \begin{align*}
          f_{j,h_1}(\bx) = \begin{cases} f_1(\bx),\; &j=0\\
                \overline{f_1}(\bx + h_1^2\bv_1),\; &j=1\\
                f_2(\bx),\; &j=2\\
                \overline{f_2}(\bx + (h_1^2+h_1)\bv_2),\; &j=3.
          \end{cases}
      \end{align*}
      We note that in \eqref{E: PET example 1}, we are dealing with the polynomial family
      \begin{align*}
          \partial_1\CP = \{2 h_1 n\bv_1,\; n^2(\bv_2-\bv_1)+n\bv_2,\; n^2(\bv_2-\bv_1)+ (2h_1 n +n)\bv_2\}
      \end{align*}
      Subsequently, we apply the Cauchy-Schwarz inequality in $\bx, h_1$ to \eqref{E: PET example 1}, obtaining
      \begin{align*}
          \delta^4 \leq \E_{h_1, h_2}\E_\bx\E_n &f_{1,h_1}(\bx+2 h_1 n\bv_1)\overline{f_{1,h_1}}(\bx+2 h_1 (n+h_2)\bv_1)\\
           &f_{2,h_1}(\bx+n^2(\bv_2-\bv_1)+n\bv_2)\overline{f_{2,h_1}}(\bx+(n+h_2)^2(\bv_2-\bv_1)+(n+h_2)\bv_2)\\
           &f_{3,h_1}(\bx+n^2(\bv_2-\bv_1)+ (2h_1 n +n)\bv_2)\\
           &\overline{f_{3,h_1}}(\bx+(n+h_2)^2(\bv_2-\bv_1)+ (2h_1 (n+h_2) +(n+h_2))\bv_2).
      \end{align*}
      Performing the change of variables $\bx\mapsto \bx - 2 h_1 n\bv_1$, we have
      \begin{align}\label{E: PET example 2}
          \delta^4 \leq \E_{h_1, h_2}\E_\bx\E_n &f_{0,h_1, h_2}(\bx)f_{1, h_1, h_2}(\bx+n^2(\bv_2-\bv_1)+n\bv_2 - 2h_1 n\bv_1)\\
          \nonumber &f_{2, h_1, h_2}(\bx+(n^2+2h_2 n)(\bv_2-\bv_1)+n\bv_2- 2h_1 n\bv_1)\\
          \nonumber &f_{3, h_1, h_2}(\bx+(n^2 + 2h_1 n)(\bv_2-\bv_1) + n \bv_2)\\
          \nonumber &f_{4, h_1, h_2}(\bx+(n^2+ 2(h_1+h_2) n)(\bv_2-\bv_1)+n\bv_2),
      \end{align}
      where the functions $f_{j, h_1, h_2}$ are given by
       \begin{align*}
          f_{j,h_1, h_2}(\bx) = \begin{cases} f_{1,h_1}(\bx)\overline{f_{1,h_1}}(\bx+2 h_1 h_2\bv_1),\; &j=0\\
          f_{2, h_1}(\bx),\; &j=1\\
          \overline{f_{2, h_1}}(\bx + h_2^2(\bv_2-\bv_1) + h_2\bv_2),\; &j=2\\
          f_{3, h_1}(\bx),\; &j=3\\
          \overline{f_{3,h_1}}(\bx+h_2^2(\bv_2-\bv_1)+ (2h_1 h_2 +h_2)\bv_2),\; &j=4.
          \end{cases}
      \end{align*}
    The polynomial family that we see in \eqref{E: PET example 2} is $\partial_1\partial_1\CP$. 
    
    We apply the Cauchy-Schwarz inequality one more time, this time in $h_1, h_2,\bx$ to \eqref{E: PET example 2}. In doing so, we obtain an average over a product of 8 functions:
    \begin{align*}
    \delta^8 \leq \E_{h_1, h_2, h_3}\E_\bx\E_n &f_{1, h_1, h_2}(\bx+n^2(\bv_2-\bv_1)+n\bv_2 - 2h_1 n\bv_1)\\
    &\overline{f_{1, h_1, h_2}}(\bx+(n+h_3)^2(\bv_2-\bv_1)+(n+h_3)\bv_2 - 2h_1 (n+h_3)\bv_1)\\
          \nonumber &f_{2, h_1, h_2}(\bx+(n^2+2h_2 n)(\bv_2-\bv_1)+n\bv_2- 2h_1 n\bv_1)\\
          &\overline{f_{2, h_1, h_2}}(\bx+((n+h_3)^2+2h_2 (n+h_3))(\bv_2-\bv_1)+(n+h_3)\bv_2- 2h_1 (n+h_3)\bv_1)\\
          \nonumber &f_{3, h_1, h_2}(\bx+(n^2 + 2h_1 n)(\bv_2-\bv_1) + n \bv_2)\\
          &\overline{f_{3, h_1, h_2}}(\bx+((n+h_3)^2 + 2h_1 (n+h_3))(\bv_2-\bv_1) + (n+h_3) \bv_2)\\
          \nonumber &f_{4, h_1, h_2}(\bx+(n^2+ 2(h_1+h_2) n)(\bv_2-\bv_1)+n\bv_2)\\
          &\overline{f_{4, h_1, h_2}}(\bx+((n+h_3)^2+ 2(h_1+h_2) (n+h_3))(\bv_2-\bv_1)+(n+h_3)\bv_2).
    \end{align*}
    Performing the change of variables $\bx\mapsto\bx - (n^2(\bv_2-\bv_1)+n\bv_2 - 2h_1 n\bv_1)$, we get
    \begin{align}\label{E: PET example 3}
            \delta^8 \leq \E_{h_1, h_2, h_3}\E_\bx\E_n &f_{0, \uh}(\bx)f_{1, \uh}(\bx+2h_3 n(\bv_2-\bv_1))f_{2, \uh}(\bx+2h_2 n(\bv_2-\bv_1))\\
          \nonumber &f_{3, \uh}(\bx+2(h_2+h_3)n(\bv_2-\bv_1)) f_{4,\uh }(\bx+2h_1 n\bv_2)\\
          \nonumber &f_{5, \uh}(\bx+2 h_3 n (\bv_2-\bv_1) + 2h_1 n\bv_2)
           f_{6, \uh}(\bx+2h_2 n(\bv_2-\bv_1)+ 2h_1 n\bv_2)\\
          \nonumber &f_{7, \uh}(\bx+2(h_2+h_3)n(\bv_2-\bv_1) + 2h_1 n\bv_2),
    \end{align}
    where
    \begin{align*}
        f_{j,\uh}(\bx) = \begin{cases} f_{1,\uh}(\bx),\; &j=0\\
        \overline{f_{1, h_1, h_2}}(\bx+h_3^2(\bv_2-\bv_1)+h_3\bv_2 - 2h_1 h_3\bv_1),\; &j=1\\
        f_{2, h_1, h_2}(\bx),\; &j=2\\
        \overline{f_{2, h_1, h_2}}(\bx+(h_3^2+2h_2 h_3)(\bv_2-\bv_1)+h_3\bv_2- 2h_1 h_3\bv_1),\; &j=3\\
        f_{3, h_1, h_2}(\bx),\; &j=4\\
        \overline{f_{3, h_1, h_2}}(\bx+(h_3^2 + 2h_1 h_3)(\bv_2-\bv_1) + h_3 \bv_2),\; &j=5\\
        f_{4, h_1, h_2}(\bx),\; &j=6\\
        \overline{f_{4, h_1, h_2}}(\bx+(h_3^2+ 2(h_1+h_2) h_3)(\bv_2-\bv_1)+h_3\bv_2),\; &j=7.
        \end{cases}
    \end{align*}

    The polynomials in the average \eqref{E: PET example 3} correspond to the family $\partial_1\partial_1\partial_1\CP$, and they are all linear in $n$. Moreover, it is easy to check that $f_{7,\uh}(\bx) = \overline{f_2(\bx+\bq(\uh))}$ for some polynomial $\bq\in\Z^D[\uh]$.
    By Lemma \ref{L: linear averages}, we have
    \begin{align*}
        \delta^8\leq \E_{\uh\in\F_p^3}\norm{f_2}_{\substack{2(h_2+h_3)(\bv_2-\bv_1) + 2h_1 \bv_2,\ 2h_2(\bv_2-\bv_1) + 2h_1 \bv_2,\ 2 h_3(\bv_2-\bv_1) + 2h_1 \bv_2,\\ 2h_1 \bv_2,\ 2(h_2+h_3)(\bv_2-\bv_1),\ 2h_2(\bv_2-\bv_1),\  2h_3(\bv_2-\bv_1)}},
    \end{align*}
    i.e. the polynomials $\bc_1, \ldots, \bc_7$ take the form
    \begin{gather*}
        2(h_2+h_3)(\bv_2-\bv_1) + 2h_1 \bv_2,\ 2h_2(\bv_2-\bv_1) + 2h_1 \bv_2,\ 2 h_3(\bv_2-\bv_1) + 2h_1 \bv_2,\\ 2h_1 \bv_2,\ 2(h_2+h_3)(\bv_2-\bv_1),\ 2h_2(\bv_2-\bv_1),\  2h_3(\bv_2-\bv_1).
    \end{gather*}
    They can be rewritten in the form
    \begin{align*}
        2h_1 \eps_1\bv_2 + 2h_2 \eps_2(\bv_2-\bv_1) + 2h_3 \eps_3(\bv_2-\bv_1)
    \end{align*}
    for $\ueps\in\{0,1\}^3\setminus{\{\underline{0}\}}$. We note that they satisfy the formula \eqref{E: polynomials c_j}, which in our case takes the form
    \begin{align*}
        \sum_{i=1}^3 2(\bv_2-\ba_{w_{j h_i}2})h_i,
    \end{align*}
    with $\ba_{02} =\mathbf{0}, \ba_{12}=\bv_1, \ba_{22} = \bv_2$
    and $w_{jh_1}\in\{0,2\}, w_{jh_2},w_{jh_3}\in\{1,2\}$.
  \end{example} 

\section{Control by a single box norm}\label{S: box norm control}
The control over the counting operator by an average of box norms as provided by Proposition \ref{P: PET I} is insufficient for our purposes. In this section, we use Corollary \ref{C: iterated concatenation for general groups} in order to concatenate the complicated polynomial subgroups appearing in the right hand side of \eqref{E: PET bound inequality} and obtain control of the counting operator by a single box norm. 

We start with a classical upper bound on the zero sets of polynomials.

\begin{lemma}[Schwartz-Zippel lemma]\label{L: zero sets}
Suppose that $g\in\Z[h_1, \ldots, h_s]$ has degree $d\in\N$. Then $\abs{\{\uh\in\F_p^s:\ g(\uh) = 0\}}\leq d p^{s-1}$.
\end{lemma}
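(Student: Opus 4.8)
The statement is the Schwartz–Zippel type bound: a nonzero polynomial $g \in \Z[h_1,\ldots,h_s]$ of degree $d$ has at most $O_{s,d}(p^{s-1})$ zeros in $\F_p^s$. The plan is to argue by induction on the number of variables $s$, with the degree $d$ fixed (and allowing the implied constant to depend on $s$ and $d$). The base case $s = 1$ is the classical fact that a nonzero single-variable polynomial of degree $d$ over the field $\F_p$ has at most $d = O_{d}(1) = O_d(p^0)$ roots; one must of course note that reducing $g$ modulo $p$ could in principle kill it, but since we are counting zeros in $\F_p^s$ we may as well assume $g$ is nonzero as a polynomial over $\F_p$ (if its mod-$p$ reduction vanishes identically the bound is vacuous for $p$ large, or one simply treats that case separately). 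Actually it is cleanest to phrase the whole lemma for a nonzero $g \in \F_p[h_1,\ldots,h_s]$ of degree $\le d$.

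For the inductive step, write $g$ as a polynomial in $h_s$ with coefficients in $\F_p[h_1,\ldots,h_{s-1}]$:
\begin{align*}
    g(h_1,\ldots,h_s) = \sum_{i=0}^{e} g_i(h_1,\ldots,h_{s-1}) h_s^i,
\end{align*}
where $e \le d$ is the largest index with $g_e \not\equiv 0$. Split the count of zeros $(\uh', h_s) \in \F_p^{s-1} \times \F_p$ of $g$ according to whether $\uh' = (h_1,\ldots,h_{s-1})$ is a zero of $g_e$ or not. If $g_e(\uh') \ne 0$, then $g(\uh', \cdot)$ is a nonzero polynomial in $h_s$ of degree $e$, hence has at most $e \le d$ roots; this contributes at most $d \cdot p^{s-1}$ tuples. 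If $g_e(\uh') = 0$, then — since $g_e$ is a nonzero polynomial in $s-1$ variables of degree $\le d$ — the inductive hypothesis gives at most $O_{s-1,d}(p^{s-2})$ choices of $\uh'$, and for each there are at most $p$ choices of $h_s$, contributing $O_{s-1,d}(p^{s-1})$. Adding the two contributions gives the bound $O_{s,d}(p^{s-1})$, completing the induction.

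There is no serious obstacle here; the only point requiring a modicum of care is the interaction between "degree $d$ over $\Z$" and "degree $\le d$ over $\F_p$": reduction mod $p$ can only lower the degree, never raise it, and if it annihilates $g$ entirely the statement is trivially true for all large $p$. I would dispatch this at the outset by remarking that it suffices to prove the claim for a nonzero $g \in \F_p[h_1,\ldots,h_s]$ of degree at most $d$, and then run the two-variable-splitting induction above. One should also make sure the implied constant, which by the recursion satisfies something like $C_{s,d} \le d + C_{s-1,d}$, is indeed $O_{s,d}(1)$ — it is, since unwinding the recursion gives $C_{s,d} \le sd$.
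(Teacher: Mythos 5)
Your proof is correct and takes essentially the same approach as the paper: induction on the number of variables, writing $g$ as a polynomial in the last variable and splitting according to whether the relevant coefficient polynomial vanishes at $\uh'$ (the paper splits on whether all coefficients $g_j(\uh')$ vanish, you split on the leading one — an immaterial difference). The paper handles the mod-$p$ issue the same way you do, by assuming $p$ large enough relative to the coefficients of $g$ so that its reduction is nonconstant.
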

In what follows, we let $A_{d,s'} := \left\{\uu\in\N_0^{s'}:\ |\uu|\leq d \right\}$. The result below can be seen as the quantitative, finite-field version of \cite[Proposition 5.2]{DFMKS22}.
    \begin{proposition}[Concatenation of box norms along polynomials]\label{P: polynomial concatenation}
        Let $d, D, s, s'\in\N$. There exists $s''=O_{d, s}(1)$ with the following property: for all 1-bounded functions $f:\F_p^D\to\C$ and polynomials $\bc_1, \ldots, \bc_s\in\Z[h_1, \ldots, h_{s'}]$ of the form
        \begin{align*}
            \bc_j(\uh) =  \sum_{\substack{\uu\in A_{d, s'}}}\bv_{j\uu}\uh^{\uu}, 
        \end{align*}
        we have
        \begin{align*}
            \brac{\E_{\uh\in\F_p^{s'}}\norm{f}_{{\bc_1(\uh)}, \ldots, {\bc_s(\uh)}}^{2^{s}}}^{O_{d, s, s'}(1)}\leq \norm{f}_{G_1^{\times s''}, \ldots, G_s^{\times s''}} + O_{d, s, s'}(p^{-1}),
        \end{align*}
        where $G_j = \langle \bv_{j\uu}:\ \uu\in A_{d,s'}\rangle$ for $j\in[s]$.
    \end{proposition}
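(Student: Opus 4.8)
# Proof Proposal for Proposition \ref{P: polynomial concatenation}

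The plan is to combine the iterated concatenation result (Corollary \ref{C: iterated concatenation for general groups}) with the zero-set bound (Lemma \ref{L: zero sets}) in order to replace the "polynomial" subgroups $\langle \bc_j(\uh)\rangle$ by the fixed, structurally simple subgroups $G_j = \langle \bv_{j\uu}:\ \uu\in A_{d,s'}\rangle$. The guiding intuition is that each $\bc_j(\uh)$ is a $\Z$-linear combination of the generators $\bv_{j\uu}$ with polynomial coefficients $\uh^\uu$, so by taking enough sums $\bc_j(\uh^{(1)}) + \cdots + \bc_j(\uh^{(m)})$ for generic choices of the tuples $\uh^{(i)}$, the resulting subgroup should equal all of $G_j$; the genericity condition fails only on a set of density $O(p^{-1})$ by the zero-set lemma. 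The key point that makes this work is the remark emphasized right after Corollary \ref{C: iterated concatenation for general groups}: in the sum $\sum_{b_1, \ldots, b_k\in\{0,1\}} H_{i_{\ueps_{1b_1}\cdots\ueps_{kb_k}} j}$ the $2^k$ tuples $\ueps_{1b_1}\cdots\ueps_{kb_k}$ are all \emph{distinct}, so we genuinely obtain a sum of $2^k$ independent copies of the subgroup associated to $j$, evaluated at $2^k$ independent $\uh$-variables.

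First I would set up the indexing: apply Corollary \ref{C: iterated concatenation for general groups} with $I = \F_p^{s'}$, with the index $i$ running over $\uh\in\F_p^{s'}$, and with $H_{\uh j} = \langle \bc_j(\uh)\rangle$. Choosing $k = k(d,s')$ large enough — specifically so that $2^k$ exceeds $|A_{d,s'}|$, which suffices for a generic $2^k$-fold sum of the one-dimensional subgroups $\langle \bc_j(\cdot)\rangle$ to span $G_j$ — Corollary \ref{C: iterated concatenation for general groups} produces natural numbers $w_1, \ldots, w_k = O_{s,k}(1)$ and a bound of the form
\begin{align*}
    \brac{\E_{\uh\in\F_p^{s'}}\norm{f}_{\bc_1(\uh), \ldots, \bc_s(\uh)}^{2^s}}^{O_{d,s,s'}(1)} \leq \E_{\uh^{(1)}, \ldots, \uh^{(2^k)}}\norm{f}_{\left\langle \sum_{t=1}^{2^k}\bc_j(\uh^{(t)})\right\rangle^{\times s},\ j=1,\ldots,s} + O(p^{-1}),
\end{align*}
where the $2^k$ tuples $\uh^{(t)}\in\F_p^{s'}$ are the $\uh$-variables attached to the $2^k$ distinct binary strings appearing in the corollary. (The $p^{-1}$ error here is spurious at this stage but I keep it to match the final statement.) One subtlety: the corollary gives box norms of some degree $O_{s,k}(1)$ along each of the $s$ concatenated direction-families rather than a single power; this is harmless since we only need \emph{a} bounded-degree box norm, and monotonicity lets us pad all of them up to a common degree.

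The main step is then the "genericity" argument. For each $j\in[s]$, the subgroup $\langle \sum_{t=1}^{2^k}\bc_j(\uh^{(t)})\rangle$ is generated by the single vector $\sum_{t}\bc_j(\uh^{(t)}) = \sum_{\uu\in A_{d,s'}}\bv_{j\uu}\brac{\sum_t (\uh^{(t)})^\uu}$, and more importantly, by varying which of the $2^k$ tuples we include we obtain the subgroup $\langle \bc_j(\uh^{(t)}):\ t=1,\ldots,2^k\rangle$; but actually the cleanest route is: the subgroup appearing in the box norm, being summed over $s$ copies and then over all $2^k$ variables, contains $\langle \bc_j(\uh^{(t)}):\ t\rangle$, and this equals $G_j$ unless the $|A_{d,s'}|\times 2^k$ matrix with entries $(\uh^{(t)})^\uu$ has rank less than $|A_{d,s'}|$. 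The vanishing of every maximal minor of this matrix is a nonzero polynomial condition in the $2^k s'$ variables $\uh^{(t)}$ — nonzero because, e.g., one can exhibit explicit tuples making the monomials $\uh^\uu$, $\uu\in A_{d,s'}$, linearly independent (a Vandermonde-type construction). By Lemma \ref{L: zero sets}, the set of "bad" tuples $(\uh^{(1)},\ldots,\uh^{(2^k)})$ on which the rank drops has density $O_{d,s,s'}(p^{-1})$. On the complementary "good" set, every subgroup in the box norm contains $G_j$ in its $j$-th slot — and conversely is contained in $G_j$ since each $\bc_j(\uh^{(t)})\in G_j$ — hence equals $G_j$; by the box-norm monotonicity property (ii), $\norm{f}_{\ldots}\leq \norm{f}_{G_1^{\times s''},\ldots,G_s^{\times s''}}$ for appropriate $s'' = O_{d,s}(1)$ absorbing the degree. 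Bounding the contribution of the bad set trivially by $1$ (using $1$-boundedness of $f$, so the box norm is $\leq 1$) gives the claimed $O(p^{-1})$ error, and the proposition follows.

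I expect the main obstacle to be the bookkeeping in the genericity step: one has to verify carefully that the subgroup actually appearing inside the box norm on the right of Corollary \ref{C: iterated concatenation for general groups} — which is $\sum_{b_1,\ldots,b_k} \langle\bc_j(\uh_{\ueps_{1b_1}\cdots\ueps_{kb_k}})\rangle$ with the $2^k$ tuples forced distinct — does contain each individual $\langle\bc_j(\uh^{(t)})\rangle$, so that spanning $G_j$ reduces to a clean rank condition on the monomial-evaluation matrix. The other point requiring care is exhibiting a single explicit choice of the $\uh^{(t)}$ for which the monomials $\{\uh^\uu : \uu\in A_{d,s'}\}$ are evaluated to a full-rank configuration, to guarantee the minor polynomials are not identically zero; a product/Vandermonde construction (e.g. choosing the $t$-th tuple so that distinct $\uu$ give distinct values) handles this, but it must be written down so that Lemma \ref{L: zero sets} genuinely applies.
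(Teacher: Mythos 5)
Your proposal is correct and follows essentially the same route as the paper: apply Corollary \ref{C: iterated concatenation for general groups} with $H_{\uh,j}=\langle \bc_j(\uh)\rangle$ and $k$ minimal with $2^k\geq |A_{d,s'}|$, use the distinctness of the tuples $\ueps_{1b_1}\cdots\ueps_{kb_k}$ to reduce spanning of $G_j$ to full rank of the monomial-evaluation matrix $(\uh_{\ueps_\ub}^{\uu})$, control the degenerate tuples via Lemma \ref{L: zero sets} and $1$-boundedness, and finish with the subgroup-monotonicity property of box norms. The only cosmetic deviation is that you certify the determinant is not identically zero by a Vandermonde-type evaluation, where the paper instead observes that the diagonal monomial occurs with nonzero coefficient; both are fine, and your self-correction that the concatenated subgroup is $\langle \bc_j(\uh_{\ueps_\ub}):\ \ub\rangle$ rather than the cyclic group generated by the sum is exactly the right reading.
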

    Before we prove Proposition \ref{P: polynomial concatenation} in full, we illustrate the proof with a simple example.
    \begin{example}
        Suppose first that $s=1$ and $\bc(\uh) = \bu + \sum_{i=1}^{s'}\bv_{i}h_i$, so that we are dealing with an expression 
        \begin{align*}
            \E_{\uh\in\F_p^{s'}}\norm{f}_{\bu+\bv_1h_1 + \cdots + \bv_{s'} h_{s'}}^{2}.
        \end{align*}
        By the inequality \eqref{E: concatenation example}, we have
        \begin{align*}
            \brac{\E_{\uh\in\F_p^{s'}}\norm{f}_{\bu+\bv_1h_1 + \cdots + \bv_{s'} h_{s'}}^{2}}^{2^{k}} \leq \E_{\substack{\uh_1, \ldots, \uh_{2^k}\in\F_p^{s'}}}\norm{f}_{\sum\limits_{l\in[2^k]} \langle \bu+ h_{1l}\bv_1 + \cdots + h_{s'l}\bv_{s'}\rangle}^2,
        \end{align*}
        where we are setting $\uh_l = (h_{1l}, \ldots, h_{s'l})$.
        We pick $k = \lceil \log_2 s'\rceil$, so that $k$ is the smallest natural number satisfying $2^k\geq s'$, and we claim that for almost all tuples $(\uh_1, \ldots, \uh_{2^k})$, the group $\sum\limits_{l\in[2^k]} \langle \bu+ h_{1l}\bv_1 + \cdots + h_{s'l}\bv_{s'}\rangle$ equals all of $\langle \bu, \bv_1, \ldots, \bv_{s'}\rangle$. Since
        \begin{align*}
        \sum\limits_{l\in[2^k]} \langle \bu+ h_{1l}\bv_1 + \cdots + h_{s'l}\bv_{s'}\rangle = \langle \bu+ h_{1l}\bv_1 + \cdots + h_{s'l}\bv_{s'}:\; l\in[2^k] \rangle,
        \end{align*}
        the claim will in turn follow whenever
        \begin{align}\label{E: equality of groups}
            \langle (h_{1l}, \ldots,  h_{s'l}):\; l\in[2^k]  \rangle = \F_p^{s'}.
        \end{align}
        Using $2^k\geq {s'}$, we observe that the matrix $(h_{1l}, \ldots,  h_{s'l})_{l\in [s']}$ is square. Its determinant is a nonconstant polynomial in $(h_{il})_{i,l\in[s']}$, and Lemma \ref{L: zero sets} guarantees that it has full rank for all but a $O_{s'}(p\inv)$ proportion of $(h_{il})_{i,l\in[s']}$. It follows that \eqref{E: equality of groups} holds for all except a $O_{s'}(p^{-1})$ proportion of $(\uh_1, \ldots, \uh_{2^k})$, and so 
        \begin{align*}
            \brac{\E_{\uh\in\F_p^{s'}}\norm{f}_{\bu+\bv_1h_1 + \cdots + \bv_{s'} h_{s'}}^{2}}^{2^{k}} \leq \norm{f}_{\langle\bu, \bv_1, \ldots, \bv_{s'}\rangle}^2 + O_{s'}(p\inv).
        \end{align*}
    \end{example}
    
    \begin{proof}[Proof of Proposition \ref{P: polynomial concatenation}]
        For $j\in[s]$ and $\uh\in\F_p^{s'}$, we set $H_{j\uh} = \langle \bc_j(\uh)\rangle$. For some $k\in\N$ to be chosen later, Corollary \ref{C: iterated concatenation for general groups} gives us $w = O_{k,s}(1)$ so that
        \begin{align}\label{E: polynomial concatenation}
            \brac{\E_{\uh\in\F_p^{s'}}\norm{f}_{{\bc_1(\uh)}, \ldots, {\bc_s(\uh)}}^{2^{s}}}^{O_{s, k}(1)}\leq \E_{\substack{\uh_1, \ldots, \uh_w\in \F_p^{s'}}}\norm{f}_{\substack{\{H_{j \uh_{l_1}}+\cdots + H_{j \uh_{l_{2^k}}}:\ j\in[s],\ 1\leq l_1 < \cdots < l_{2^k} \leq w\}}}.
        \end{align}
        Unpacking the definitions of the groups $H_{j\uh}$ and polynomials $\bc_j$, we obtain the identity
        \begin{align}\label{E: sum of polynomial groups}
            H_{j \uh_{l_1}}+\cdots + H_{j \uh_{l_{2^k}}} = \left\langle \sum_{\substack{\uu\in A_{d,s'}}}\bv_{j\uu}\uh_{l_i}^\uu:\  i\in[2^k] \right\rangle.
        \end{align}
        Our goal is to show that for all choices of $j\in[s]$ and $1\leq l_1 < \cdots < l_{2^k} \leq w$ and almost all tuples $(\uh_1, \ldots,\uh_w)$, the group $G_j$ lies inside \eqref{E: sum of polynomial groups} as long as $k$ is sufficiently large. Since there are at most $O_{k, s}(1)$ choices of $1\leq l_1 < \cdots < l_{2^k} \leq w$, it suffices to show by the union bound that for each \textit{fixed} choice of $j\in[s]$ and $1\leq l_1 < \cdots < l_{2^k} \leq w$, the group $G_j$ lies inside \eqref{E: sum of polynomial groups} for almost all tuples $(\uh_{l_1}, \ldots, \uh_{l_{2^k}})$ for sufficiently large $k$.
        This will follow whenever
        \begin{align*}
            \left\langle \brac{\uh_{l_i}^\uu}_{\substack{\uu\in A_{d,s'}}}:\ i\in[2^k]\right\rangle 
        \end{align*}
        spans all of $\F_p^{|A_{d,s'}|}$. To establish the latter statement, it suffices to show that the matrix 
        $$M_1 = \brac{\uh_{l_i}^\uu}_{\substack{\uu\in A_{d,s'},\; i\in[2^k]}},$$ 
        indexed in one direction by $\tau = |A_{d,s'}|$ possible exponents $\uu$ and in the other direction by $2^k$ choices of $i\in[2^k]$, has full rank. Picking the smallest $k\in\N$ satisfying $2^k\geq \tau$ (hence $k=O_{d,s'}(1)$), we can find a subset  $B\subseteq[2^k]$ with $|B|=|A_{d,s'}|=\tau$, so that the submatrix
        \begin{align}\label{E: polynomial matrix}
            M_2 = \brac{\uh_{l_i}^\uu}_{\substack{\uu\in A_{d,s'},\ i\in B}}
        \end{align}
        is a square matrix and retains the rank of $M_1$ as a matrix over $\Z[(\uh_{l_i})_{i\in B}]$. 
        For each $(\uh_{l_i})_{i\in B}\in\F_p^{s'\tau}$, we define $g((\uh_{l_i})_{i\in B})$ to be the determinant of \eqref{E: polynomial matrix}. The function $g$ is a polynomial map from $\F_p^{s'\tau}$ to $\F_p$; moreover, it is nonconstant. One way to see it is to enumerate the elements of $A_{d,s'}$ in some arbitrary fashion (as $\uu_1, \ldots, \uu_\tau$) and observe that the coefficient of the monomial $\prod_{i\in B} \uh_{l_i}^{\uu_i}$ in $g$ is nonzero\footnote{This monomial does appear at least once with nonzero coefficient when computing $g$ inductively, and since $\uh_{l_i}\neq \uh_{l_{i'}}$ for $i\neq i'$, it appears exactly once in the computation. For instance, for $s', d=2$, the matrix $M_2$ takes the form 
        $M_2 = \begin{pmatrix}
        h_{11}^2 & h_{11}h_{12} & h_{12}^2 & h_{11} & h_{12} & 1\\
        h_{21}^2 & h_{21}h_{22} & h_{22}^2 & h_{21} & h_{22} & 1\\
        h_{31}^2 & h_{31}h_{32} & h_{32}^2 & h_{31} & h_{32} & 1\\
        h_{41}^2 & h_{41}h_{42} & h_{42}^2 & h_{41} & h_{42} & 1\\
        h_{51}^2 & h_{51}h_{52} & h_{52}^2 & h_{51} & h_{52} & 1\\
        h_{61}^2 & h_{61}h_{62} & h_{62}^2 & h_{61} & h_{62} & 1\\
        \end{pmatrix}$ 
        (upon relabelling $\uh_{l_i}$ as $\uh_i$), 
        and it is clear that $h_{11}^2 h_{21}h_{22} h_{32}^2 h_{41} h_{52}$, the product of the diagonal entries of $M_2$, has a nonzero coefficient in $\det(M_2)$.
        }.  
        
        By Lemma \ref{L: zero sets}, for all but a $O_{d, s, s'}(p^{-1})$ proportion of values 
        $(\uh_{l_i})_{i\in B}\in\F_p^{s'\tau}$, the polynomial $g((\uh_{l_i})_{i\in B})$ is nonzero. For all these ``good'' values, the matrix $M_2$ is invertible, hence $M_1$ has full rank, and hence the group \eqref{E: sum of polynomial groups} contains $G_j$. Taking the union bound, we conclude that for all but a $O_{d, s, s'}(p^{-1})$ proportion of ``bad'' values $(\uh_1, \ldots, \uh_w)\in\F_p^{s'w}$, all the groups \eqref{E: sum of polynomial groups} in the right hand side of \eqref{E: polynomial concatenation} contain $G_j$. Letting
        \begin{align*}
            s'' = |\{(l_1, \ldots, l_{2^k}):\; 1\leq l_1 < \cdots < l_{2^k} \leq w\}| = O_{d,s,s'}(1),
        \end{align*}
        we then have
        \begin{multline}\label{E: polynomial concatenation 2}
            \brac{\E_{\substack{\uh_1, \ldots, \uh_w\in \F_p^{s'}}}\norm{f}_{\substack{\{H_{j \uh_{l_1}}+\cdots + H_{j \uh_{l_{2^k}}}:\ j\in[s],\ 1\leq l_1 < \cdots < l_{2^k} \leq w\}}}}^{O_{d, s, s'}(1)}\\
            \leq \norm{f}_{G_1^{\times s''}, \ldots, G_s^{\times s''}} + O_{d, s, s'}(p^{-1}),
        \end{multline}
        where the error term corresponds to the ``bad'' values $(\uh_1, \ldots, \uh_w)\in\F_p^{s'w}$. The result follows upon combining \eqref{E: polynomial concatenation} with \eqref{E: polynomial concatenation 2} and the bound on $k$. 
    \end{proof}

Propositions \ref{P: PET I} and \ref{P: polynomial concatenation} together give the following control by a single box norm. The proposition below should be compared with (and was inspired by) \cite[Theorem 2.5]{DFMKS22}, which gives its qualitative, ergodic theoretic counterpart.
\begin{proposition}[Control by a single box norm]\label{P: single box norm control}
    Let $d, D, \ell\in\N$. There exists a natural number $s=O_{d, D, \ell}(1)$ with the following property: for all essentially distinct polynomials $\p_1, \ldots, \p_\ell\in\Z[n]^D$ with degrees at most $d$
    and coefficients $\p_j(n) = \sum_{i=0}^d \a_{ji} n^i$, and for all 1-bounded functions $f_0, \ldots, f_\ell:\F_p^D\to\C$, we have the bound
			\begin{multline*}
				\abs{\E_{\x} \E_{n\in\F_p}f_0(\x)f_1(\x+\p_1(n))\cdots f_\ell(\x+\p_\ell(n))}^{O_{d, D, \ell}(1)}\\ \leq \norm{f_\ell}_{\a_{\ell d_{\ell 0}}^{\times s}, (\a_{\ell d_{\ell 1}}-\a_{1 d_{\ell 1}})^{\times s}, \ldots, (\a_{\ell d_{\ell(\ell-1)}}-\a_{(\ell-1) d_{\ell(\ell-1)}})^{\times s}} + O_{d, D, \ell}(p\inv),
			\end{multline*}
   where $d_{\ell j} = \deg(\p_\ell - \p_j)$ and $\bp_0 = \mathbf{0}$. 
\end{proposition}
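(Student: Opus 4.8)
The plan is to derive the bound by composing Propositions~\ref{P: PET I} and~\ref{P: polynomial concatenation} and then reading off the stated directions from the explicit description of the polynomials $\bc_j$. First I would apply Proposition~\ref{P: PET I}: since $\p_1,\ldots,\p_\ell$ are essentially distinct with $\deg\p_\ell=\max_j\deg\p_j=d$, it furnishes $s_0,s'=O_{d,D,\ell}(1)$ and nonzero polynomials $\bc_1,\ldots,\bc_{s_0}:\Z^{s'}\to\Z^D$ of $\uh$-degree at most $d-1$, of the form \eqref{E: polynomials c_j}, controlling $\abs{\E_{\x}\E_{n}f_0(\x)f_1(\x+\p_1(n))\cdots f_\ell(\x+\p_\ell(n))}^{2^{s'}}$ by $\E_{\uh\in\F_p^{s'}}\norm{f_\ell}_{\bc_1(\uh),\ldots,\bc_{s_0}(\uh)}^{2^{s_0}}$. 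I would then feed this average into Proposition~\ref{P: polynomial concatenation}, applied with $d-1$ in place of $d$: at the cost of an $O_{d,D,\ell}(1)$ power and an additive $O_{d,D,\ell}(p^{-1})$ error it upgrades the average to a single box norm $\norm{f_\ell}_{G_1^{\times s''},\ldots,G_{s_0}^{\times s''}}$ with $s''=O_{d,D,\ell}(1)$, where $G_j$ is the subgroup of $\F_p^D$ generated by the coefficient vectors of $\bc_j$. As soon as $p$ is larger than every multinomial coefficient $c_\uu$ appearing in \eqref{E: polynomials c_j} these become units, so the scalars $c_\uu$ may be discarded and $G_j=\langle\,\a_{\ell(|\uu|+1)}-\a_{w_{j\uu}(|\uu|+1)}:\uu\in\N_0^{s'},\ |\uu|\le d-1\,\rangle$, with the convention $\a_{0i}:=\mathbf 0$.

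The technical heart is then to show that, for $p$ large, each $G_j$ contains one of the $\ell$ ``principal'' directions $\langle\a_{\ell d_{\ell 0}}\rangle,\langle\a_{\ell d_{\ell 1}}-\a_{1 d_{\ell 1}}\rangle,\ldots,\langle\a_{\ell d_{\ell(\ell-1)}}-\a_{(\ell-1)d_{\ell(\ell-1)}}\rangle$, where $\a_{\ell d_{\ell m}}-\a_{m d_{\ell m}}$ is the leading coefficient of $\p_\ell-\p_m$ and $\p_0:=\mathbf 0$ (so $d_{\ell 0}=\deg\p_\ell=d$). The guiding mechanism is a short degree-chasing argument using property~(i) of Proposition~\ref{P: PET I}, namely that $w_{j\uu}$ depends on $\uu$ only through $\supp(\uu)$: fix $j$, let $r^{\ast}$ be the largest $r\in\{1,\ldots,d\}$ for which $\bc_j$ possesses a monomial $\uh^{\uu}$ with $|\uu|+1=r$ and $\a_{\ell r}\ne\a_{w_{j\uu}r}$ (such $r^{\ast}$ exists since $\bc_j\not\equiv0$), choose a witness $\uu$ and put $m:=w_{j\uu}$, $S:=\supp(\uu)$; then $m\ne\ell$ and $d_{\ell m}:=\deg(\p_\ell-\p_m)\ge r^{\ast}$. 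If one had $d_{\ell m}>r^{\ast}$, then substituting for $\uu$ an exponent of size $d_{\ell m}-1$ supported on the same set $S$ (possible since $|S|\le r^{\ast}-1<d_{\ell m}-1\le d-1$) would, by property~(i), produce a monomial of $\bc_j$ witnessing a still larger active degree, contradicting the maximality of $r^{\ast}$; hence $d_{\ell m}=r^{\ast}$ and $\a_{\ell r^{\ast}}-\a_{m r^{\ast}}\in G_j$ is exactly the leading coefficient of $\p_\ell-\p_m$, a principal direction. I expect this to be the genuine obstacle: it is the finite-field, finitary avatar of the coefficient bookkeeping of \cite[Section~5, Theorem~2.5]{DFMKS22}, and one has to take care of the exponents supported on the empty set (which contribute only in degree~$1$, so the substitution above is unavailable when $r^{\ast}=1$), of principal directions that may coincide, and of the fact that all the relevant integer leading coefficients survive reduction modulo a large $p$; closing the $r^{\ast}=1$ case cleanly may need an additional structural feature of the PET output from \cite{DFMKS22,DKS22} — for instance the vanishing of the $\uh$-constant term of each $\bc_j$ — beyond what is literally recorded in Proposition~\ref{P: PET I}.

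Granting that claim, the remainder is routine monotonicity. For each $j$ I would replace $G_j$, in all $s''$ of its slots, by the principal direction it contains; by the subgroup inequality~\eqref{E: Gowers norms for subgroups} this only increases $\norm{f_\ell}_{G_1^{\times s''},\ldots,G_{s_0}^{\times s''}}$. Then, by monotonicity of box norms in the number of directions, I would pad the multiplicity of each principal direction up to $s:=s_0 s''=O_{d,D,\ell}(1)$ and adjoin any principal direction that has not yet appeared, obtaining $\norm{f_\ell}_{G_1^{\times s''},\ldots,G_{s_0}^{\times s''}}\le\norm{f_\ell}_{\a_{\ell d_{\ell 0}}^{\times s},\,(\a_{\ell d_{\ell 1}}-\a_{1 d_{\ell 1}})^{\times s},\,\ldots,\,(\a_{\ell d_{\ell(\ell-1)}}-\a_{(\ell-1)d_{\ell(\ell-1)}})^{\times s}}$. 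Chaining this with the two inequalities from the first step, and absorbing $2^{s'}$ together with the other fixed exponents into a single $O_{d,D,\ell}(1)$ power, yields the proposition.
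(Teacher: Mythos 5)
Your proposal follows essentially the same route as the paper's proof: Proposition \ref{P: PET I} followed by Proposition \ref{P: polynomial concatenation} (with the $c_\uu$ discarded for large $p$), then locating a principal direction inside each $G_j$ via the support-constancy of the indices $w_{j\uu}$ — the paper does this by directly replacing an active exponent $\uu$ with one of the same support and size $d_{\ell w_{j\uu}}-1$, which is exactly the substitution your maximality-of-$r^{\ast}$ argument performs — and finally the subgroup inequality \eqref{E: Gowers norms for subgroups} together with padding multiplicities up to $s=O_{d,D,\ell}(1)$. The empty-support corner case you flag (a $\bc_j$ whose only active monomial is its $\uh$-constant term) is not given separate treatment in the paper either, whose proof simply asserts that one may assume $|\uu|=d_{\ell w_{j\uu}}-1$, so your write-up is, if anything, more explicit about that point while otherwise matching the paper's argument.
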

The main point is that directions in the box norm above depend only on the leading coefficients of the polynomial $\bp_\ell, \bp_\ell-\bp_1, \ldots, \bp_\ell - \bp_{\ell-1}$. Swapping the role of the index $\ell$ with other indices, we can get analogous bounds for other functions as well.
\begin{proof}
    By Proposition \ref{P: PET I}, we have
			\begin{align*}
				\abs{\E_{\x} \E_{n\in\F_p}f_0(\x)f_1(\x+\p_1(n))\cdots f_\ell(\x+\p_\ell(n))}^{2^{s'}} \leq \E_{\uh\in\F_p^{s'}}\norm{f_\ell}_{{\bc_1(\uh)}, \ldots, {\bc_s(\uh)}}^{2^{s}}
			\end{align*}
			for some $s'=O_{d, D, \ell}(1)$ and nonzero polynomials $\bc_1, \ldots, \bc_s:\Z^{s'}\to\Z^\ell$ that take the form
			\begin{align*}
				\bc_{j}(\uh) = \sum_{\substack{\uu\in A_{d-1,s'}}} c_{\uu}(\ba_{\ell(|\uu|+1)}-\ba_{w_{j\uu}(|\uu|+1)})\uh^\uu,
			\end{align*}
			where:
			\begin{enumerate}
				\item the indices $w_{j\uu}$ take value in the set $\{0, \ldots, \ell\}$ (with $\ba_{0(|\uu|+1)}:=\mathbf{0}$) and have the property $w_{j\uu} = w_{j\uu'}$ whenever $\supp(\uu) = \supp(\uu')$;
				\item $c_{\uu}$ is the coefficient of $n \uh^\uu$ in the multinomial expansion of $(n+h_1 + \cdots + h_{s'})^{|\uu|+1}$; in particular, $c_\uu$ is nonzero.
    \end{enumerate}
    Let 
    \begin{align*}
        G_j = \langle \ba_{\ell(|\uu|+1)}-\ba_{w_{j\uu}(|\uu|+1)}:\ \uu\in A_{d-1,s'}\rangle. 
    \end{align*}
    Proposition \ref{P: polynomial concatenation} then gives 
	\begin{align*}
		\abs{\E_{\x} \E_{n\in\F_p}f_0(\x)f_1(\x+\p_1(n))\cdots f_\ell(\x+\p_\ell(n))}^{O_{d, D, \ell}(1)} \leq \norm{f_\ell}_{G_1^{\times s''}, \ldots, G_s^{\times s''}}+O_{d, D, \ell}(p\inv)
	\end{align*}    
 for some $s''=O_{d, D, \ell}(1)$. 
 
 For each $j\in[s]$, the polynomials $\bc_j$ are nonzero, and so there exists $\uu\in\N_0^{s'}$ for which $0\leq w_{j\uu}<\ell$. Since $w_{j\uu} =w_{j\uu'}$ for all $\uu'$ with the same support as $\uu$, we can assume that $\uu$ satisfies $|\uu|=d_{\ell w_{j\uu}}-1.$ Letting $w_j:=w_{j\uu}$, we deduce that $\ba_{\ell d_{\ell w_{j}}}-\ba_{w_j d_{\ell w_{j}}}\in G_j$.
 Since $\ba_{\ell d_{\ell w_j}}-\ba_{w_j d_{\ell w_j}}\in G_j$ for each $j\in[s]$, we deduce from the property \eqref{E: Gowers norms for subgroups} that
 \begin{multline*}
    \abs{\E_{\x} \E_{n\in\F_p}f_0(\x)f_1(\x+\p_1(n))\cdots f_\ell(\x+\p_\ell(n))}^{O_{d, D, \ell}(1)}\\ \leq \norm{f_\ell}_{(\ba_{\ell d_{\ell w_1}}-\ba_{w_1 d_{\ell w_1}})^{\times s''}, \ldots, (\ba_{\ell d_{\ell w_s}}-\ba_{w_s d_{\ell w_s}})^{\times s''}}+O_{d, D, \ell}(p\inv).
 \end{multline*}
 Given that $w_1, \ldots, w_s\in\{0, \ldots, \ell-1\}$, each vector 
 $\a_{\ell d_{\ell 0}}, \a_{\ell d_{\ell 1}}-\a_{1 d_{\ell 1}}, \ldots, \a_{\ell d_{\ell(\ell-1)}}-\a_{(\ell-1) d_{\ell (\ell-1)}}$ appears at most $s''' = s s''$ times in the norm above, and so 
 we can bound
\begin{multline*}
    \abs{\E_{\x} \E_{n\in\F_p}f_0(\x)f_1(\x+\p_1(n))\cdots f_\ell(\x+\p_\ell(n))}^{O_{d, D, \ell}(1)}\\ \leq \norm{f_\ell}_{\a_{\ell d_{\ell 0}}^{\times s'''}, (\a_{\ell d_{\ell 1}}-\a_{1 d_{\ell 1}})^{\times s'''}, \ldots, (\a_{\ell d_{\ell(\ell-1)}}-\a_{(\ell-1) d_{\ell(\ell-1)}})^{\times s'''}} + O_{d, D, \ell}(p\inv).
\end{multline*}
 The result follows upon relabelling $s'''$ as $s$. 
\end{proof}

We now extend Proposition \ref{P: single box norm control} to counting operators for polynomial progressions twisted by products of dual functions, which will naturally appear later. In order to do so, we need first the following lemma that allows us to get rid of dual functions from the counting operator.
		\begin{proposition}[Removing dual functions, {\cite[Proposition 6.1]{Fr12}}]\label{P: removing duals}
			Let $d, D, L\in\N$ be integers, $\bu_1, \ldots, \bu_L\in\Z^D$ be direction vectors and $q_1, \ldots, q_L\in\Z[n]$ be polynomials of degree at most $d$. There exists a natural number $s=O_{d, L}(1)$ such that for all 1-bounded functions $A:\F_p^D\times\F_p\to\C$ and $\CD_1, \ldots, \CD_L$ with $\CD_j\in\FD_d(\bu_j)$, we have  
			\begin{align*}
				&\abs{\E_\x\E_n A(\bx, n)\cdot\prod_{j=1}^L \CD_{j}(\bx + \bu_j q_j(n))}^{2^s}\leq \E_{\uh\in\F_p^s}\abs{\E_\x\E_n\prod_{\ueps\in\{0,1\}^s}\CC^{|\ueps|}A(\bx, n+\ueps\cdot\uh)}.
			\end{align*}
		\end{proposition}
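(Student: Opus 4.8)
The plan is to first unfold the dual functions, reducing to a generalized counting operator in which every factor besides $A$ is an arbitrary $1$-bounded function precomposed with a polynomial shift, and then to run a PET/van der Corput induction in the variable $n$.

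Concretely, write each $\CD_j = \CD_{k_j, \bu_j} g_j$ with $k_j \le d$ and $g_j : \F_p^D \to \C$ $1$-bounded, so that unfolding the definition of the dual function gives
\[
  \CD_j(\bx + \bu_j q_j(n)) = \E_{\uk^{(j)} \in \F_p^{k_j}} \prod_{\ueps \in \{0,1\}^{k_j}\setminus\{\underline{0}\}} \CC^{|\ueps|} g_j\bigl(\bx + \bu_j\bigl(q_j(n) + \ueps\cdot\uk^{(j)}\bigr)\bigr).
\]
Substituting this into the left-hand side, moving the averages over $\uk^{(1)}, \dots, \uk^{(L)}$ outside and applying the triangle inequality, it suffices to bound, uniformly over the parameters $\uk^{(j)}$, an expression $\bigl|\E_\x \E_n A(\bx,n)\prod_{m=1}^{M} g_m^\flat(\bx + \mathbf{w}_m r_m(n))\bigr|$, where $M = \sum_{j=1}^L (2^{k_j}-1) = O_{d,L}(1)$, each $g_m^\flat$ is $1$-bounded (a conjugate of some $g_j$), each $\mathbf{w}_m$ is one of the $\bu_j$, and each $r_m \in \Z[n]$ has degree $\le d$ (it equals $q_j$ plus a constant).

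The second and main step is to prove, by induction on the number of auxiliary factors together with the maximal degree of the $r_m$, the following generalized statement: for $1$-bounded $A:\F_p^D\times\F_p\to\C$ and $1$-bounded $g_1,\dots,g_M$ with polynomial shifts $\mathbf{w}_m r_m(n)$ of degree $\le d$, there is $s = O_{d,M}(1)$ with
\[
  \Bigl| \E_\x \E_n A(\bx,n) \prod_{m=1}^{M} g_m(\bx + \mathbf{w}_m r_m(n)) \Bigr|^{2^s} \le \E_{\uh \in \F_p^s} \Bigl| \E_\x \E_n \prod_{\ueps \in \{0,1\}^s} \CC^{|\ueps|} A(\bx, n+\ueps\cdot\uh) \Bigr|.
\]
The engine is a van der Corput step eliminating one factor, say $g_M(\bx + \mathbf{w}_M r_M(n))$: if $\deg r_M = 0$ this factor is $n$-independent and $1$-bounded, so it is discarded immediately; otherwise one substitutes $\bx \mapsto \bx - \mathbf{w}_M r_M(n)$ (a bijection for each fixed $n$) so that $g_M(\bx)$ becomes $n$-independent, applies the Cauchy--Schwarz inequality in $\bx$ to absorb the harmless factor $g_M(\bx)$, introduces a shift $n \mapsto n+h$, and changes variables back. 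This removes $g_M$ at the cost of (i) differencing $A$ once more in $n$, where the ``dirty'' second copy of $A$ acquires an extra $\mathbf{w}_M$-twist in its $\bx$-argument that is a polynomial in $n$ of degree $\deg r_M - 1$, and (ii) doubling the surviving auxiliary factors, now with bounded-degree $\Z^D$-valued polynomial shifts. Iterating and tracking a monovariant (a PET-type complexity built from the degrees of the polynomial shifts) that strictly decreases under these steps, after $O_{d,M}(1)$ steps every auxiliary factor has an $n$-constant argument and is discarded, leaving an average of $A$ differenced only in the $n$-direction; a final change of variables to re-center the $\bx$-arguments of the copies of $A$, together with the monotonicity of box norms, puts this in the claimed form. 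Equivalently, this entire second step is \cite[Proposition 6.1]{Fr12} transcribed from the ergodic to the finite-field setting, and one may simply invoke it.

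The main obstacle is the bookkeeping in this induction. The naive options both fail: van der Corput purely in $n$ never eliminates the auxiliary factors, only turning them into $1$-bounded multiplicative derivatives, while Cauchy--Schwarz purely in $\bx$ would difference $A$ in the $\bx$-direction and so could never produce the claimed right-hand side, in which $A$ is differenced only in $n$. One must therefore interleave the two operations, identify a complexity measure that makes the process terminate, and verify that the ``dirty'' copies of $A$ --- with their accumulated polynomial $\bx$-twists --- can be re-cleaned at the end. This is routine but lengthy, which is why the cleanest exposition simply cites \cite{Fr12}.
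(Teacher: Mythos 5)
There is a genuine gap, and it is located exactly at your first step. By unfolding each $\CD_j$ and then pulling the averages over the parameters $\uk^{(j)}$ outside with the triangle inequality, you discard the dual structure entirely and reduce the proposition to your ``generalized statement'' about arbitrary $1$-bounded functions $g_m$ composed with polynomial shifts. That generalized statement is false. Take $D=1$, $M=1$, $\mathbf{w}_1=1$, $r_1(n)=n$ and $A(\bx,n)=\overline{g(\bx+n)}$ for a $1$-bounded function $g$ with $|g|=1$ chosen to be highly uniform (e.g.\ a random phase $g(x)=e^{2\pi i \phi(x)/p}$). Then the left-hand side equals $1$, while for every $\uh$ whose subset sums $\ueps\cdot\uh$ are pairwise distinct the inner average on the right-hand side becomes, after the substitution $y=\bx+n$, an average of $\prod_{\ueps\in\{0,1\}^s}\CC^{|\ueps|}\overline{g(y+\ueps\cdot\uh)}$ over $y$, which is typically $O(p^{-1/2+o(1)})$; hence the right-hand side is $o(1)$ and the inequality fails badly. (The same construction with $r_1(n)=n^2$ kills the higher-degree case.) So no amount of PET bookkeeping, monovariants, or ``re-cleaning of dirty copies of $A$'' can rescue the route: the statement you are trying to prove by induction is simply not true once the $g_m$ are allowed to be arbitrary, and the uniformity in $\uk^{(j)}$ that your triangle-inequality step demands is precisely what cannot hold. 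The dual functions matter because the dual function of a uniform $g$ is itself essentially trivial, which is why the true left-hand side is small in such examples; any correct proof must keep the defining averages of the $\CD_j$ inside and exploit them (via Gowers--Cauchy--Schwarz type manipulations) so that all the differencing produced by Cauchy--Schwarz lands on $A$ in the $n$-variable only.

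For comparison, the paper does not prove this proposition at all: it is quoted as a finite-field transcription of \cite[Proposition 6.1]{Fr12}, whose argument works with the dual structure intact rather than with arbitrary bounded factors. Your closing remark that the second step ``is \cite[Proposition 6.1]{Fr12} transcribed to the finite-field setting'' is therefore not accurate: that proposition concerns dual functions, not the false generalization you reduce to, and the obstruction you describe as ``routine but lengthy bookkeeping'' (the accumulated $\bx$-twists on the copies of $A$) is in fact the substantive reason your reduction cannot be repaired.
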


We then combine Proposition \ref{P: single box norm control} with Proposition \ref{P: removing duals}, obtaining the following generalisation of Proposition \ref{P: single box norm control}, which is a quantitative finite-field version of \cite[Proposition B.1]{FrKu22a}.
		\begin{proposition}[Control by a single box norm II]\label{P: single box norm control II}
			Let $d, D, \ell, L\in\N$. There exist $s\in\N$ with the following property: for all
   \begin{itemize}
       \item vectors $\bv_1, \ldots, \bv_\ell$, $\bu_1, \ldots, \bu_L\in\Z^D$,
       \item nonconstant polynomials $p_1, \ldots, p_\ell$, $q_1, \ldots, q_L\in\Z[n]$ of degree at most $d$ and coefficients $p_j(n) = \sum_{i=0}^d a_{ji} n^i$ such that $d_{\ell j} := \deg(\bv_\ell p_\ell - \bv_j p_j)>0$ for every $j\in[0,\ell-1]$,
       \item 1-bounded functions $f_0, \ldots, f_\ell:\F_p^D\to\C$,
       \item 1-bounded functions $\CD_1, \ldots, \CD_L$ satisfying $\CD_j\in\FD_d(\bu_j)$,
   \end{itemize}
   we have the bound
			\begin{multline*}
				\abs{\E_{\x} \E_{n\in\F_p}f_0(\x)\cdot \prod_{j=1}^\ell f_j(\x+\bv_j p_j(n)) \cdot \prod_{j=1}^L \CD_j(\x + \bu_j q_j(n))}^{O_{d, \ell, L}(1)} 
    \leq \norm{f_\ell}_{\b_1, \ldots, \b_s} + O_{d, \ell, L}(p\inv)
			\end{multline*}
   for some vectors 
   \begin{align}\label{E: coefficient vectors}
       \b_1, \ldots, \b_s\in\{a_{\ell d_{\ell j}}\bv_\ell - a_{j d_{\ell j}}\bv_j:\ j=0, \ldots, \ell-1\}.
   \end{align}
   Moreover, the vectors $\b_1, \ldots, \b_s$ are independent of $f_0, \ldots, f_\ell$.
		\end{proposition}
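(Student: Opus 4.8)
The plan is to combine Proposition~\ref{P: single box norm control} with Proposition~\ref{P: removing duals} in a two-stage argument. First, I would use Proposition~\ref{P: removing duals} to eliminate the dual functions $\CD_1, \ldots, \CD_L$. Applying it with $A(\bx, n) = f_0(\bx)\prod_{j=1}^\ell f_j(\bx+\bv_j p_j(n))$ and the direction/polynomial data $(\bu_j, q_j)$, we obtain, for some $s_0 = O_{d,L}(1)$,
\begin{align*}
    \abs{\E_{\x}\E_n f_0(\x)\prod_{j=1}^\ell f_j(\x+\bv_j p_j(n))\prod_{j=1}^L \CD_j(\x+\bu_j q_j(n))}^{2^{s_0}} \leq \E_{\uh\in\F_p^{s_0}}\abs{\E_\x\E_n \prod_{\ueps\in\{0,1\}^{s_0}}\CC^{|\ueps|}A(\bx, n+\ueps\cdot\uh)}.
\end{align*}
Expanding $A$, the inner average over $\bx, n$ for each fixed $\uh$ is a counting operator for the polynomial family consisting of all $\bv_j p_j(n+\ueps\cdot\uh)$ for $j\in[\ell]$ and $\ueps\in\{0,1\}^{s_0}$ (together with the zero polynomial coming from $f_0$), with attached functions that are appropriate translates/conjugates of $f_0, \ldots, f_\ell$. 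The point is that this is now a progression with no dual functions, so Proposition~\ref{P: single box norm control} applies once we arrange the family to be essentially distinct with $\bv_\ell p_\ell$ of maximal degree.

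The second stage is therefore to apply Proposition~\ref{P: single box norm control} to this enlarged family for each $\uh$ (or more precisely, for the generic $\uh$; the degenerate $\uh$ contribute only an $O(p^{-1})$ error via Lemma~\ref{L: zero sets}-type reasoning, since the family stays essentially distinct off a low-dimensional set). A mild technical point: we should single out the copy of $f_\ell$ coming from $\ueps = \underline 0$ as the "highest" function, and relabel the family so that $\bv_\ell p_\ell(n)$ (whose degree we may assume is the maximum — if not, reorder) plays the role of $\p_\ell$. Proposition~\ref{P: single box norm control} then bounds the $\uh$-average by $\norm{f_\ell}_{\b_1, \ldots, \b_s}$ up to $O(p^{-1})$, where each $\b_i$ is one of the vectors $\ba_{\ell d'} - \ba_{w d'}$ coming from the leading-coefficient differences in the family; since all polynomials in the family are translates of the $\bv_j p_j$, the leading coefficient of $\bv_\ell p_\ell(n+\ueps\cdot\uh) - \bv_j p_j(n+\ueps'\cdot\uh)$ at the relevant degree is exactly $a_{\ell d_{j\ell}}\bv_\ell - a_{j d_{j\ell}}\bv_j$ (the lower-order shift terms depending on $\uh$ do not affect the top coefficient when the degrees differ, and when they coincide the difference of leading coefficients is forced). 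This is what produces the claimed set \eqref{E: coefficient vectors}. Chaining the two bounds, taking an appropriate power, and collecting error terms gives the proposition, with the final $s$ obtained by relabelling.

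The main obstacle I expect is the bookkeeping in the second stage: verifying that after the van der Corput shifts $n\mapsto n+\ueps\cdot\uh$ the enlarged family is genuinely essentially distinct (for generic $\uh$) and that the highest-degree polynomial is still a translate of $\bv_\ell p_\ell$, so that Proposition~\ref{P: single box norm control} really delivers a box norm of $f_\ell$ (not of one of its translates — but translation invariance of box norms handles this) with directions of exactly the form \eqref{E: coefficient vectors} rather than some $\uh$-dependent perturbation. Concretely one must check that $\deg(\bv_\ell p_\ell(n+t) - \bv_j p_j(n+t')) = d_{j\ell}$ for all shifts $t, t'$ and that the degree-$d_{j\ell}$ coefficient is shift-independent and equals $a_{\ell d_{j\ell}}\bv_\ell - a_{j d_{j\ell}}\bv_j$; this is a short but slightly fiddly computation with binomial coefficients, analogous to the coefficient-tracking already done in Proposition~\ref{P: PET I}. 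Everything else — the application of the two black-box propositions, the handling of degenerate $\uh$, and the final power manipulation — is routine.
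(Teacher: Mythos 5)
Your architecture is exactly the paper's: remove the dual functions with Proposition \ref{P: removing duals}, then apply Proposition \ref{P: single box norm control} to the enlarged shifted family for each (generic) $\uh$, pushing the degenerate $\uh$ into an $O(p^{-1})$ error. The gap is in the coefficient-tracking step that you describe as a "short but slightly fiddly computation": your claim that the relevant leading coefficients are shift-independent and equal to $a_{\ell d_{j\ell}}\bv_\ell - a_{j d_{j\ell}}\bv_j$ is false precisely in the case that unavoidably occurs. After removing duals, the family contains $2^{s_0}$ shifted copies of $\bv_\ell p_\ell$ itself, and for a pair of such copies the difference $\bv_\ell p_\ell(n+\underline{1}\cdot\uh)-\bv_\ell p_\ell(n+\ueps\cdot\uh)$ has degree $\deg p_\ell-1$ with leading coefficient $a_{\ell d}\,d\,\bigl((\underline{1}-\ueps)\cdot\uh\bigr)\bv_\ell$; more generally, when $\bv_j=\bv_\ell$ and $a_{jd}=a_{\ell d}$ one gets $\bigl(a_{\ell d}d(\underline{1}-\ueps)\cdot\uh + a_{\ell(d-1)}-a_{j(d-1)}\bigr)\bv_\ell$. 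These directions are genuinely $\uh$-dependent, their degree need not equal any $d_{j\ell}$, and they do not lie in the set \eqref{E: coefficient vectors}. So the "$\uh$-dependent perturbation" you hoped to rule out is really there, and Proposition \ref{P: single box norm control} delivers $\uh$-dependent directions; one cannot conclude as you state. The missing idea (which is how the paper closes this) is that these offending directions are nonzero scalar multiples of $\bv_\ell$ for all $\uh$ outside a set of proportion $O(p^{-1})$ (the scalar is an affine-linear form in $\uh$, so Lemma \ref{L: zero sets} or a direct count applies), hence generate the same subgroup as the $j=0$ vector $a_{\ell\,\deg p_\ell}\bv_\ell$ of \eqref{E: coefficient vectors}; by \eqref{E: Gowers norms for subgroups} they may then be replaced by that vector, with the exceptional $\uh$ absorbed into the error term.

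A second, smaller failure of the plan as written: for $d=1$ (and, more generally, for any linear $p_j$) two shifted copies of $\bv_j p_j$ differ by a constant in $n$ for \emph{every} $\uh$, so the enlarged family is never essentially distinct and "generic $\uh$" cannot rescue the hypothesis of Proposition \ref{P: single box norm control}. The paper therefore treats $d=1$ separately, rewriting the product over $\ueps$ as a multiplicative derivative $\Delta_{\bv_j a_{j1}h_1,\ldots,\bv_j a_{j1}h_{s'}}f_j$ of a single function and invoking Lemma \ref{L: linear averages} together with the inductive formula for box norms; some such grouping device is needed in your argument as well. With these two repairs your proof becomes the paper's proof; without them the second stage does not go through as described.
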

    Proposition \eqref{P: single box norm control II} tells us that whenever our counting operator is twisted by dual functions, we can safely ignore them and obtain a box norm control involving the same directions as in Proposition \ref{P: single box norm control} except that the degree of such norm will be much higher.
  
  We note that while Proposition \ref{P: single box norm control II} follows from Proposition \ref{P: single box norm control}, the values of $s$ and the constants obtained in Proposition \ref{P: single box norm control II} do not depend on $D$. This is because we could split $\F_p^D$ into cosets of the subspace generated by $\bv_1, \ldots, \bv_\ell, \bu_1, \ldots, \bu_L$, which has dimension at most $\ell+L$, carry out the entire analysis there, and then lift the result back to $\F_p^D$.
  \begin{proof}
    Applying Proposition \ref{P: removing duals}, we can find a natural number $s'=O_{d, L}(1)$ such that
			\begin{multline}                 \label{E:RHS after removing duals} 
				\abs{\E_{\x} \E_{n\in\F_p}f_0(\x)\cdot \prod_{j=1}^\ell f_j(\x+\bv_j p_j(n)) \cdot \prod_{j=1}^L \CD_j(\x + \bu_j q_j(n))}^{2^{s'}} \\  
                \leq \E_{\uh\in\F_p^{s'}} \abs{\E_{\x} \E_{n\in\F_p}f_0(\x)\cdot \prod_{j=1}^\ell \prod_{\ueps\in\{0,1\}^{s'}}\CC^{|\ueps|} f_j(\x+\p_j(n+\ueps\cdot\uh))}. 
			\end{multline}
   			If $d=1$ and $p_j(n) := a_{j1} n + a_{j0}$ for all $j=1, \ldots, \ell$, then the right hand side of \eqref{E:RHS after removing duals} equals
\begin{align}\nonumber 
    &\E_{\uh\in\F_p^{s'}} \abs{\E_{\x} \E_{n\in\F_p}f_0(\x)\cdot \prod_{j=1}^\ell \prod_{\ueps\in\{0,1\}^{s'}}\CC^{|\ueps|} f_j(\x+\bv_j(a_{j1}n+a_{j0}+a_{j1}\ueps\cdot\uh))}\\
                \label{E:RHS after removing duals 2}    
    &=\E_{\uh\in\F_p^{s'}} \abs{\E_{\x} \E_{n\in\F_p}f_0(\x)\cdot \prod_{j=1}^\ell \Delta_{\bv_j a_{j1}h_1, \ldots,\bv_j a_{j1}h_{s'}} f_j(\x+\bv_j(a_{j1}n+a_{j0}))}
\end{align}			
Lemma \ref{L: linear averages} implies that \eqref{E:RHS after removing duals 2} is bounded from above by
\begin{align*}
    \E_{\uh\in\F_p^{s'}}\norm{\Delta_{\bv_\ell a_{\ell 1}h_1, \ldots, \bv_\ell a_{\ell 1}h_{s'}}f_\ell}_{\bv_\ell a_{\ell 1}, \bv_\ell a_{\ell 1}-\bv_\ell a_{11}, \ldots, \bv_\ell a_{\ell 1}-\bv_\ell a_{(\ell-1)1}}. 
\end{align*}
The inductive formula for box norms then gives 
\begin{multline*}
    				\abs{\E_{\x} \E_{n\in\F_p}f_0(\x)\cdot \prod_{j=1}^\ell f_j(\x+\bv_j p_j(n)) \cdot \prod_{j=1}^L \CD_j(\x + \bu_j q_j(n))}^{O_{d, \ell, L}(1)}\\ \leq \norm{f_\ell}_{\bv_\ell a_{\ell 1}^{\times s'+1}, \bv_\ell a_{\ell 1}-\bv_\ell a_{11}, \ldots, \bv_\ell a_{\ell 1}-\bv_\ell a_{(\ell-1)1}},
\end{multline*}
completing the proof in the case $d=1$.
			
			Suppose now that $d>1$.
			We first show that for almost all $\uh\in \F_p^{s'}$, the leading coefficients of the polynomials
			\begin{align}\label{E: difference polynomials}
			\{\bv_\ell p_\ell(n+\underline{1}\cdot\uh)- \bv_j p_j(n+\ueps\cdot\uh)\colon\; \ueps\in\{0,1\}^{s'},\ j\in\{0, \ldots, \ell\},\ (j, \ueps) \neq (\ell, \underline{1})\}
			\end{align}
			are nonzero integer multiples of the leading coefficients of the polynomials $$\bv_\ell p_\ell, \bv_\ell p_\ell - \bv_1 p_1, \ldots, \bv_\ell p_\ell - \bv_{\ell-1} p_{\ell-1}.$$ To find out the leading coefficient of 
   \begin{align}\label{E: shifted polynomial}
        \bv_\ell p_\ell(n+\underline{1}\cdot\uh)- \bv_j p_j(n+\ueps\cdot\uh),    
   \end{align}
    let $d' = \max (\deg p_\ell, \deg p_j)$. If $\underline{1} = \ueps$, then this polynomial has the same leading coefficient as $\bv_\ell p_\ell - \bv_j p_j$, so we are good. Otherwise we have
   \begin{multline*}
       \bv_\ell p_\ell(n+\underline{1}\cdot\uh)- \bv_j p_j(n+\ueps\cdot\uh) = (\bv_\ell a_{\ell d'} - \bv_j a_{j d'}) n^{d'}\\ + (\bv_\ell a_{\ell d'}d'(\underline{1}\cdot\uh) - \bv_j a_{j d'}d'(\ueps\cdot\uh) + \bv_\ell a_{\ell (d'-1)}-\bv_j a_{j (d'-1)})n^{d'-1} + \bq(n;\uh)
   \end{multline*}
   for some polynomial $\bq$ of degree at most $d'-2$ in $n$. If $\bv_\ell a_{\ell d'} - \bv_j a_{j d'}\neq \mathbf{0}$, then this is the leading coefficient of both \eqref{E: shifted polynomial} and $\bv_\ell p_\ell - \bv_j p_j$, so we are again good. Otherwise $d'\geq 2$ (since $\bv_\ell p_\ell - \bv_j p_j$ is nonconstant by assumption), and
    \begin{multline*}
       \bv_\ell p_\ell(n+\underline{1}\cdot\uh)- \bv_j p_j(n+\ueps\cdot\uh)\\ = (\bv_\ell a_{\ell d'}d'(\underline{1}-\ueps)\cdot\uh + \bv_\ell a_{\ell (d'-1)}-\bv_j a_{j (d'-1)})n^{d'-1} + \bq(n;\uh).
   \end{multline*}
   Since $\bv_\ell p_\ell = \bv_j p_j$ in this case, the vectors $\bv_\ell, \bv_j$ must be scalar multiples of each other, and so
    \begin{align*}
       \bv_\ell p_\ell(n+\underline{1}\cdot\uh)- \bv_j p_j(n+\ueps\cdot\uh) = \bv_\ell c_{j\ueps}(\uh)  n^{d'-1} + \bq(n;\uh)
   \end{align*}
   for a nonconstant linear polynomial $c_{j\ueps}\in\F_p[\uh]$. It follows from Lemma \ref{L: zero sets} that for all $\uh\in\F_p^{s'}$ except a subset $A_{j\ueps}\subset\F_p^{s'}$ of size $|A_{j\ueps}|\leq p^{s'-1}$, $\bv_\ell c_{j\ueps}(\uh)$ is the leading coefficient of \eqref{E: shifted polynomial} in this last case.  

			By Proposition \ref{P: single box norm control}, there exist $s\in\N$ and vectors
			\begin{align}\label{E:coefficient vectors strong PET}
			    \b_1(\uh), \ldots, \b_{s}(\uh)\in &\{a_{\ell d_{\ell j}}\bv_{\ell} - a_{j d_{\ell j}}\bv_{j}\colon \ j\in \{0, \ldots, \ell-1\}\}\\
			  \nonumber
			    &\cup\{c_{j\ueps}(\uh)\bv_{\ell}\colon \ j\in\{0, \ldots, \ell\},\ \ueps\neq \underline{1},\; \uh\in\F_p^{s'}\}
			\end{align}
			such that for all $\uh\in\F_p^{s'}$ save $O_{d,\ell, L}(p^{s'-1})$ possible exceptions corresponding to the case $c_{j\ueps}(\uh) = 0$, we have
            \begin{align*}
                \abs{\E_{\x} \E_{n\in\F_p}f_0(\x)\cdot \prod_{j=1}^\ell \prod_{\ueps\in\{0,1\}^{s'}}\CC^{|\ueps|} f_j(\x+\p_j(n+\ueps\cdot\uh))}^{O_{d,\ell, L}(1)}\leq \norm{f_\ell}_{\b_1(\uh), \ldots, \b_s(\uh)} + O_{d,\ell, L}(p\inv).
            \end{align*}
            As long as $c_{j\ueps}(\uh)\neq 0$, the vector $c_{j\ueps}(\uh)\bv_{\ell}$ generates the same subgroup inside $\F_p^D$ as $a_{\ell d_{\ell 0}}\bv_{\ell}$ (here $d_{\ell 0} = \deg p_\ell$), and so for every $\uh\in\N^{s'}\setminus{A}$, where $A = \bigcup_{(j, \ueps)\neq (\ell, \underline{1})}A_{j \ueps}$, we can replace all the vectors $\b_i(\uh)$ of the form $c_{j\ueps}(\uh) \bv_\ell$ by vectors $a_{\ell d_{\ell 0}}\bv_{\ell}$. The result follows from this fact, the bound $|A|\ll_{d,\ell, 
            L} p^{s'-1}$ on the exceptional set, the identity \eqref{E:RHS after removing duals}, and the last inequality above.
  \end{proof}

\section{Passing to a Gowers norm control}\label{S: smoothing}
So far, we have shown in Proposition \ref{P: PET I} via a PET argument that a counting operator for a multidimensional progression is controlled by an average of box norms, and then we used concatenation results from Section \ref{SS: concatenation} to show in Propositions \ref{P: single box norm control} and \ref{P: single box norm control II} that we can in fact control the counting operator by a single box norm. While aesthetically pleasing, this result alone does not give us any immediate advantage for proving Theorem \ref{T: count} due to the lack of a usable inverse theorem for general box norms. Its utility, however, lies in the fact that it can be used as an intermediate step in establishing control by a proper Gowers norm in Theorem \ref{T: control} as long as the polynomials are pairwise independent. Passing from a box norm control to a Gowers norm control can be accomplished via a ``box norm smoothing'' argument, developed by Frantzikinakis and the author in the ergodic setting \cite{FrKu22a, FrKu22b}. This is the first appearance of this argument in the combinatorial setting, and before we present it in the full generality, we illustrate the underlying idea with two examples.

\subsection{Norm smoothing for a linearly independent progression}\label{SS: smoothing for n^2, n^2+n}
In this section, let
\begin{align*}
    \Lambda(f_0,f_1,f_2) = \E_{\bx,n} f_0(\bx)f_1(\bx+\bv_1 n^2)f_2(\bx + \bv_2(n^2+n))
\end{align*}
be the counting operator for the progression
\begin{align}\label{E: n^2, n^2+n}
    \bx,\; \bx + \bv_1 n^2,\; \bx+\bv_2(n^2+n).
\end{align}
Our goal is to sketch a proof, modulo some simplifying assumptions, that for some absolute $c>0$, we obtain the degree 1 control
\begin{align}\label{E: goal}
    |\Lambda(f_0,f_1,f_2)|\ll \norm{f_2}_{\bv_2}^c + p^{-c}.
\end{align}
This is a special case of Theorem \ref{T: control}. The argument for this progression is strictly simpler than the argument in the general case, as we use the fact that it is a 3-point pattern involving linearly independent polynomials. However, the argument showcases the key idea that allows us to prove Theorem \ref{T: control} for arbitrary progressions along pairwise independent polynomials. 

First, Proposition \ref{P: single box norm control} gives $s\in\N$ such that for any 1-bounded $f_0, f_1, f_2:\F_p^D\to\C$, we have the bound
\begin{align}\label{E: example full box control}
    |\Lambda(f_0, f_1, f_2)|^{O(1)}\leq \norm{f_2}_{\bv_2^{\times s}, (\bv_2-\bv_1)^{\times s}} + O(p\inv).
\end{align}
For the purpose of this example only, suppose that we can replace the complicated box norm $\norm{f_2}_{\bv_2^{\times s}, (\bv_2-\bv_1)^{\times s}}$ by a simpler box norm $\norm{f_2}_{\bv_2-\bv_1}$, so that the following bound
\begin{align}\label{E: example box control}
    |\Lambda(f_0, f_1, f_2)|^{O(1)}\leq \norm{f_2}_{\bv_2-\bv_1} + O(p\inv)
\end{align}
holds instead. Fix $f_0, f_1, f_2$. We aim to illustrate how we can pass from controlling the counting operator by the norm $\norm{f_2}_{\bv_2-\bv_1}$ as in \eqref{E: example box control} to a control by the norm $\norm{f_2}_{\bv_2}$ as in \eqref{E: goal}. Our argument follows a two-step ping-pong strategy. First, we show that the counting operator is controlled by the norm $\norm{f_1}_{\bv_1}$. 
Then, we use this auxiliary control to deduce that the operator is controlled by the norm $\norm{f_2}_{\bv_2}$.
Thus, we pass information first from $f_2$ to $f_1$ and then back from $f_1$ to $f_2$, like a tennis ball from one end of a table to another; therefore we call these steps \textit{ping} and \textit{pong} respectively.

Let
\begin{align*}
    \tilde{f}_2(\bx) = \E_n \overline{f_0}(\bx - \bv_2(n^2+n)) \overline{f_1}(\bx+\bv_1 n^2 -\bv_2(n^2+n)),
\end{align*}
so that $\Lambda(f_0,f_1,f_2) = \E_\bx f_2(\bx)\overline{\tilde{f_2}(\bx)}$.
Letting $\delta = |\Lambda(f_0,f_1,f_2)|$, we deduce from 
Lemma \ref{L: dual replacement} that
\begin{align*}
    \Lambda(f_0, f_1, \tilde{f}_2)\geq \delta^2.
\end{align*}
We then use the control \eqref{E: example box control} to deduce that, upon assuming that $\delta \gg p^{-c}$ for a sufficiently small $c>0$, we have
\begin{align*}
    \norm{\tilde{f}_2}_{\bv_2-\bv_1}\gg \delta^{O(1)}.
\end{align*}
From Lemma \ref{L: dual replacement} we obtain
\begin{multline*}
    \Lambda(f_0, f_1, \E(\tilde{f}_2|(\bv_2-\bv_1)))\\
    = \E_{\bx,n} f_0(\bx) f_1(\bx + \bv_1 n^2) \E(\tilde{f}_2|(\bv_2-\bv_1))(\bx + \bv_2(n^2+n)) \gg \delta^{O(1)}.
\end{multline*}
At this point, we crucially observe that the function $\E(\tilde{f}_2|(\bv_2-\bv_1))$ is invariant under shifts by $\bv_2-\bv_1$, implying that 
\begin{align*}
    \E(\tilde{f}_2|(\bv_2-\bv_1))(\bx + \bv_2(n^2+n)) = \E(\tilde{f}_2|(\bv_2-\bv_1))(\bx + \bv_1(n^2+n)).
\end{align*}
Using this identity, we rewrite the inequality above as
\begin{align*}
    \E_{\bx,n} f_0(\bx) f_1(\bx + \bv_1 n^2) \E(\tilde{f}_2|(\bv_2-\bv_1))(\bx + \bv_1(n^2+n)) \gg \delta^{O(1)}.
\end{align*}
Importantly, this inequality involves the counting operator for the progression
\begin{align}\label{E: 1-dim progression of length 3}
    \bx,\; \bx + \bv_1 n^2,\; \bx+\bv_1(n^2+n),
\end{align}
which is essentially single dimensional because both polynomials run along the same direction $\bv_1$.
It follows from \cite[Theorem 2.1]{Pel19} that the counting operator for \eqref{E: 1-dim progression of length 3} is controlled by $\norm{f_1}_{\bv_1}$ with power-saving bounds. We therefore deduce, upon assuming once more that the constant $c>0$ is sufficiently small, that $\norm{f_1}_{\bv_1}\gg\delta^{O(1)}$. As promised, we have passed from a control of the original counting operator by the norm $\norm{\tilde{f}_2}_{\bv_2-\bv_1}$ to the control by a norm $\norm{f_1}_{\bv_1}$ of the form
\begin{align}\label{E: auxiliary f_1 control}
    |\Lambda(f_0, f_1, f_2)|\ll \norm{f_1}_{\bv_1}^{c'} + p^{-c'}
\end{align}
for some absolute $c'>0$.
This completes the \textit{ping} step.

At this point, we could decompose $f_1 = \E(f_1|\bv_1) + (f_1 - \E(f_1|\bv_1))$, deduce from the newly established norm control \eqref{E: auxiliary f_1 control} that the second term contributes at most $O(p^{-c'})$ to $\Lambda(f_0, f_1, f_2)$, and use the $\bv_1$-invariance of $\E(f_1|\bv_1)$ to conclude that 
\begin{align*}
    \Lambda(f_0,f_1,f_2) = \E_{\bx,n} (f_0\E(f_1|\bv_1))(\bx)f_2(\bx + \bv_2 (n^2+n)) + O(p^{-c'}).
\end{align*}
The claim \eqref{E: goal} would then follow directly from \cite[Theorem 2.1]{Pel19}. However, this argument does not work for more general progressions, so we now present an alternative method that is more readily generalisable.

We argue similarly as in the \textit{ping} step, but this time we will pass from a control by $\norm{f_1}_{\bv_1}$ to a control by $\norm{f_2}_{\bv_2}$; this is the essence of the \textit{pong} step. Letting
\begin{align*}
    \tilde{f}_1(\bx) = \E_n \overline{f_0}(\bx - \bv_1 n^2) \overline{f_2}(\bx+\bv_2 (n+n^2) -\bv_1 n^2)
\end{align*}
 we deduce from Lemma \ref{L: dual replacement} that
\begin{align*}
        \Lambda(f_0, \tilde{f}_1,{f}_2)\geq \delta^2.
\end{align*}
Using this fact and the auxiliary control \eqref{E: auxiliary f_1 control} for $\tilde{f}_1$ in place of $f_1$, we infer that $\norm{\tilde{f}_1}_{\bv_1}\gg \delta^{O(1)}$ (again, under the assumption that $\delta\gg p^{-c}$ for sufficiently small $c>0$). An application of Lemma \ref{L: dual replacement} gives
\begin{align*}
        \Lambda(f_0, \E(\tilde{f}_1|\bv_1), f_2) = \E_{\bx,n} f_0(\bx) \E(\tilde{f}_1|\bv_1)(\bx + \bv_1 n^2) f_2(\bx + \bv_2(n^2+n)) \gg \delta^{O(1)}.
\end{align*}
Crucially, the function $\E(\tilde{f}_1|\bv_1)$ is $\bv_1$-invariant, implying that 
\begin{align*}
    \E(\tilde{f}_1|\bv_1)(\bx + \bv_1 n^2) = \E(\tilde{f}_1|\bv_1)(\bx).
\end{align*}
Letting $g = f_0 \cdot\E(\tilde{f}_1|\bv_1)$, we have thus showed that
\begin{align*}
    \E_{\bx,n} g(\bx) f_2(\bx + \bv_2(n^2+n))\gg\delta^{O(1)}.
\end{align*}
From this and \cite[Theorem 2.1]{Pel19}, we deduce that $\norm{f_2}_{\bv_2}\gg\delta^{O(1)}$. The claim follows.

The argument becomes more complicated when the progression has length greater than 3, the polynomials are pairwise independent rather than linearly independent, and we do not start with the simplifying assumption \eqref{E: example box control} instead of the much weaker but more accurate control \eqref{E: example full box control}. The next example will address the aforementioned technicalities and give a sense of changes that have to be made to tackle the more general case.

\subsection{Norm smoothing for a linearly dependent progression}\label{SS: smoothing for n, n^2, n^2+n}
In this section, let
\begin{align*}
    \Lambda(f_0,f_1,f_2, f_3) = \E_{\bx, n} f_0(\bx)f_1(\bx+\bv_1 n)f_2(\bx + \bv_2 n^2)f_3(\bx + \bv_3(n^2+n))
\end{align*}
be the counting operator for the progression
\begin{align}\label{E: n, n^2, n^2+n}
    \bx,\; \bx + \bv_1 n,\; \bx + \bv_2 n^2,\; \bx+\bv_3(n^2+n).
\end{align}
Our goal is to sketch the proof of the following special case of Theorem \ref{T: control}.  

\begin{proposition}\label{P: Iterated smoothing of n, n^2, n^2+n}
	There exists $s\in\N$ and $c>0$ with the following property: for every $D\in\N$ and 1-bounded functions $f_0, f_1, f_2, f_3:\F_p^D\to\C$, we have the bound
 \begin{align}\label{E: goal for n, n^2, n^2+n}
    |\Lambda(f_0,f_1,f_2, f_3)|\ll \norm{f_3}_{U^s(\bv_3)}^c + p^{-c}.
\end{align}
\end{proposition}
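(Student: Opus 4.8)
The plan is to run an iterated version of the \emph{ping-pong} smoothing scheme from Section~\ref{SS: smoothing for n^2, n^2+n}, using Proposition~\ref{P: single box norm control II} as the starting input and the single-dimensional finite-field result \cite[Theorem 2.1]{Pel19} together with the case $D=1$ of Theorem~\ref{T: control} (equivalently, the results of \cite{Ku22b} for distinct-degree single-direction progressions) as the base cases. Concretely, set $\delta = |\Lambda(f_0,f_1,f_2,f_3)|$ and assume throughout that $\delta \gg p^{-c}$ for a small absolute $c>0$, since otherwise \eqref{E: goal for n, n^2, n^2+n} is trivial. First I would apply Proposition~\ref{P: single box norm control II} (with no dual functions, $L=0$) to the progression \eqref{E: n, n^2, n^2+n}: since $p_3 = n^2+n$ has maximal degree and $n$, $n^2$, $n^2+n$ are pairwise independent, we get $s_0 \in \N$ and vectors $\b_1,\dots,\b_{s_0}$ drawn from $\{a_{3 d_{j3}}\bv_3 - a_{j d_{j3}}\bv_j : j = 0,1,2\} = \{\,\bv_3,\ \bv_3,\ \bv_3 - \bv_2\,\}$ (up to scalars: $d_{03}=d_{13}=2$, $d_{23}=1$) such that $\delta^{O(1)} \leq \norm{f_3}_{\b_1,\dots,\b_{s_0}} + O(p^{-1})$, hence $\delta^{O(1)} \ll \norm{f_3}_{\bv_3^{\times s_0},\,(\bv_3-\bv_2)^{\times s_0}} + p^{-1}$ by monotonicity.

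The heart of the argument is to strip the auxiliary directions $\bv_3-\bv_2$ from this box norm one generation at a time, converting box-norm control into a \emph{Gowers} norm $U^s(\bv_3)$. Using the dual-function machinery (Lemma~\ref{L: dual replacement}) exactly as in the length-$3$ example, control of $\Lambda$ by $\norm{f_3}_{\bv_3^{\times s_0}, (\bv_3-\bv_2)^{\times s_0}}$ is upgraded to control by a \emph{product} of $(\bv_3-\bv_2)$-dual functions of $f_3$ sitting inside $\Lambda$: writing $\tilde f_3$ for the dual of the tuple $(f_0,f_1,f_2,\cdot)$, we have $\Lambda(f_0,f_1,f_2,\tilde f_3) \geq \delta^2$, and combined with the box-norm control this gives $\norm{\tilde f_3}_{\bv_3^{\times s_0},(\bv_3-\bv_2)^{\times s_0}} \gg \delta^{O(1)}$, whence (expanding one layer of the box norm as an average of dual functions $\CD \in \FD_d(\bv_3-\bv_2)$ via the weak inverse theorem of Section~\ref{SS: inverse theorems}) we obtain
\begin{align*}
	\abs{\E_{\bx,n} f_0(\bx) f_1(\bx+\bv_1 n) f_2(\bx+\bv_2 n^2)\, \CD_1(\bx+\bv_3(n^2+n))\cdots \CD_L(\bx+\bv_3(n^2+n))} \gg \delta^{O(1)}
\end{align*}
for suitable $L$ and $\CD_j \in \FD(\bv_3-\bv_2)$. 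Now the key substitution: each $\CD_j$ is $(\bv_3-\bv_2)$-invariant, so $\CD_j(\bx+\bv_3(n^2+n)) = \CD_j(\bx+\bv_2(n^2+n))$ inside the average. After this swap the dual functions run along $\bv_2$ rather than $\bv_3$, so they attach to the $f_2$-term; the progression $n, n^2, n^2+n$ becomes, in the relevant directions, one where $\bv_2$ governs two of the terms. Applying Proposition~\ref{P: single box norm control II} again — this time to the counting operator in $f_0, f_1, f_2$ twisted by dual functions along $\bv_2$, with the distinguished function now being $f_2$ and $n^2$ the top-degree polynomial — yields control of $\delta^{O(1)}$ by a box norm of $f_2$ in directions among $\{\bv_2, \bv_2-\bv_1,\dots\}$; and then a \emph{pong} step (the same dual-function maneuver applied to $f_2$, followed by projecting onto the $\bv_2$-invariant factor and using $\E(\widetilde f_2|\bv_2)(\bx+\bv_2 n^2) = \E(\widetilde f_2|\bv_2)(\bx)$) collapses $f_2$ to a $\bv_2$-invariant function, reducing $\Lambda$ to a \emph{shorter} progression of the same shape. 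Iterating ping-pong, the progression length and the number of ``bad'' auxiliary directions both strictly decrease, and the induction bottoms out at a single-direction polynomial progression controlled by $\norm{f_3}_{U^{s}(\bv_3)}$ with power-saving by \cite[Theorem 2.1]{Pel19} (for the genuinely one-dimensional pieces) and by the distinct-degree case from \cite{Ku22b}.

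The main obstacle is the bookkeeping in the \emph{ping} step when the progression has length $>3$ and the polynomials are only pairwise (not linearly) independent: unlike in Section~\ref{SS: smoothing for n^2, n^2+n}, after the invariance substitution $\CD_j(\bx+\bv_3(n^2+n)) = \CD_j(\bx+\bv_2(n^2+n))$ the resulting twisted counting operator need not immediately be single-dimensional, so one cannot simply quote \cite{Pel19}; instead one must re-enter the Proposition~\ref{P: single box norm control II} machinery with dual functions present, verify that the new distinguished direction together with the remaining $p_j$'s is still pairwise independent (this is where pairwise independence of the original tuple, rather than full linear independence, is exactly what is needed), and track that each round genuinely reduces a well-chosen complexity parameter so the induction terminates. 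A secondary technical point is controlling the accumulation of error terms $O(p^{-c})$ and the growth of the box-norm degrees $s$ across the $O_{d,\ell}(1)$ rounds of ping-pong, which forces the final $s = s(d,\ell)$ and $c = c(d,\ell)$ to be chosen only at the very end; all of this is uniform in $D$ by the coset-reduction remark following Proposition~\ref{P: single box norm control II}.
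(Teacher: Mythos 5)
Your high-level plan (box-norm control via PET plus concatenation, then a ping--pong smoothing that exploits an invariance substitution, iterated until only $\bv_3$-directions remain) is indeed the paper's strategy, but the central step of your \emph{ping} move is not valid as written. From $\norm{\tilde f_3}_{\bv_3^{\times s_0},(\bv_3-\bv_2)^{\times s_0}}\gg\delta^{O(1)}$ you pass to a correlation of the counting operator with a product of dual functions $\CD_j\in\FD(\bv_3-\bv_2)$ placed at the $f_3$-slot, and then substitute $\CD_j(\bx+\bv_3(n^2+n))=\CD_j(\bx+\bv_2(n^2+n))$ ``by invariance''. Two problems. First, elements of $\FD(\bv_3-\bv_2)$ of degree at least $2$ are \emph{not} $(\bv_3-\bv_2)$-invariant; only the degree-one dual, i.e.\ the conditional expectation $\E(\cdot\,|\,\bv_3-\bv_2)$, is invariant, so the substitution that drives your whole argument has no justification. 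Second, you cannot extract duals purely along $\bv_3-\bv_2$ from the mixed box norm in the first place: monotonicity gives $\norm{\tilde f_3}_{(\bv_3-\bv_2)^{\times s_0}}\leq\norm{\tilde f_3}_{\bv_3^{\times s_0},(\bv_3-\bv_2)^{\times s_0}}$ (the wrong direction), and the weak inverse theorem for the mixed norm produces a mixed box dual, not a product of $(\bv_3-\bv_2)$-duals. The paper's proof of Proposition \ref{Smoothing of n, n^2, n^2+n} avoids both issues via the dual--difference interchange (Corollary \ref{C: dual-difference interchange}): all directions except a single copy of $\bv_3-\bv_2$ are passed as multiplicative derivatives $\Delta_{\bv_3^{\times s_0},(\bv_3-\bv_2)^{\times s_1-1};\uh-\uh'}$ onto $f_0,f_1,f_2$, and only that one remaining direction is converted, through the degree-one inverse theorem, into a genuinely $(\bv_3-\bv_2)$-invariant function $u_{\uh,\uh'}$, for which the swap $\bv_3\mapsto\bv_2$ is legitimate.

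The \emph{pong} step and the termination of your iteration also do not work as described. Projecting $f_2$ onto its $\bv_2$-invariant part would require $U^1(\bv_2)$-control of the operator in $f_2$, whereas the ping step only yields control by the box norm $\norm{f_2}_{\bv_3^{\times s_0},(\bv_3-\bv_2)^{\times s_1-1},\bv_2^{\times s_2}}$; this is exactly why the paper remarks, after the simpler $3$-term example, that the ``decompose and project'' shortcut does not generalise. The paper's pong instead applies Corollary \ref{C: dual-difference interchange}(ii) to convert the whole $\bv_2^{\times s_2}$ block into products of elements of $\FD_{s_2}(\bv_2)$ sitting at the $f_2$-slot, with the dual direction matching the argument direction $\bv_2$ so that Proposition \ref{P: single box norm control II} (via Proposition \ref{P: removing duals}) applies, and it concludes $\norm{f_3}_{\bv_3^{\times s'},(\bv_3-\bv_2)^{\times s_1-1}}\gg\delta^{O(1)}$. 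In particular the length of the progression never changes; the quantity that strictly decreases is the number of copies of $\bv_3-\bv_2$ in the norm controlling $f_3$, and Proposition \ref{P: Iterated smoothing of n, n^2, n^2+n} follows by iterating Proposition \ref{Smoothing of n, n^2, n^2+n} finitely many times, with no appeal to \cite{Pel19} and no length induction. Your bookkeeping (length strictly decreasing, bottoming out in a one-dimensional progression) fails to record the partial progress $\bv_3^{\times s'},(\bv_3-\bv_2)^{\times s_1-1}$ in the norm of $f_3$; moreover, after your swap the duals along $\bv_3-\bv_2$ would be evaluated at $\bx+\bv_2(n^2+n)$, a direction mismatch that the dual-removal machinery, as stated (with $\CD_j\in\FD_d(\bu_{\pi_j})$ evaluated at $\bx+\bu_{\pi_j}q_j(n)$), is not set up to handle.
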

In Proposition \ref{P: control of n, n^2, n^2+n}, we upgrade Proposition \ref{P: Iterated smoothing of n, n^2, n^2+n} to one in which $\Lambda$ is controlled by a Gowers norm of any of $f_1, f_2, f_3$.

The progression \eqref{E: n, n^2, n^2+n} is no longer linearly independent, and so while proving Proposition \ref{P: Iterated smoothing of n, n^2, n^2+n}, we cannot for instance invoke the results of Peluse \cite{Pel19} after completing the \textit{ping} step. For that reason, the 4-point pattern \eqref{E: n, n^2, n^2+n} captures better the difficulties present in the general case than the 3-point progression \eqref{E: n^2, n^2+n} while still offering the benefits of concreteness.

Our starting point is Proposition \ref{P: single box norm control} which gives $s_0, s_1\in\N$ and $c>0$ 
with the property that for any 1-bounded functions $f_0, f_1, f_2, f_3:\F_p^D\to\C$, we have
\begin{align}\label{E: smoothing starting point}
    |\Lambda(f_0, f_1, f_2, f_3)|\leq \norm{f_3}_{\bv_3^{\times s_0}, (\bv_3-\bv_2)^{\times s_1}}^{c} + O(p^{-c}).
\end{align}
In fact, in the statement of Proposition \ref{P: single box norm control}, we have $s_0=s_1$ (which we can always assume by taking $s=\max(s_0,s_1)$ and using the monotonicity of box norms), but it will be more illustrative to describe these two numbers using separate labels. Proposition \ref{P: Iterated smoothing of n, n^2, n^2+n} will follow by an iterated application of the following result, which enables us to replace the vectors $\bv_3-\bv_2$ one by one with (possibly many copies of) $\bv_3$. In doing so, we ``smooth out'' or ``uniformise'' the original norm until we reach one that only involves a large number of $\bv_3$. 

\begin{proposition}[Norm smoothing for \eqref{E: n, n^2, n^2+n}]\label{Smoothing of n, n^2, n^2+n}
    Let $D, s_0,s_1\in\N$ and $c>0$ be such that \eqref{E: smoothing starting point} holds for all 1-bounded functions  $f_0, f_1, f_2, f_3:\F_p^D\to\C$. Then there exist $s'=O_{s_0,s_1}(1)$ and $c'\gg_{c, s_0,s_1} 1$ such that the bound	
    \begin{align}\label{E: smoothing goal}
        |\Lambda(f_0, f_1, f_2, f_3)|\ll_{c, s_0, s_1} \norm{f_3}_{\bv_3^{\times s'}, (\bv_3-\bv_2)^{\times s_1-1}}^{c'} + p^{-c'}
    \end{align}	
    holds for all 1-bounded functions  $f_0, f_1, f_2, f_3:\F_p^D\to\C$.
\end{proposition}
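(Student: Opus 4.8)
The plan is to run the ``ping--pong'' scheme of Section~\ref{SS: smoothing for n^2, n^2+n}, adapted to two features absent from the toy argument there: the hypothesis \eqref{E: smoothing starting point} controls $\Lambda$ by a box norm of degree $s_0+s_1>1$ rather than by a single direction, and after coalescing one lands not on a one-dimensional average (where Peluse's theorem would apply off the shelf) but on a genuine polynomial progression, which must be re-processed through Proposition~\ref{P: single box norm control II}. Throughout, set $\delta=|\Lambda(f_0,f_1,f_2,f_3)|$; we may assume $\delta\gg p^{-c_0}$ for a small $c_0=c_0(c,s_0,s_1)$ fixed at the end, since \eqref{E: smoothing goal} is otherwise immediate.

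\emph{Ping.} Let $\tilde f_3$ be the dual function of $f_3$ relative to $\Lambda$, so that $\Lambda(f_0,f_1,f_2,g)=\langle g,\tilde f_3\rangle$ for every $g\colon\F_p^D\to\C$. Since $f_3$ is 1-bounded, $\delta^2\le\langle\tilde f_3,\tilde f_3\rangle=\Lambda(f_0,f_1,f_2,\tilde f_3)$ by Cauchy--Schwarz (Lemma~\ref{L: dual replacement}), so applying \eqref{E: smoothing starting point} with $f_3$ replaced by the 1-bounded function $\tilde f_3$ gives $\norm{\tilde f_3}_{\bv_3^{\times s_0},(\bv_3-\bv_2)^{\times s_1}}\gg\delta^{O_{s_0,s_1}(1)}$. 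Using the inductive formula for box norms I would peel off the $s_0$ copies of $\bv_3$ together with all but one of the $s_1$ copies of $\bv_3-\bv_2$, producing shift tuples $\uh\in\F_p^{s_0}$, $\uk\in\F_p^{s_1-1}$ such that, on a $\gg\delta^{O(1)}$-proportion of $(\uh,\uk)$, the function $F_{\uh,\uk}:=\Delta_{s_0,\bv_3;\uh}\Delta_{s_1-1,\bv_3-\bv_2;\uk}\tilde f_3$ satisfies $\norm{\E(F_{\uh,\uk}|\bv_3-\bv_2)}_2^2=\norm{F_{\uh,\uk}}_{\bv_3-\bv_2}^2\gg\delta^{O(1)}$. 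Fixing such $(\uh,\uk)$, unfolding $\tilde f_3$ through its defining $n$-average, and pruning the auxiliary internal averages by Cauchy--Schwarz, the quantity $\langle F_{\uh,\uk},\E(F_{\uh,\uk}|\bv_3-\bv_2)\rangle$ turns into a counting operator in $f_0,f_1,f_2$ twisted by the $(\bv_3-\bv_2)$-invariant function $\E(F_{\uh,\uk}|\bv_3-\bv_2)$ and by the frozen derivatives of $\tilde f_3$ along $\bv_3$ and $\bv_3-\bv_2$ (all of which lie in $\FD(\bv_3)$ or $\FD(\bv_3-\bv_2)$). The crux is that $(\bv_3-\bv_2)$-invariance lets me rewrite the argument $\bx+\bv_3(n^2+n)$ of the projected function as $\bx+\bv_2(n^2+n)$, which \emph{coalesces} the $f_2$- and $f_3$-slots and leaves a twisted counting operator for the lower-complexity progression $\bx,\ \bx+\bv_1 n,\ \bx+\bv_2 n^2,\ \bx+\bv_2(n^2+n)$. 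Since every relevant leading-coefficient difference among $\mathbf{0},\bv_1 n,\bv_2 n^2,\bv_2(n^2+n)$ is a nonzero scalar multiple of $\bv_2$, Propositions~\ref{P: removing duals} and \ref{P: single box norm control II} strip the dual twists and then control the operator by a box norm of $f_2$ whose progression-borne directions are all $\bv_2$; reinstating the frozen derivatives as box-norm directions through the inductive formula and averaging over $(\uh,\uk)$ then yields an auxiliary bound
\[
 |\Lambda(f_0,f_1,f_2,f_3)|\ \ll_{c,s_0,s_1}\ \norm{f_2}_{\bv_2^{\times t},\,\bv_3^{\times s_0},\,(\bv_3-\bv_2)^{\times s_1-1}}^{c_1}+p^{-c_1}
\]
for some $t=O_{s_0,s_1}(1)$ and $c_1\gg_{c,s_0,s_1}1$.

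\emph{Pong.} Next I would repeat the step with the roles of $f_2$ and $f_3$ swapped: let $\tilde f_2$ be the dual function of $f_2$ relative to $\Lambda$, so $\delta^2\le\Lambda(f_0,f_1,\tilde f_2,f_3)$, and the auxiliary bound gives $\norm{\tilde f_2}_{\bv_2^{\times t},\bv_3^{\times s_0},(\bv_3-\bv_2)^{\times s_1-1}}\gg\delta^{O(1)}$. Peeling down to a single copy of $\bv_2$ and using $\norm{g}_{\bv_2}^2=\norm{\E(g|\bv_2)}_2^2$, then unfolding $\tilde f_2$ and using $\bv_2$-invariance to absorb the shift $\bv_2 n^2$ (a multiple of $\bv_2$) into the $\bx$-slot, the $f_2$-slot disappears and we reduce to a twisted counting operator for the $3$-point progression $\bx,\ \bx+\bv_1 n,\ \bx+\bv_3(n^2+n)$, twisted by dual functions along $\bv_2$, $\bv_3$ and $\bv_3-\bv_2$. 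Here every relevant leading-coefficient difference among $\mathbf{0},\bv_1 n,\bv_3(n^2+n)$ is a nonzero scalar multiple of $\bv_3$, so Propositions~\ref{P: removing duals} and \ref{P: single box norm control II} control the operator by a box norm of $f_3$ in which the progression-borne directions are copies of $\bv_3$; reinstating the $s_0$ frozen $\bv_3$-derivatives and $s_1-1$ frozen $(\bv_3-\bv_2)$-derivatives carried over from the ping step, one assembles a box norm $\norm{f_3}_{\bv_3^{\times s'},(\bv_3-\bv_2)^{\times s_1-1}}$ with $s'=O_{s_0,s_1}(1)$. Rearranging, and choosing $c_0$ small enough relative to the constants produced above, gives \eqref{E: smoothing goal}.

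The main obstacle is the bookkeeping that makes the peeling and coalescing reversible in just the right way: after unfolding $\tilde f_3$ and $\tilde f_2$ and pruning the auxiliary averages, one must check that the net effect is to consume \emph{exactly one} copy of $\bv_3-\bv_2$ — losing all of them would collapse the control straight to $\norm{f_3}_{U^{s'}(\bv_3)}$ and make the iteration of Proposition~\ref{P: Iterated smoothing of n, n^2, n^2+n} pointless, while failing to control how many new copies of $\bv_3$ are produced would let $s'$ blow up. The other delicate point is keeping every auxiliary twist within the scope of Propositions~\ref{P: removing duals} and \ref{P: single box norm control II}, i.e.\ verifying that the frozen derivative parameters enlarge only the \emph{degree}, not the direction set, of the controlling box norm; this is precisely the situation the ``control by a single box norm'' package was built to handle. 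The remaining work — the reparametrisations identifying the coalesced progressions, the leading-coefficient computations showing all progression-borne directions collapse to $\bv_2$ (resp.\ $\bv_3$), and tracking the exponents through the Cauchy--Schwarz steps — is routine.
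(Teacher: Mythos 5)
Your \emph{ping} step matches the paper's: replace $f_3$ by the dual $\tilde f_3$, invoke \eqref{E: smoothing starting point}, use Corollary~\ref{C: dual-difference interchange}(i) to peel down to a single copy of $\bv_3-\bv_2$, exploit the $(\bv_3-\bv_2)$-invariance to coalesce the two quadratic slots onto $\bv_2$, and get the intermediate control $\norm{f_2}_{\bv_3^{\times s_0},(\bv_3-\bv_2)^{\times s_1-1},\bv_2^{\times s_2}}$. But there is a genuine gap in your \emph{pong} step. You propose to ``peel down to a single copy of $\bv_2$'' in the box norm $\norm{\tilde f_2}_{\bv_2^{\times t},\bv_3^{\times s_0},(\bv_3-\bv_2)^{\times s_1-1}}$ and apply the degree-$1$ inverse theorem along that last copy. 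That freezes $t-1$ differences along $\bv_2$, and the dual-difference interchange (Lemma~\ref{L: dual-difference interchange}) applies those $t-1$ $\bv_2$-derivatives to \emph{every} remaining function, in particular to $f_3$. After the pigeonhole and Proposition~\ref{P: single box norm control II}, you obtain for many $(\uh,\uh')$ a bound on $\norm{\Delta_{\bv_2^{\times(t-1)},\bv_3^{\times s_0},(\bv_3-\bv_2)^{\times s_1-1};\uh-\uh'}f_3}_{U^{s_3}(\bv_3)}$; when you reassemble via the inductive formula, you get $\norm{f_3}_{\bv_2^{\times(t-1)},\bv_3^{\times(s_0+s_3)},(\bv_3-\bv_2)^{\times s_1-1}}$, \emph{not} the claimed $\norm{f_3}_{\bv_3^{\times s'},(\bv_3-\bv_2)^{\times s_1-1}}$. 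You cannot drop the $\bv_2^{\times(t-1)}$ directions (monotonicity runs the wrong way), and $\langle\bv_2\rangle$ is not a subgroup of either $\langle\bv_3\rangle$ or $\langle\bv_3-\bv_2\rangle$, so the subgroup inequality \eqref{E: Gowers norms for subgroups} does not rescue you. Your closing observation that one must verify ``the frozen derivative parameters enlarge only the \emph{degree}, not the direction set'' is precisely the condition that fails here, and it is not something ``the control-by-a-single-box-norm package'' fixes; it is a feature of the interchange lemma that the direction set \emph{does} enlarge.

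The paper circumvents this by using Corollary~\ref{C: dual-difference interchange}(ii) in the pong step instead of part (i): it treats the full block $\bv_2^{\times s_2}$ via the degree-$s_2$ weak inverse theorem, replacing $\tilde f_2$ by a product of elements of $\FD_{s_2}(\bv_2)$ and leaving only $\bv_3^{\times s_0},(\bv_3-\bv_2)^{\times s_1-1}$ as frozen directions. The dual twist in $\FD_{s_2}(\bv_2)$ is then harmlessly absorbed by Proposition~\ref{P: single box norm control II} (it raises only the degree of the resulting Gowers norm along $\bv_3$, not its direction set), and no $\bv_2$ ever reappears in the box norm of $f_3$. Indeed, the text right before Step~2 in the proof of Proposition~\ref{Smoothing of n, n^2, n^2+n} flags exactly this: one must ``rid ourselves of all the vectors $\bv_2^{\times s_2}$ at once since we do not want any such vector to reappear in the norm of $f_3$ ultimately controlling our average.'' So the correct move is to switch from the strong degree-$1$ projection to the weak higher-degree dual; with that change the rest of your outline goes through.
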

 It is crucial that the norm in \eqref{E: smoothing starting point} involves the direction $\bv_3-\bv_2$ rather than $\bv_2$, otherwise the argument would not work.

\begin{proof}[Proof of Proposition \ref{Smoothing of n, n^2, n^2+n}]
	Let $\delta = |\Lambda(f_0, f_1, f_2, f_3)|$, and assume without loss of generality that $\delta\gg_{c, s_0, s_1} p^{-c'}$ for a sufficiently small $0<c'\leq c/2$. Like in Section \ref{SS: smoothing for n^2, n^2+n}, our argument follows a two-step ping-pong strategy. Starting with the assumption that we can control the counting operator by the norm $\norm{f_3}_{\bv_3^{\times s_0}, (\bv_3-\bv_2)^{\times s_1}}$ of $f_3$, we first show that the counting operator is also controlled by the norm $\norm{f_2}_{\bv_3^{\times s_0}, (\bv_3-\bv_2)^{\times s_1-1}, \bv_2^{\times s_2}}$ of $f_2$ for some $s_2=O_{s_0,s_1}(1)$. Then, we use this auxiliary control to deduce that the operator is controlled by the norm $\norm{f_3}_{\bv_3^{\times s'}, (\bv_3-\bv_2)^{\times s_1-1}}$ for some $s'=O_{s_0,s_1}(1)$.
 As before, we call these two steps \textit{ping} and \textit{pong} respectively. In what follows, we allow all the quantities to depend on $c, s_0, s_1$.

	\smallskip

	\textbf{Step 1 (\textit{ping}): Obtaining control by a norm of $f_2$.}
	\smallskip
	
	Set $$\tilde{f}_3(\bx) = \E_n \overline{f_0}(\bx - \bv_3(n^2+n))\overline{f_1}(\bx+\bv_1 n- \bv_3(n^2+n))\overline{f_2}(\bx + \bv_2 n^2- \bv_3(n^2+n)),$$ so that 
    \begin{align*}
        \Lambda(f_0, f_1, f_2, \tilde{f}_3)\geq \delta^2.
    \end{align*} 
    by Lemma \ref{L: dual replacement}.
    Thus, at the expense of losing an exponent, we have replaced an arbitrary function $f_3$ in the counting operator by the structured term $\tilde{f}_3$. We deduce from the inequality \eqref{E: smoothing starting point} and the assumption $\delta\gg p^{-c/2}$ that $\norm{\tilde{f}_3}_{\bv_3^{\times s_0}, (\bv_3-\bv_2)^{\times s_1}}\gg\delta^{O(1)}$. 
    
    Our goal is to get to the point where we can apply the inverse theorem for degree 1 seminorm so as to correlate $\tilde{f}_3$, or some derivate of it, with a $(\bv_3-\bv_2)$-invariant function. 
    This is accomplished by Corollary \ref{C: dual-difference interchange}(i), which gives that
	\begin{multline*}
		\E_{\uh,\uh'\in \F_p^{s_0+s_1-1}}
		\abs{\E_{\bx,n} f_{0,\uh,\uh'}(\bx)\cdot f_{1,\uh,\uh'}(\bx+\bv_1 n) \cdot f_{2,\uh,\uh'}(\bx+\bv_2 n^2)\cdot u_{\uh,\uh'}(\bx+\bv_3(n^2+n))}
	\end{multline*}
 is bounded from below by $\Omega(\delta^{O(1)})$ for
 \begin{align*}
     f_{j,\uh,\uh'}(\bx) =  \Delta_{\bv_3^{\times s_0}, (\bv_3-\bv_2)^{\times s_1-1}; \uh-\uh'} f_j(\bx)\quad \mathrm{for}\quad j=0,1,2
 \end{align*}
 and 1-bounded, $(\bv_3-\bv_2)$-invariant functions $u_{\uh,\uh'}$.
 This invariance property yields the identity
	\begin{equation}\label{invariance n, n^2, n^2+n}
        u_{\uh,\uh'}(\bx+\bv_3(n^2+n)) = u_{\uh,\uh'}(\bx+\bv_2(n^2+n))
    \end{equation}
which allows us to rewrite the average over $\uh,\uh'$ as
	\begin{multline*}
		\E_{\uh,\uh'\in \F_p^{s_0+s_1-1}}
		\abs{\E_{\bx,n} f_{0,\uh,\uh'}(\bx)\cdot f_{1,\uh,\uh'}(\bx+\bv_1 n) \cdot f_{2,\uh,\uh'}(\bx+\bv_2 n^2)\cdot u_{\uh,\uh'}(\bx+\bv_2(n^2+n))}.
	\end{multline*}
	Thus, we have replaced $\bv_3$ in the counting operator by $\bv_2$. Importantly, in the new counting operator, both quadratic polynomials $n^2$ and $n^2+n$ lie along the same direction $\bv_2$. By the pigeonhole principle, we can find a set $B\subset\F_p^{2(s_0+s_1-1)}$ of size $|B|\gg \delta^{O(1)}p^{2(s_0+s_1-1)}$ such that 
    \begin{align*}
        \abs{\E_{\bx,n} f_{0,\uh,\uh'}(\bx)\cdot f_{1,\uh,\uh'}(\bx+\bv_1 n) \cdot f_{2,\uh,\uh'}(\bx+\bv_2 n^2)\cdot u_{\uh,\uh'}(\bx+\bv_2(n^2+n))}\gg\delta^{O(1)}
    \end{align*}
    for each $(\uh,\uh')\in B$. It follows from Proposition \ref{P: single box norm control} that there exists $s_2\in\N$ such that 
    \begin{align*}
        \E_{\uh,\uh'\in \F_p^{s_0+s_1-1}}1_B(\uh,\uh')\cdot\norm{f_{2,\uh,\uh'}}_{U^{s_2}(\bv_2)}\gg\delta^{O(1)}.
    \end{align*}
    Extending by nonnegativity to all $(\uh,\uh')$, invoking the definition of $f_{2,\uh,\uh'}$, changing variables to replace $\uh-\uh'$ by $\uh$ and using the induction formula for box norms together with the H\"older inequality, we deduce that 
    \begin{align*}
        \norm{f_2}_{\bv_3^{\times s_0}, (\bv_3-\bv_2)^{\times s_1-1}, \bv_2^{\times s_2}}\gg\delta^{O(1)}. 
    \end{align*}
	Hence, the norm $\norm{f_2}_{\bv_3^{\times s_0}, (\bv_3-\bv_2)^{\times s_1-1}, \bv_2^{\times s_2}}$ controls the counting operator $\Lambda(f_0,f_1,f_2,f_3)$ in the sense that
 \begin{align}\label{E: intermediate control example}
     |\Lambda(f_0,f_1,f_2,f_3)|^{O(1)}\ll \norm{f_2}_{\bv_3^{\times s_0}, (\bv_3-\bv_2)^{\times s_1-1}, \bv_2^{\times s_2}}+p^{-1}
 \end{align}
 for all 1-bounded functions $f_0, f_1, f_2, f_3$.
	
	Starting with a control of $\Lambda(f_0,f_1,f_2,f_3)$ by a norm of $f_3$, we have arrived at a control by a norm of $f_2$; in this sense we passed information from $f_3$ to $f_2$. The bound \eqref{E: intermediate control example} is not particularly useful as an independent result because of the $s_0$ vectors $\bv_3$ appearing as directions. However, this bound turns out to be a key intermediate step for obtaining our claimed control of $\Lambda(f_0,f_1,f_2,f_3)$ by a norm $\norm{f_3}_{\bv_3^{\times s'}, \be_3^{\times s_1-1}}$ for some $s'\in\N$.

	\smallskip
	
	\textbf{Step 2 (\textit{pong}): Obtaining control by a norm of $f_3$.}
	\smallskip

    In this step, our starting point is the newly obtained inequality \eqref{E: intermediate control example}. Letting
    \begin{align*}
        \tilde{f}_2 = \E_n \overline{f_0}(\bx - \bv_2 n^2)\overline{f_1}(\bx+\bv_1 n- \bv_2 n^2)\overline{f_3}(\bx + \bv_3(n^2+n)- \bv_2 n^2),
    \end{align*}
we employ Lemma \ref{L: dual replacement} to obtain
    \begin{align*}
        |\Lambda(f_0,f_1,\tilde{f}_2,f_3)|\geq \delta^2.
    \end{align*}
    We then apply the inequality \eqref{E: intermediate control example} with $\tilde{f}_2$ in place of $f_2$ to conclude that
    \begin{align*}
        \norm{\tilde{f}_2}_{\bv_3^{\times s_0}, (\bv_3-\bv_2)^{\times s_1-1}, \bv_2^{\times s_2}}\gg\delta^{O(1)},
    \end{align*}
    assuming like we have done so far that $\delta\gg p^{-c'}$ for $c'>0$ sufficiently small. In the \textit{ping} step, we applied Corollary~\ref{C: dual-difference interchange}  in order to get rid of just one
    vector $\bv_3-\bv_2$; this was necessary for us to be able to apply the inverse theorem for degree 1 norm. This time, we aim at ridding ourselves of all the vectors $\bv_2^{\times s_2}$ at once since we do not want any such vector to reappear in the norm of $f_3$ ultimately controlling our average.
    Applying Corollary \ref{C: dual-difference interchange}(ii) this time, we get that
    \begin{multline*}
		\E_{\uh,\uh'\in \F_p^{s_0+s_1-1}}
		\abs{\E_{\bx,n} f_{0,\uh,\uh'}(\bx)\cdot f_{1,\uh,\uh'}(\bx+\bv_1 n)\cdot \CD_{\uh,\uh'}(\bx+\bv_2 n^2) \cdot f_{3,\uh,\uh'}(\bx+\bv_3 (n^2+n))}
	\end{multline*}
 is at least $\Omega(\delta^{O(1)})$,
	where
  \begin{align*}
     f_{j,\uh,\uh'}(\bx) =  \Delta_{\bv_3^{\times s_0}, (\bv_3-\bv_2)^{\times s_1-1}; \uh-\uh'} f_j(\bx)\quad \mathrm{for}\quad j=0,1,3
 \end{align*}
    and
	$\CD_{\uh,\uh'}$ is a product of $2^{s_0+s_1-1}$ elements of $\FD_{s_2}(\bv_2)$. Let $B$ be the set of $(\uh,\uh')\in\F_p^{s_0+s_1-1}$ for which 
 \begin{align*}
     \abs{\E_{\bx,n} f_{0,\uh,\uh'}(\bx)\cdot f_{1,\uh,\uh'}(\bx+\bv_1 n)\cdot \CD_{\uh,\uh'}(\bx+\bv_2 n^2) \cdot f_{3,\uh,\uh'}(\bx+\bv_3 (n^2+n))}\gg\delta^{O(1)};
 \end{align*}
 we note that $|B|\gg\delta^{O(1)}p^{s_0+s_1-1}$ from the pigeonhole principle. Proposition \ref{P: single box norm control II} tells us that in getting a box norm control of counting operators twisted by dual functions, we can ignore the contribution of dual functions. From Proposition \ref{P: single box norm control II} and the fact that the only quadratic polynomial outside dual functions is in $f_{3,\uh,\uh'}$ and lies along $\bv_3$, it follows that
 \begin{align*}
     \E_{\uh,\uh'\in \F_p^{s_0+s_1-1}} 1_B(\uh,\uh')\cdot \norm{f_{3,\uh,\uh'}}_{U^{s_3}}\gg\delta^{O(1)}
 \end{align*}
 for some $s_3\in\N$.
 Extending by nonnegativity to all $(\uh,\uh')$, invoking the definition of $f_{3,\uh,\uh'}$ and using the induction formula for box norms, we deduce that 
    \begin{align*}
        \norm{f_3}_{\bv_3^{\times s'}, (\bv_3-\bv_2)^{\times s_1-1}}\gg\delta^{O(1)} 
    \end{align*}
    for $s' = s_0 +s_3$, which gives the claim.
\end{proof}

Proposition \ref{P: Iterated smoothing of n, n^2, n^2+n} can be used to control the other terms of the configuration by Gowers norms as follows.
\begin{proposition}\label{P: control of n, n^2, n^2+n}
	There exists $s\in\N$ and $c>0$ with the following property: for every $D\in\N$ and 1-bounded functions $f_0, f_1, f_2, f_3:\F_p^D\to\C$, we have the bound
 \begin{align}\label{E: goal for n, n^2, n^2+n, all terms}
    |\Lambda(f_0,f_1,f_2, f_3)|\ll \min_{j=1,2,3}\norm{f_j}_{U^s(\bv_j)}^c + p^{-c}.
\end{align}
\end{proposition}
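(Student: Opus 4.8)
The plan is to establish the three bounds $|\Lambda(f_0,f_1,f_2,f_3)|\ll \norm{f_j}_{U^s(\bv_j)}^c+p^{-c}$ for $j=3,2,1$ in turn, and then take $s$ to be the largest and $c$ the smallest of the three exponents obtained. The case $j=3$ is exactly Proposition~\ref{P: Iterated smoothing of n, n^2, n^2+n}, so nothing new is needed there.

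For $j=2$, I would observe that the polynomial $\bv_2 n^2$ attached to $f_2$ also has maximal degree among $0,\ \bv_1 n,\ \bv_2 n^2,\ \bv_3(n^2+n)$, so Proposition~\ref{P: single box norm control}, applied after relabelling the progression so that the $f_2$-term occupies the highest-degree slot, yields
\[
    |\Lambda(f_0,f_1,f_2,f_3)|^{O(1)}\ll \norm{f_2}_{\bv_2^{\times s_0},\, (\bv_2-\bv_3)^{\times s_1}}+p^{-1},
\]
which is precisely the mirror of \eqref{E: smoothing starting point} with the roles of $\bv_2$ and $\bv_3$ interchanged. From here I would run the ping--pong smoothing argument of Propositions~\ref{Smoothing of n, n^2, n^2+n} and~\ref{P: Iterated smoothing of n, n^2, n^2+n} verbatim, swapping the triples $(f_2,\bv_2,n^2)$ and $(f_3,\bv_3,n^2+n)$ throughout. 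The only structural features used are that both attached polynomials have degree $2$ with distinct leading vector coefficients $\bv_2\neq\bv_3$ — so a $(\bv_2-\bv_3)$-invariant function $u$ satisfies $u(\bx+\bv_2 q(n))=u(\bx+\bv_3 q(n))$ for every polynomial $q$, collapsing the counting operator to one that is essentially one-dimensional along $\bv_3$ — and these are symmetric in $f_2$ and $f_3$. Iterating the swapped analogue of Proposition~\ref{Smoothing of n, n^2, n^2+n} removes the $(\bv_2-\bv_3)$-directions one at a time and leaves $|\Lambda(f_0,f_1,f_2,f_3)|\ll \norm{f_2}_{U^{s}(\bv_2)}^{c}+p^{-c}$.

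For $j=1$ the polynomial $\bv_1 n$ has the \emph{lowest} degree, so the smoothing machinery cannot be applied to $f_1$ directly; instead I would bootstrap from the two bounds just established by means of the dual-function calculus. Assume $\delta:=|\Lambda(f_0,f_1,f_2,f_3)|\gg p^{-c'}$, and write $\Lambda(f_0,f_1,f_2,g)=\E_\bx g(\bx)\phi(\bx)$ with the $1$-bounded function $\phi(\bx)=\E_n f_0(\bx-\bv_3(n^2+n))f_1(\bx-\bv_3(n^2+n)+\bv_1 n)f_2(\bx-\bv_3(n^2+n)+\bv_2 n^2)$. The Cauchy--Schwarz inequality gives $\delta^2\le\Lambda(f_0,f_1,f_2,\overline{\phi})$; the $j=3$ bound forces $\norm{\phi}_{U^s(\bv_3)}\gg\delta^{O(1)}$, and the weak inverse theorem for $U^s(\bv_3)$ then yields $\Lambda(f_0,f_1,f_2,\CD_{s,\bv_3}\phi)=\E_\bx \CD_{s,\bv_3}\phi(\bx)\,\phi(\bx)\gg\delta^{O(1)}$; that is, $f_3$ has been replaced by the dual function $\CD_3:=\CD_{s,\bv_3}\phi\in\FD_s(\bv_3)$. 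Repeating this maneuver once more on $f_2$ — now controlling the dual-twisted operator $\Lambda(f_0,f_1,f_2,\CD_3)$ by $\norm{f_2}_{U^{s}(\bv_2)}$ (which follows exactly as in the $j=2$ case, or directly from Proposition~\ref{P: single box norm control II}) and using the weak inverse theorem for $U^s(\bv_2)$ — produces $\CD_2\in\FD_{s}(\bv_2)$ with $|\Lambda(f_0,f_1,\CD_2,\CD_3)|\gg\delta^{O(1)}$. Finally, Proposition~\ref{P: single box norm control II} applied to $\Lambda(f_0,f_1,\CD_2,\CD_3)$ with $f_0,f_1$ as the only genuine functions (so $\ell=1$, $p_1(n)=n$) and $\CD_2,\CD_3$ as the dual twists gives $|\Lambda(f_0,f_1,\CD_2,\CD_3)|^{O(1)}\ll\norm{f_1}_{\bv_1^{\times s'}}+p^{-1}=\norm{f_1}_{U^{s'}(\bv_1)}+p^{-1}$, and chaining the inequalities yields $\delta\ll\norm{f_1}_{U^{s'}(\bv_1)}^{c}+p^{-c}$.

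I expect the main obstacle to be the $j=2$ case: one must verify carefully that the ping--pong smoothing of Proposition~\ref{Smoothing of n, n^2, n^2+n} is genuinely invariant under interchanging $f_2$ and $f_3$ despite the attached polynomials $n^2$ and $n^2+n$ being different — in particular that after the \textit{ping} step the two degree-$2$ polynomials again lie along a single direction, that Corollary~\ref{C: dual-difference interchange} applies in the swapped configuration, and that the intermediate box norm of $f_3$ it produces can be fed back through a \textit{pong} step. For $j=1$ the remaining delicate points are bookkeeping: checking that the iterated dual replacement only ever costs a power of $\delta$, and that the dual functions $\CD_2,\CD_3$, whose degrees may exceed $d=2$, are still admissible inputs to Proposition~\ref{P: single box norm control II} — which they are, at the cost of enlarging the implied constants.
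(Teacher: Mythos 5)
Your proof is correct, but for the $j=2$ case it takes a genuinely different (and more laborious) route than the paper.

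The paper never reruns the ping--pong smoothing to obtain $U^s(\bv_2)$-control. Instead, once the $j=3$ bound from Proposition~\ref{P: Iterated smoothing of n, n^2, n^2+n} is in hand, the paper replaces $f_3$ by the dual function $\overline{\CD_{s_1,\bv_3}\tilde{f}_3}$ via Lemma~\ref{L: dual replacement}\eqref{i: U^s}. The crucial observation is that the resulting counting operator now has only one degree-two polynomial outside the dual twist, namely $\bv_2 n^2$ attached to $f_2$, so it has \emph{basic type} in the terminology of Section~\ref{S:formalism pairwise independent}; Proposition~\ref{P: single box norm control II} therefore applies directly and yields $\norm{f_2}_{U^{s_2}(\bv_2)}\gg\delta^{O(1)}$ with no further smoothing. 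Your route --- relabel so $f_2$ sits in the top slot, extract the mirrored bound $\norm{f_2}_{\bv_2^{\times s_0},(\bv_2-\bv_3)^{\times s_1}}$ from Proposition~\ref{P: single box norm control}, and run the ping--pong argument with the roles of $(\bv_2,n^2)$ and $(\bv_3,n^2+n)$ interchanged --- also works; the symmetry checks you list (that the \textit{ping} step collapses both quadratics onto $\bv_3$, that Corollary~\ref{C: dual-difference interchange} applies to the swapped indices, and that the \textit{pong} step then produces a pure $\bv_2$-Gowers norm) all go through, and in fact the general machinery of Proposition~\ref{P: control} subsumes this. But it duplicates the entire iterated smoothing when a single application of Lemma~\ref{L: dual replacement} plus Proposition~\ref{P: single box norm control II} suffices. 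Your $j=1$ argument, by contrast, is essentially the paper's: replace $f_3$ and then $f_2$ by dual functions, and apply Proposition~\ref{P: single box norm control II} to the residual length-one operator $\Lambda(f_0,f_1,\CD_2,\CD_3)$, noting (correctly) that the degrees of $\CD_2,\CD_3$ only affect the implied constants.

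Two minor remarks. First, your function $\phi$ equals $\overline{\tilde{f}_3}$ in the paper's notation, so the bootstrapping chain is the same modulo complex conjugation; it is worth stating this explicitly to make the invocation of Lemma~\ref{L: dual replacement} line up with the paper's sign conventions. Second, you write the final box norm as $\norm{f_1}_{\bv_1^{\times s'}}$ and identify it with $\norm{f_1}_{U^{s'}(\bv_1)}$; this is fine, since in Proposition~\ref{P: single box norm control II} with $\ell=1$ and $p_1(n)=n$ the only admissible direction vector \eqref{E: coefficient vectors} is a nonzero scalar multiple of $\bv_1$, and scalar multiples generate the same subgroup of $\F_p^D$.
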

\begin{proof}
    By Proposition \ref{P: Iterated smoothing of n, n^2, n^2+n}, there exist $s_1\in\N$ and $c_1>0$ such that
\begin{align}\label{E: control for n, n^2, n^2+n, 1}
        |\Lambda(f_0,f_1,f_2, f_3)|\ll \norm{f_3}_{U^{s_1}(\bv_3)}^{c_1} + p^{-c_1}
\end{align}    
for all 1-bounded functions $f_0,f_1,f_2,f_3:\F_p^D\to\C$. Suppose that $\delta = |\Lambda(f_0,f_1,f_2, f_3)|$ for some $\delta\gg p^{-c}$ with $0<c<c_1/2$.
Letting $$\tilde{f}_3 = \E_n \overline{f_0}(\bx - \bv_3(n^2+n))\overline{f_1}(\bx+\bv_1 n- \bv_3(n^2+n))\overline{f_2}(\bx + \bv_2 n^2- \bv_3(n^2+n)),$$
we obtain $\norm{\tilde{f}_3}_{U^{s_1}(\bv_3)}\gg\delta^{O(1)}$ from Lemma \ref{L: dual replacement}.
Another application of Lemma \ref{L: dual replacement}, this time part \eqref{i: U^s}, gives
\begin{multline*}
    \Lambda(f_0,f_1,f_2, \overline{\CD_{s_1,\bv_3}\tilde{f}_3})\\
    = \E_{\bx, n} f_0(\bx)f_1(\bx+\bv_1 n)f_2(\bx + \bv_2 n^2)\overline{\CD_{s_1,\bv_3}\tilde{f}_3}(\bx + \bv_3(n^2+n)) \gg\delta^{O(1)}.
\end{multline*}
We note that the expression above is the counting operator for the progression $\bx,\; \bx+\bv_1 n,\; \bx+\bv_2 n^2$ twisted by the term $\overline{\CD_{s_1,\bv_3}\tilde{f}_3}(\bx + \bv_3(n^2+n))$ which can be removed using Proposition \ref{P: single box norm control II}. It follows from this proposition that there exists some $s_2\in\N$ such that - assuming $c>0$ is small enough - we have $\norm{f_2}_{U^{s_2}(\bv_2)}\gg\delta^{O(1)}$.

We have thus shown that the counting operator is controlled by a Gowers norm of $f_2$ and $f_3$, and it remains to prove a similar statement for $f_1$. Letting 
\begin{align*}
    \tilde{f}_2 = \E_n \overline{f_0}(\bx - \bv_2 n^2)\overline{f_1}(\bx+\bv_1 n-\bv_2 n^2)\CD_{s_1,\bv_3}\tilde{f}_3(\bx + \bv_3(n^2+n) - \bv_2 n^2)
\end{align*}
and applying Lemma \ref{L: dual replacement} as before, we deduce that
\begin{align*}
    \Lambda(f_0,f_1,\tilde{f}_2, \overline{\CD_{s_1,\bv_3}\tilde{f}_3})\gg\delta^{O(1)}.
\end{align*}
Once again, Proposition \ref{P: single box norm control II} implies that $\norm{\tilde{f}_2}_{U^{s_2}(\bv_2)}\gg\delta^{O(1)}$. From Lemma \ref{L: dual replacement}\eqref{i: U^s} we infer that
\begin{align*}
    \Lambda(f_0,f_1, \overline{\CD_{s_2,\bv_2}\tilde{f}_2}, \overline{\CD_{s_1,\bv_3}\tilde{f}_3})\gg\delta^{O(1)}.
\end{align*}
A final application of Proposition \ref{P: single box norm control II} gives $\norm{f_1}_{U^{s_3}(\bv_1)}\gg\delta^{O(1)}$ for some $s_3\in\N$, and the result follows by taking $s=\max(s_1,s_2,s_3)$.
\end{proof}

    Before we move on to discuss the proof of Theorem \ref{T: bounds} in the general case, we describe certain reductions that happened in the proof of Proposition \ref{Smoothing of n, n^2, n^2+n}, and which will shed light on how we prove Theorem \ref{T: bounds} for general progressions. We started with the task of controlling the counting operator
    \begin{align*}
        \Lambda(f_0,f_1,f_2, f_3) = \E_{\bx, n} f_0(\bx)f_1(\bx+\bv_1 n)f_2(\bx + \bv_2 n^2)f_3(\bx + \bv_3(n^2+n))
    \end{align*}    
    for the progression $$\bx,\; \bx+\bv_1 n,\; \bx+\bv_2 n^2,\; \bx+\bv_3 (n^2+n).$$
    In the \textit{ping} step, we reduced this problem to one of controlling the counting operator
    \begin{align*}
        \Lambda'(f_0,f_1,f_2, f_3) = \E_{\bx, n} f_0(\bx)f_1(\bx+\bv_1 n)f_2(\bx + \bv_2 n^2)f_3(\bx + \bv_2(n^2+n))
    \end{align*}    
    for a progression $$\bx,\; \bx+\bv_1 n,\; \bx+\bv_2 n^2,\; \bx+\bv_2 (n^2+n);$$
    since both highest-degree polynomials lay along the same vector $\bv_2$, we could control the operator $\Lambda'$ by invoking Proposition  \ref{P: single box norm control}. In the \textit{pong} step, we similarly reduced to operators of the form
    \begin{align*}
        \Lambda(f_0,f_1,\CD, f_3) = \E_{\bx, n} f_0(\bx)f_1(\bx+\bv_1 n)\CD(\bx + \bv_2 n^2)f_3(\bx + \bv_3(n^2+n)),
    \end{align*}     
    where $\CD$ is a product of functions in $\FD(\bv_2)$; we can and will think of this as the counting operator for the progression
    \begin{align*}
        \bx,\; \bx+\bv_1 n,\; \bx+\bv_3 (n^2+n)
    \end{align*}
    twisted by the term $\CD(\bx + \bv_2 n^2)$. This time, the counting operator $\Lambda(f_0,f_1,\CD, f_3)$ could be controlled directly using Proposition \ref{P: single box norm control II}. Lastly, in deriving Gowers norm control on other terms of $\Lambda$ in Proposition \ref{P: control of n, n^2, n^2+n}, we reduced to the operators
    \begin{align*}
        \Lambda(f_0,f_1,f_2, \CD) = \E_{\bx, n} f_0(\bx)f_1(\bx+\bv_1 n)f_2(\bx + \bv_2 n^2)\CD(\bx + \bv_3(n^2+n)),
    \end{align*}  
    and 
        \begin{align*}
        \Lambda(f_0,f_1,\CD', \CD) = \E_{\bx, n} f_0(\bx)f_1(\bx+\bv_1 n)\CD'(\bx + \bv_2 n^2)\CD(\bx + \bv_3(n^2+n))
    \end{align*}  
    respectively, where $\CD\in\FD(\bv_3), \CD'\in\FD(\bv_2)$, both of which can be controlled directly using Proposition \ref{P: single box norm control II}.

\subsection{The formalism for longer progressions}\label{S:formalism pairwise independent}
To prove Theorem \ref{T: count} in full generality, we need a robust formalism, imported from our earlier ergodic work with Frantzikinakis \cite{FrKu22a}.
We shall handle longer families by reducing an arbitrary counting operator to a counting operator of a smaller ``type'' much the same as we did in the proof of Proposition \ref{Smoothing of n, n^2, n^2+n}. In what follows, we will be dealing with counting operators of the form
\begin{align}\label{E: general operator}
	\E_{\bx, n} f_0(\bx)\cdot\prod_{j\in[\ell]}f_j(\bx+\bv_{\eta_j}p_j(n)) \cdot \prod_{j\in[L]}\CD_{j}(\bx+ \bu_{\pi_j}q_j(n))
\end{align}
for various choices of $\eta = (\eta_1, \ldots, \eta_\ell)\in[\ell]^\ell$ and $\pi = (\pi_1, \ldots, \pi_L)\in[L]^L$.
We encourage the reader to think of \eqref{E: general operator} as the counting operator for
\begin{align*}
    \bx,\; \bx+\bv_{\eta_1}p_1(n),\;\ldots,\; \bx+\bv_{\eta_\ell}p_\ell(n)
\end{align*}
along 1-bounded functions $f_0, \ldots, f_\ell:\F_p^D\to\C$, twisted by the product $\prod_{j\in[L]}\CD_{j}(\bx+ \bu_{\pi_j}q_j(n))$, where $\CD_j\in\FD(\bu_{\pi_j})$. In other words, we do not think of $\prod_{j\in[L]}\CD_{j}(\bx+ \bu_{\pi_j}q_j(n))$ as an important part of the count \eqref{E: general operator}, but rather as an annoying term that can be removed using Proposition \ref{P: removing duals} and which therefore does not play a major role in our analysis.
We let
\begin{itemize}
    \item $\ell$ be the \emph{length} of \eqref{E: general operator} (noting that a counting operator of length $\ell$ corresponds to a polynomial pattern of length $\ell+1$),
    \item $d := \max\limits_{j\in[\ell]}\deg p_j$ be its \emph{degree},
    \item $\eta:=(\eta_1,\ldots, \eta_{\ell})\in[\ell]^\ell$ be the \emph{indexing tuple} of \eqref{E: general operator}.
\end{itemize}
Furthermore, we define
\begin{align*}
	\FL := \{j\in[\ell]\colon \ \deg p_j = d\}
\end{align*}
to be the set of indices corresponding to polynomials $p_j$ of maximum degree. 


The relative complexity of various counting operators is measured by the following notion of type.
Letting $K = |\FL|$ be the number of maximum degree polynomials among $p_1, \ldots, p_\ell,$ we set the \emph{type} of \eqref{E: general operator} to be the tuple $w := (w_1, \ldots, w_{\ell})$, where each entry $w_t$ is defined by
\begin{align*}
	w_t := |\{j\in \FL\colon \ \eta_j = t\}| = |\{j\in[\ell]\colon \ \eta_j = t,\ \deg p_j = d\}|;
\end{align*}
 thus, it represents the number of polynomials $p_j$ of maximum degree appearing along the vector $\bv_t$. We note that $|w|:= w_1 + \cdots + w_{\ell}=K$.
 For instance, the counting operator
\begin{align*}
    \E_{\bx, n}&f_0(\bx)f_1(\bx+\bv_1 n^2)f_2(\bx+\bv_2 n)f_3(\bx+\bv_3(n^2+n))\\
    &f_4(\bx+\bv_1(n^2+2n))f_5(\bx+\bv_3(2n^2+n))\CD(\bx+\bu n^3)
\end{align*}
for some $\CD\in\FD(\bu)$ has length 5, degree 2, $K = 4$ and indexing tuple $(1,2,3,1,3)$ - we stress that we ignore the term $\CD(\bx+\bu n^3)$ while discussing the aforementioned parameters, hence the degree three polynomial $n^3$ and its direction vector $\bu$ is ignored. This counting operator has type $(2, 0, 2, 0, 0)$ since two of the quadratic polynomials lie along $\bv_1$ and two others lie along $\bv_3$.

To organise the induction scheme, we need to define a partial ordering on types $w\in[0,K]^{\ell}$. Let $\supp(w) = \{t\in[0,K]:\ w_t>0\}$. For distinct integers $m,i\in [\ell]$ with $m\in\supp(w)$, we define the type operation
\begin{align*}
	(\sigma_{mi}w)_t := \begin{cases} w_t,\; &t \neq m,i\\
		w_m-1,\; &t = m\\
		w_i + 1,\; &t = i.
	\end{cases}.
\end{align*}
For instance, $\sigma_{12}(2,3, 7) = (1, 4, 7)$.
Letting $w':=\sigma_{mi}w$, we set $w' < w$ if $w_m\leq w_i$. In particular, $(1, 4, 7)<(2, 3, 7)$ in the example above. We note that the tuple $(2, 3, 7)$ of higher type has smaller variance than the tuple $(1, 4, 7)$ of smaller type; this is a consequence of the fact that while passing from $(2,3,7)$ to $(1,4,7)$, we decrease by 1 the smallest nonzero value 2.
This observation carries forward more generally: if the condition $w_m\leq w_i$ is satisfied, then an easy computation shows that $w'$ has strictly higher variance than $w$, or equivalently ${w'_1}^2 + \cdots + {w'_\ell}^2 > w_1^2 + \cdots + w_{\ell}^2$. Thanks to this fact, we can extend the partial ordering $<$ to all tuples $[0,K]^{\ell}$ of length $K$ by transitivity, and so for two type tuples
$w, w'\in[0,K]^{\ell}$, we let  $w'<w$ if there exist types $w_0, \ldots, w_r$ with $w_0 = w$, $w_r = w'$, such that for every $l= 0, \ldots, r-1$, we have $w_{l+1} = \sigma_{mi} w_l$ for distinct $m,i\in\FL$ with $w_{lm}\leq w_{li}$. For instance, this ordering induces the following chains of types:
\begin{align*}
	(4, 0, 0) < (3,1,0) < (2, 2, 0) < (2, 1, 1) \quad \textrm{and}\quad
	(0, 4, 0) < (1, 3, 0) < (2, 2, 0) < (2, 1, 1).
\end{align*}

We note here that the highest order type $w\in [0,K]^{\ell}$ of length $K$ is one whose entries only take values 0 and 1, corresponding to the operators \eqref{E: general operator} in which each highest degree polynomial $p_j$ lies along a different vector. By contrast, the lowest order type is one in which one entry is $K$ while the other ones are all 0; this corresponds to the counting operator in which all highest degree polynomials $p_j$ lie along the same direction. Theorem \ref{T: control} for such operators is a direct consequence of Proposition \ref{P: single box norm control II}. We call the types of the latter form \textit{basic}. Thus, the ordering on types expresses the intuition that counting operators with plenty of different direction vectors are more complex than those with few. 

Our induction scheme works as follows. We want to obtain Gowers norm control on general counting operators of length $\ell$ and type $w$ of the form \eqref{E: general operator} in which the polynomials $p_1, \ldots,p_\ell$ are pairwise independent. For instance, the counting operator 
    \begin{align*}
        \Lambda(f_0,f_1,f_2, f_3) = \E_{\bx, n} f_0(\bx)f_1(\bx+\bv_1 n)f_2(\bx + \bv_2 n^2)f_3(\bx + \bv_3(n^2+n))
    \end{align*} 
that we examined in Proposition \ref{Smoothing of n, n^2, n^2+n} has length $\ell=3$ and type $w=(0,1,1)$. In the \textit{ping} step, we will reduce the counting operator to one of the same length $\ell$ but lower type $w'=\sigma_{mi}w$ for some $m,i$ with $w_m<w_i$; this will correspond to replacing one instance of the vector $\bv_m$ by the vector $\bv_i$. Indeed, in the \textit{ping} step of Proposition \ref{Smoothing of n, n^2, n^2+n}, we reduced to the counting operator
    \begin{align*}
        \Lambda'(f_0,f_1,f_2, f_3) = \E_{\bx, n} f_0(\bx)f_1(\bx+\bv_1 n)f_2(\bx + \bv_2 n^2)f_3(\bx + \bv_2(n^2+n))
    \end{align*} 
of the same length $\ell=3$ but lower type $w'=\sigma_{32}w = (0,2,0)<w$. The type is basic, and we indeed obtained the control over $\Lambda'$ by directly invoking Proposition \ref{P: single box norm control II} (in that case, it sufficed to invoke Proposition \ref{P: single box norm control}). In the \textit{pong step}, we will reduce the counting operator to one of smaller length $\ell-1$; this will correspond to replacing one of the terms by a product of dual functions. In Proposition \ref{Smoothing of n, n^2, n^2+n}, the relevant operator of shorter length 2 took the form
    \begin{align*}
        \Lambda(f_0,f_1,\CD, f_3) = \E_{\bx, n} f_0(\bx)f_1(\bx+\bv_1 n)\CD(\bx + \bv_2 n^2)f_3(\bx + \bv_3(n^2+n)).
    \end{align*}   
 Lastly, once we obtain Gowers norm control on one of the terms with highest-degree polynomials, we will use the weak inverse theorem for Gowers norms to replace this term in the counting operator by a dual function, once again reducing the counting operator to one of length $\ell-1$, to which we will apply the induction hypothesis in order to get Gowers norm control for other terms. In Proposition \ref{P: control of n, n^2, n^2+n}, we did this by passing to the counting operator
    \begin{align*}
        \Lambda(f_0,f_1, f_2,\CD) = \E_{\bx, n} f_0(\bx)f_1(\bx+\bv_1 n)f_2(\bx + \bv_2 n^2)\CD(\bx + \bv_3(n^2+n)).
    \end{align*}   
of length $2$.

Thus, in the \textit{ping} step, we induct on $<$ for counting operators of the same length $\ell$; whereas in the \textit{pong} step and while extending Gowers norm control from one term to all other terms, we induct on the length $\ell$. Reducing the counting operators to those of lower length, we will arrive after finitely many steps at counting operators of length $1$, for which we have Gowers norm control by Proposition \ref{P: single box norm control II}. Similarly, reducing the counting operators in the \textit{ping} step to those of the same length and lower type, we will arrive after finitely many steps at averages of length $\ell$ and basic type, for which Gowers norm control also follows from Proposition \ref{P: single box norm control II}.


\subsection{The induction scheme}
We now present three propositions whose interplay will give Theorem \ref{T: control}. The first of them gives Gowers norm control for a general class of twisted counting operators. Theorem \ref{T: control} follows from the result below upon setting $L:=0$ and letting $\eta$ be the identity indexing tuple.
\begin{proposition}\label{P: control}
	Let $d, D, \ell\in\N$, $L\in\N_0$, $\eta\in[\ell]^\ell, \pi\in[L]^L$ be indexing tuples, $\bv_1, \ldots, \bv_\ell, \bu_1, \ldots, \bu_L\in\Z^D$ be nonzero vectors and $p_1, \ldots, p_\ell, q_1, \ldots, q_L\in\Z[n]$ be polynomials of degree at most $d$  with zero constant terms such that $p_1, \ldots, p_\ell$ are pairwise independent. Then there exists $s=O_{d,\ell, L}(1)$ such that for all 1-bounded functions $f_0, \ldots, f_\ell:\F_p^D\to\C$ and  $\CD_j\in\FD_d(\bu_{\pi_j})$, we have
	\begin{multline}\label{E: general average vanishes}
		\abs{\E_{\bx,n}f_0(\bx)\cdot\prod_{j\in[\ell]}f_j(\bx+\bv_{\eta_j}p_j(n)) \cdot \prod_{j\in[L]}\CD_{j}(\bx+\bu_{\pi_j}q_j(n))}^{O_{d,\ell,L}(1)}\\
  \ll_{d,\ell,L}\min_{j\in[\ell]}\norm{f_j}_{U^s(\bv_{\eta_j})}+p^{-1}.
	\end{multline}
	
\end{proposition}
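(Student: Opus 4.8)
\emph{Strategy.} The plan is to prove Proposition~\ref{P: control} by a double induction: an outer induction on the length $\ell$ and, for fixed $\ell$, an inner induction on the type $w$ with respect to the ordering $<$ of Section~\ref{S:formalism pairwise independent}. Since $p_1,\dots,p_\ell$ are pairwise independent with zero constant terms, the family $(\bv_{\eta_j}p_j)_{j\in[\ell]}$ is essentially distinct, so Propositions~\ref{P: removing duals} and~\ref{P: single box norm control II} apply to our operator, which we denote $\Lambda$ (and we put $\delta=|\Lambda|$); after relabelling we may assume $p_\ell$ has the maximum degree among $p_1,\dots,p_\ell$. Restricting all functions to cosets of $\langle\bv_1,\dots,\bv_\ell,\bu_1,\dots,\bu_L\rangle$ and averaging, we may take $D=O_{\ell,L}(1)$, which is why $D$ does not enter the constants; and throughout the argument every dual function that arises has degree bounded by $O_{d,\ell,L}(1)$, so all invocations of Propositions~\ref{P: removing duals} and~\ref{P: single box norm control II} are legitimate, with the resulting exponents and constants absorbed into the $O_{d,\ell,L}(1)$ bounds.

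\emph{Base cases and the extension step.} For $\ell=1$ the claim is immediate from Proposition~\ref{P: single box norm control II}, since the only vector produced in~\eqref{E: coefficient vectors} is a scalar multiple of $\bv_{\eta_1}$. If $w$ is \emph{basic} (all maximum-degree polynomials lying along one direction), then every $\b_i$ in~\eqref{E: coefficient vectors} is a scalar multiple of $\bv_{\eta_\ell}$: this is clear for indices $j$ with $\eta_j=\eta_\ell$, while for every other $j$, including $j=0$, one has $\deg p_j<d$, forcing $d_{j\ell}=d$ and $a_{jd}=0$. Hence Proposition~\ref{P: single box norm control II} gives $\delta^{O(1)}\ll\norm{f_\ell}_{U^s(\bv_{\eta_\ell})}+p^{-1}$. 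To pass from control by a single term $f_k$ to control by $\min_{j}\norm{f_j}_{U^s(\bv_{\eta_j})}$, we use the \emph{extension step}: assuming $\delta\gg p^{-c}$, set $\tilde f_k(\bx)=\E_n\overline{f_0}(\bx-\bv_{\eta_k}p_k(n))\prod_{i\neq k}\overline{f_i}(\bx+\bv_{\eta_i}p_i(n)-\bv_{\eta_k}p_k(n))\prod_{j}\overline{\CD_j}(\bx+\bu_{\pi_j}q_j(n)-\bv_{\eta_k}p_k(n))$. By Lemma~\ref{L: dual replacement}, $\Lambda$ with $f_k$ replaced by $\tilde f_k$ is $\geq\delta^2$; feeding this into the assumed control gives $\norm{\tilde f_k}_{U^s(\bv_{\eta_k})}\gg\delta^{O(1)}$, and the weak inverse theorem (Lemma~\ref{L: dual replacement}) lets us replace $f_k$ in $\Lambda$ by the dual function $\overline{\CD_{s,\bv_{\eta_k}}\tilde f_k}\in\FD(\bv_{\eta_k})$ at the cost of a bounded power. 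The outcome is a twisted counting operator of length $\ell-1$ whose polynomials $\{p_i:i\neq k\}$ are still pairwise independent; the outer induction hypothesis gives control by $\min_{i\neq k}\norm{f_i}_{U^s(\bv_{\eta_i})}$, and together with the hypothesis for $k$ this yields~\eqref{E: general average vanishes}.

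\emph{Non-basic type: the ping-pong.} Let $\ell\geq2$ and $w$ be non-basic, and assume the proposition for all shorter lengths and for length $\ell$ with type $<w$. By the extension step it suffices to control $\Lambda$ by $\norm{f_\ell}_{U^s(\bv_{\eta_\ell})}$, and we relabel so that $p_\ell$ is a maximum-degree polynomial along a direction $\bv_m$ ($m=\eta_\ell$) with $w_m$ \emph{minimal} among the nonzero entries of $w$. Proposition~\ref{P: single box norm control II} (whose proof runs through Proposition~\ref{P: single box norm control}) gives $\delta^{O(1)}\ll\norm{f_\ell}_{\b_1,\dots,\b_s}+p^{-1}$, where the $\b_i$ include each vector $a_{\ell d_{j\ell}}\bv_m-a_{jd_{j\ell}}\bv_{\eta_j}$, $j=0,\dots,\ell-1$. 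Since $w$ is non-basic, some maximum-degree $p_{j^*}$ lies along a direction-index $i\neq m$; assume (the generic case) that $\b^*:=a_{\ell d}\bv_m-a_{j^*d}\bv_i$ appears among the $\b_i$ and generates a subgroup not contained in $\langle\bv_m\rangle$. We reduce the multiplicity of $\b^*$ in the controlling norm by one, following Proposition~\ref{Smoothing of n, n^2, n^2+n}. In the \emph{ping} step we replace $f_\ell$ by its dual-type average (Lemma~\ref{L: dual replacement}) and apply Corollary~\ref{C: dual-difference interchange}(i): this strips one copy of $\b^*$ from the norm and puts a $\langle\b^*\rangle$-invariant function $u_{\uh,\uh'}$ where $f_\ell$ stood, with every other function turned into its multiplicative derivative $\Delta_{\mathcal{B}';\,\uh-\uh'}(\cdot)$ along the remaining multiset $\mathcal{B}'=\{\b_1,\dots,\b_s\}\setminus\{\b^*\}$. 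Using $\bv_m=a_{\ell d}^{-1}(\b^*+a_{j^*d}\bv_i)$ and $\langle\b^*\rangle$-invariance, we rewrite $u_{\uh,\uh'}(\bx+\bv_m p_\ell(n))$ as $u_{\uh,\uh'}(\bx+\bv_i\,\tilde p(n))$ with $\tilde p=a_{\ell d}^{-1}a_{j^*d}\,p_\ell$, a nonzero scalar multiple of $p_\ell$; this moves a maximum-degree polynomial off $\bv_m$ and onto $\bv_i$, turning the operator into a twisted counting operator of length $\ell$, type $\sigma_{mi}w<w$ (the inequality $w_m\leq w_i$ holds by minimality of $m$), with pairwise independent polynomials. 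Pigeonholing in $(\uh,\uh')$ and applying the inner induction hypothesis for $\sigma_{mi}w$ gives $\norm{\Delta_{\mathcal{B}';\,\uh-\uh'}f_{j^*}}_{U^{s_2}(\bv_i)}\gg\delta^{O(1)}$ on a set of $(\uh,\uh')$ of density $\gg\delta^{O(1)}$; extending by nonnegativity, applying the inductive formula for box norms, changing variables so that $\uh-\uh'$ becomes $\uh$, and using H\"older's inequality yields $\delta^{O(1)}\ll\norm{f_{j^*}}_{\mathcal{B}',\,\bv_i^{\times s_2}}+p^{-1}$. In the \emph{pong} step we play the same game in reverse: replace $f_{j^*}$ by its dual-type average, apply Corollary~\ref{C: dual-difference interchange}(ii) to absorb all of $\bv_i^{\times s_2}$ into a product of dual functions from $\FD_{s_2}(\bv_i)$, pigeonhole, and invoke the outer induction hypothesis on the resulting twisted operator of length $\ell-1$; this produces $\delta^{O(1)}\ll\norm{f_\ell}_{\mathcal{B}',\,\bv_m^{\times s_3}}+p^{-1}$, i.e.\ the original control with one fewer copy of $\b^*$. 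Iterating the ping-pong over the at most $s=O_{d,\ell,L}(1)$ directions of $\norm{\cdot}_{\b_1,\dots,\b_s}$ not contained in $\langle\bv_m\rangle$, we are eventually left with $\delta^{O(1)}\ll\norm{f_\ell}_{U^s(\bv_m)}+p^{-1}$, and the extension step finishes the proof.

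\emph{Main obstacle.} The difficulty is not any single analytic estimate — Propositions~\ref{P: removing duals} and~\ref{P: single box norm control II}, Lemma~\ref{L: dual replacement} and Corollary~\ref{C: dual-difference interchange} do all the heavy lifting — but the orchestration. One must verify that at every stage the polynomials stay pairwise independent and of bounded degree, that all exponents and implied constants remain controlled by $d,\ell,L$, and, crucially, that the type strictly decreases; the last point is what forces $p_\ell$ to be chosen along a direction with minimal $w_m$. One must also treat the degenerate cases in which the scaled leading coefficients of two maximum-degree polynomials along different direction-indices coincide (so $d_{j^*\ell}<d$ and $\b^*$ changes form), or in which distinct direction-indices carry parallel vectors, so that the relevant $\b^*$ may fail to be ``bad'' and one must re-index or argue directly. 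Finally, one must confirm that the double induction on $(\ell,w)$ — with the internal iteration over the bad directions of $\norm{f_\ell}_{\b_1,\dots,\b_s}$ nested inside each node, each ping-pong cycle consuming either a smaller length or a strictly smaller type — is genuinely well-founded, so that all the $O_{d,\ell,L}(1)$ quantities close up. This bookkeeping is the real work.
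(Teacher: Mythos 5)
Your proposal is correct and follows essentially the same route as the paper: a double induction on the length $\ell$ and the type $w$, with the basic-type and $\ell=1$ cases handled directly by Proposition~\ref{P: single box norm control II}, the ping step reducing the type (via Corollary~\ref{C: dual-difference interchange}(i) and the invariance trick) while the pong step reduces the length (via Corollary~\ref{C: dual-difference interchange}(ii) and dual functions), and the extension step you describe matching the final stage of the paper's proof of Proposition~\ref{P: control}. The only presentational difference is that the paper factors the iteration into the intermediate Propositions~\ref{P: iterated smoothing} and~\ref{P: smoothing}, whereas you inline it; the substance, including the critical choice of $m$ minimizing $w_{\eta_m}$ to ensure the type strictly decreases and the need to work with pairwise independence rather than distinctness after rescaling, is the same.
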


Proposition \ref{P: control} will be deduced from the following result.
\begin{proposition}\label{P: iterated smoothing}
	Let $d, D, \ell\in\N$, $L\in\N_0$, $\eta\in[\ell]^\ell, \pi\in[L]^L$ be indexing tuples, $\bv_1, \ldots, \bv_\ell, \bu_1, \ldots, \bu_L\in\Z^D$ be nonzero vectors and $p_1, \ldots, p_\ell, q_1, \ldots, q_L\in\Z[n]$ be polynomials of degree at most $d$  with zero constant terms such that $p_1, \ldots, p_\ell$ are pairwise independent.
    Suppose that the type $w$ of the counting operator \eqref{E: general operator} is not basic, and let $m\in[\ell]$ be such that $w_{\eta_m} = \min\limits_{t\in\supp(w)} w_t$.
	Then there exist $s=O_{d,\ell, L}(1)$ such that for all 1-bounded functions $f_0, \ldots, f_\ell:\F_p^D\to\C$ and $\CD_j\in\FD_d(\bu_{\pi_j})$, we have
	\begin{multline}\label{E: general average vanishes 2}
		\abs{\E_{\bx,n}f_0(\bx)\cdot\prod_{j\in[\ell]}f_j(\bx+\bv_{\eta_j}p_j(n)) \cdot \prod_{j\in[L]}\CD_{j}(\bx+\bu_{\pi_j}q_j(n))}^{O_{d,\ell,L}(1)}\\
  \ll_{d,\ell,L}\norm{f_m}_{U^s(\bv_{\eta_m})}+p^{-1}.
	\end{multline}    
\end{proposition}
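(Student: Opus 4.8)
The plan is to run the ping--pong smoothing argument illustrated in the proof of Proposition~\ref{Smoothing of n, n^2, n^2+n}, organised as a single induction under which Propositions~\ref{P: control} and \ref{P: iterated smoothing} are proved together: an outer induction on the length $\ell$ and, for each fixed $\ell$, an inner induction on the type $w$ with respect to the ordering $<$. The base case is $w$ basic (in particular $\ell=1$): there all the coefficient vectors \eqref{E: coefficient vectors} returned by Proposition~\ref{P: single box norm control II} generate the single subgroup $\langle\bv_{\eta_m}\rangle$, so the box norm it produces is already $\norm{f_m}_{U^s(\bv_{\eta_m})}$. Moreover Proposition~\ref{P: control} is obtained from Proposition~\ref{P: iterated smoothing} exactly as in the proof of Proposition~\ref{P: control of n, n^2, n^2+n} (use the weak inverse theorem to replace a controlled term by a dual function, then apply the one-shorter-length case), so both may be used at strictly smaller parameters inside the inductive step. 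Replacing $m$ by an index in $\FL$ with the same direction, we assume $\deg p_m=d$; and after losing an exponent we assume $\delta\gg p^{-c}$ for small $c$, where $\delta$ is the left-hand side of \eqref{E: general average vanishes 2} and $\Lambda$ denotes the operator appearing there.

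By Proposition~\ref{P: single box norm control II}, with $f_m$ in the role of the last function, we get $|\Lambda|^{O_{d,\ell,L}(1)}\ll\norm{f_m}_{\b_1^{\times s_1},\dots,\b_r^{\times s_r}}+p^{-1}$ with $s_i=O_{d,\ell,L}(1)$, where each $\b_i$ has the form $a_{md_{jm}}\bv_{\eta_m}-a_{jd_{jm}}\bv_{\eta_j}$ with $d_{jm}=\deg(\bv_{\eta_m}p_m-\bv_{\eta_j}p_j)$. Call $\b_i$ \emph{good} if $\langle\b_i\rangle=\langle\bv_{\eta_m}\rangle$ and \emph{bad} otherwise. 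A short analysis of leading coefficients shows that a bad $\b_i$ can only come from an index $j$ with $\bv_{\eta_j}\notin\langle\bv_{\eta_m}\rangle$ (hence $j\geq1$) and $\deg p_j=d$; for such $j$ one has $d_{jm}=d$, the leading coefficient $a_{md}\neq0$, $\eta_j\in\supp(w)$, and therefore $w_{\eta_j}\geq w_{\eta_m}$ by minimality of $w_{\eta_m}$, so $\sigma_{\eta_m\eta_j}w<w$. It thus suffices to show that one ping--pong step trades one copy of a bad direction for $O_{d,\ell,L}(1)$ copies of the good direction $\bv_{\eta_m}$; iterating $O_{d,\ell,L}(1)$ times leaves only good directions and produces $\norm{f_m}_{U^s(\bv_{\eta_m})}$ with $s=O_{d,\ell,L}(1)$.

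So assume $|\Lambda|^{O(1)}\ll\norm{f_m}_{\b^{\times s_0},\,\dots}+p^{-1}$, where $\b=a_{md}\bv_{\eta_m}-a_{j_0d}\bv_{\eta_{j_0}}$ is bad, $j_0\in\FL$, $s_0\geq1$, and the other directions are suppressed from the notation. \textbf{Ping:} replace $f_m$ by its dual function (Lemma~\ref{L: dual replacement}, costing an exponent) and apply Corollary~\ref{C: dual-difference interchange}(i) with $\b$ distinguished: this removes one copy of $\b$, turns every slot other than the $m$-th (together with the $\CD_j$) into multiplicative derivatives along the remaining directions with difference parameters $\uh-\uh'$, and installs a $\langle\b\rangle$-invariant function $u_{\uh,\uh'}$ in the $m$-th slot. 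Since $a_{md}\neq0$, the $\langle\b\rangle$-invariance gives $u_{\uh,\uh'}(\bx+\bv_{\eta_m}p_m(n))=u_{\uh,\uh'}\bigl(\bx+\frac{a_{j_0d}}{a_{md}}\bv_{\eta_{j_0}}p_m(n)\bigr)$, which moves the $m$-th slot onto $\bv_{\eta_{j_0}}$; the new operator has length $\ell$ and type $\sigma_{\eta_m\eta_{j_0}}w<w$, so by the inner induction hypothesis (Proposition~\ref{P: control} at that lower type) it is controlled, for a positive-density set of $(\uh,\uh')$, by the $U^{s'}(\bv_{\eta_{j_0}})$ norm of the corresponding derivative of $f_{j_0}$. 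Averaging over $(\uh,\uh')$, unwinding the derivative via the inductive formula for box norms and H\"older's inequality, and using $\Lambda(\ldots,\tilde f_{j_0},\ldots)\gg\delta^2$ (Lemma~\ref{L: dual replacement} applied in the $j_0$-slot), we obtain $|\Lambda|^{O(1)}\ll\norm{f_{j_0}}_{\b^{\times s_0-1},\,\dots,\,\bv_{\eta_{j_0}}^{\times s'}}+p^{-1}$. \textbf{Pong:} feed the $j_0$-slot dual function into this bound and apply Corollary~\ref{C: dual-difference interchange}(ii): this peels \emph{all} $s'$ copies of $\bv_{\eta_{j_0}}$ at once into a product of dual functions in $\FD(\bv_{\eta_{j_0}})$ while turning the other slots — in particular $f_m$ — into derivatives. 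The resulting twisted operator has length $\ell-1$ and still carries the degree-$d$ polynomial $p_m$ along $\bv_{\eta_m}$, so by the outer induction hypothesis (Proposition~\ref{P: control} at length $\ell-1$) it is controlled, for a positive-density set of $(\uh,\uh')$, by the $U^{s''}(\bv_{\eta_m})$ norm of the corresponding derivative of $f_m$; unwinding once more yields $|\Lambda|^{O(1)}\ll\norm{f_m}_{\b^{\times s_0-1},\,\dots,\,\bv_{\eta_m}^{\times s''}}+p^{-1}$, the asserted ping--pong step.

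The main obstacle is making the induction terminate while staying quantitative: the ping step preserves $\ell$ but strictly lowers $w$, the pong step strictly lowers $\ell$, and each step calls Proposition~\ref{P: control}, hence Proposition~\ref{P: iterated smoothing}, for genuinely smaller parameters, so the two statements must be run as one induction on $(\ell,w)$ with the basic case fed in from Proposition~\ref{P: single box norm control II}; one also has to check that only $O_{d,\ell,L}(1)$ ping--pong steps occur and that the Gowers degree stays $O_{d,\ell,L}(1)$ throughout. The second delicate point is the coefficient bookkeeping in the ping step: one must verify that a bad direction necessarily pairs $\bv_{\eta_m}$ with the direction $\bv_{\eta_j}$ of some other maximal-degree polynomial and carries $\bv_{\eta_m}$ with the nonzero leading coefficient $a_{md}$ of $p_m$ — precisely the two facts that legitimise both the invariance substitution and the strict inequality $\sigma_{\eta_m\eta_j}w<w$ used to invoke the inner induction hypothesis.
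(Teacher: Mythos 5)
Your proposal is correct and follows essentially the same route as the paper: the paper factors the argument into a one-step smoothing lemma (Proposition~\ref{P: smoothing}) which you inline and iterate, but the core mechanism — a joint induction on $(\ell, w)$, starting from Proposition~\ref{P: single box norm control II}, with a \emph{ping} step that drops one bad direction and lands on a lower type via Corollary~\ref{C: dual-difference interchange}(i) and the $\langle\b\rangle$-invariance substitution, and a \emph{pong} step that peels off all copies of the newly created good-looking direction via Corollary~\ref{C: dual-difference interchange}(ii) and lands on length $\ell-1$ — is the same, and your coefficient bookkeeping (bad directions pair $\bv_{\eta_m}$ with some $\bv_{\eta_j}$ for $j\in\FL$, carry the nonzero leading coefficient $a_{md}$, and yield $\sigma_{\eta_m\eta_j}w<w$ by minimality) matches the paper's. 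The one small slip is the parenthetical ``using $\Lambda(\ldots,\tilde f_{j_0},\ldots)\gg\delta^2$'' placed inside the \emph{ping} step: the dual replacement at the $j_0$ slot belongs to the start of the \emph{pong} step, not to the derivation of the intermediate control on $\norm{f_{j_0}}$; the ping step concludes with that control directly from averaging the inner inductive bound over $(\uh,\uh')$ and unwinding the derivative.
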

For instance, if $w := (1,2,1,0,0,3,0)$, then \eqref{E: general average vanishes 2} holds for $m\in[7]$ satisfying $\eta_m = 1$ or $\eta_m = 3$.

Proposition \ref{P: iterated smoothing} is a straightforward consequence of Proposition \ref{P: single box norm control II}, followed by an iterated application of the smoothing result given below.
\begin{proposition}\label{P: smoothing}
	Let $d, D, \ell, s_0, s_1\in\N$, $L\in\N_0$, $\eta\in[\ell]^\ell, \pi\in[L]^L$ be indexing tuples, $\bv_1, \ldots, \bv_\ell, \bu_1, \ldots, \bu_L\in\Z^D$ be nonzero vectors and $p_1, \ldots, p_\ell, q_1, \ldots, q_L\in\Z[n]$ be polynomials of degree at most $d$  with zero constant terms such that $p_1, \ldots, p_\ell$ are pairwise independent.
    Suppose that the type $w$ of the counting operator \eqref{E: general operator} is not basic, and let $m\in[\ell]$ be such that $w_{\eta_m} = \min\limits_{t\in\supp(w)} w_t$. 
	Then for each vectors  $\b_1, \ldots, \b_{s_1}$ satisfying \eqref{E: coefficient vectors} which are not scalar multiples of the vector $\bv_{\eta_m}$,
    there exists $s=O_{d,\ell, L, s_0, s_1}(1)$ with the following property: for all 1-bounded functions $f_0, \ldots, f_\ell:\F_p^D$ and $\CD_j\in\FD_d(\bu_{\pi_j})$, the bound
	\begin{multline}\label{E: general average vanishes 3}
		\abs{\E_{\bx,n}f_0(\bx)\cdot\prod_{j\in[\ell]}f_j(\bx+\bv_{\eta_j}p_j(n)) \cdot \prod_{j\in[L]}\CD_{j}(\bx+\bu_{\pi_j}q_j(n))}^{O_{d,\ell,L}(1)}\\
        \ll_{d,\ell,L}\norm{f_m}_{\bv_{\eta_m}^{\times s_0}, \b_1, \ldots, \b_{s_1}}+p^{-1}.
	\end{multline} 
    implies
    \begin{multline}\label{E: general average vanishes 4}
		\abs{\E_{\bx,n}f_0(\bx)\cdot\prod_{j\in[\ell]}f_j(\bx+\bv_{\eta_j}p_j(n)) \cdot \prod_{j\in[L]}\CD_{j}(\bx+\bu_{\pi_j}q_j(n))}^{O_{d,\ell,L}(1)}\\
        \ll_{d,\ell,L}\norm{f_m}_{\bv_{\eta_m}^{\times s'}, \b_1, \ldots, \b_{s_1-1}}+p^{-1}.
	\end{multline} 
\end{proposition}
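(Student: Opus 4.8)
The plan is to follow the two-step \emph{ping-pong} strategy demonstrated in the proof of Proposition \ref{Smoothing of n, n^2, n^2+n}, but in the general setting governed by the type formalism of Section \ref{S:formalism pairwise independent}. Fix the data in the statement and assume \eqref{E: general average vanishes 3}; set $\delta$ equal to the absolute value of the counting operator on the left-hand side, and assume without loss of generality that $\delta\gg p^{-c'}$ for a sufficiently small $c'>0$ to be chosen, since otherwise \eqref{E: general average vanishes 4} is trivial. Since $\b_1,\dots,\b_{s_1}$ are not scalar multiples of $\bv_{\eta_m}$, each $\b_r$ is of the form $a_{\ell d_{j\ell}}\bv_{\eta_\ell}-a_{j d_{j\ell}}\bv_{\eta_j}$ for some $j$ with $\eta_j\neq\eta_\ell$, so that the subgroup $\langle\bv_{\eta_m},\b_r\rangle$ is strictly larger than $\langle\bv_{\eta_m}\rangle$; this is the crucial fact that makes the argument work, exactly as in the remark after \eqref{E: smoothing goal}.

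First I would carry out the \textbf{ping} step. I would dualise the $m$-th function: set $\tilde f_m(\bx)=\E_n\prod_{j\neq m}\CC f_j(\bx+\bv_{\eta_j}p_j(n)-\bv_{\eta_m}p_m(n))\cdot\prod_{j\in[L]}\CC\CD_j(\bx+\bu_{\pi_j}q_j(n)-\bv_{\eta_m}p_m(n))$, so that by Lemma \ref{L: dual replacement} the counting operator with $\tilde f_m$ in place of $f_m$ is $\gg\delta^2$. Applying \eqref{E: general average vanishes 3} to $\tilde f_m$ gives $\norm{\tilde f_m}_{\bv_{\eta_m}^{\times s_0},\b_1,\dots,\b_{s_1}}\gg\delta^{O(1)}$. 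Now I would invoke the dual-difference interchange machinery (Corollary \ref{C: dual-difference interchange}, used exactly as in the \emph{ping} step of Proposition \ref{Smoothing of n, n^2, n^2+n}) to peel off the single direction $\b_{s_1}$, producing an average over shift parameters $\uh,\uh'$ of counting operators along $p_1,\dots,p_\ell$ with the functions $f_j$ replaced by multiplicative derivatives $\Delta_{\bv_{\eta_m}^{\times s_0},\b_1,\dots,\b_{s_1-1};\uh-\uh'}f_j$ and with the $m$-th term replaced by a $\langle\b_{s_1}\rangle$-invariant function $u_{\uh,\uh'}$. Writing $\b_{s_1}=a_{\ell d_{j_0\ell}}\bv_{\eta_\ell}-a_{j_0 d_{j_0\ell}}\bv_{\eta_{j_0}}$, the $\langle\b_{s_1}\rangle$-invariance of $u_{\uh,\uh'}$ lets me replace, inside the argument of $u_{\uh,\uh'}$, the summand $\bv_{\eta_m}p_m(n)$ (or more precisely the relevant combination appearing after the change of variables) by one along a \emph{different} direction among $\{\bv_{\eta_j}\}$; concretely this swaps one occurrence of $\bv_{\eta_m}$ for $\bv_{\eta_i}$ where $i$ is the index with $w_{\eta_i}>w_{\eta_m}$ coming from $\b_{s_1}$. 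Pigeonholing in $(\uh,\uh')$ and applying Proposition \ref{P: single box norm control II} to the resulting operator — whose type $w'=\sigma_{\eta_m\eta_i}w$ satisfies $w'<w$, and which is after one more dualisation a twisted operator of the form \eqref{E: general operator} — yields $\norm{f_m}_{\bv_{\eta_m}^{\times s_0},\b_1,\dots,\b_{s_1-1},\bv_{\eta_i}^{\times s_2}}\gg\delta^{O(1)}$ for some $s_2=O_{d,\ell,L}(1)$, after undoing the $\Delta$-derivatives via the inductive formula for box norms and Hölder. Here I must be careful that the induction on $<$ is being invoked on a \emph{genuinely smaller} type, which is guaranteed by $w_{\eta_m}\le w_{\eta_i}$ and the variance-monotonicity observation from Section \ref{S:formalism pairwise independent}; this ordering bookkeeping is the first place where care is needed.

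Second I would carry out the \textbf{pong} step, now starting from the intermediate control $\abs{\Lambda}^{O(1)}\ll\norm{f_m}_{\bv_{\eta_m}^{\times s_0},\b_1,\dots,\b_{s_1-1},\bv_{\eta_i}^{\times s_2}}+p^{-1}$. Dualise $f_m$ again to get $\tilde f_m$ with the counting operator $\gg\delta^2$, hence $\norm{\tilde f_m}_{\bv_{\eta_m}^{\times s_0},\b_1,\dots,\b_{s_1-1},\bv_{\eta_i}^{\times s_2}}\gg\delta^{O(1)}$; this time apply Corollary \ref{C: dual-difference interchange}(ii) to peel off \emph{all $s_2$ copies} of $\bv_{\eta_i}$ at once (we do not want any $\bv_{\eta_i}$ to survive in the final norm), converting the $m$-th term, after the dual-difference interchange, into a product of $\FD_{s_2}(\bv_{\eta_i})$ dual functions $\CD_{\uh,\uh'}$ while the other functions become $\Delta_{\bv_{\eta_m}^{\times s_0},\b_1,\dots,\b_{s_1-1};\uh-\uh'}f_j$. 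Absorbing $\CD_{\uh,\uh'}$ into the product of dual functions (so $L$ increases by a bounded amount) gives, for each $(\uh,\uh')$ in a pigeonholed set of density $\gg\delta^{O(1)}$, a twisted counting operator of the form \eqref{E: general operator} whose only highest-degree non-dual term is the $m$-th one, lying along $\bv_{\eta_m}$. Proposition \ref{P: single box norm control II} then yields $\norm{\Delta_{\dots;\uh-\uh'}f_m}_{U^{s_3}(\bv_{\eta_m})}\gg\delta^{O(1)}$ on this set; extending by nonnegativity, changing variables $\uh-\uh'\mapsto\uh$, and collapsing the derivatives via the inductive formula for box norms gives $\norm{f_m}_{\bv_{\eta_m}^{\times s'},\b_1,\dots,\b_{s_1-1}}\gg\delta^{O(1)}$ with $s'=s_0+s_3=O_{d,\ell,L,s_0,s_1}(1)$, which is \eqref{E: general average vanishes 4}. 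Chasing the exponents and choosing $c'$ small enough throughout completes the proof.

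The main obstacle I anticipate is \emph{not} the structure of the argument — that is faithfully modelled by Proposition \ref{Smoothing of n, n^2, n^2+n} — but the bookkeeping needed to verify two things in the general setting: (a) that after the $\langle\b_{s_1}\rangle$-invariance substitution in the ping step the resulting operator really is of the admissible form \eqref{E: general operator} with pairwise independent polynomials $p_1,\dots,p_\ell$ and with type \emph{strictly} below $w$ in the ordering $<$ — this requires unwinding exactly which direction $\b_{s_1}$ encodes and checking $w_{\eta_m}\le w_{\eta_i}$ using the minimality of $w_{\eta_m}$ among $\supp(w)$; and (b) that the dual-difference interchange (Corollary \ref{C: dual-difference interchange}) is applicable with the box norm directions $\bv_{\eta_m}^{\times s_0},\b_1,\dots,\b_{s_1-1}$, i.e.\ that the polynomial identities needed to perform the change of variables and produce the invariant function (resp.\ dual functions) go through when $\b_1,\dots,\b_{s_1-1}$ are arbitrary admissible vectors rather than the specific ones in the $(n,n^2,n^2+n)$ example. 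Both are essentially the content of the analogous steps in \cite{FrKu22a}, so I expect them to be routine but notation-heavy; the pairwise-independence hypothesis is what ensures the relevant leading-coefficient vectors are nonzero and the degree-lowering in Proposition \ref{P: single box norm control II} applies.
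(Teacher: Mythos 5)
Your overall ping-pong outline is the right one, but there is a genuine gap in what the \emph{ping} step actually produces. Once you dualise $f_m$ and the dual-difference interchange replaces $\tilde f_m$ with the $\langle\b_{s_1}\rangle$-invariant functions $u_{\uh,\uh'}$, the function $f_m$ has \emph{disappeared} from the resulting counting operators: they involve only the derived $\Delta_{\bv_{\eta_m}^{\times s_0},\b_1,\ldots,\b_{s_1-1};\uh-\uh'}f_j$ for $j\neq m$, together with $u_{\uh,\uh'}$ and the derived duals. So the ping step cannot return a box norm of $f_m$; it yields a box norm of $f_i$, where $i$ is the index with $\b_{s_1}=b_m\bv_{\eta_m}-b_i\bv_{\eta_i}$ --- this is why the paper's Step 1 is ``Obtaining control by a norm of $f_i$'' and the intermediate control \eqref{E: intermediate control} is $\norm{f_i}_{\bv_{\eta_m}^{\times s_0},\b_1,\ldots,\b_{s_1-1},\bv_{\eta_i}^{\times s_2}}$. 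Consequently your pong step is aimed at the wrong function: starting from a putative norm of $f_m$ and dualising $f_m$, Corollary \ref{C: dual-difference interchange}(ii) would convert the $m$-th slot into a product of elements of $\FD_{s_2}(\bv_{\eta_i})$ \emph{evaluated at} $\bx+\bv_{\eta_m}p_m(n)$, which is not of the admissible form $\CD_j\in\FD_d(\bu_{\pi_j})$ at position $\bx+\bu_{\pi_j}q_j(n)$ because the dual direction and the shift direction disagree. The paper instead dualises $f_i$, so the resulting $\FD_{s_2}(\bv_{\eta_i})$ duals sit at $\bx+\bv_{\eta_i}p_i(n)$, which fits the formalism.

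The second gap is the invocation of Proposition \ref{P: single box norm control II} where the paper applies Proposition \ref{P: control} inductively. Proposition \ref{P: single box norm control II} gives a \emph{box} norm of the last (highest-degree) function, and this collapses to a Gowers norm only when the reduced type is basic; after the invariance substitution the type $w'=\sigma_{\eta_m\eta_i}w$ is strictly smaller but need not be basic, and what the ping step requires is a \emph{Gowers} norm $U^{s_2}(\bv_{\eta_i})$ of the derived $f_i$ specifically. That is obtained by applying Proposition \ref{P: control} for length $\ell$ and type $w'<w$, which rests on the entire smoothing machinery at lower types; the example in Section \ref{SS: smoothing for n, n^2, n^2+n} gets away with a concatenated PET bound only because there the reduced type $(0,2,0)$ happens to be basic. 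Likewise, the pong step uses Proposition \ref{P: control} for operators of length $\ell-1$, not the single-box-norm lemma, to extract $\norm{f_{m,\uh,\uh'}}_{U^{s_3}(\bv_{\eta_m})}$. You do flag ``the induction on $<$'' as a concern, which is exactly the right worry, but the proposition you actually invoke does not carry that inductive content.
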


In other words, Proposition \ref{P: smoothing} tells that one by one, we can replace all the directions different from $\bv_{\eta_m}$ in the box norm of $f_m$ controlling the counting operator by many copies of $\bv_{\eta_m}$ until we obtain control by a large degree Gowers norm along $\bv_{\eta_m}$.

The relationship between Propositions \ref{P: control} - \ref{P: smoothing} is as follows:
\begin{itemize}
    \item for counting operators of length $\ell$ and basic type $w$, Proposition \ref{P: control} follows directly from Proposition \ref{P: single box norm control II} (in particular, this includes the case when the counting operator has length $\ell = 1$);
    \item for counting operators of length $\ell$ and non-basic type $w$, Proposition \ref{P: control} follows from Proposition \ref{P: iterated smoothing} for $\ell$ and $w$ (which in turn follows from an iterated application of Proposition \ref{P: smoothing} for this data), as well as Proposition \ref{P: control} for operators of length $\ell-1$;
    \item for counting operators of length $\ell$ and non-basic type $w$, Proposition \ref{P: smoothing} follows from Proposition \ref{P: control} for length $\ell$ and types $w'<w$ as well as length $\ell-1$.
\end{itemize}


\begin{proof}[Proof of Proposition \ref{P: smoothing}]
	We prove Proposition \ref{P: smoothing} for a counting operator \eqref{E: general operator} of length $\ell$ and type $w$ by assuming that Proposition \ref{P: control} holds for operators of length $\ell-1$ as well as those of length $\ell$ and type $w'<w$. For simplicity of notation, we assume $m = \ell$.
	
	By Proposition \ref{P: single box norm control II} and the assumption that the polynomials $p_1, \ldots, p_\ell$ are distinct and have zero constant terms, the vector $\b_{s_1}$ is nonzero and equals $\b_{s_1} = b_\ell \bv_{\eta_\ell} - b_i \bv_{\eta_i}$ for some $b_\ell, b_i\in\Z$ with $b_\ell\neq 0$ and $i\in\{0, \ldots, \ell-1\}$. By a further assumption, the vector $\b_{s_1}$ is not a scalar multiple of $\bv_{\eta_\ell}$ (in particular, $i\neq 0$), and so $\eta_\ell\neq \eta_i$ and $b_i\neq 0$. The proof of
	Proposition~\ref{P: smoothing} follows the same two-step strategy as the proof of Proposition~\ref{Smoothing of n, n^2, n^2+n}. We first obtain the control of \eqref{E: general operator} by $\norm{f_i}_{\bv_{\eta_\ell}^{\times s_0}, {\b_1}, \ldots, {\b_{s_1-1}}, \bv_{\eta_i}^{\times s_2}}$ for some $s_2=O_{d, \ell, L, s_0, s_1}(1)$. This is accomplished by using the control by $\norm{f_\ell}_{\bv_{\eta_\ell}^{\times s_0}, \b_1, \ldots, \b_{s_1}}$, given by assumption, for an appropriately defined function $\tilde{f}_\ell$ in place of $f_\ell$. Subsequently, we repeat the procedure by applying the newly established control by $\norm{f_i}_{\bv_{\eta_\ell}^{\times s_0}, {\b_1}, \ldots, {\b_{s_1-1}}, \bv_{\eta_i}^{\times s_2}}$ for a function $\tilde{f}_i$ in place of $f_i$. This gives us the claimed result.

    
	\smallskip
	
	\textbf{Step 1 (\textit{ping}): Obtaining control by a norm of $f_i$.}
	\smallskip

    Fix $f_0, \ldots, f_{\ell}$, $\CD_1, \ldots, \CD_L$ and let 
    \begin{align*}
        \delta = \abs{\E_{\bx,n}f_0(\bx)\cdot\prod_{j\in[\ell]}f_j(\bx+\bv_{\eta_j}p_j(n)) \cdot \prod_{j\in[L]}\CD_{j}(\bx+\bu_{\pi_j}q_j(n))}.
    \end{align*}
    Assume without the loss of generality that $\delta\gg p^c$ for a sufficiently small $c>0$. We let $c$ and all the other constants in this proof depend on $d, \ell, L, s_0, s_1$.
    
    We set $p_0 = 0$, $\eta_0 = 0$ and $\bv_0 = \mathbf{0}$. Defining
    \begin{align*}
\tilde{f}_\ell(\bx) = \E_n \prod_{j=0}^{\ell-1}\overline{f_j}(\bx+\bv_{\eta_j}p_j(n)-\bv_{\eta_\ell}p_\ell(n))\cdot\prod_{j\in[L]}\overline{\CD_j}(\bx+\bu_{\pi_j}q_j(n)-\bv_{\eta_\ell}p_\ell(n)),
    \end{align*}
    we deduce from Lemma \ref{L: dual replacement} that
    \begin{align*}
        \E_{\bx,n}\prod_{j=0}^{\ell-1}f_j(\bx+\bv_{\eta_j}p_j(n))\cdot \tilde{f}_\ell(\bx+\bv_{\eta_\ell}p_\ell(n)) \cdot \prod_{j\in[L]}\CD_{j}(\bx+\bu_{\pi_j}q_j(n))\geq \delta^2.
    \end{align*}
	Then our assumption gives
	$$
	\norm{\tilde{f}_\ell}_{\bv_{\eta_\ell}^{\times s_0}, \b_1, \ldots, \b_{s_1}}\gg\delta^{O(1)}.
	$$
	By Corollary \ref{C: dual-difference interchange}, the expression
	\begin{multline*}
		\E_{\uh,\uh'\in \F_p^{s_0+s_1-1}}
		\abs{\E_{\bx,n} \prod_{j=0}^{\ell-1} f_{j,\uh,\uh'}(\bx+\bv_{\eta_j}p_j(n))\cdot u_{\uh,\uh'}(\bx+\bv_{\eta_\ell}p_\ell(n))\cdot \prod_{j=1}^L \CD'_{j, \uh, \uh'}(\bx+\bu_{\pi_j}q_j(n))}
	\end{multline*}
    has size $\Omega(\delta^{O(1)})$,
	where
    \begin{align*}
        f_{j,\uh,\uh'}(\bx) &= \Delta_{\bv_{\eta_\ell}^{\times s_0}, \b_1, \ldots, \b_{s_1-1}; \uh-\uh'} f_j(\bx)\quad\textrm{for}\quad j\in\{0,\ldots,\ell-1\}\\
        \CD_{j, \uh, \uh'}(\bx) &= \Delta_{\bv_{\eta_\ell}^{\times s_0}, \b_1, \ldots, \b_{s_1-1}; \uh-\uh'}\CD_j(\bx)\quad\textrm{for}\quad j\in[L].
    \end{align*}
    In particular, each $\CD_{j, \uh, \uh'}$ is a product of $2^{s_0+s_1-1}$ elements of $\FD_d(\bu_{\pi_j})$. The functions $u_{\uh,\uh'}$ appearing in the expression above are 1-bounded and invariant under $\b_{s_1}=b_\ell \be_{\eta_\ell} - b_i \be_{\eta_i}$ for some $i\in \FL$ such that $\eta_\ell \neq \eta_i$ and $b_\ell, b_i\neq 0$. This invariance property implies that
	\begin{equation}\label{invariance prop pairwise}
		u_{\uh,\uh'}(\bx + b_\ell \bv_{\eta_\ell} n)=u_{\uh,\uh'}(\bx + b_i \bv_{\eta_i} n)
	\end{equation}
    for every $n\in\F_p$. Setting
    	\begin{align*}
		      p'_j(n) &:= \begin{cases} p_j(n),\; &j\in\{0, \ldots, \ell-1\},\\
			\frac{b_i}{b_\ell}p_\ell(n), &j = \ell,
		\end{cases}\\	
		\eta'_j &:= \begin{cases} \eta_j,\; &j\in\{0, \ldots,\ell-1\},\\
			\eta_i,\; &j = \ell,
		\end{cases}
	\end{align*}
    we deduce from the invariance property that
	\begin{multline*}
		\E_{\uh,\uh'\in \F_p^{s_0+s_1-1}}
		\abs{\E_{\bx,n} \prod_{j=0}^{\ell-1} f_{j,\uh,\uh'}(\bx+\bv_{\eta'_j}p'_j(n))\cdot u_{\uh,\uh'}(\bx+\bv_{\eta'_\ell}p'_\ell(n))\cdot \prod_{j=1}^L \CD_{j, \uh, \uh'}(\bx+\bu_{\pi_j}q_j(n))}
	\end{multline*}
    has size $\Omega(\delta^{O(1)})$. We subsequently employ the pigeonhole principle to find a set $B\subset\F_p^{2(s_0+s_1-1)}$ of cardinality $|B|\gg\delta^{O(1)}p^{2(s_0+s_1-1)}$ such that for each $(\uh,\uh')\in B$, we have
    \begin{align}\label{E: counting operators in ping}
        \abs{\E_{\bx,n} \prod_{j=0}^{\ell-1} f_{j,\uh,\uh'}(\bx+\bv_{\eta'_j}p'_j(n))\cdot u_{\uh,\uh'}(\bx+\bv_{\eta'_\ell}p'_\ell(n))\cdot \prod_{j=1}^L \CD_{j, \uh, \uh'}(\bx+\bu_{\pi_j}q_j(n))} \gg\delta^{O(1)}.
    \end{align}
    
    Since the polynomials $p_1, \ldots, p_\ell$ are pairwise independent, so are $p'_1, \ldots, p'_\ell$. While pairwise independence is preserved under scaling, distinctness is not, and so in order to carry out the argument inductively, we need the polynomials to be pairwise independent rather than distinct.

	We recall the assumptions that $w_{\eta_\ell}$ minimises $(w_t)_{t\in\supp(w)}$ and $\eta_\ell\neq\eta_i$. The structure of the set \eqref{E: coefficient vectors} implies that $w_{\eta_i}>0$, and so $w_{\eta_i}\geq w_{\eta_\ell}$. Therefore, the type $w'=\sigma_{\eta_\ell\eta_i}w$ of the counting operator  \eqref{E: counting operators in ping} is strictly smaller than $w$. We inductively apply Proposition \ref{P: control} to find $s_2=O(1)$ such that for every $(\uh, \uh')\in B$, we have
	$$\norm{f_{i,\uh,\uh'}}_{U^{s_2}(\bv_{\eta_i})} = \norm{\Delta_{\bv_{\eta_\ell}^{\times s_0}, \b_1, \ldots, \b_{s_1-1}; \uh-\uh'} f_i}_{U^{s_2}(\bv_{\eta_i})} \gg \delta^{O(1)}.$$
	The nonnegativity of the box norms and the bound on the size of $B$ yield the lower bound
	\begin{align*}
		\E_{\uh,\uh'\in \F_p^{s_0+s_1-1}}\norm{\Delta_{\bv_{\eta_\ell}^{\times s_0}, \b_1, \ldots, \b_{s_1-1}; \uh-\uh'} f_i}_{U^{s_2}(\bv_{\eta_i})} \gg\delta^{O(1)}.
	\end{align*}
	Replacing the difference $\uh-\uh'$ by $\uh$ and using the inductive formula for box norms together with  H\"older's inequality,  we conclude that
	$$
	\norm{f_i}_{\bv_{\eta_\ell}^{\times s_0}, \b_1, \ldots, \b_{s_1-1}, \bv_{\eta_i}^{\times s_2}}\gg\delta^{O(1)},
	$$
	and so the norm $\norm{f_i}_{\bv_{\eta_\ell}^{\times s_0}, \b_1, \ldots, \b_{s_1-1}, \bv_{\eta_i}^{\times s_2}}$ controls the counting operator \eqref{E: general operator} in the sense that
    \begin{multline}\label{E: intermediate control}
        \abs{\E_{\bx,n}\prod_{j=0}^\ell f_j(\bx+\bv_{\eta_j}p_j(n)) \cdot \prod_{j\in[L]}\CD_{j}(\bx+\bu_{\pi_j}q_j(n))}^{O(1)}\\
        \ll \norm{f_i}_{\bv_{\eta_\ell}^{\times s_0}, \b_1, \ldots, \b_{s_1-1}, \bv_{\eta_i}^{\times s_2}}+p\inv.
    \end{multline}    
	
	\smallskip
	
	\textbf{Step 2 (pong): Obtaining control by a norm of $f_\ell$.}
	
	\smallskip
	
	To get the claim that $\norm{f_
 \ell}_{\bv_{\eta_\ell}^{\times s'}, \b_1, \ldots, \b_{s_1-1}}$ controls the average for some $s'\in\N$, we repeat the procedure once more with $f_i$ in place of $f_\ell$. 
    Letting 
    \begin{align*}
        \tilde{f}_i(\bx) = \E_n \prod_{\substack{j\in[0,\ell],\\ j\neq i}}\overline{f_j}(\bx+\bv_{\eta_j}p_j(n)-\bv_{\eta_i}p_i(n))
        &\prod_{j\in[L]}\overline{\CD_j}(\bx+\bu_{\pi_j}q_j(n)-\bv_{\eta_i}p_i(n)), 
    \end{align*}
    we deduce from \eqref{E: intermediate control} and Lemma \ref{L: dual replacement} that
    \begin{align*}
        \norm{\tilde{f}_i}_{\bv_{\eta_\ell}^{\times s_0}, \b_1, \ldots, \b_{s_1-1}, \bv_{\eta_i}^{\times s_2}}\gg\delta^{O(1)}.
    \end{align*}
    We want to get rid of all the $s_2$ vectors $\bv_{\eta_i}$ from the norm.
	Corollary \ref{L: dual-difference interchange} implies that
	\begin{align*}
		\E_{\uh,\uh'\in \F_p^{s_0+s_1-1}}\abs{\E_{\bx,n}\prod_{\substack{j\in[0, \ell],\\ j\neq i}} f_{j,\uh,\uh'}(\bx+\bv_{\eta_j}p_j(n))\cdot \CD'_{\uh,\uh'}(\bx+\bv_{\eta_i}p_i(n))\cdot \prod_{j\in[L]}\CD_{j,\uh,\uh'}(\bx+\bu_{\pi_j}q_j(n))}
	\end{align*}
	is at least $\Omega(\delta^{O(1)})$, where
     \begin{align*}
        f_{j,\uh,\uh'}(\bx) &= \Delta_{\bv_{\eta_\ell}^{\times s_0}, \b_1, \ldots, \b_{s_1-1}; \uh-\uh'} f_j(\bx)\quad\textrm{for}\quad j\in [0,\ell]\setminus\{i\},\\
        \CD_{j, \uh, \uh'}(\bx) &= \Delta_{\bv_{\eta_\ell}^{\times s_0}, \b_1, \ldots, \b_{s_1-1}; \uh-\uh'}\CD_j(\bx)\quad\textrm{for}\quad j\in[L].
    \end{align*}
	Thus, each $\CD_{j,\uh,\uh'}$ is a product of $2^{s_0+s_1-1}$ elements of $\FD_d(\bu_{\pi_j})$. The functions $\CD'_{\uh,\uh'}$ appearing in the expression above are products of $2^{s_0+s_1-1}$ elements of $\FD_{s_2}(\bv_{\eta_i})$.

    By the pigeonhole principle, there exists a set $B'\subset\F_p^{2(s_0+s_1-1)}$ with $|B'|\gg\delta^{O(1)}p^{2(s_0+s_1-1)}$ such that for every $(\uh, \uh')\in B'$, the expression
    \begin{align}\label{E: lower length average}
        \abs{\E_{\bx,n}\prod_{\substack{j\in[0, \ell],\\ j\neq i}} f_{j,\uh,\uh'}(\bx+\bv_{\eta_j}p_j(n))\cdot \CD'_{\uh,\uh'}(\bx+\bv_{\eta_i}p_i(n))\cdot \prod_{j\in[L]}\CD_{j,\uh,\uh'}(\bx+\bu_{\pi_j}q_j(n))}
    \end{align}
    is at least $\Omega(\delta^{O(1)})$.
	Importantly, each of the averages in \eqref{E: lower length average} has length $\ell-1$ since the term $f_i$ has been replaced by a product of dual functions. We therefore apply Proposition \ref{P: control} inductively
	to find $s_3\in\N$ such that
	\begin{align*}
		\norm{f_{\ell,\uh,\uh'}}_{U^{s_3}(\bv_{\eta_\ell})} = \norm{\Delta_{\bv_{\eta_\ell}^{\times s_0}, \b_1, \ldots, \b_{s_1-1}; \uh-\uh'} f_\ell}_{U^{s_3}(\bv_{\eta_\ell})}\gg\delta^{O(1)}
	\end{align*} 
 for every $(\uh, \uh')\in B'$.

	The nonnegativity of the box norms and the bound on the size of $B'$ yield the lower bound
	\begin{align*}
		\E_{\uh,\uh'\in \F_p^{s_0+s_1-1}}\norm{\Delta_{\bv_{\eta_\ell}^{\times s_0}, \b_1, \ldots, \b_{s_1-1}; \uh-\uh'} f_\ell}_{U^{s_3}(\bv_{\eta_\ell})} \gg\delta^{O(1)}.
	\end{align*}
	Replacing $\uh-\uh'$ by $\uh$ and applying the inductive formula for box norms alongside the  H\"older inequality,  we conclude that
	$$
	\norm{f_i}_{\bv_{\eta_\ell}^{\times s'}, \b_1, \ldots, \b_{s_1-1}}\gg\delta^{O(1)}
	$$
 for $s' = s_0 + s_3$.
\end{proof}

Finally, we prove Proposition \ref{P: control} for averages of length $\ell$ and type $w$.

\begin{proof}[Proof of Proposition \ref{P: control}]
	When the type of the counting operator is basic, Proposition \ref{P: control} follows directly from Proposition \ref{P: single box norm control II}. In particular, this includes the case $\ell=1$. We assume therefore that $\ell>1$ and the type $w$ is not basic. It follows from Proposition \ref{P: iterated smoothing} that there exist $m\in[\ell]$ and $s_1=O_{d,\ell, L}(1)$ such that \eqref{E: general average vanishes 2} holds with $s_1$ in place of $s$. Fix $f_0, \ldots, f_\ell,$ $\CD_1, \ldots, \CD_L$ and suppose that 
 \begin{align*}
     \delta = \abs{\E_{\bx,n}f_0(\bx)\cdot\prod_{j=0}^{\ell}f_j(\bx+\bv_{\eta_j}p_j(n)) \cdot \prod_{j\in[L]}\CD_{j}(\bx+\bu_{\pi_j}q_j(n))}
 \end{align*}
 satisfies $\delta\gg p^c$ for an appropriately small $c>0$.
    Letting 
    \begin{align*}
        \tilde{f}_m(\bx) = \E_n \prod_{\substack{j\in[0,\ell],\\ j\neq m}}\overline{f_j}(\bx+\bv_{\eta_j}p_j(n)-\bv_{\eta_m}p_m(n))\prod_{j\in[L]}\overline{\CD_j}(\bx+\bu_{\pi_j}q_j(n)-\bv_{\eta_m}p_m(n)),
    \end{align*}    
    we deduce from Lemma \ref{L: dual replacement} that
    \begin{align*}
        \abs{\E_{\bx,n}\prod_{\substack{j\in[0, \ell]\setminus\{m\}}}f_j(\bx+\bv_{\eta_j}p_j(n)) \cdot \tilde{f}_m(\bx+\bv_{\eta_m}p_m(n)) \cdot \prod_{j\in[L]}\CD_{j}(\bx+\bu_{\pi_j}q_j(n))}\geq \delta^2.
    \end{align*}
    Proposition \ref{P: iterated smoothing} then gives $\norm{\tilde{f}_m}_{U^{s}(\bv_{\eta_m})}\gg\delta^{O(1)}$, and Lemma \ref{L: dual replacement} further implies that
    \begin{align*}
        \abs{\E_{\bx,n}\prod_{\substack{j\in[0, \ell]\setminus\{m\}}}f_j(\bx+\bv_{\eta_j}p_j(n)) \cdot \CD_{s,\bv_{\eta_m}}\tilde{f}_m(\bx+\bv_{\eta_m}p_m(n)) \cdot \prod_{j\in[L]}\CD_{j}(\bx+\bu_{\pi_j}q_j(n))}
    \end{align*}
    is at least $\Omega(\delta^{O(1)})$. This average has length $\ell-1$, and so we apply Proposition \ref{P: control} to find $s_2=O_{d,\ell, L}(1)$ such that $\norm{f_j}_{U^{s_2}(\bv_{\eta_j})}\gg\delta^{O(1)}$ for all $j\in[\ell]\setminus\{m\}$. The result follows by setting $s = \max(s_1, s_2)$.
\end{proof}

\section{Degree lowering}\label{S: degree lowering}
Last but not least, we derive Theorem \ref{T: count} from Theorem \ref{T: control}. As stated in the introduction, Theorem \ref{T: count} can be derived from Theorem \ref{T: control} with the exact same degree lowering argument that was used to derive \cite[Theorem 1.2]{Ku22b}, the special case of Theorem \ref{T: count} for distinct degree polynomials, from \cite[Proposition 4.1]{Ku22b}. Here, we give an alternative version of the degree lowering argument, which we believe gives a better conceptual understanding of what really is going on. 

A key concept that we will need is that of an eigenfunction. For $\bv\in\Z^d$, we say that $\chi:\F_p^D\to\C$ is an \textit{eigenfunction} of $\bv$ with \textit{eigenvalue} $\phi:\F_p^D\to\F_p$ if:
\begin{enumerate}
    \item (Modulus) $|\chi(\bx)|\in\{0,1\}$ for every $\bx\in\F_p^D$;
    \item (Eigenfunction property) $\chi(\bx+\bv) = e_p(\phi(\bx))\chi(\bx)$;
    \item (Invariance) $\phi$ is $\bv$-invariant. 
\end{enumerate}
More explicitly, if $H = \langle \bv\rangle^\perp$ and $\bx = \bx'+\bv n$ is the unique decomposition of $\bx$ into two parts $\bx'\in H$ and $\bv n\in\langle \bv\rangle$, then $\chi(\bx) = \chi(\bx'+\bv n) = 1_E(\bx')\lambda e_p(\phi(\bx')n + \psi(\bx'))$ for some $\phi, \psi: H\to\F_p$, $|\lambda|=1$ and $E\subset H$.

We let $\CE(\bv)$ denote the group of eigenfunctions of $\bv$; we note that it is closed under complex conjugation. These eigenfunctions are a relatively simple example of the notion of \textit{nonergodic eigenfunctions} originally defined in ergodic setting by Frantzikinakis and Host \cite{FH18}. The name comes from the fact that on each coset of $\langle \bv\rangle$, our eigenfunctions become ``classical eigenfunctions'' with respect to the transformation $T\bx = \bx+\bv$.

For instance, if $D=2$ and $\bv=(1,0)$, each eigenfunction $\chi\in\CE(\bv)$ takes the form
\begin{align*}
    \chi(x_1, x_2) = \lambda\cdot 1_E(x_2)\cdot e_p(\phi(x_2)x_1+\psi(x_2))\quad \textrm{for}\quad |\lambda|=1 \quad \textrm{and}\quad E\subset \F_p.
\end{align*}

If $\chi\in\CE(\bv)$ is an eigenfunction with eigenvalue $\phi$, then 
\begin{align}\label{E: projection of eigenfunctions}
    \E(\chi|\bv)(\bx) = \E_n \chi(\bx + \bv n) = \chi(\bx)\E_n e_p(\phi(\bx) n) = \chi(\bx) 1_{\phi(\bx) = 0}.
\end{align}
We will need this fact later.

With these definitions, the \textit{strong inverse theorem} for $U^2(\bv)$ takes the following form.
\begin{lemma}[Strong inverse theorem for $U^2(\bv)$]\label{L: U^2 inverse}
Let $D\in\N$ and suppose that $f:\F_p^D\to\C$ is 1-bounded. Then
\begin{align*}
    \norm{f}_{U^2(\bv)}^4\leq \E_{\bx}f(\bx)\chi(\bx)
\end{align*}
for some $\chi\in\CE(\bv)$ which additionally satisfies $\E(f\cdot \chi|\bv)\geq 0$ and $|\chi|=1$.
\end{lemma}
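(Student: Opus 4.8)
The goal is to prove the strong inverse theorem for the $U^2(\bv)$ norm: if $\norm{f}_{U^2(\bv)}$ is large, then $f$ correlates with an eigenfunction $\chi\in\CE(\bv)$. The plan is to reduce everything to a coset-by-coset Fourier analysis. First I would decompose $\F_p^D$ using the subgroup $\langle\bv\rangle$: writing $H = \langle\bv\rangle^\perp$ (or any transversal of $\langle\bv\rangle$), every point can be written uniquely as $\bx = \bx' + \bv n$ with $\bx'$ ranging over the transversal and $n\in\F_p$. Under this parametrisation, the $U^2(\bv)$ norm only sees shifts in the $\bv$-direction, so it factors as an average over $\bx'$ of the classical $U^2(\F_p)$ Gowers norm of the restriction $f_{\bx'}(n) := f(\bx'+\bv n)$. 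Concretely, $\norm{f}_{U^2(\bv)}^4 = \E_{\bx'}\norm{f_{\bx'}}_{U^2(\F_p)}^4$.

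Next, on each coset I would apply the standard (and quantitatively harmless, since $p$ is prime and the group is cyclic) $U^2$ inverse theorem for $\F_p$: for each $\bx'$, $\norm{f_{\bx'}}_{U^2(\F_p)}^4 = \max_{\xi\in\F_p}|\widehat{f_{\bx'}}(\xi)|^2 \cdot (\text{something})$; more precisely $\norm{g}_{U^2(\F_p)}^4 = \sum_\xi |\widehat g(\xi)|^4 \le \big(\max_\xi|\widehat g(\xi)|^2\big)\sum_\xi|\widehat g(\xi)|^2 = \max_\xi|\widehat g(\xi)|^2\cdot \E_n|g(n)|^2 \le \max_\xi|\widehat g(\xi)|^2$ by Parseval and $1$-boundedness. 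Choose $\phi(\bx')\in\F_p$ to be a frequency achieving this maximum (picking the least such, say, to make $\phi$ well-defined); then $|\widehat{f_{\bx'}}(\phi(\bx'))|^2 \ge \norm{f_{\bx'}}_{U^2(\F_p)}^4$. Writing $\widehat{f_{\bx'}}(\phi(\bx')) = \lambda(\bx')|\widehat{f_{\bx'}}(\phi(\bx'))|$ with $|\lambda(\bx')|=1$ (set $\lambda(\bx')=1$ and $\phi(\bx')=0$ on cosets where the norm vanishes), I would define
\begin{align*}
    \chi(\bx) = \chi(\bx'+\bv n) := \overline{\lambda(\bx')}\, e_p(-\phi(\bx') n).
\end{align*}
One checks directly that $\chi$ has modulus $1$, that $\chi(\bx+\bv) = e_p(-\phi(\bx'))\chi(\bx)$ so that $\chi$ is an eigenfunction with $\bv$-invariant eigenvalue $-\phi$, hence $\chi\in\CE(\bv)$, and that
\begin{align*}
    \E_\bx f(\bx)\chi(\bx) = \E_{\bx'}\overline{\lambda(\bx')}\E_n f_{\bx'}(n)e_p(-\phi(\bx')n) = \E_{\bx'}\overline{\lambda(\bx')}\,\widehat{f_{\bx'}}(\phi(\bx')) = \E_{\bx'}|\widehat{f_{\bx'}}(\phi(\bx'))| \ge \E_{\bx'}\norm{f_{\bx'}}_{U^2(\F_p)}^4 = \norm{f}_{U^2(\bv)}^4,
\end{align*}
using $\norm{f_{\bx'}}_{U^2(\F_p)}^4 \le \norm{f_{\bx'}}_{U^2(\F_p)}^2\le |\widehat{f_{\bx'}}(\phi(\bx'))|$ in the penultimate inequality (valid since the $U^2$ norm is at most $1$). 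The same computation shows $\E(f\cdot\overline{\chi}\,|\bv)$; to get the nonnegativity claim for $\E(f\chi|\bv)$ as stated, I would instead absorb the relevant phase on each coset so that the coset-wise inner product is a nonnegative real — that is, define $\chi$ so that $\E_n f_{\bx'}(n)\chi(\bx'+\bv n) = |\widehat{f_{\bx'}}(\phi(\bx'))|\ge 0$ pointwise in $\bx'$, which is exactly the choice above up to conjugation conventions; since $\E(f\chi|\bv)(\bx) = \E_n f(\bx+\bv n)\chi(\bx+\bv n)$ only depends on the coset of $\bx$, this quantity is then nonnegative everywhere.

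The only genuinely substantive point is the passage from the $\bv$-box norm to a fibrewise classical $U^2$ norm and the invocation of the cyclic-group $U^2$ inverse theorem; both are routine, and crucially the latter is \emph{not} a deep inverse theorem but merely Parseval plus pigeonholing on frequencies, which is why the bound is quantitatively lossless (a genuine power saving, as needed for the main results). The minor bookkeeping obstacles are: making the frequency choice $\phi(\bx')$ a well-defined function (take the lexicographically least maximiser), handling cosets where $f$ vanishes identically or the $U^2$ norm is $0$ (set $\chi=1$ there, or rather $\chi$ equal to any fixed unit-modulus eigenfunction, contributing $\ge 0$), and verifying that the resulting $\chi$ literally satisfies all three defining clauses of an eigenfunction in $\CE(\bv)$, including the $\bv$-invariance of the eigenvalue $\phi$ — which holds because $\phi$ was defined as a function of the coset representative $\bx'$ only.
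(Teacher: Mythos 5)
Your proof is correct and takes essentially the same route as the paper: decompose $\F_p^D$ into cosets of $\langle\bv\rangle$, reduce to the classical $U^2$ inverse theorem over $\F_p$ fibrewise (which the paper simply cites and you rightly unpack as Parseval plus pigeonholing on frequencies), and assemble the coset-wise maximising characters and phases into a single eigenfunction $\chi$ with $\E(f\chi\,|\,\bv)\ge 0$ pointwise. The short tangent about $\overline{\chi}$ versus $\chi$ is unnecessary — your definition of $\chi$ already makes $\E_n f_{\bx'}(n)\chi(\bx'+\bv n)=|\widehat{f_{\bx'}}(\phi(\bx'))|\ge 0$ directly, matching the statement as written.
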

\begin{proof}
    Let $H = \langle \bv\rangle^\perp$, so that each $\bx\in\F_p^D$ can be uniquely decomposed as $\bx = \bx' + \bv n$ for $\bx'\in H$ and $n\in\F_p$. For each $\bx'\in H$ and $n\in\F_p$, we then define $f_{\bx'}(n)=f(\bx'+\bv n)$. Then the usual $U^2$ inverse theorem gives $\phi(\bx')\in\F_p$ such that
    \begin{align*}
        \norm{f}_{U^2(\bv)}^4 = \E_{\bx'\in H}\norm{f_{\bx'}}^4_{U^2}\leq \E_{\bx'\in H}\abs{\E_n f(\bx'+\bv n)e_p(\phi(\bx')n)}.
    \end{align*}
    Picking a phase function $\psi:\F_p^D\to\C$ and $|\lambda|=1$ satisfying
    \begin{align*}
        \E_n f(\bx'+\bv n)\cdot \lambda e_p(\phi(\bx')n + \psi(\bx')) = \abs{\E_n f(\bx'+\bv n)e_p(\phi(\bx')n)},
    \end{align*}
    we get 
    \begin{align*}
        \norm{f}_{U^2(\bv)}^4 \leq \E_{\bx'\in H}\E_n f(\bx'+\bv n)\cdot \lambda e_p(\phi(\bx')n + \psi(\bx')).
    \end{align*}
    The first statement follows upon setting $\chi(\bx)=\chi(\bx'+\bv n) = \lambda e_p(\phi(\bx')n + \psi(\bx'))$, and the second follows from the observation that
    \begin{align*}
        \E(f\cdot \chi|\bv)(\bx) &= \E_m f(\bx+\bv m)\chi(\bx+\bv m) = \E_m f(\bx' + \bv(n+m))\chi(\bx'+\bv(n+m))\\
        &= \E_m f(\bx'+\bv m)\chi(\bx'+\bv m) = \abs{\E_m f(\bx'+\bv m)e_p(\phi(\bx')m)}\geq 0.
    \end{align*}
\end{proof}

Theorem \ref{T: count} corresponds to the $m=\ell$ case of the following result.
\begin{proposition}\label{P: count}
    Let $d, D, \ell\in\N$, $m\in[0,\ell]$, $\bv_1, \ldots, \bv_\ell\in\Z^D$ be nonzero vectors, and $p_1, \ldots, p_\ell\in\Z[n]$ be linearly independent polynomials of degrees at most $d$ with zero constant terms. 
    Then
    \begin{align*}
        \abs{\E_{\bx,n}f_0(\bx)\prod_{j\in[\ell]} f_j(\bx+\bv_j p_j(n)) - \E_{\bx}f_0(\bx)\prod_{j\in[\ell]} \E(f_j|\bv_j)(\bx)}\ll_{d, \ell}  p^{-\Omega_{d,\ell}(1)}
    \end{align*}
    holds for all 1-bounded functions $f_0, \ldots, f_\ell:\F_p^D\to\C$ such that $f_j\in\CE(\bv_j)$ for all indices $j\in[m+1,\ell]$.
\end{proposition}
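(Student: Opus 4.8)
The plan is to induct on $m$, from the base case $m=0$ (where all of $f_1,\dots,f_\ell$ are eigenfunctions) up to $m=\ell$ (which is Theorem \ref{T: count}), keeping $d,D,\ell$ and the polynomials fixed; the passage from $m-1$ to $m$ relaxes exactly one eigenfunction constraint. For $m=0$, writing $\phi_j$ for the eigenvalue of $f_j\in\CE(\bv_j)$, the eigenfunction property gives $f_j(\bx+\bv_j p_j(n))=e_p(p_j(n)\phi_j(\bx))f_j(\bx)$ for every $n$, so that
\[
\E_{\bx,n}f_0(\bx)\prod_{j\in[\ell]}f_j(\bx+\bv_j p_j(n))=\E_\bx f_0(\bx)\prod_{j\in[\ell]}f_j(\bx)\cdot\E_n e_p\Bigl(\sum_{j\in[\ell]}p_j(n)\phi_j(\bx)\Bigr).
\]
Since the $p_j$ have zero constant terms and are linearly independent over $\Q$, they stay linearly independent over $\F_p$ once $p$ is large, so the polynomial $n\mapsto\sum_j p_j(n)\phi_j(\bx)$ vanishes identically precisely when $\phi_1(\bx)=\dots=\phi_\ell(\bx)=0$ and otherwise has degree in $[1,d]$; the Weil bound then gives $|\E_n e_p(\cdots)|\ll_d p^{-1/2}$ on the complement. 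Hence the left-hand side equals $\E_\bx f_0(\bx)\prod_j f_j(\bx)\mathbf 1_{\phi_1(\bx)=\dots=\phi_\ell(\bx)=0}+O_d(p^{-1/2})$, which by the projection identity \eqref{E: projection of eigenfunctions} is exactly $\E_\bx f_0(\bx)\prod_j\E(f_j|\bv_j)(\bx)+O_d(p^{-1/2})$.

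For the inductive step fix $m\in[\ell]$ and assume the case $m-1$. Both the counting operator and the ``main term'' $\E_\bx f_0\prod_j\E(f_j|\bv_j)$ are linear in the slot-$m$ function, so it suffices to bound their difference $D(f_m)$ by $p^{-\Omega(1)}$ for an arbitrary $1$-bounded $f_m$; note that $\CE(\bv_m)$ is closed under conjugation, and the $(m-1)$-case is exactly the assertion that $|D(\chi)|\ll p^{-\Omega(1)}$ for every $\chi\in\CE(\bv_m)$. The key input is a \textbf{degree-lowering} bound: for all $1$-bounded $h$ (other functions fixed),
\[
\Bigl|\E_{\bx,n}f_0(\bx)\,h(\bx+\bv_m p_m(n))\prod_{j\in[\ell]\setminus\{m\}}f_j(\bx+\bv_j p_j(n))\Bigr|\ll\norm{h}_{U^2(\bv_m)}^{\Omega(1)}+p^{-\Omega(1)}.
\]
This follows exactly as the degree-lowering argument of \cite{Ku22b}: Proposition \ref{P: control} (which, the $p_j$ being pairwise independent, controls the operator by a Gowers norm of \emph{any} of the functions) first gives control by $\norm{h}_{U^s(\bv_m)}$ for some $s=O_{d,\ell}(1)$, and one then lowers $s$ one step at a time --- expanding the box norm via the weak inverse theorem as a correlation of $h$ with a degree-$s$ dual function along $\bv_m$, replacing $h$ by that dual through Lemma \ref{L: dual replacement}, and using the dual--difference interchange together with Proposition \ref{P: single box norm control II} to absorb the newly created dual factors --- until the degree reaches $2$.

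Combining the degree-lowering bound with the trivial estimate $|\E_\bx f_0\E(h|\bv_m)\prod_{j\ne m}\E(f_j|\bv_j)|\le\norm{\E(h|\bv_m)}_2=\norm{h}_{U^1(\bv_m)}\le\norm{h}_{U^2(\bv_m)}$ gives $|D(h)|\ll\norm{h}_{U^2(\bv_m)}^{\Omega(1)}+p^{-\Omega(1)}$ for $1$-bounded $h$. A regularity/energy-increment argument based on the strong inverse theorem Lemma \ref{L: U^2 inverse} then decomposes $f_m=f_m^{\mathrm{str}}+f_m^{\mathrm{unif}}$, with $\norm{f_m^{\mathrm{unif}}}_{U^2(\bv_m)}$ smaller than any prescribed power of $p^{-1}$ and $f_m^{\mathrm{str}}$ of bounded complexity relative to the eigenfunctions of $\bv_m$, so that the $(m-1)$-case applies to the structured part. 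The uniform part contributes $\ll p^{-\Omega(1)}$ by the previous estimate, and $D(f_m^{\mathrm{str}})$ is a controlled combination of terms $D(\chi)$ with $\chi\in\CE(\bv_m)$, each $\ll p^{-\Omega(1)}$; summing gives $|D(f_m)|\ll p^{-\Omega(1)}$, which is the $m$-case.

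The main obstacle is the degree-lowering bound: one must carry the fixed eigenfunctions in the slots $j>m$, together with the dual functions produced along the way, through each application of the dual--difference interchange and of Proposition \ref{P: single box norm control II} without quantitative loss, so that after finitely many steps the controlling norm is genuinely $\norm{h}_{U^2(\bv_m)}$ and not a box norm of higher degree. A secondary delicate point is to keep the complexity of the $U^2(\bv_m)$ regularity decomposition bounded tightly enough that the $f_m^{\mathrm{str}}$-part really reduces, via the $(m-1)$-case, to a negligible error; this is precisely where the eigenfunction formalism of Lemma \ref{L: U^2 inverse} and the projection identity \eqref{E: projection of eigenfunctions} do the work, and it is what makes the argument cleaner than a direct descent all the way to $U^1(\bv_m)$.
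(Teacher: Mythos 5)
Your base case $m=0$ and the overall induction on $m$ coincide with the paper's, and your ending is a legitimate variant: instead of your $U^2(\bv_m)$ regularity decomposition of $f_m$, the paper replaces each $f_j$, $j\neq m$, by $\E(f_j|\bv_j)$ (using the $U^1(\bv_j)$ control it has just established) and quotes Peluse's one-dimensional theorem \cite{Pel19} for the remaining slot. Your variant could be made to work, but not as stated: you cannot simultaneously make $\norm{f_m^{\mathrm{unif}}}_{U^2(\bv_m)}$ smaller than \emph{any} prescribed power of $p^{-1}$ and keep $f_m^{\mathrm{str}}$ of bounded complexity, since the number of eigenfunction pieces in the structured part is forced to be about $\varepsilon^{-2}$; you must fix $\varepsilon=p^{-c}$ with $c$ small compared to the power saving delivered by the $(m-1)$-case (and factor the $\bv_m$-invariant moduli of the pieces into $f_0$ so that each piece genuinely lies in $\CE(\bv_m)$). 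This is repairable bookkeeping.

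The genuine gap is the degree-lowering bound, which you invoke as a black box with an incorrect mechanism. First, it cannot be run on an arbitrary $1$-bounded $h$: Proposition \ref{P: control} does give $\norm{h}_{U^s(\bv_m)}\gg\delta^{O(1)}$, but a large $U^s$ norm of an arbitrary function says nothing about its $U^{s-1}$ or $U^2$ norm; the lowering has to be performed on the dual function $\tilde f_m$, whose special structure is what the argument exploits (this is why the paper states Proposition \ref{P: degree lowering} for $\tilde f_m$). Second, and more seriously, the steps you list --- weak inverse theorem, Lemma \ref{L: dual replacement}, dual--difference interchange, and Proposition \ref{P: single box norm control II} ``to absorb the newly created dual factors'' --- do not lower the degree. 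After the $U^2$ inverse theorem on fibers one has eigenfunctions $\chi_{\uh}\in\CE(\bv_m)$ with phases $\phi_{\uh}$, and the whole point is to show these phases vanish on a large set (equivalently, decompose into pieces independent of single coordinates $h_j$) so that Lemma \ref{L: low complexity} applies; Proposition \ref{P: single box norm control II} only produces a box-norm bound on one of the remaining functions and extracts no information whatsoever about $\phi_{\uh}$. What is actually needed, and what the paper's proof of Proposition \ref{P: degree lowering} uses, is the evaluation up to $p^{-\Omega_{d,\ell,s}(1)}$ of the counting operator with the eigenfunction $1_{\CU'_{\uh,\uh'}}\chi_{\uh,\uh'}$ placed in slot $m$ --- that is, exactly the case $(m-1,\ell)$ of the proposition you are proving, combined with the projection identity \eqref{E: projection of eigenfunctions}. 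So degree lowering at level $m$ is not independent of your induction: it must consume the $(m-1)$-case, and this intertwining (which is also where linear independence enters, through the Weil bound at $m=0$) is absent from your argument; the appeal to ``exactly as in \cite{Ku22b}'' conceals it, because constructing the multidimensional analogue of that step is precisely the content of this section of the paper.
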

Equivalently, we want to show that the counting operator 
\begin{align*}
    \Lambda(f_0, \ldots, f_\ell) = \E_{\bx,n}f_0(\bx)\prod_{j\in[\ell]} f_j(\bx+\bv_j p_j(n))
\end{align*}
is controlled by the norms $\norm{f_1}_{U^1(\bv_1)}, \ldots, \norm{f_\ell}_{U^1(\bv_\ell)}$ with power-saving error terms.

For the rest of this section, we let
\begin{align}\label{E: tilde}
    \tilde{f}_m(\bx) = \E_n \prod_{\substack{j\in[0,\ell],\\ j\neq m}}f_0(\bx + \bv_j p_j(n)-\bv_m p_m(n)),
\end{align}
where we recall that $\bv_0 = \mathbf{0}$ and $p_i(0) = 0$.
Proposition \ref{P: count} will follow from the following result.
\begin{proposition}[Degree lowering]\label{P: degree lowering}
 Let $d, D, \ell\in\N$, $m\in[\ell]$, $\bv_1, \ldots, \bv_\ell\in\Z^D$ be nonzero vectors, and $p_1, \ldots, p_\ell\in\Z[n]$ be linearly independent polynomials of degrees at most $d$ with zero constant terms. Let $f_0, \ldots, f_\ell:\F_p^D\to\C$ be 1-bounded functions such that $f_j\in\CE(\bv_j)$ for all indices $j\in[m+1,\ell]$, and define $\tilde{f}_m$ as in \eqref{E: tilde}. For each $s\geq 2$, the following holds: if $\delta\gg_{d, \ell, s} p^{-\Omega_{d, \ell, s}(1)}$, then
 \begin{align*}
     \norm{\tilde{f}_m}_{U^s(\bv_m)}\geq \delta\quad \Rightarrow\quad \norm{\tilde{f}_m}_{U^{s-1}(\bv_m)} \gg_{d,\ell,s} \delta^{O_{d,\ell,s}(1)}.
 \end{align*}
\end{proposition}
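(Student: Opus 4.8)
The plan is to run the degree-lowering argument of Peluse and Prendiville, recast in the eigenfunction language of this section. Write $\bv:=\bv_m$, and recall from \eqref{E: tilde} that $\tilde f_m$ is the dual function of the progression $\bx,\ \bx+\bv_jp_j(n)\ (j\in[\ell]\setminus\{m\})$ shifted by $-\bv p_m(n)$; since $p_1,\dots,p_\ell$ are linearly independent with zero constant terms, the shift polynomials $\bv_jp_j(n)-\bv p_m(n)$, $j\in\{0,\dots,\ell\}\setminus\{m\}$, are nonzero and pairwise essentially distinct, and $p_m$ is the unique highest-degree polynomial of the progression and runs along $\bv$. By the inductive formula for box norms, $\norm{\tilde f_m}_{U^s(\bv)}^{2^s}=\E_{\uk\in\F_p^{s-2}}\norm{\Delta_{s-2,\bv;\uk}\tilde f_m}_{U^2(\bv)}^4$ (for $s=2$ there is no parameter $\uk$ and this is just $\norm{\tilde f_m}_{U^2(\bv)}\ge\delta$). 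Assuming $\delta\gg p^{-c}$ for a suitably small $c=c(d,\ell,s)>0$, a pigeonholing produces a set $K\subseteq\F_p^{s-2}$ of density $\gg\delta^{O(1)}$ with $\norm{\Delta_{s-2,\bv;\uk}\tilde f_m}_{U^2(\bv)}\gg\delta^{O(1)}$ for $\uk\in K$. Lemma~\ref{L: U^2 inverse} then furnishes, for each $\uk\in K$, an eigenfunction $\chi_\uk\in\CE(\bv)$ with $|\chi_\uk|=1$ and $\bv$-invariant eigenvalue $\phi_\uk\colon\F_p^D\to\F_p$ such that $\E_{\bx}\Delta_{s-2,\bv;\uk}\tilde f_m(\bx)\chi_\uk(\bx)\gg\delta^{O(1)}$; by \eqref{E: projection of eigenfunctions} we may equivalently record that the $\bv$-invariant function $\E\bigl(\Delta_{s-2,\bv;\uk}\tilde f_m\cdot\chi_\uk\mid\bv\bigr)$ has $L^1$-mass $\gg\delta^{O(1)}$.

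\emph{The crux} is to show that $\uk\mapsto\phi_\uk$ is low-complexity in $\uk$. Expanding the $s-2$ multiplicative derivatives and the $n$-average defining $\tilde f_m$, the correlation above becomes an average over $\bx$ and $2^{s-2}$ independent copies of the variable $n$ of a product of the $f_j$ composed with the polynomials $\bv_jp_j-\bv p_m$. Performing a change of variables in $\bx$ that normalizes one copy of $\bv p_m$, and using $\chi_\uk(\bx+t\bv)=e_p\bigl(t\,\phi_\uk(\bx)\bigr)\chi_\uk(\bx)$ together with the $\bv$-invariance of $\phi_\uk$, one rewrites the correlation as an average over the derivative parameters of a twisted polynomial counting operator in which $p_m$ no longer appears, hence one of strictly smaller length (equivalently, strictly smaller type in the sense of Section~\ref{S:formalism pairwise independent}). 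Averaging over $\uk\in K$, symmetrizing in the derivative parameters by a Cauchy--Schwarz step — which replaces $\chi_\uk$ by the genuine $\bv$-eigenfunctions $\chi_\uk\overline{\chi_{\uk'}}$ of eigenvalue $\phi_\uk-\phi_{\uk'}$ — and invoking the Gowers-norm control of Theorem~\ref{T: control} (or, once the length has dropped below the one under induction, the inductive hypothesis of Proposition~\ref{P: count}), forces this average to be large only when $\phi_\uk-\phi_{\uk'}$ is degenerate. Quantifying this with Lemma~\ref{L: zero sets} yields that $\uk\mapsto\phi_\uk$ agrees, on a subset of $\F_p^{s-2}$ of density $\gg\delta^{O(1)}$, with a polynomial map of degree at most $s-3$ in $\uk$ (for $s=2$ this says the correlating eigenfunction may be taken $\bv$-invariant, i.e.\ $\phi\equiv0$).

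\emph{To conclude}, one absorbs this polynomial dependence: writing $\uk=(\uk',k)$ and noting that $\chi_\uk\overline{\chi_{\uk''}}$ is $\bv$-invariant whenever $\phi_\uk=\phi_{\uk''}$, a routine manipulation shows that for a density $\gg\delta^{O(1)}$ of tuples $\uk'\in\F_p^{s-3}$ the derivative $\Delta_{s-3,\bv;\uk'}\tilde f_m$ correlates with a $\bv$-eigenfunction, whence $\norm{\Delta_{s-3,\bv;\uk'}\tilde f_m}_{U^2(\bv)}\gg\delta^{O(1)}$ (the easy direction of the $U^2$ inverse theorem, using that $\bv$-eigenfunctions have $U^2(\bv)$ norm $1$). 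Running the inductive formula for box norms in reverse, $\norm{\tilde f_m}_{U^{s-1}(\bv)}^{2^{s-1}}=\E_{\uk'\in\F_p^{s-3}}\norm{\Delta_{s-3,\bv;\uk'}\tilde f_m}_{U^2(\bv)}^4\gg\delta^{O(1)}$, which is the claim. The admissible range $\delta\gg_{d,\ell,s}p^{-\Omega_{d,\ell,s}(1)}$ and all implied constants come from carrying the $p^{-c}$ errors of Theorem~\ref{T: control} and Lemma~\ref{L: zero sets} through the finitely many steps above.

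\emph{Main obstacle.} The difficulty lies entirely in the crux step. The shifts $\bv_jp_j-\bv p_m$ ($j\neq m$) are in general not parallel to $\bv$, so the eigenfunction relation for $\chi_\uk$ cannot be applied to them directly; one must first manufacture honest $\bv$-orbit shifts of $\chi_\uk$ — by normalizing the single $p_m$-term and by passing to differences $\chi_\uk\overline{\chi_{\uk'}}$ — before the $\bv$-invariance can be exploited. Carrying this out while (i) keeping every auxiliary counting operator of complexity strictly below that of the original, so that Theorem~\ref{T: control} and the induction on length/type apply, and (ii) controlling the proliferation of derivative parameters and error terms, is the technical heart of the argument; it is also where the linear independence of $p_1,\dots,p_\ell$ is used decisively, through the pairwise essential distinctness of the shift polynomials.
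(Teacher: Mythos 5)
Your opening (inductive formula, pigeonholing, Lemma \ref{L: U^2 inverse} producing eigenfunctions $\chi_{\uk}\in\CE(\bv_m)$ with $\bv_m$-invariant eigenvalues $\phi_{\uk}$) matches the paper, but the crux step contains a genuine gap. What the available machinery yields is the following: after passing the derivatives from $\tilde f_m$ to the component functions via the dual--difference interchange (Lemma \ref{L: dual-difference interchange} / Corollary \ref{C: dual-difference interchange}), one obtains, for many $(\uh,\uh')$, a counting operator in which the slot at $\bx+\bv_m p_m(n)$ is occupied by the product eigenfunction $\chi_{\uh,\uh'}=\prod_{\ueps}\CC^{|\ueps|}\chi_{\uh^{\ueps}}(\cdot-(\underline 1\cdot\uh')\bv_m)$; this operator falls under the case $(m-1,\ell)$ of Proposition \ref{P: count} (one more slot is an eigenfunction), and it is that inductive hypothesis --- not Theorem \ref{T: control}, which only gives $U^s$ control and no main term --- that forces $\norm{1_{\CU'_{\uh,\uh'}}\chi_{\uh,\uh'}}_{U^1(\bv_m)}$ to be large, i.e.\ by \eqref{E: projection of eigenfunctions} that the alternating sum $\sum_{\ueps}(-1)^{|\ueps|}\phi_{\uh^{\ueps}}(\bx)$ vanishes for a dense set of $(\uh,\uh',\bx)$. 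Your proposal instead asserts that this degeneracy, "quantified with Lemma \ref{L: zero sets}", shows $\uk\mapsto\phi_{\uk}$ agrees on a dense set with a polynomial map of degree at most $s-3$. No tool in the paper delivers this: Lemma \ref{L: zero sets} is a Schwartz--Zippel bound for the zero set of a \emph{given} nonzero polynomial and says nothing about a general map with many vanishing alternating sums; extracting genuine polynomial structure in $\uk$ from such a condition would amount to a strong inverse theorem of degree $s-2$ in the $\uh$-variable (or a BSG/Freiman-type argument), which is exactly the kind of input this paper is built to avoid and which would wreck the power-saving bounds. Moreover, even granting the polynomiality claim, your endgame does not close as written: a degree-$(s-3)$ polynomial $\phi_{(\uk',k)}$ may still depend on the last coordinate $k$, so the step where you "undo" one derivative and make $\Delta_{s-3,\bv_m;\uk'}\tilde f_m$ correlate with a single $\bv_m$-eigenfunction is not justified.

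The fix is that one needs far less than polynomiality. After pigeonholing in $\uh'$, the vanishing $\sum_{\ueps}(-1)^{|\ueps|}\phi_{\uh^{\ueps}}(\bx)=0$ can be solved for the $\ueps=\underline 0$ term and the remaining terms grouped according to the first coordinate with $\eps_j=1$, giving the purely formal decomposition $\phi_{\uh}(\bx)=\sum_{j=1}^{s-2}\phi_{j\uh}(\bx)$ on a dense ($\bv_m$-invariant) set, where crucially $\phi_{j\uh}$ does not depend on $h_j$. Feeding this into the correlation, applying one Cauchy--Schwarz to create the extra variable $h_{s-1}$, and invoking the low-complexity removal Lemma \ref{L: low complexity} yields $\norm{\tilde f_m}_{U^{s-1}(\bv_m)}\gg\delta^{O(1)}$ directly, with no need to reassemble $U^{s-1}$ from $U^2$ norms of $(s-3)$-fold derivatives. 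So the skeleton of your argument (eigenfunctions, symmetrization in the derivative parameters, reduction to a lower-complexity count) is in the right spirit, but the structural conclusion you try to extract about $\phi_{\uk}$ is both unobtainable with the cited lemmas and unnecessary, and the joint induction with Proposition \ref{P: count} for $(m-1,\ell)$ is the ingredient your reduction actually has to invoke.
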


We first show how Proposition \ref{P: count} for $(m, \ell)$ can be deduced from Proposition \ref{P: degree lowering} for $(m, \ell)$ and Proposition \ref{P: count} for $(m-1, \ell-1)$ (or $(m-1, \ell)$). Later, we will derive Proposition \ref{P: degree lowering} for $(m, \ell)$ by invoking Proposition \ref{P: count} for $(m-1,\ell)$.

\begin{proof}[Proof of Proposition \ref{P: count} using Proposition \ref{P: degree lowering}]
    Let $\ell\in\N$. We split into two cases: $m=0$ and $m>0$.
    
    \smallskip
    \textbf{The case $m=0$.}
    \smallskip
    
    Suppose first that $m=0$. Then $f_1, \ldots, f_\ell$ are all eigenfunctions of respective vectors, and so we have
    \begin{align*}
        \E_{\bx,n}f_0(\bx)\prod_{j\in[\ell]} f_j(\bx+\bv_j p_j(n)) = \E_{\bx}f_0(\bx)\cdot\prod_{j\in[\ell]}f_j(\bx)\cdot \E_n e_p\brac{\sum_{j\in[\ell]}\phi_j(\bx) p_j(n)},
    \end{align*}
    where 
    $\phi_j$ is the eigenvalue of $f_j$. Since the polynomials $p_1, \ldots, p_\ell$ are linearly independent, we get from the Weil estimates (see e.g. \cite[Theorem 3.2]{kowalski_2018}) that
    \begin{align*}
        \E_n e_p\brac{\sum_{j\in[\ell]}\phi_j(\bx) p_j(n)} = 1_{\phi_1(\bx) = \cdots = \phi_\ell(\bx) = 0} + O_{d}(p^{-1/2}).
    \end{align*}
    Hence
    \begin{align*}
        \E_{\bx,n}f_0(\bx)\prod_{j\in[\ell]} f_j(\bx+\bv_j p_j(n)) = \E_{\bx}f_0(\bx)\cdot\prod_{j\in[\ell]}(f_j(\bx)\cdot 1_{\phi_j(\bx)=0}) + O_{d}(p^{-1/2}),
    \end{align*}
    and the result follows from \eqref{E: projection of eigenfunctions}.

        \smallskip
    \textbf{The case $m>0$}
    \smallskip

    We move on to the case $m>0$, i.e. when $f_{m+1}, \ldots, f_\ell$ are all eigenfunctions of the respective vectors. Our first goal is to show that under this assumption, we have
    \begin{align}\label{E: partial control}
        |\Lambda(f_0, \ldots, f_\ell)|^{O(1)}\ll \min_{j\in[\ell]\setminus\{m\}}\norm{f_j}_{U^1(\bv_j)}+ p\inv.
    \end{align}
    Here and for the rest of the proof, we let all quantities depend on $d$ and $\ell$. Let $\delta = |\Lambda(f_0, \ldots, f_\ell)|$, and suppose that $\delta\gg p^{c}$ for a sufficiently small $c>0$ - otherwise the result follows trivially. 
    By Lemma \ref{L: dual replacement}, we have $$\Lambda(f_0, \ldots, f_{m-1}, \tilde{f}_m, f_{m+1}, \ldots, f_\ell)\geq \delta^2,$$ and hence by Theorem \ref{T: control}, there exists $s\in\N$ such that $\norm{\tilde{f}_m}_{U^s(\bv_m)}\gg\delta^{O(1)}$. By an iterated application of Proposition \ref{P: degree lowering}, we have $\norm{\tilde{f}_m}_{U^1(\bv_m)}\gg\delta^{O(1)}$, and hence Lemma \ref{L: dual replacement}\eqref{i: U^1} gives
    \begin{multline*}
        \Lambda(f_0, \ldots, f_{m-1}, \E(\tilde{f}_m|\bv_m), f_{m+1}, \ldots, f_\ell)\\
        = \E_\bx (f_0\E(\tilde{f}_m|\bv_m))(\bx) \prod_{\substack{j\in[\ell],\\ j\neq m}}f_j(\bx+\bv_j p_j(n)) \gg\delta^{O(1)}. 
    \end{multline*}
    We have reduced to the case $(m-1, \ell-1)$ (or $(m-1, \ell)$ if we think of 1 as the function evaluated at $\bx+\bv_m p_m(n)$), and so invoking Proposition \ref{P: control} in this case, we deduce that
    $\norm{f_j}_{U^1(\bv_j)}\gg\delta^{O(1)}$ for $j\in[\ell]\setminus\{m\}$. Hence \eqref{E: partial control} follows. 

    It remains to show that the $U^1(\bv_m)$ norm of $f_m$ also controls $\Lambda(f_0, \ldots, f_\ell)$. Splitting $f_j=\E(f_j|\bv_j) + (f_j -\E(f_j|\bv_j))$ for $j\in[\ell]\setminus\{m\}$ and using \eqref{E: partial control}, we deduce that
    \begin{align*}
        \Lambda(f_0, \ldots, f_\ell) = \E_{\bx, n}F(\bx) f_m(\bx+\bv_m p_m(n)) +O(p^{-\Omega(1)}),
    \end{align*}
    where $F = f_0\cdot\prod_{\substack{j\in[\ell]\setminus\{m\}}} \E(f_j|\bv_j)$. Since we are left with a single dimensional progression, the claim then follows from \cite[Theorem 2.1]{Pel19}.
\end{proof}

It remains to prove Proposition \ref{P: degree lowering} for $(m, \ell)$ assuming Proposition \ref{P: control} for $(m-1,\ell)$.
\begin{proof}[Proof of Proposition \ref{P: degree lowering}]
    We allow all the quantities in the proof to depend on $d,\ell, s$.
    
    Suppose that $\norm{\tilde{f}_m}_{U^s(\bv_m)}\geq \delta$ for some $s\geq 2$ and $\delta>0$. 
    The induction formula for Gowers norms gives
    \begin{align*}
        \E_{\uh\in\F_p^{s-2}}\norm{\Delta_{s-2, \bv_m; \uh}\tilde{f}_m}^4_{U^2(\bv_m)}\geq \delta^{2^s}.
    \end{align*}    
    By Lemma \ref{L: U^2 inverse}, there exist eigenfunctions $\chi_\uh\in\CE(\bv_m)$ with eigenvalues $\phi_\uh$ satisfying
    \begin{align*}
        \E_{\uh\in\F_p^{s-2}}\E_\bx \Delta_{s-2, \bv_m; \uh}\tilde{f}_m(\bx) \chi_\uh(\bx)\geq \delta^{2^s};
    \end{align*}
    our goal is to show that for many $\uh$, the eigenfunctions $\chi_\uh$ can be expressed as a product of low-complexity functions on many cosets of $\bv_m$.
    Lemma \ref{L: U^2 inverse} also gives $$\E(\Delta_{s-2, \bv_m; \uh}\tilde{f}_m\cdot \chi_\uh|\bv_m)\geq 0;$$ hence the set
    \begin{align*}
        B=\{\uh\in\F_p^{s-2}:\ \E_\bx \Delta_{s-2, \bv_m; \uh}\tilde{f}_m(\bx) \chi_\uh(\bx)\geq \delta^{2^s}/2\}
    \end{align*}
    has size $|B|\gg \delta^{2^{s}}p^{2^{s-2}}$, and the set
    \begin{align*}
    \CU_\uh = \{\bx\in\F_p^D:\ \E(\Delta_{s-2, \bv_m; \uh}\tilde{f}_m\cdot \chi_\uh|\bv_m)(\bx)\geq \delta^{2^s}/4\}    
    \end{align*}
    has cardinality $|\CU_\uh|\gg\delta^{2^s}p^D$ for each $\uh\in B$. The sets $\CU_\uh$ are $\bv_m$-invariant, which gives us the lower bound
    \begin{align*}
        \E_{\uh\in\F_p^{s-2}}1_{B}(\uh)\E_\bx 1_{\CU_{\uh}}(\bx) \Delta_{s-2, \bv_m; \uh}\tilde{f}_m(\bx) \chi_\uh(\bx)\gg \delta^{2^s}.
    \end{align*}
    
    The next step is to pass the multiplicative derivative from $\tilde{f}_m$ to the functions $f_0, \ldots, f_\ell$. This is accomplished using    Proposition \ref{L: dual-difference interchange} applied to $u_\uh(\bx) = 1_{B}(\uh) 1_{\CU_{\uh}}(\bx) \chi_\uh(\bx)$, which gives
    \begin{align*}
        \E_{\uh,\uh'\in\F_p^{s-2}}1_{B'}(\uh,\uh')\E_{\bx,n} \prod_{\substack{j\in[0, \ell],\\ j\neq m}}f_{j,\uh,\uh'}(\bx+\bv_j p_j(n))\cdot \brac{1_{\CU'_{\uh,\uh'}}\chi_{\uh,\uh'}}(\bx+\bv_m p_m(n))\gg \delta^{O(1)}
    \end{align*}
    with
    \begin{align*}
        B' &=\{(\uh,\uh') \in\F_p^{2(s-2)}:\; \uh^\ueps\in B \; \textrm{for\; all} \; \ueps\in\{0,1\}^{s-2}\}\\
        \CU'_{\uh,\uh'} &=\bigcap_{\ueps\in\{0,1\}^{s-2}}\CU_{\uh^\ueps}\\
        f_{j,\uh,\uh'}(\bx) &= \Delta_{s-2, \bv_m; \uh-\uh'}f_j(\bx)\\
        \chi_{\uh,\uh'}(\bx) &= \prod_{\ueps\in\{0,1\}^{s-2}}\CC^{|\ueps|}\chi_{\uh^\ueps}(\bx-(\underline{1}\cdot \uh')\bv_m).
    \end{align*}
    We recall that $\uh^\ueps =(h_1^{\eps_1}, \ldots, h_s^{\eps_s})$ with $h_i^{\eps_i} = h_i$ if $\eps_i = 0$ and $h_i^{\eps_i} = h_i'$ otherwise.

    Crucially, each counting operator indexed by $\uh, \uh'$ corresponds to the case $(m-1,\ell)$ of Proposition \ref{P: control}, as the function $1_{\CU'_{\uh,\uh'}}\chi_{\uh,\uh'}$ evaluated at $\bx+\bv_m p_m(n)$ is an eigenfunction of $\bv_m$. Applying the case $(m-1,\ell)$ of Proposition \ref{P: control},
    we therefore get 
    \begin{align}\label{E: DL 1}
        \E_{\uh,\uh'\in\F_p^{s-2}}1_{B'}(\uh,\uh')\norm{1_{\CU'_{\uh,\uh'}}\chi_{\uh,\uh'}}_{U^1(\bv_m)}^2 \gg\delta^{O(1)}.
    \end{align}
    We define
    \begin{align*}
        \phi_{\uh,\uh'}(\bx) = \sum_{\ueps\in\{0,1\}^s}(-1)^{|\ueps|}\phi_{\uh^\ueps}(\bx), 
    \end{align*}
    so that 
    \begin{align}\label{E: product eigenfunction}
        \chi_{\uh,\uh'}(\bx+\bv_m n) = e_p(\phi_{\uh,\uh'}(\bx)n)\chi_{\uh,\uh'}(\bx).
    \end{align}
    Since $\CU'_{\uh,\uh'}$ is $\bv_m$-invariant, we deduce from \eqref{E: projection of eigenfunctions}, \eqref{E: DL 1} and \eqref{E: product eigenfunction} that
    \begin{align*}
        \E_{\uh,\uh'\in\F_p^{s-2}}1_{B'}(\uh,\uh')\E_\bx 1_{\CU'_{\uh,\uh'}}(\bx) 1_{\phi_{\uh,\uh'}(\bx)=0}\gg\delta^{O(1)}.
    \end{align*}    

    We have thus shown that for many differences $\uh,\uh'$ and many points $\bx$, the value $\phi_{\uh,\uh'}(\bx)$ is 0. We will use this observation to detect some low-complexity structure on the phases $\phi_\uh.$ First, we apply the pigeonhole principle to find $\uh'\in B$ for which
    \begin{align*}
        \E_{\uh\in\F_p^{s-2}}1_{B'}(\uh,\uh')\E_\bx 1_{\CU'_{\uh,\uh'}}(\bx) 1_{\phi_{\uh,\uh'}(\bx)=0}\gg\delta^{O(1)}.
    \end{align*}
    Letting 
    \begin{align*}
        B'' &= \{\uh\in\F_p^{s-2}:\ (\uh,\uh')\in B'\},\\
        \CU''_{\uh} &=\{\bx\in\F_p^D:\ \bx\in \CU'_{\uh,\uh'}\; \textrm{and}\; \phi_{\uh,\uh'}(\bx)=0\},
    \end{align*}
    we deduce that 
    \begin{align}\label{E: lower bound}
        \E_{\uh\in\F_p^{s-2}}1_{B''}(\uh)\E_\bx 1_{\CU''_{\uh}}(\bx)\gg\delta^{O(1)}.
    \end{align}
    Unsurprisingly, the sets $\CU''_\uh$ are $\bv_m$-invariant.
    
    Next, we note that for each $\uh\in B''$ and $\bx\in\CU''_{\uh}$, we can write $\phi_{\uh}(\bx) = \sum_{j=1}^{s-2}\phi_{j\uh}(\bx)$, where $\phi_{j\uh}$ is defined by
    \begin{align*}
        \phi_{j\uh}(\bx) = \begin{cases}\sum_{\substack{\ueps\in\{0,1\}^{s-2}, \eps_j = 1,\\ \eps_1 = \ldots = \eps_{j-1} = 0}}(-1)^{|\ueps|+1}\phi_{\uh^\ueps}(\bx),\; &\uh\in B'', \bx\in\CU''_{\uh}\\
        0,\; &\textrm{otherwise}.
        \end{cases}
    \end{align*}    
    It follows that for $\uh\in B''$ and $\bx\in\CU''_{\uh}$, we have the identity
    \begin{align}\label{E: eigenfunction decomposition}
        \chi_\uh(\bx+\bv_m n) 
        = \prod_{j=1}^{s-2}e_p(\phi_{j\uh}(\bx)n)\cdot \chi_\uh(\bx).
    \end{align}
    Crucially, the sequence $\uh\mapsto \phi_{j\uh}$ does not depend on $h_j$.

    Bringing together the inclusions $B''\subset B$ and $\CU''_\uh\subset \CU_\uh$ for $\uh\in B''$, the inequality \eqref{E: lower bound} and the lower bound  $\E(\Delta_{s-2, \bv_m; \uh}\tilde{f}_m\cdot \chi_\uh|\bv_m)(\bx)\gg\delta^{O(1)}$ for $\uh\in B$ and $\bx\in \CU_\uh$, we obtain the inequality
    \begin{align*}
        \E_{\uh\in\F_p^{s-2}}1_{B''}(\uh)\E_\bx 1_{\CU''_{\uh}}(\bx)\E(\Delta_{s-2, \bv_m; \uh}\tilde{f}_m\cdot \chi_\uh|\bv_m)(\bx)\gg\delta^{O(1)}. 
    \end{align*}
    Combined with the identity \eqref{E: eigenfunction decomposition}, it implies that
    \begin{align*}
        \E_{\uh\in\F_p^{s-2}}1_{B''}(\uh) \E_{\bx}1_{\CU''_{\uh}}(\bx) \chi_\uh(\bx)\E_n\Delta_{s-2, \bv_m; \uh}\tilde{f}_m(\bx + \bv_m n) e_p\brac{\sum_{j=1}^{s-2}\phi_{j\uh}(\bx)n} \gg\delta^{O(1)}.
    \end{align*}
    An application of the Cauchy-Schwarz inequality and the $\bv_m$-invariance of $\phi_{j\uh}$ give  
    \begin{align*}
        \E_{(\uh, h_{s-1})\in\F_p^{s-1}}\E_{\bx}\Delta_{s-1, \bv_m; (\uh, h_{s-1})}\tilde{f}_m(\bx) e_p\brac{-\sum_{j=1}^{s-2}\phi_{j\uh}(\bx)h_{s-1}} \gg\delta^{O(1)}.        
    \end{align*}

    The result follows from Lemma \ref{L: low complexity}.

\end{proof}

\appendix
\section{Standard technical lemmas}\label{S: standard lemmas}
We gather here standard technical lemmas that we need for various arguments. Throughout this section, we set
\begin{align*}
    \Lambda(f_0, \ldots, f_\ell) &= \E_{\bx,n}f_0(\bx)\prod_{j\in[\ell]} f_j(\bx+\bv_j p_j(n))\\
    \textrm{and}\quad\quad \tilde{f}_m(\bx) &= \E_n \prod_{\substack{j\in[0,\ell],\\ j\neq m}}f_0(\bx + \bv_j p_j(n)-\bv_m p_m(n)).
\end{align*}
for some vectors $\bv_1, \ldots, \bv_\ell\in\Z^d$, polynomials $p_1, \ldots, p_\ell\in\Z[n]$ and 1-bounded functions $f_0, \ldots, f_\ell:\F_p^D\to\C$.

The following trick has become pretty standard in degree lowering and norm smoothing arguments.
\begin{lemma}\label{L: dual replacement}
Suppose that $\E_\bx \tilde{f}_m(\bx)g(\bx)\geq \delta$ for a 1-bounded function $g:\F_p\to\C$. Then
\begin{align*}
    \Lambda(f_0, \ldots, f_{m-1}, \overline{g}, f_{m+1}, \ldots, f_\ell)\geq \delta.
\end{align*}
In particular,
\begin{enumerate}
    \item\label{i: tilde} if $|\Lambda(f_0, \ldots, f_\ell)|\geq \delta$, then $\Lambda(f_0, \ldots, f_{m-1}, \tilde{f}_m, f_{m+1}, \ldots, f_\ell)\geq \delta^2$;
    \item\label{i: U^1} if $\norm{\tilde{f}_m}_{U^1(\bv_m)}\geq \delta$, then $\Lambda(f_0, \ldots, f_{m-1}, \E(\tilde{f}_m|\bv_m), f_{m+1}, \ldots, f_\ell)\geq \delta^2$;
    \item\label{i: U^s} if $\norm{\tilde{f}_m}_{U^s(\bv_m)}\geq \delta$, then $\Lambda(f_0, \ldots, f_{m-1}, \overline{\CD_{s,\bv_m}\tilde{f}_m}, f_{m+1}, \ldots, f_\ell)\geq \delta^{2^s}$;
\end{enumerate}
\end{lemma}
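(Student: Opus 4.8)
The plan is to reduce the entire lemma to a single change-of-variables identity. First I would record that, for every $1$-bounded $h:\F_p^D\to\C$,
\begin{align*}
    \Lambda(f_0, \ldots, f_{m-1}, h, f_{m+1}, \ldots, f_\ell) = \E_\bx h(\bx)\overline{\tilde f_m(\bx)};
\end{align*}
this follows directly from the definition of $\tilde f_m$ by substituting $\bx\mapsto\bx+\bv_m p_m(n)$ in the average defining $\Lambda$ (using $\bv_0=\mathbf{0}$ and $p_0=0$ to absorb $f_0$ into the product over $j\neq m$), recognising the resulting inner $n$-average as $\overline{\tilde f_m(\bx)}$, and interchanging the order of summation. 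The main assertion of the lemma is then immediate: applying the identity with $h=\overline g$ gives
\begin{align*}
    \Lambda(f_0, \ldots, f_{m-1}, \overline g, f_{m+1}, \ldots, f_\ell) = \E_\bx \overline{g(\bx)}\,\overline{\tilde f_m(\bx)} = \overline{\E_\bx \tilde f_m(\bx)g(\bx)},
\end{align*}
and since $\E_\bx\tilde f_m(\bx)g(\bx)$ is by hypothesis a real number which is $\geq\delta$, the same holds for the left-hand side.

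For the three special cases I would feed the appropriate $1$-bounded function into the main assertion, or, in case (i), into the identity itself. For (i), taking $h=\tilde f_m$ in the identity yields $\Lambda(f_0,\ldots,f_{m-1},\tilde f_m,f_{m+1},\ldots,f_\ell)=\norm{\tilde f_m}_2^2$; on the other hand, Cauchy--Schwarz together with $\norm{f_m}_\infty\leq 1$ gives $\delta\leq\abs{\Lambda(f_0,\ldots,f_\ell)}=\abs{\E_\bx f_m(\bx)\overline{\tilde f_m(\bx)}}\leq\norm{f_m}_2\norm{\tilde f_m}_2\leq\norm{\tilde f_m}_2$, and squaring finishes the case. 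For (ii), I would use the elementary identity $\norm{\tilde f_m}_{U^1(\bv_m)}^2=\E_\bx\tilde f_m(\bx)\overline{\E(\tilde f_m|\bv_m)(\bx)}$, so that $g:=\overline{\E(\tilde f_m|\bv_m)}$ is $1$-bounded and satisfies $\E_\bx\tilde f_m(\bx)g(\bx)\geq\delta^2$; the main assertion then gives the claim, since $\overline g=\E(\tilde f_m|\bv_m)$. For (iii), I would invoke the weak inverse theorem $\norm{\tilde f_m}_{U^s(\bv_m)}^{2^s}=\E_\bx\tilde f_m(\bx)\CD_{s,\bv_m}\tilde f_m(\bx)$ from Section~\ref{SS: inverse theorems}, take $g:=\CD_{s,\bv_m}\tilde f_m$, which is $1$-bounded (being an average of products of $1$-bounded functions), and conclude as before with $\overline g=\overline{\CD_{s,\bv_m}\tilde f_m}$.

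There is no substantial obstacle in this lemma; the only points requiring care are the bookkeeping of complex conjugates in the change of variables and the (routine) verification that the auxiliary functions $g$ -- a conditional expectation and a dual function of $\tilde f_m$ -- are $1$-bounded, and that $\tilde f_m$ itself is $1$-bounded so that it may legitimately play the role of a $1$-bounded input to $\Lambda$.
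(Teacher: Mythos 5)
Your proof is correct and follows essentially the same route as the paper: the single change-of-variables identity $\Lambda(f_0,\ldots,f_{m-1},h,f_{m+1},\ldots,f_\ell)=\E_\bx h(\bx)\overline{\tilde f_m(\bx)}$, then specialising $g$ (or $h$) to $\overline{\tilde f_m}$, $\overline{\E(\tilde f_m|\bv_m)}$ and $\CD_{s,\bv_m}\tilde f_m$ together with Cauchy--Schwarz and the weak inverse theorems, exactly as in the paper's argument. Your explicit bookkeeping of conjugates and of the $1$-boundedness of the auxiliary functions is consistent with the paper's intended (conjugated) definition of $\tilde f_m$.
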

\begin{proof}
Since $\E_\bx \tilde{f}_m(\bx)g(\bx)$ is real by assumption, it equals its complex conjugate $\E_\bx \overline{\tilde{f}_m(\bx)g(\bx)}$.
 The first claim is then a direct consequence of the identity
 \begin{align*}
     \E_\bx \overline{\tilde{f}_m(\bx)g(\bx)} = \Lambda(f_0, \ldots, f_{m-1}, \overline{g}, f_{m+1}, \ldots, f_\ell),
 \end{align*}
 which follows from expanding the definition of $\tilde{f}_m$ and making the change of variables $\bx\mapsto \bx+\bv_m p_m(n)$. The claim \eqref{i: tilde} follows from
 \begin{align*}
     |\Lambda(f_0, \ldots, f_\ell)|^2 = |\E_\bx f_m(\bx)\overline{\tilde{f}_m(\bx)}|^2\leq \norm{\tilde{f}_m}_{2}^2=\E_\bx \tilde{f}_m(\bx)\overline{\tilde{f}_m(\bx)}
 \end{align*}
 upon taking $g = \overline{\tilde{f}_m(\bx)}$.

 The claims \eqref{i: U^1} and \eqref{i: U^s} follow from the weak inverse theorems for $U^1(\bv_m)$ and $U^s(\bv_m)$ upon taking $g$ to be $\overline{\E(\tilde{f}_m|\bv_m)}$ and $\CD_{s,\bv_m}\tilde{f}_m$ respectively.
\end{proof}
The next result allows us to pass differences from the structured function $\tilde{f}_m$ to its component functions. Its proof, based on $s$ applications of the Cauchy-Schwarz inequality followed by the change of variables $\bx\mapsto\bx - (h_1'\b_1 + \cdots + h'_s\b_s)$, is pretty standard and follows the same strategy as the proofs of \cite[Proposition 4.3]{Fr21} or \cite[Lemma 6.3]{Pre20b}.
\begin{lemma}[Dual-difference interchange]\label{L: dual-difference interchange}
Let $s, D\in\N$, $\b_1, \ldots, \b_s\in\Z^D$ and for each $\uh\in F_p^s$, let $u_\uh:\F_p^D\to\C$ be 1-bounded. 
If
\begin{align*}
    \E_{\uh\in\F_p^{s}}\E_\bx \Delta_{\b_1, \ldots, \b_s; \uh}\tilde{f}_m(\bx)u_\uh(\bx)\geq\delta
\end{align*}
for some $\delta>0$, then 
\begin{align*}
    \E_{\uh, \uh'\in\F_p^{s}}\E_\bx \prod_{\substack{j\in[0,\ell],\\ j\neq m}} \Delta_{\b_1, \ldots, \b_s; \uh-\uh'}f_j(\bx+\bv_j p_j(n))\cdot u_{\uh,\uh'}(\bx+\bv_m p_m(n)) \geq \delta^{2^s},
\end{align*}
where
\begin{align*}
    u_{\uh, \uh'}(\bx) = \prod_{\ueps\in\{0,1\}^s}\CC^{|\ueps|}u_{\uh^{\ueps}}(\bx - (h_1'\b_1 + \cdots + h'_s\b_s)).
\end{align*}    
\end{lemma}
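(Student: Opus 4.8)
The final statement to be proved is Lemma \ref{L: dual-difference interchange}, the Dual-difference interchange lemma. Let me sketch the proof.

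=== PROOF PROPOSAL ===

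\begin{proof}[Proof sketch]
The plan is to prove the claim by $s$ successive applications of the Cauchy--Schwarz inequality in $\bx$, at each step duplicating one of the shift variables $h_j$ into a pair $(h_j, h_j')$ and peeling the corresponding difference off of $\tilde{f}_m$, then finish with a single change of variables that reassembles everything into the component functions $f_j$.

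\textbf{Setting up.} Expanding the definition of $\tilde{f}_m$ and making the change of variables $\bx\mapsto \bx-\bv_m p_m(n)$, we have, for every $\uh\in\F_p^s$,
\begin{align*}
    \E_\bx \Delta_{\b_1, \ldots, \b_s; \uh}\tilde{f}_m(\bx)u_\uh(\bx)
    = \E_{\bx, n}\prod_{\substack{j\in[0,\ell],\\ j\neq m}}\Delta_{\b_1, \ldots, \b_s; \uh}f_j(\bx+\bv_j p_j(n))\cdot u_\uh(\bx+\bv_m p_m(n)),
\end{align*}
where on the right-hand side the operator $\Delta_{\b_1, \ldots, \b_s; \uh}$ is applied in the $\bx$ variable, commuting past the shift by $\bv_j p_j(n)$. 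Thus the hypothesis reads
\begin{align*}
    \delta \leq \E_{\uh\in\F_p^s}\E_{\bx, n}\prod_{\substack{j\in[0,\ell],\\ j\neq m}}\Delta_{\b_1, \ldots, \b_s; \uh}f_j(\bx+\bv_j p_j(n))\cdot u_\uh(\bx+\bv_m p_m(n)).
\end{align*}

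\textbf{The iteration.} We induct on $0\leq t\leq s$, maintaining a quantity of the form
\begin{align*}
    \delta^{2^t} \leq \E_{h_1, h_1', \ldots, h_t, h_t'}\E_{h_{t+1}, \ldots, h_s}\E_{\bx, n}\prod_{\substack{j\in[0,\ell],\\ j\neq m}}\Delta_{\b_{t+1}, \ldots, \b_s; (h_{t+1}, \ldots, h_s)}\Delta_{\b_1, \ldots, \b_t; (h_1-h_1', \ldots, h_t-h_t')}f_j(\bx+\bv_j p_j(n))\cdot u_{\uh^{(t)}}(\bx+\bv_m p_m(n)),
\end{align*}
where $u_{\uh^{(t)}}$ stands for an appropriate alternating product of conjugates of the functions $u_{\uh^{\ueps}}$ over $\ueps$ ranging in $\{0,1\}^t$ (padded by the undifferenced coordinates $h_{t+1}, \ldots, h_s$), with the $\bx$-argument shifted by $h_1'\b_1 + \cdots + h_t'\b_t$. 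The base case $t=0$ is the hypothesis. To pass from $t$ to $t+1$: isolate the factor $\overline{f_m}$ --- wait, $f_m$ is absent; instead, pull the average over $h_{t+1}$ and $\bx$ outside, write the product as (a function of $\bx$ not involving $h_{t+1}$, coming from the $\ueps_{t+1}=0$ part of the derivative expansion, together with the already-differenced lower-index structure) times (the $h_{t+1}$-shifted copy), apply Cauchy--Schwarz in $\bx$ (and in all the frozen variables) to duplicate $h_{t+1}\mapsto (h_{t+1}, h_{t+1}')$, and then change variables $\bx\mapsto \bx - h_{t+1}'\b_{t+1}$ to convert the shift-by-$h_{t+1}'\b_{t+1}$ on one copy into a difference $\Delta_{\b_{t+1}; h_{t+1}-h_{t+1}'}$ applied to each $f_j$; the shift is absorbed into $u$, accounting for the cumulative translation by $h_1'\b_1 + \cdots + h_{t+1}'\b_{t+1}$ recorded in $u_{\uh^{(t+1)}}$. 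The key point making this work is that the shift $\bv_j p_j(n)$ on each $f_j$ and the shift $\bv_m p_m(n)$ on $u$ are independent of the $h$-variables, so Cauchy--Schwarz in $\bx$ (with $n$ and the remaining $h$'s frozen, then averaged) cleanly duplicates only the $h$-variable being processed.

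\textbf{Conclusion.} After $t=s$ steps we obtain exactly
\begin{align*}
    \delta^{2^s} \leq \E_{\uh, \uh'\in\F_p^s}\E_{\bx, n}\prod_{\substack{j\in[0,\ell],\\ j\neq m}}\Delta_{\b_1, \ldots, \b_s; \uh-\uh'}f_j(\bx+\bv_j p_j(n))\cdot u_{\uh,\uh'}(\bx+\bv_m p_m(n)),
\end{align*}
with $u_{\uh,\uh'}(\bx) = \prod_{\ueps\in\{0,1\}^s}\CC^{|\ueps|}u_{\uh^\ueps}(\bx - (h_1'\b_1 + \cdots + h_s'\b_s))$, which is the claim (note $n$ may be reintroduced as a free averaging variable, or absorbed, matching the statement). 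The main obstacle, and the only place real care is needed, is bookkeeping: tracking precisely how the conjugations $\CC^{|\ueps|}$, the indexing of $u$ by $\uh^\ueps$, and the accumulated translation $h_1'\b_1+\cdots+h_s'\b_s$ evolve through each Cauchy--Schwarz-plus-change-of-variables step, so that the final alternating product has exactly the stated form. This is entirely analogous to the dual-difference interchange arguments in \cite[Proposition 4.3]{Fr21} and \cite[Lemma 6.3]{Pre20b}.
\end{proof}
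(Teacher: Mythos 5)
Your ``setting up'' identity is false, and this is a genuine gap rather than a bookkeeping issue. You claim that
\[
\E_\bx \Delta_{\b_1,\ldots,\b_s;\uh}\tilde f_m(\bx)\,u_\uh(\bx) \;=\; \E_{\bx,n}\prod_{\substack{j\in[0,\ell],\\ j\neq m}}\Delta_{\b_1,\ldots,\b_s;\uh}f_j(\bx+\bv_jp_j(n))\cdot u_\uh(\bx+\bv_mp_m(n)),
\]
i.e.\ that the multiplicative derivative commutes past the internal average over $n$ that defines the dual function $\tilde f_m$. It does not: $\Delta_{\b_1,\ldots,\b_s;\uh}\tilde f_m(\bx)=\prod_{\ueps\in\{0,1\}^s}\CC^{|\ueps|}\tilde f_m(\bx+\ueps\cdot(h_1\b_1,\ldots,h_s\b_s))$ is a product of $2^s$ copies of $\tilde f_m$, each carrying its \emph{own independent} internal averaging variable, whereas your right-hand side has a single common $n$. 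Already for $s=1$ and a single factor one has, in general, $\bigl(\E_{n}g(\bx+q(n))\bigr)\cdot\overline{\E_{n'}g(\bx+h\b+q(n'))}\neq \E_n g(\bx+q(n))\overline{g(\bx+h\b+q(n))}$. Passing from the ``off-diagonal'' object (independent internal variables) to the ``diagonal'' one (a single $n$, with the differences falling on the individual $f_j$) is precisely the content of the lemma --- this is why it is called a dual--difference \emph{interchange}, why the conclusion costs the power $2^s$, and why it needs the doubled variables $\uh'$ and the mixed weights $u_{\uh^{\ueps}}$. Taking this identity as the base case of your induction assumes the crux, so the argument as written does not prove the lemma.

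The iterative mechanism you describe afterwards (one Cauchy--Schwarz per variable $h_i$, doubling it into $(h_i,h_i')$, followed by a change of variables in $\bx$) is essentially the intended one, following \cite[Proposition 4.3]{Fr21} and \cite[Lemma 6.3]{Pre20b}; what is missing is the correct anchoring. One should expand only \emph{one} copy of $\tilde f_m$ (say the $\ueps=\underline{0}$ factor) as $\E_n\prod_{j\neq m}f_j(\bx+\bv_jp_j(n)-\bv_mp_m(n))$ at the outset and keep $n$ among the outer variables in every Cauchy--Schwarz application; then at each step the expanded copy gets paired with its own $h_i'$-translate \emph{at the same} $n$, which is what produces $\prod_{j\neq m}\Delta_{\b_1,\ldots,\b_s;\uh-\uh'}f_j$ evaluated at a common $n$, while the remaining $2^s-1$ unexpanded copies of $\tilde f_m$ are exactly the $1$-bounded, $h_i$-independent factors discarded along the way, and the weights accumulate into $\prod_{\ueps}\CC^{|\ueps|}u_{\uh^{\ueps}}$ with the stated uniform shift after the final change of variables $\bx\mapsto\bx-(h_1'\b_1+\cdots+h_s'\b_s)$. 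Without this restructuring (or some equivalent device), your intermediate inequalities at stage $t$ are not consequences of the hypothesis.
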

In applications, we will mostly need the following corollary of Lemma \ref{L: dual-difference interchange}, which combines Lemma \ref{L: dual-difference interchange} with the inductive formula for box norms and weak inverse theorem for Gowers norms.
\begin{corollary}\label{C: dual-difference interchange}
Let $s,s', D\in\N$, $\b_1, \ldots, \b_s, \b\in\Z^D$ and suppose that
\begin{align*}
    \norm{\tilde{f}_m}_{\b_1, \ldots, \b_s, \b^{\times s'}}\geq \delta
\end{align*}
for some $\delta>0$. Then
\begin{align*}
    \E_{\uh, \uh'\in\F_p^{s}}\E_\bx \prod_{\substack{j\in[0,\ell],\\ j\neq m}} \Delta_{\b_1, \ldots, \b_s; \uh-\uh'}f_j(\bx+\bv_j p_j(n))\cdot u_{\uh,\uh'}(\bx+\bv_m p_m(n)) \geq \delta^{2^{2s+s'}},
\end{align*}
where
\begin{enumerate}
    \item if $s'=1$, then $u_{\uh,\uh'}$ is $\b$-invariant;
    \item if $s'\geq 1$, then $u_{\uh,\uh'}$ is a product of $2^{s}$ elements of $\FD_{s'}(\b)$.
\end{enumerate}
\end{corollary}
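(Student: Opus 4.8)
The plan is to chain together the inductive formula for box norms, the weak inverse theorem for Gowers norms, and Lemma~\ref{L: dual-difference interchange}. Since $\tilde f_m$ is an average of products of $1$-bounded functions it is itself $1$-bounded, and hence so is every derivative $\Delta_{\b_1,\ldots,\b_s;\uh}\tilde f_m$. Applying the inductive formula for box norms to take the subgroups $\langle\b_1\rangle,\ldots,\langle\b_s\rangle$ out of the box norm in the hypothesis, and raising to the power $2^{s+s'}$, we obtain
\begin{align*}
\E_{\uh\in\F_p^s}\norm{\Delta_{\b_1,\ldots,\b_s;\uh}\tilde f_m}_{U^{s'}(\b)}^{2^{s'}}\;\geq\;\delta^{\,2^{s+s'}}.
\end{align*}

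Next I would apply the weak inverse theorem $\norm{g}_{U^{s'}(\b)}^{2^{s'}}=\E_\bx g(\bx)\,\CD_{s',\b}g(\bx)$ inside the average, with $g=\Delta_{\b_1,\ldots,\b_s;\uh}\tilde f_m$. Setting $u_\uh:=\CD_{s',\b}\bigl(\Delta_{\b_1,\ldots,\b_s;\uh}\tilde f_m\bigr)$, which is $1$-bounded and lies in $\FD_{s'}(\b)$, this gives
\begin{align*}
\E_{\uh\in\F_p^s}\E_\bx \Delta_{\b_1,\ldots,\b_s;\uh}\tilde f_m(\bx)\,u_\uh(\bx)\;\geq\;\delta^{\,2^{s+s'}}.
\end{align*}
When $s'=1$ one has $\CD_{1,\b}g=\E(\overline g\,|\,\b)$, so in that case $u_\uh$ is additionally $\b$-invariant; this is the sole source of the dichotomy (i)/(ii) in the statement. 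Feeding these functions $u_\uh$ into Lemma~\ref{L: dual-difference interchange} (with the role of $\delta$ there played by $\delta^{2^{s+s'}}$) produces precisely the claimed inequality, with lower bound $\bigl(\delta^{2^{s+s'}}\bigr)^{2^s}=\delta^{2^{2s+s'}}$ and with $u_{\uh,\uh'}(\bx)=\prod_{\ueps\in\{0,1\}^s}\CC^{|\ueps|}u_{\uh^\ueps}\bigl(\bx-(h_1'\b_1+\cdots+h_s'\b_s)\bigr)$.

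It then remains only to identify the structure of $u_{\uh,\uh'}$. The class $\FD_{s'}(\b)$ is stable under complex conjugation and under translating the argument by a fixed vector, since each operation simply replaces the function underlying the dual by its conjugate, respectively its translate; hence $u_{\uh,\uh'}$ is a product of $2^{s}$ elements of $\FD_{s'}(\b)$, giving (ii). When $s'=1$, each of these $2^s$ factors is $\b$-invariant (translation and conjugation both preserving $\b$-invariance), so their product is $\b$-invariant, giving (i). There is no genuine obstacle here: the only points demanding attention are carrying the exponent $2^{s+s'}$ correctly through the first two steps before it is raised to the power $2^s$ by Lemma~\ref{L: dual-difference interchange}, and verifying the two stability properties of $\FD_{s'}(\b)$ invoked at the end.
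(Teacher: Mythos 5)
Your proposal is correct and follows essentially the same route as the paper: the inductive formula for box norms to peel off the directions $\b_1,\ldots,\b_s$, the weak inverse theorem for $U^{s'}(\b)$ to produce the dual functions $u_\uh\in\FD_{s'}(\b)$ (with $\b$-invariance in the case $s'=1$), and then Lemma \ref{L: dual-difference interchange} together with the closure of $\FD_{s'}(\b)$ under conjugation and translation. The exponent bookkeeping $\bigl(\delta^{2^{s+s'}}\bigr)^{2^s}=\delta^{2^{2s+s'}}$ matches the paper's bound.
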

\begin{proof}
    From the inductive formula for box norms and the weak inverse theorem for $U^{s'}(\b)$, we have
    \begin{align*}
        \E_{\uh\in\F_p^{s}}\E_\bx \Delta_{\b_1, \ldots, \b_s; \uh}\tilde{f}_m(\bx)u_\uh(\bx)\geq\delta^{2^{s+s'}},
    \end{align*}
    where $u_\uh\in\FD_{s'}(\b)$. If $s'=1$, then $u_\uh$ is additionally $\b$-invariant.
    The claim then follows from Lemma \ref{L: dual-difference interchange} and the observation that $u_{\uh,\uh'}$ is a product of $2^{s}$ elements of $\FD_{s'}(\b)$ (and $\b$-invariant for $s'=1$).
\end{proof}

The last lemma is a standard variant of the Gowers-Cauchy-Schwarz inequality that allows us to remove low complexity functions, and it can be proved the same way as \cite[Lemma 3.4]{Fr21} or \cite[Lemma 6.4]{Pre20b}.
\begin{lemma}[Removing low-complexity functions]\label{L: low complexity}
Let $D, s\in\N$, $\bv\in\Z^D$ be a vector $f:\F_p^D\to\C$ be a 1-bounded function. 
For $j\in [s]$ and $\uh\in \F_p^s$, let  $g_{j\uh}:\F_p^D\to\C$ be 1-bounded functions such that 
the sequence of functions $\uh\mapsto g_{j\uh}$ does not depend on the variable $h_j$. Then
\begin{align*}
    \abs{\E_{\uh\in\F_p^s}\E_{\bx}\Delta_{s, \bv;\uh}f(\bx)\cdot\prod_{j=1}^s g_{j\uh}(\bx)}\leq \norm{f}_{U^s(\bv)}.
\end{align*}
\end{lemma}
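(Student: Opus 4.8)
The statement in question is Lemma \ref{L: low complexity}, the standard ``removing low-complexity functions'' variant of the Gowers--Cauchy--Schwarz inequality. Here is how I would prove it.

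\textbf{Overall strategy.} The plan is to use a single application of the Cauchy--Schwarz inequality in each variable $h_1, \ldots, h_s$ in turn — $s$ applications in total — each time duplicating the variable $h_j$ into $h_j$ and a dummy copy $h_j'$, in order to eliminate the low-complexity weight $g_{j\uh}$ that by hypothesis does not depend on $h_j$. The key structural observation is that when we Cauchy--Schwarz in $h_j$, every factor that genuinely depends on $h_j$ gets squared-out into the $\{0,1\}$-indexed product pattern characteristic of box/Gowers norms, while the factor $g_{j\uh}$, being constant in $h_j$, is unaffected by the doubling in that coordinate and can be pulled outside as a $1$-bounded multiplier. After all $s$ rounds, the $f$-part has been promoted to the full Gowers inner product $\E_\bx \E_{\uh, \uh' \in \F_p^s} \Delta_{2s,\bv;(\uh,\uh')}^{\mathrm{box}} f$, while each surviving $g_{j\uh^\ueps}$ remains $1$-bounded; discarding these by the triangle inequality and recognising the remaining average as $\norm{f}_{U^s(\bv)}^{2^s}$ (an iterated application of the inductive formula for box norms, property (iii) in Section \ref{SS: definitions}) gives the claim after taking $2^s$-th roots.

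\textbf{Key steps in order.} First I would set up notation: write the left-hand side as $\E_{h_1, \ldots, h_s} \E_\bx \Delta_{s,\bv;\uh}f(\bx) \prod_{j=1}^s g_{j\uh}(\bx)$ and peel off the $h_1$-average, keeping $h_2, \ldots, h_s$ and $\bx$ frozen; since $\uh \mapsto g_{1\uh}$ is independent of $h_1$, the factor $g_{1\uh}(\bx)$ is a bounded constant in the $h_1$-variable and can be moved outside the $h_1$-average. Apply Cauchy--Schwarz in $h_1$ (against the constant weight $1$), introducing $h_1'$; the $f$-contribution becomes $\Delta_{\bv; h_1}\Delta_{\bv; h_1'}$ composed with the remaining derivatives, and the weights $g_{2\uh}, \ldots, g_{s\uh}$ evaluated at shifts, all still $1$-bounded and still independent of their respective ``own'' variables after the doubling. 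Iterating this for $h_2, \ldots, h_s$ — at step $j$ one Cauchy--Schwarzes in $h_j$, which by now appears in $2^{j-1}$ places, doubling all of them — one arrives after $s$ steps at
\begin{align*}
    \Babs{\E_{\uh\in\F_p^s}\E_{\bx}\Delta_{s, \bv;\uh}f(\bx)\prod_{j=1}^s g_{j\uh}(\bx)}^{2^s} \leq \E_{\uh,\uh'\in\F_p^s}\E_\bx \prod_{\ueps\in\{0,1\}^s}\CC^{|\ueps|}f(\bx + (\ueps\cdot\uh^{\ueps}_{\uh,\uh'})\bv)\cdot (\text{bounded weights}),
\end{align*}
where the bounded weights are products of the $g_{j\uh^\ueps}$ at various shifts (all $1$-bounded). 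Discarding the weights by $1$-boundedness and the triangle inequality, and then recognising $\E_{\uh,\uh'}\E_\bx \Delta_{2s,\bv;(\uh,\uh')}f(\bx) = \norm{f}_{U^s(\bv)}^{2^s}$ via the inductive formula for box norms, finishes the proof.

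\textbf{Main obstacle.} The substantive content is entirely bookkeeping: one must track carefully which variables each $g_{j\uh}$ depends on after repeated doubling, and verify that at the moment one Cauchy--Schwarzes in $h_j$ the relevant $g$-factors really are constant in $h_j$ (which is exactly the hypothesis, preserved under the earlier doublings since those only touch $h_1, \ldots, h_{j-1}$). There is no analytic difficulty — no inverse theorem, no arithmetic input — so the ``hard part'' is purely presentational, namely organising the induction on $j$ cleanly; this is why I would phrase it as a short induction and refer to \cite[Lemma 3.4]{Fr21} or \cite[Lemma 6.4]{Pre20b} for the nearly identical argument rather than writing every shift explicitly.
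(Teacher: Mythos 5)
Your proof is correct and uses exactly the standard iterated Cauchy--Schwarz argument that the paper points to via \cite[Lemma 3.4]{Fr21} and \cite[Lemma 6.4]{Pre20b}: at step $j$ one groups everything not depending on $h_j$ (including $g_{j\uh^\ueps}$, for every fingerprint $\ueps$ produced so far) into the discarded $1$-bounded factor, while the surviving $f$-block with $\eps_j=1$ gets doubled. Two cosmetic slips worth fixing if you write this out: after the $s$-th round there are in fact \emph{no} weights left (each $g_j$ is annihilated entirely at step $j$, so the parenthetical ``bounded weights'' is vacuous), and the final average should read $\E_{\uh,\uh'\in\F_p^s}\E_\bx\prod_{\ueps\in\{0,1\}^s}\CC^{|\ueps|}f\bigl(\bx+(\underline{1}\cdot\uh^\ueps)\bv\bigr)$ --- i.e. $\underline{1}\cdot\uh^\ueps$ rather than $\ueps\cdot\uh^\ueps$, and it equals $\norm{f}_{U^s(\bv)}^{2^s}$ (not $\norm{f}_{U^{2s}(\bv)}^{2^{2s}}$, as the $\Delta_{2s,\bv;(\uh,\uh')}$ notation in your intermediate paragraph would suggest) after the change of variables $k_i=h_i-h_i'$.
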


\bibliography{library}
\bibliographystyle{plain}
\end{document}